\newtheorem{theorem}{Theorem}[section]
\newtheorem{remark}[theorem]{Remark}
\newtheorem{corollary}[theorem]{Corollary}
\newtheorem{lemma}[theorem]{Lemma}
\newtheorem{proposition}[theorem]{Proposition}
\def\eps{\varepsilon}
\def\Var{\textup{Var}}
\def\Cov{\textup{Cov}}
\def\Q{\mathbb{Q}}
\def\Z{\mathbb{Z}}
\def\N{\mathbb{N}}
\def\R{\mathbb{R}}
\def\E{\mathbb{E}}
\def\P{\mathbb{P}}
\def\V{\mathcal{V}}
\def\L{\mathcal{L}}
\def\H{\mathcal{H}}
\def\W{\mathcal{W}}
\DeclareMathOperator*{\argmax}{arg\,max}
\begin{document}
\title{Branch lengths for geodesics in the directed landscape and mutation patterns in growing spatially structured populations}
\author{Shirshendu Ganguly\footnote{Department of Statistics, University of California, Berkeley, sganguly@berkeley.edu, Partially supported by NSF career grant-1945172.
}, Jason Schweinsberg\footnote{Department of Mathematics, University of California San Diego, jschweinsberg@ucsd.edu}, Yubo Shuai\footnote{Department of Mathematics, University of California San Diego, yushuai@ucsd.edu}}

\maketitle
\vspace{-.3in}
\begin{abstract}
Consider a population that is expanding in two-dimensional space.  Suppose we collect data from a sample of individuals taken at random either from the entire population, or from near the outer boundary of the population.  A quantity of interest in population genetics is the site frequency spectrum, which is the number of mutations that appear on $k$ of the $n$ sampled individuals, for $k = 1, \dots, n-1$.  As long as the mutation rate is constant, this number will be roughly proportional to the total length of all branches in the genealogical tree that are on the ancestral line of $k$ sampled individuals.  
While the rigorous literature has primarily focused on models without any spatial structure, in many natural settings, such as tumors or bacteria colonies, growth is dictated by spatial constraints. A large number of such two dimensional growth models are expected to fall in the KPZ universality class exhibiting similar features as the Kardar-Parisi-Zhang equation.
Although we are not aware of previous rigorous work related to the site frequency spectrum for such models, nonrigorous predictions are available, for instance, in \cite{fgkah16}.  

In this article we adopt the perspective that for population models in the KPZ universality class, the genealogical tree can be approximated by the tree formed by the infinite upward geodesics in the directed landscape, a universal scaling limit constructed in \cite{dov22}, starting from $n$ randomly chosen points.  Relying on geodesic coalescence, we prove new asymptotic results for the lengths of the portions of these geodesics that are ancestral to $k$ of the $n$ sampled points and consequently obtain exponents driving the site frequency spectrum as predicted in \cite{fgkah16}.  An important ingredient in the proof is a new tight estimate of the probability that three infinite upward geodesics stay disjoint up to time $t$, i.e., a sharp quantitative version of the well studied N3G problem, which is of independent interest.
\end{abstract}

\setcounter{tocdepth}{2}
\tableofcontents
\section{Introduction}

One of the central goals in population genetics is to use information obtained from the DNA of individuals sampled from the present-day population to draw inferences about the evolutionary history of the population.  Since the seminal work of Kingman \cite{king82} in the early 1980s, coalescent theory has been an essential tool for this purpose.  Suppose one takes a sample of $n$ individuals from a population and follows their ancestral lines backwards in time.  Then the ancestral lineages will coalesce until eventually all of the lineages are traced back to one common ancestor, leading to a genealogical tree such as the one illustrated in Figure \ref{treefigure} below.

An important observable in this context is the site frequency spectrum (defined in the upcoming section).  In this article, we are interested in the site frequency spectrum in two-dimensional population models with spatial structure.  We leverage recent mathematical advances in the understanding of growth models in the Kardar-Parisi-Zhang universality class to build a  framework which allows us to verify rigorously some of the exponents that have been numerically observed in the literature, for example in \cite{fgkah16, epf22}, pertaining to the behavior of the site frequency spectrum in bacterial colonies and other two-dimensional population models.

\subsection{The site frequency spectrum}

From the DNA sequences obtained from the $n$ sampled individuals, one can identify mutations which are inherited by some but not all individuals in the sample.  Let $M_{k,n}$ denote the number of mutations inherited by $k$ of the $n$ sampled individuals.  Then the vector $(M_{1,n}, \dots, M_{n-1,n})$ is called the site frequency spectrum.  An example is given in Figure \ref{treefigure} below.

\begin{figure}[h]\label{treefigure}
\centering
\begin{tikzpicture}[scale=0.7]
\draw [very thick] (3,0)--(3.75,1);
\draw [very thick] (4.5,0)--(3.75,1);
\draw [very thick] (7.5,0)--(8.25,1.5);
\draw [very thick] (9,0)--(8.25,1.5);
\draw [very thick] (6,0)--(4.5,2);
\draw [very thick] (3.75,1)--(4.5,2);
\draw [very thick] (10.5,0)--(9,2.5);
\draw [very thick] (8.25,1.5)--(9,2.5);
\draw [very thick] (4.5,2)--(7,4.5);
\draw [very thick] (9,2.5)--(7,4.5);
\draw [fill=black](3.45,0.6) circle (2pt);
\draw [fill=black](4.35,0.2) circle (2pt);
\draw [fill=black](5,2.5) circle (2pt);
\draw [fill=black](7.25,4.25) circle (2pt);
\draw [fill=black](8.7,2.1) circle (2pt);
\draw [fill=black](10.2,0.5) circle (2pt);
\node at (3,-0.3){1};
\node at (4.5,-0.3){2};
\node at (6,-0.3){3};
\node at (7.5,-0.3){4};
\node at (9,-0.3){5};
\node at (10.5,-0.3){6};
\end{tikzpicture}
\caption{\small{A genealogical tree of a sample of size $n = 6$.  Dots indicate the times of mutations.  Three mutations near the bottom of the tree are inherited by only one individual (1, 2, or 6).  One mutation is inherited by the individuals 4 and 5.  The two mutations closest to the top are inherited by three individuals each.  Therefore, the site frequency spectrum is $M_{1,n} = 3$, $M_{2,n} = 1$, $M_{3,n} = 2$, and $M_{4,n} = M_{5,n} = 0$.}}
\end{figure}

We say that a branch of the coalescent tree supports $k$ leaves if the ancestral lines of $k$ of the $n$ sampled individuals include that branch.  A mutation that arises on a branch that supports $k$ leaves will be inherited by $k$ of the sampled individuals.  Let $L_{k,n}$ denote the total length of all branches in the coalescent tree that support $k$ leaves.  If we assume that mutations occur at the constant rate $\theta$ per unit time along each lineage, then the conditional distribution of $M_{k,n}$ given $L_{k,n}$ is Poisson with mean $\theta L_{k,n}$.  Therefore, to understand the distribution of the site frequency spectrum, it is enough to understand the distribution of the branch lengths $L_{k,n}$.

While the focus of this article is on models with spatial structure, let us first briefly review the literature on more tractable examples without an intrinsic geometry.   A classical population model for populations of fixed size is  the Moran model \cite{moran58}, in which there are $N$  individuals in the population at all times, each individual independently lives for an exponentially distributed time with rate $1$, and when an individual dies, an individual is chosen at random from the population to give birth to a new individual. In this model, the genealogy of a sample of $n$ individuals, after a rescaling of time, is given by Kingman's coalescent \cite{king82}, in which each pair of lineages merges at rate $1$.  For Kingman's coalescent, the distribution of the branch lengths $L_{k,n}$ is well understood. For instance, Fu \cite{f95} showed that for Kingman's coalescent,
\begin{equation}\label{kingexp}
\E[L_{k,n}] = \frac{2}{k} \qquad\mbox{for } 1 \leq k \leq n-1,
\end{equation}
and a Central Limit Theorem was obtained in \cite{dk15}.
The site frequency spectrum has also been studied, for example, in \cite{bbs07, dk19, gmss24, ksw21, sks16} for coalescent processes in which more than two lineages can merge at a time.

Also of interest are populations whose size is increasing rapidly over time, such as populations of cancer cells.  For exponentially growing populations, the coalescent tree has long external branches leading to the $n$ leaves of the tree, with most of the coalescence occurring close to the root.  Durrett \cite{d13} considered a supercritical branching process starting from one individual in which each individual gives birth at rate $\lambda$ and dies at rate $\mu$, where $\lambda > \mu$.  He showed that if we take a sample of size $n$ from the population at time $T$, then as $T \rightarrow \infty$, we have
\begin{equation}\label{durrexp}
\E[L_{k,n}] \rightarrow \frac{n}{\lambda - \mu} \cdot \frac{1}{k(k-1)} \qquad\mbox{for }2 \leq k \leq n-1.
\end{equation}
Durrett also showed that $\E[L_{1,n}] \sim n T$ as $T \rightarrow \infty$.  Further asymptotics were developed in \cite{jssc23, ss23}, and it was demonstrated in \cite{jssc23} that the formula \eqref{durrexp} provides a good fit to data from blood cancer.
See also \cite{glf21, glz23}, in which the authors investigate the site frequency spectrum for an exponentially growing population when we have genetic data from the entire population, rather than just a sample of size $n$.

\subsection{Spatial population models}

While models of fixed size populations and exponentially growing populations are applicable in a wide range of settings, they fail to capture instances when spatial constraints dictate population growth, preventing the population from growing exponentially fast.  Over the years, a wide variety of spatial population models have been proposed.  One example is the biased voter model on $\Z^d$, which was proposed as a model of tumor growth in 1972 by Williams and Bjerknes \cite{wb72} and was studied further in \cite{dfl16}.  Stepping stone models, which were introduced in \cite{k53} and were studied mathematically, for example, in \cite{cd02}, allow demes consisting of many individuals at each site.  Another well-known model is the Eden model on $\Z^d$, which was introduced in \cite{e61}.  In the Eden model, the population begins at time zero with a single individual occupying the origin.  At each time step, a new individual is born at a site that is randomly chosen from the sites that are vacant, but are adjacent to an occupied site.  The Eden model, and various extensions of it, has frequently been proposed as a model of tumor growth \cite{noble22, wbphvn15, chk19, wk20, li23, kpw25}.
One can alternatively model space as a continuum by using the spatial $\Lambda$-Fleming-Viot process, which was formulated in \cite{bev10} and was adapted to the setting of an expanding population in \cite{l23, lv24}, or the closely related model introduced by Deijfen in \cite{d03}.

We advance this discussion in the setting of first passage percolation (FPP), a close variant of the Eden model.  See \cite{adh17} for a survey of first passage percolation.
Individuals in the population are located at sites in $\mathbb{Z}^2$, and the process begins with one individual located at the origin.  For each edge $e$ of $\mathbb{Z}^2$, there is an independent random variable $\tau_e$, (say distributed as a standard exponential variable) which we call the length of the edge.  Then, for an edge $e$ that connects the vertices $u$ and $v$, once either the site $u$ or $v$ is occupied by an individual, a new individual is born at the other site adjacent to the edge after an additional time $\tau_e$, if that site is not already occupied.
For $x,y \in \Z^2$, let $d(x,y)$ denote the length of the shortest path between $x$ and $y$, which is called the geodesic path from $x$ to $y$.  Writing ${\bf 0}$ for the point $(0,0)$, the time when the vertex $x$ first becomes occupied by an individual is $d({\bf 0}, x)$.  The vertices along the geodesic path from ${\bf 0}$ to $x$ are the ancestors of the individual located at $x$.  Therefore, the genealogical tree of the individuals at $n$ sites $x_1, \dots, x_n$ in $\mathbb{Z}^2$ is the tree formed from the $n$ geodesic paths from ${\bf 0}$ to $x_i$, for $i = 1, \dots, n$.  See Figure \ref{simlineages} for a simulation of the geodesics in first passage percolation.

\begin{figure}
\centering
\includegraphics[scale=0.4, trim={4cm 6.5cm 3.2cm 6cm}, clip]{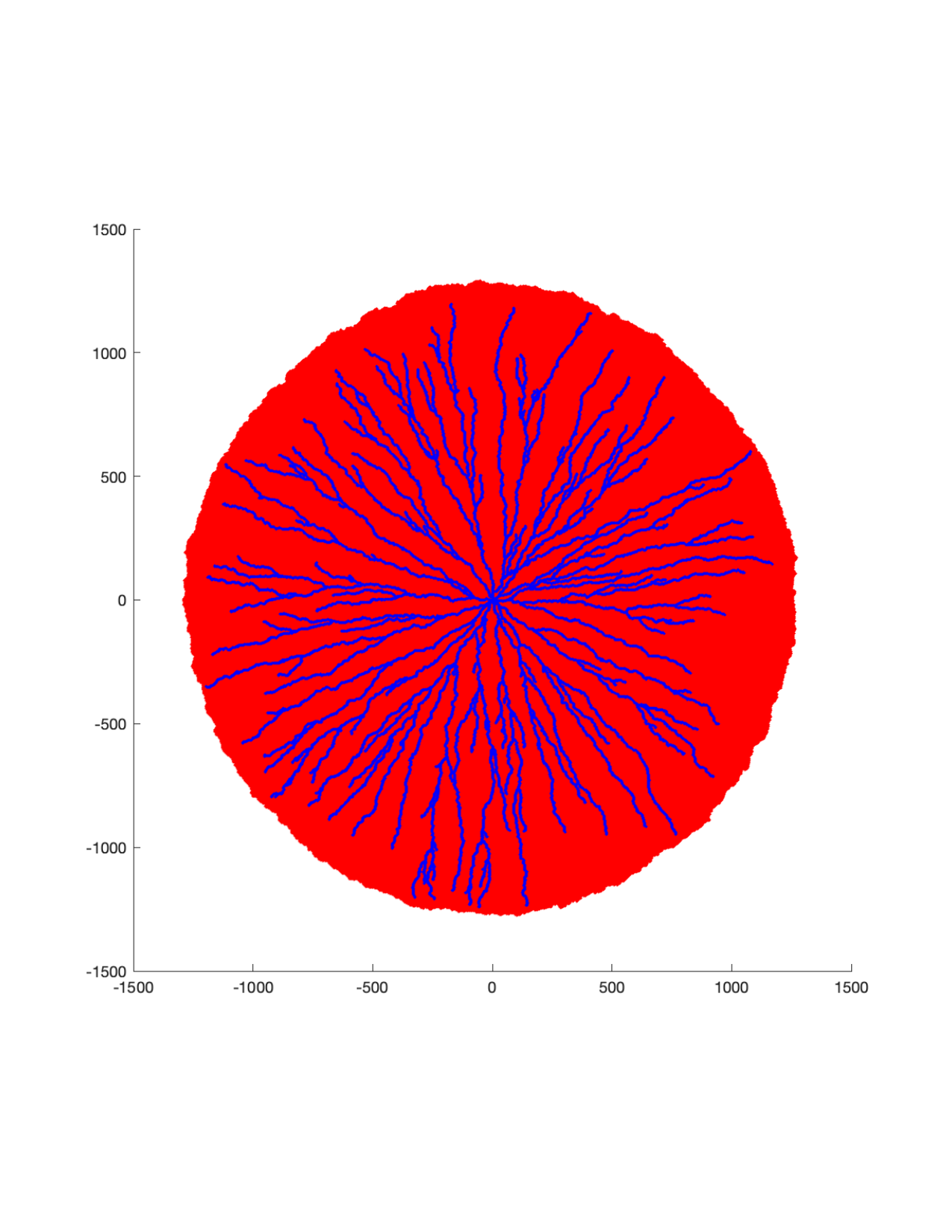} \hspace{.3in}
\includegraphics[scale=0.4, trim={4cm 6.5cm 3.2cm 6cm}, clip]{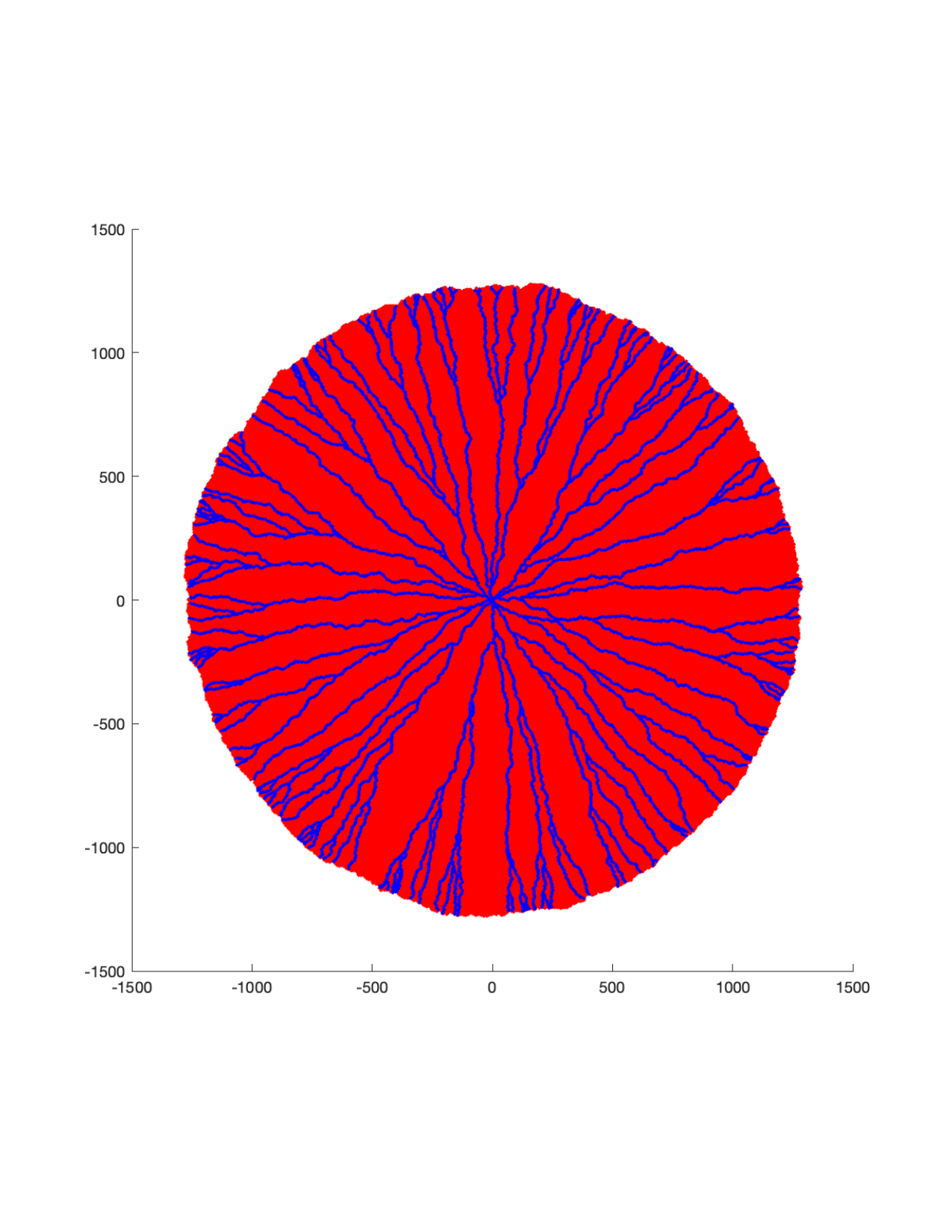}
\caption{Genealogical trees of samples of size $n = 200$ from first passage percolation.  The growth of the cluster was simulated until there were $N = 5,000,000$ occupied sites.  In the picture on the left, the blue lines show the geodesic paths from the origin to $n$ randomly chosen points from the cluster.  In the picture on the right, the $n$ points were sampled from the boundary of the cluster.}
\label{simlineages}
\end{figure}

While above we took the edge weights $\tau_e$ above to be standard exponential variables, it is believed that the model exhibits a universal fluctuation theory which does not depend on the distribution of the weights $\tau_e$ as long as it satisfies certain reasonable assumptions. Moreover, this universal behavior is expected to be exhibited by a large class of models forming the so-called KPZ universality class. The Eden model is also believed to belong to this class, and recent simulation results in \cite{il25} suggest that the spatial $\Lambda$-Fleming-Viot process belongs to this class as well.  All of these models are expected to exhibit critical fluctuation exponents in line with the behavior of a canonical non-linear stochastic partial differential equation known as the KPZ equation.  The KPZ equation was proposed by Kardar, Parisi, and Zhang \cite{kpz86} as a model of a random interface, one example of which is the outer boundary of the growth cluster in FPP.  The KPZ equation is given by
$$\frac{\partial}{\partial t} h(t,x) = \frac{1}{2} \frac{\partial^2}{\partial x^2} h(t,x) - \frac{1}{2} \bigg( \frac{\partial}{\partial x} h(t,x) \bigg)^2 + \xi(t,x),$$
where $h(t,x)$ represents the height of an interface at time $t$ and spatial location $x \in \mathbb{R}$, and $\xi$ is space-time white noise.  On account of the white-noise term, the solutions, if any, ought to be quite rough, rendering the gradient squared term ill-defined.  Nevertheless, at a heuristic level, from scaling arguments, one can deduce that when the growth of an interface is governed by the KPZ equation, $h(t,x)$ for large $t$ is typically at a height which is linear in $t$ with fluctuations on the scale of $t^{1/3}$, and exhibiting non-trivial correlation between $h(t,x)$ and $h(t,y)$  when $|x - y|$ is on the scale of $t^{2/3}$. This is often termed in the literature as the $(1:2:3)$ scaling property.

Although we are not aware of previous rigorous work related to the site frequency spectrum for populations that are growing according to a model in the KPZ universality class, some nonrigorous predictions are available.  Fusco, Gralka, Kayser, Anderson, and Hallatschek \cite{fgkah16} argued that the number of mutations that are inherited by a fraction at least $x$ of the population is approximately proportional to $x^{-2/5}$ when $x$ is small, and this expression was shown in \cite{gh19} to match some data from bacterial populations.  This leads to the prediction that in a random sample from the population, the number of mutations inherited by exactly $k$ of the sampled individuals should be proportional to $k^{-7/5}$ (essentially by taking the derivative in $x$ of the previous statement), in contrast to the $k^{-1}$ behavior for Kingman's coalescent in \eqref{kingexp} and the $k^{-2}$ behavior for exponentially growing populations in \eqref{durrexp}.  Eghdami, Paulose, and Fusco \cite{epf22} studied the case in which the sample comes entirely from the outer edge of the population.  They found in their equation (6) that the number of mutations inherited by $k$ sampled individuals should be proportional to $k^{-1/2}$, although some of their simulation results suggested a better fit to $k^{-2/3}$, a point to which we will return in section \ref{simsec} when we present simulations.  Our main results, which are Theorems \ref{Theorem: main2} and \ref{Theorem: main1} below, establish these exponents rigorously for the directed landscape.

On the experimental side, it was demonstrated in \cite{hhrn07, gh19} that the growth patterns of some bacterial populations resemble what would be predicted by the KPZ equation.  Allstadt et al. \cite{anwkc16} suggested that models in the KPZ universality class could describe the growth of invasive plant species, such as white clover.  Growth dynamics consistent with KPZ fluctuations were observed in \cite{hpgba12} for cervical cancer, and Lewinsohn et al. \cite{lbmf23} found that cancer cells near the boundary of a tumor divide faster than cells in the interior, consistent with the Eden model.

It is worth noting that the above discussion has been exclusively two dimensional, which means it is not applicable in many physical instances, such as the growth of solid tumors, which demands a three-dimensional theory. Our understanding of the latter remains significantly limited, notwithstanding some recent impressive developments \cite{csz24}. 
Further, the planar KPZ models do not account for cell death, nor do they account for the possibility that the tumor may consist of several types of cells which grow at different speeds.
Finally, some experimental results reported in \cite{bru98, bru03} indicate that the boundary of some tumors is more smooth than would be predicted by KPZ models, perhaps because of the movement of tumor cells near the boundary.  For a recent discussion of this issue, see \cite{mawk22, mawk23}.

\subsection{The directed landscape and its geodesics}

Instead of working with prelimiting models, we will base our framework on a process known as the directed landscape, which was recently constructed by Dauvergne, Ortmann and Vir\`ag \cite{dov22}.  The directed landscape is conjectured to be the universal scaling limit for various models in the KPZ universality class.  In particular, the genealogical structure in the limit for such models is believed to be described by the so-called geodesics in the directed landscape.

While the convergence of general pre-limiting models to the directed landscape remains one of the central open problems in field, the convergence has indeed been shown for a class of models possessing additional integrable structures. This includes Brownian last passage percolation \cite{dov22} and Geometric and Exponential last passage percolation \cite{dv21}. Convergence for related models such as the asymmetric simple exclusion process and the stochastic six-vertex model was shown recently in \cite{ach24} (see also \cite{dz24c}), and convergence of the KPZ equation and the log-gamma polymer to the directed landscape was shown in \cite{wu23} and \cite{z25} respectively. 

We now introduce the directed landscape, although we refrain from providing a fully rigorous definition and instead refer the reader to \cite{dov22} for the details.  Let $$\R^4_{\uparrow} = \{(x,s;y,t) \in \R^4: s < t\},$$ which can be viewed as the set of ordered pairs of spacetime points, where $x$ and $y$ are spatial coordinates and $s$ and $t$ are time coordinates.  The directed landscape is a random function $\L: \R^4_{\uparrow} \rightarrow \R$.  It is a directed metric (see \cite{dv21}, section 1.3), meaning that if $p$, $q$, and $r$ are points in $\R^2$ such that $(p;q)$ and $(q;r)$ are in $\R^4_{\uparrow}$, then $\L(p;p) = 0$ and $\L(p;r) \geq \L(p;q) + \L(q;r)$.  The directed landscape also satisfies the metric composition property (see \cite{dov22}, Definition 1.4), which means that if $x,y \in \R$ and $r < s < t$, then 
\begin{equation}\label{metric comp}
\L(x,r;y,t) = \max_{z \in \R} \big( \L(x,r;z,s) + \L(z,s;y,t) \big).
\end{equation}
The directed landscape is stationary in time and space.  It also satisfies a scaling property, which roughly states that the landscape looks the same after the horizontal axis is scaled by $\rho^2$ and the vertical axis is scaled by $\rho^3$.  These properties will be stated formally in Proposition \ref{Proposition: properties of the landscape} below.

For any $(x,s;y,t)\in\R^4_{\uparrow}$, a path from $(x,s)$ to $(y,t)$ is a continuous function $\gamma:[0,t-s]\rightarrow\R$ such that $\gamma(0)=x$ and $\gamma(t-s)=y$. We define its length to be
$$
||\gamma||_{\L}:=\inf_{k\in \N}\inf_{0=t_0<t_1<\dots<t_k=t-s}\sum_{i=1}^k\L(\gamma(t_{i-1}), s+t_{i-1}; \gamma(t_i), s+t_i).
$$
The metric composition property \eqref{metric comp} implies that $||\gamma||_{\L}\le \L(x,s; y,t)$ for any path $\gamma$, and we say $\gamma$ is a geodesic from $(x,s)$ to $(y,t)$ if equality holds (note that the path is directed).  Almost surely, a geodesic from $(x,s)$ to $(y,t)$ exists for all $ (x,s;y,t) \in \R^4_{\uparrow}$ (see \cite{dov22}, Lemma 13.2).  For each fixed $ (x,s;y,t) \in \R^4_{\uparrow}$, the geodesic from $(x,s)$ to $(y,t)$ is unique almost surely (see \cite{dov22}, Theorem 12.1), although the uniqueness will fail for a measure zero set of exceptional points (see \cite{bgh21,bgh22,gz22,d23, gm24} for a study of the associated fractal structures). Nonetheless, as noted in Lemma~13.2 of \cite{dov22}, almost surely for all $(x,s;y,t)\in\R^4_{\uparrow}$, there exist unique left-most and right-most geodesics from $(x,s)$ to $(y,t)$.

A continuous function $g: [0, \infty) \rightarrow \R$ is called an infinite geodesic starting from $(x,s)$ if $g(0) = x$ and the restriction of $g$ to $[0,u]$ is a geodesic from $(x,s)$ to $(g(u), s+u)$ for every $u > 0$.  We call $g$ an infinite upward geodesic if it is an infinite geodesic and if $\lim_{u \rightarrow \infty} u^{-1} g(u) = 0$.  As recorded in Corollary 3.20 of \cite{rv21}, almost surely there is an infinite upward geodesic starting from $(x,s)$ for all $(x,s) \in \R^2$.  For any fixed point $(x,s) \in \R^2$, this infinite upward geodesic is unique almost surely.  Again, while the uniqueness fails at an exceptional set of points, almost surely there is a unique left-most and a unique right-most infinite upward geodesic starting from all $(x,s) \in \R^2$.  We will denote by $g_{(x,s)}$ the left-most infinite upward geodesic starting from $(x,s)$.

We can interpret the infinite upward geodesics in the directed landscape as describing the genealogical structure in a population in which there is an individual at each point in $\mathbb{R}^2$, and the ancestors of the individual located at $(x, s)$ are the points on the infinite upward geodesic $g_{(x,s)}$.  The tree formed from the infinite upward geodesics starting from $n$ sampled points represents the genealogical tree of $n$ individuals sampled from a population.  We will consider two different methods of sampling.  To model choosing $n$ individuals uniformly at random from a growing two-dimensional population, we will sample $n$ points uniformly at random from a box in $\mathbb{R}^2$.  To model choosing $n$ individuals uniformly at random from the outer edge of a growing two-dimensional population, we will sample $n$ points uniformly at random from a horizontal line segment.  

This approach essentially captures the behavior for a pre-limiting model such as FPP on a patch of a characteristic scale.  After a long time, the particles in FPP will occupy a large convex region, as in Figure \ref{simlineages}.  Because the directed landscape is obtained as a scaling limit, it only captures the geometry at the scale of the correlation, evident from the fact that it is stationary in time and space.  Therefore, the tree formed by the infinite upward geodesics starting from $n$ points sampled from a box in $\mathbb{R}^2$ should resemble the genealogical tree of $n$ particles sampled from a small patch far from the origin in the left panel in Figure \ref{simlineages}.  Likewise, the tree formed by the infinite upward geodesics starting from $n$ points sampled from a line segment should resemble the genealogical tree of $n$ particles sampled from a small arc along the boundary of the occupied region in the right panel in Figure \ref{simlineages}.  Nevertheless, because the quantities that we will be studying depend only on the local features of the genealogical tree, we expect that our main results should be approximately valid in prelimiting models such as FPP, even when individuals are sampled from the entire population.

\subsection{Main results}

Before stating our main results, we introduce some notation.
Recall that for any $(x,s) \in \R^2$, space-time pairs along the infinite upward geodesic $g_{(x,s)}$ represent ancestors of $(x,s)$. Therefore, the descendants of $(y,t)$ are the points whose infinite upward geodesics pass through $(y,t)$.
For each $(y,t)\in\R^2$, we define the descendants of $(y,t)$ to be 
$$
D_{(y,t)} :=\{(x,s)\in\R^2: s\le t, \, g_{(x,s)}(t-s)=y\}.
$$
For any $A\subseteq\R^2$, we define the ancestors of $A$ as the union of the infinite upward geodesics emanating from points in $A$, so
$$
Anc(A):=\bigcup_{(x,s)\in A} \bigcup_{u\ge 0} \{(g_{(x,s)}(u), s+u)\} = \{(y,t):D_{(y,t)}\cap A\neq\emptyset\}.
$$
The individuals that are ancestral to exactly $k$ individuals in $A$ are denoted by
$$
Anc_k (A):=\{(y,t)\in Anc(A): \#( D_{(y,t)}\cap A)=k\},
$$
where $\#S$ denotes the cardinality of the set $S$. If $A$ is finite, we usually measure the set $Anc_k(A)$ in terms of its branch length. More generally, for any measurable $B\subseteq\R^2$, we write $B^t=\{y\in\R: (y,t)\in B\}$ for the time-$t$ slice of $B$, and define its branch length as
\begin{equation}\label{def length}
l(B) :=\int_{\R} \# B^t\ dt.
\end{equation}
Note that $l(B)$ is well-defined because it is not hard to see that $t \mapsto \# B^t$ is measurable.  Also, by a countability argument, one can show that $Anc(A)$ and $Anc_k(A)$ are measurable subsets of $\R^2$ when $A$ is a finite or countable random subset of $\R^2$ because infinite upward geodesics can be approximated by the infinite upward geodesics starting at rational points.

With the above preparation we are now in a position to state our main results. Theorem~ \ref{Theorem: main2} gives the asymptotic behavior of the total length of branches that are ancestral to $k$ of the $n$ points sampled uniformly from a horizontal line segment. Theorem \ref{Theorem: main1} gives the asymptotic behavior of the total length of branches that are ancestral to $k$ of the $n$ points sampled uniformly from a box. The final main result Theorem \ref{Theorem: 3-disjoint}, while being a crucial input in the proofs of the other results, is itself of independent interest. It pertains to the well known N3G problem which states that there are no three disjoint infinite geodesics in any direction (note that while our focus will only be on geodesics in the vertical direction, one may consider geodesics with any asymptotic direction lying in the upper hemisphere of the unit circle). 

Here and in the rest of the document, constants are strictly positive and finite unless otherwise stated. Numbered constants such as $C_1$, $C_2$, $C_1'$, and $C_2'$ will not change, but unnumbered constants could change from line to line.

\begin{theorem}\label{Theorem: main2}
Let $\widetilde{U}_n$ be $n$ points sampled uniformly at random from $[0,n]\times\{0\}$. Then there exists a constant $C_1\in(0,\infty)$ such that as $n\rightarrow\infty$, for all positive integers $k$,
\begin{equation}\label{Introduction: main2}
\frac{l(Anc_k(\widetilde{U}_n))}{n}\rightarrow C_1\cdot \frac{\Gamma(k + 1/2)}{k!}\qquad\text{in probability}.
\end{equation}
\end{theorem}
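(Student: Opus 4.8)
The plan is to rewrite the branch length through the coalescence structure of the infinite upward geodesics and then exploit the stationarity and $1{:}2{:}3$ scale invariance of the directed landscape. The left-most infinite upward geodesics $g_{(x,0)}$, $x\in[0,n]$, are ordered in $x$ and, once two of them meet, stay together (they form a Busemann-type geodesic tree; see \cite{rv21}), so for each $t\ge 0$ the segment $[0,n]$ splits into intervals $I_1(t),I_2(t),\dots$ with $x,x'$ in the same interval iff $g_{(x,0)}(t)=g_{(x',0)}(t)$. A point $(y,t)$ lies in $Anc_k(\widetilde U_n)$ exactly when $y$ is the common image at time $t$ of some $I_j(t)$ containing exactly $k$ of the sampled points, so, writing $N_k(t)$ for the number of such intervals, $l(Anc_k(\widetilde U_n))=\int_0^\infty N_k(t)\,dt$. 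Since the sampling of $\widetilde U_n$ is independent of $\L$, conditionally on $\L$ the points are $n$ i.i.d.\ uniform on $[0,n]$, and for $t$ in the relevant range $t\ll n^{3/2}$ a Poisson approximation gives $\E[N_k(t)\mid\L]=\frac1{k!}\sum_j|I_j(t)|^k e^{-|I_j(t)|}+(\text{error})$, the error being negligible after summing over $j$ and integrating over $t$ thanks to moment bounds on $|I_j(t)|$.

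For the mean, translation invariance of $\L$ makes the interval endpoints at time $1$ a stationary point process, with some intensity measure $\mu$ on $(0,\infty)$ for the interval lengths, and the $1{:}2{:}3$ scaling of $\L$ (see \cite{dov22}) identifies the partition at time $t$ with the partition at time $1$ dilated spatially by $t^{2/3}$; hence $\E\big[\sum_j|I_j(t)|^ke^{-|I_j(t)|}\big]=\frac{n}{t^{2/3}}\int(t^{2/3}b)^ke^{-t^{2/3}b}\,\mu(db)+o(n)$. Substituting $u=t^{2/3}$ and applying Fubini,
\[
\frac1n\,\E\big[l(Anc_k(\widetilde U_n))\big]\;\longrightarrow\;\frac{3}{2\,k!}\int_0^\infty\!\Big(\int_0^\infty u^{\,k-1/2}e^{-ub}\,du\Big)b^{\,k}\,\mu(db)\;=\;\Big(\tfrac32\!\int_0^\infty\! b^{-1/2}\,\mu(db)\Big)\frac{\Gamma(k+1/2)}{k!},
\]
which is the claimed limit with $C_1=\tfrac32\int_0^\infty b^{-1/2}\,\mu(db)$, the same constant for every $k$. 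Note $\int b\,\mu(db)=1$ because the intervals tile the line, which also gives $C_1>0$.

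The crux is finiteness of $C_1$ together with the exchanges of limits above (in particular the large-$t$ contribution), and this is where Theorem~\ref{Theorem: 3-disjoint} enters. Both issues reduce to one point: a coalescence interval of length $\le\ell$ at time $t$ forces three left-most infinite upward geodesics started within $O(\ell)$ of one another (one inside the short interval, one to each side) to remain pairwise disjoint up to time $t$, so by scaling the intensity of such intervals is controlled by the probability that three geodesics started at mutual distance $O(\ell/t^{2/3})$ stay disjoint up to time $1$ — precisely the quantity bounded sharply in Theorem~\ref{Theorem: 3-disjoint}. That bound is strong enough that $\mu((0,\ell])$ decays faster than $\ell^{1/2}$, making $\int_0^1 b^{-1/2}\mu(db)<\infty$, and, applied with $\ell\asymp k\,t^{-2/3}$, that $\int_T^\infty\E[N_k(t)]\,dt$ is small for large $T=T(k)$, so the time integral may be truncated; together with a two-geodesic coalescence estimate for the upper tail of $\mu$, these inputs also show $\mu$ is a genuine locally finite measure and legitimize the Poisson approximation and dominated-convergence steps.

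For the convergence in probability I would bound $\Var\big(l(Anc_k(\widetilde U_n))\big)$. After truncating the time integral at $T$, the randomness in $\widetilde U_n$ is handled by a bounded-difference inequality — moving one sample point changes $\int_0^T N_k(t)\,dt$ by at most $2T$ — giving conditional variance $O(nT^2)$ given $\L$; for the remaining randomness, $\E\big[\int_0^T N_k(t)\,dt\mid\L\big]$ is a sum of $O(n)$ contributions, the one indexed by a unit window depending — outside events of small probability on which some geodesic from $[0,n]$ travels farther than $CT^{2/3}$ — only on the landscape over $[0,T]$ near that window, so well-separated windows are essentially independent and the variance is again $O(n)=o(n^2)$, after which Chebyshev finishes. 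The main obstacle, granting Theorem~\ref{Theorem: 3-disjoint}, is converting ``short coalescence interval'' into a clean three-geodesic disjointness event with the correct quantitative dependence on the length scale, and the analogous control of the large-$t$ tail (the concentration step additionally requiring localization estimates for the geodesics), since this single estimate is what simultaneously forces $C_1<\infty$, makes $\mu$ locally finite, and justifies every interchange of limits in the computation.
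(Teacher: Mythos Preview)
Your route differs genuinely from the paper's, and the scaling computation for the mean is correct. The paper Poissonizes from the start: it introduces $\widetilde{\V}_a=\{(y,t):\widetilde m(\widetilde D_{(y,t)})>a\}$, uses scale invariance to get $\E[l((\widetilde\V_0\setminus\widetilde\V_a)\cap A)]=C_1'\,\widetilde m(\widetilde A)\,a^{1/2}$ (Proposition~\ref{Proposition: expectation, fixed time}), and integrates the Poisson kernel against the resulting density $\tfrac{C_1'}{2}a^{-1/2}$, thereby avoiding your binomial-to-Poisson approximation entirely. For concentration the paper applies Birkhoff's ergodic theorem over spatial translates and then identifies the limit as deterministic via the box-coupling of Proposition~\ref{Proposition: coupling}; de-Poissonization and the boundary comparison are handled separately (Propositions~\ref{Proposition: lln, fixed time, points in [0,n]} and~\ref{Proposition: diff2}). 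Your parametrization $C_1=\tfrac32\int b^{-1/2}\mu(db)$ makes the mechanism behind $C_1<\infty$ visibly a small-$b$ tail bound on $\mu$, while the paper's Poissonization plus ergodic theorem gives a cleaner modular argument with fewer error terms to track.

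Two steps in your sketch need real work. First, the inference ``short coalescence interval $\Rightarrow$ three disjoint geodesics $\Rightarrow$ bound on $\mu((0,\ell])$'' is not immediate: a naive $\ell$-mesh bounds only the number of \emph{windows} containing a short interval, not the number of short intervals, since several can cluster in one window. The paper's Lemma~\ref{Lemma: C is finite, fixed time} solves exactly this by first scaling so the intervals in question have length in $(2,4]$, indexing each one injectively by an integer it contains, and then combining the three-arm probability with a H\"older inequality against transversal-fluctuation tails to control the landing location of the associated geodesic. Second, your variance step (``well-separated windows are essentially independent'') is not free: it requires precisely Proposition~\ref{Proposition: coupling}, which furnishes independent copies of $\L$ agreeing on disjoint boxes, together with quantitative control of the exceptional events where localization fails; the paper's ergodic-theorem route packages both issues more economically. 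The Poisson-approximation and first/last-interval errors you defer are comparatively minor, but note the paper disposes of them via the explicit sandwich $\widetilde\Pi_{1-\eps_n}\subset\widetilde\Pi_{\widetilde\lambda_n^*}\subset\widetilde\Pi_{1+\eps_n}$ and Lemma~\ref{Lemma: perturb the sampling intensity2}.
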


\begin{theorem}\label{Theorem: main1}
Let $U_n$ be $n$ points sampled uniformly at random from the box $A_n=[0,n^{2/5}]\times [0,n^{3/5}]$. Then there exists a constant $C_2\in(0,\infty)$ such that as $n\rightarrow\infty$, for all positive integers $k$,
\begin{equation}\label{Introduction: main1}
\frac{l(Anc_k(U_n))}{n}\rightarrow C_2\cdot\frac{\Gamma(k - 2/5)}{k!}\qquad\text{in probability}.
\end{equation} 
\end{theorem}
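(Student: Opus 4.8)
\emph{Proof plan.} I would prove this by a first-moment computation, made sharp via the $1{:}2{:}3$ scale invariance of the directed landscape and the N3G bound, and then upgrade it to convergence in probability with a variance estimate. The first move is to replace $U_n$ by a rate-one Poisson process $\Pi_n$ on $A_n$: inserting or deleting one point $p=(x,s)$ changes $l(Anc_k(\cdot))$ only through the initial segment of the upward geodesic $g_p$ that carries at most $k$ descendants, and by geodesic coalescence the length of that segment has expectation $O_k(1)$ (indeed $O(k^{3/5})$) with exponential tails; since $|\#\Pi_n-n|$ is typically $O(\sqrt n)$ and i.i.d.\ uniform points are exchangeable given their number, a routine de-Poissonization then gives $n^{-1}\bigl(l(Anc_k(U_n))-l(Anc_k(\Pi_n))\bigr)\to 0$ in probability, so it suffices to treat $\Pi_n$.

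Next, by \eqref{def length} and Fubini, $\E[l(Anc_k(\Pi_n))]=\int_{\R}\E[\#(Anc_k(\Pi_n))^t]\,dt$, and only $t\in(0,n^{3/5}+O_k(1))$ contributes. For a fixed time $t$ the backward clusters $\{D_{(y,t)}\}_{y}$ partition the slab $\{s\le t\}$, this partition is a measurable function of $\L$ alone, and $Anc_k(\Pi_n)^t=\{y:\#(D_{(y,t)}\cap\Pi_n)=k\}$; conditioning on $\L$ and using that the Poisson sprinkling is independent of $\L$,
\begin{equation*}
\E\bigl[\#(Anc_k(\Pi_n))^t \,\big|\, \L\bigr]=\sum_{y} e^{-a_y}\frac{a_y^{\,k}}{k!},\qquad a_y:=\mathrm{area}\bigl(D_{(y,t)}\cap A_n\bigr).
\end{equation*}
For $t$ in the bulk, say $t\in[\log n,\,n^{3/5}]$, and anchors $y\in[\log n,\,n^{2/5}-\log n]$, a cluster with $a_y=O_k(1)$ has spatial diameter $O_k(1)$ with overwhelming probability (by quantitative control of geodesic fluctuations and coalescence), so $D_{(y,t)}\subset A_n$ and its local statistics are those of $\L$ on $\R^2$; translation invariance in space and time then yields $\E[\#(Anc_k(\Pi_n))^t]=n^{2/5}\rho_k+o(n^{2/5})$ uniformly over such $t$, where
\begin{equation*}
\rho_k:=\int_0^\infty e^{-a}\,\frac{a^{\,k}}{k!}\,\lambda(da),
\end{equation*}
and $\lambda$ is the intensity, per unit spatial length, of the point process of areas of backward clusters at a fixed time level for the directed landscape on $\R^2$. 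The remaining levels and the boundary strips contribute only $O_k(n^{2/5}\log n)=o(n)$, using a crude bound $\E[\#(Anc_k(\Pi_n))^t]=O_k(n^{2/5})$ (which follows from the same backward-cluster estimates). Hence $\E[l(Anc_k(\Pi_n))]=n^{3/5}\cdot n^{2/5}\rho_k+o(n)=\rho_k\, n+o(n)$.

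To pin down $\rho_k$ I would invoke the $1{:}2{:}3$ scale invariance of $\L$ (time rescaled by $\mu^3$, space by $\mu^2$, values by $\mu$), under which upward geodesics are carried to upward geodesics and each backward cluster has its area multiplied by $\mu^5$; together with translation invariance this forces the cluster-area intensity to satisfy $\lambda(\mu^5 B)=\mu^{-2}\lambda(B)$ for every $\mu>0$ and Borel $B\subset(0,\infty)$, and the only locally finite measure on $(0,\infty)$ with this homogeneity is $\lambda(da)=C_2\,a^{-7/5}\,da$ for some $C_2\in(0,\infty)$ (the exponent forced by $5\beta+5=-2$). Consequently
\begin{equation*}
\rho_k=\frac{C_2}{k!}\int_0^\infty e^{-a}a^{\,k-7/5}\,da=C_2\,\frac{\Gamma(k-2/5)}{k!},
\end{equation*}
the integral converging for every $k\ge 1$. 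The step that needs genuine work here — and, I expect, the main obstacle — is showing that $\lambda$ really is a bona fide locally finite measure obeying this scaling, and that the first-moment reduction above is uniform: one must rule out an accumulation of, or anomalously thin, backward clusters, and a cluster that stays thin while wedged between its two neighbours forces three upward geodesics to remain disjoint over its entire lifetime, so the sharp quantitative ``no three disjoint geodesics'' estimate (Theorem~\ref{Theorem: 3-disjoint}) is precisely the tool that quantifies how rarely this occurs; a merely qualitative N3G statement would not suffice.

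Finally, for the convergence in probability I would bound $\Var(l(Anc_k(\Pi_n)))=\iint \Cov\bigl(\#(Anc_k(\Pi_n))^{t_1},\#(Anc_k(\Pi_n))^{t_2}\bigr)\,dt_1\,dt_2$. For each fixed $t$, $\#(Anc_k(\Pi_n))^t$ is a spatial integral of indicators that, off a negligible event, depend on $(\L,\Pi_n)$ only in a window of size $O_k(1)$, so decorrelation of $\L$ over spatially separated regions gives $\Var(\#(Anc_k(\Pi_n))^t)=O_k(n^{2/5})$; moreover $\#(Anc_k(\Pi_n))^t$ is determined by levels in $[t-O_k(1),t]$, so by independence of $\L$ over disjoint time slabs the covariance is negligible once $|t_1-t_2|\gg O_k(1)$. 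Therefore $\Var(l(Anc_k(\Pi_n)))=O_k(1)\cdot n^{3/5}\cdot O_k(n^{2/5})=O_k(n)=o(n^2)$, and Chebyshev's inequality together with the de-Poissonization step gives \eqref{Introduction: main1}. The de-Poissonization and the decorrelation estimates for $\L$, while technical, follow by-now-standard patterns; the crux remains the backward-cluster geometry at the $O_k(1)$ scale, which is where Theorem~\ref{Theorem: 3-disjoint} is indispensable.
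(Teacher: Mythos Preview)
Your first-moment computation via the $1{:}2{:}3$ scaling is essentially the paper's Proposition~\ref{Proposition: expectation}: the cluster-area intensity $\lambda(da)=C\,a^{-7/5}\,da$ is exactly the density $\phi(a)=\frac{2C_2'}{5}a^{-7/5}$ appearing in \eqref{Sampling from the population, density}, and your Poisson integration reproduces \eqref{sfs1}. The identification of Theorem~\ref{Theorem: 3-disjoint} as the key input is also correct in spirit, though in the two-dimensional case the paper actually establishes $C_2'<\infty$ by a different ``tall or wide'' argument (see the proof of Proposition~\ref{Proposition: expectation}) and deploys the three-arm bound later, in the boundary analysis (Lemma~\ref{Lemma:  branches outside the box}).

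There is, however, a genuine gap in your variance step. You assert that $\#(Anc_k(\Pi_n))^t$ is determined by $(\L,\Pi_n)$ on levels $[t-O_k(1),t]$, and hence that the covariance between times $t_1$ and $t_2$ vanishes once $|t_1-t_2|\gg 1$ by temporal independence of $\L$. This is false as stated: whether a point $(x,s)$ with $s<t$ belongs to $D_{(y,t)}$ is the condition $g_{(x,s)}(t-s)=y$, and by Theorem~\ref{Theorem: Brownian motion} the position $g_{(x,s)}(t-s)$ is an argmax involving the Busemann function at level $t$, which is measurable with respect to $\L|_{\R\times(t,\infty)}$. Thus $\#(Anc_k(\Pi_n))^{t_1}$ and $\#(Anc_k(\Pi_n))^{t_2}$ both depend on the \emph{entire future} of $\L$, and no amount of separation between $t_1$ and $t_2$ makes them automatically decorrelate. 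The paper confronts exactly this issue: Lemma~\ref{Lemma: localization for sampling from the population} constructs, for each unit box, a coupling that replaces the infinite upward geodesics by finite geodesics targeting a fixed point at height $T-2M$, with failure probability $O(T^{-1/3+o(1)})$; this localizes the dependence to a slab of height $T$ and yields a usable covariance bound (see the proof of Proposition~\ref{Proposition: lln, points anywhere}). Your plan can be repaired by inserting such a coupling, but it is not a routine step and should be flagged as the main technical ingredient of the variance estimate rather than assumed.

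Two smaller points. First, your de-Poissonization via single-point insertion requires that the length of the segment of $g_p$ carrying exactly $k-1$ or $k$ sampled descendants have controlled moments uniformly over $p\in A_n$; this is plausible but needs justification, and points near the top boundary of $A_n$ (where descendants may be artificially few) require care. The paper sidesteps this by coupling all rates via $(\Pi_\lambda)_{\lambda>0}$ and comparing $\lambda_n^*$ to $1$ (Proposition~\ref{Proposition: lln, points in the box}). Second, the boundary contribution you dismiss as $O_k(n^{2/5}\log n)$ is the content of Proposition~\ref{Proposition: diff1} in the paper and occupies several lemmas (Lemmas~\ref{Lemma:  branches outside the box}--\ref{Lemma: wide bubbles}); in particular the three-arm bound enters precisely here, to control ancestors in $A_n$ whose geodesics exit through the sides. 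This part is doable along the lines you sketch, but it is where most of the technical work in the two-dimensional case actually lies.
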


\begin{remark}
{\em We will see that the choice of the box $A_n=[0,n^{2/5}]\times [0,n^{3/5}]$ is consistent with the scaling property of the directed landscape $\L$ (see Proposition \ref{Proposition: properties of the landscape} below), and therefore one could also formulate this result in terms of sampling $n$ points from the unit box $[0,1] \times [0,1]$.}
\end{remark}
 
Note that the quantities $l(Anc_k(\widetilde{U}_n))$ and $l(Anc_k(U_n))$ play the same role as $L_{k,n}$ in the results for Kingman's coalescent and supercritical birth and death processes.  These quantities should therefore be proportional to the number of mutations that appear on $k$ of the $n$ sampled individuals as long as the mutation rate is constant.  Since we have $\Gamma(k + 1/2)/k! \sim k^{-1/2}$ and $\Gamma(k - 2/5)/k! \sim k^{-7/5}$, where $\sim$ means that the ratio of the two sides tends to one as $k \rightarrow \infty$,  these results are consistent with the predictions of \cite{epf22} and \cite{fgkah16}.

An important ingredient for the proofs of Theorems \ref{Theorem: main2} and Theorem \ref{Theorem: main1} is the following bound on the three-arm probability, i.e. the non-intersection probability of three infinite upward geodesics in the directed landscape.  
\begin{theorem}\label{Theorem: 3-disjoint}
Consider the infinite upward geodesics $g_{(-1,0)}$, $g_{(0,0)}$, and $g_{(1,0)}$. There exists a constant $C\in(0,\infty)$ such that for all $t \geq 2$,
$$
\P(g_{(-1,0)}, g_{(0,0)}, \text{ and } g_{(1,0)} \text{ are disjoint for all } s\in[0,t])\le Ct^{-5/3}\log^{52} t.
$$
\end{theorem}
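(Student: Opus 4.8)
The plan is to reformulate the event geometrically, reduce via scaling, and split the bound into a ``two-arm'' estimate and a conditional confinement estimate for the middle geodesic. Since $g_{(-1,0)}$, $g_{(0,0)}$, $g_{(1,0)}$ are leftmost infinite upward geodesics from ordered starting points, they are weakly ordered in space at every time, so the event in question is exactly that $g_{(0,0)}$ stays \emph{strictly inside} the random corridor $\{(x,s):g_{(-1,0)}(s)<x<g_{(1,0)}(s)\}$ on $[0,t]$; in particular, on this event the outer pair is automatically disjoint on $[0,t]$, so the event is contained in $\{\tau_{LR}>t\}$, where $\tau_{LR}$ is the first time $g_{(-1,0)}$ and $g_{(1,0)}$ meet. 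Using the scaling invariance of $\L$ (rescaling time by $t$ and space by $t^{2/3}$), the statement is equivalent to a bound of order $\eps^{5/2}=t^{-5/3}$, up to logarithmic corrections, for three infinite upward geodesics started within $\eps:=t^{-2/3}$ of one another remaining disjoint up to time $1$. Throughout, using monotonicity of geodesics and the limiting description of infinite upward geodesics (cf.\ \cite{rv21}), I would replace these by geodesics targeted at a far point, so that last-passage estimates apply on the compact time window.

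Next I would establish the two-arm estimate $\P(\tau_{LR}>t)\le Ct^{-2/3}\log^{c_0}t$. This is essentially the tail of the coalescence time of two geodesics started $O(1)$ apart, consistent with the $k$-versus-$k^{3/2}$ space--time coalescence scaling recorded in the introduction; it can be derived from one-point regularity of the Airy sheet by a disjoint-optimizers / overlap argument (comparing the weight of two disjoint geodesic segments on $[t/2,t]$ with the unconstrained two-point optimum), with a union bound over discretized endpoint locations at times $t/2$ and $t$ producing the logarithmic factor.

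The heart of the matter is the confinement of the middle geodesic. I would condition on the outer pair $(g_{(-1,0)},g_{(1,0)})$, which fixes the corridor; on $\{\tau_{LR}>t\}$ and with high conditional probability its width $W(s)=g_{(1,0)}(s)-g_{(-1,0)}(s)$ is of order $\max(1,s^{2/3})$, up to polylogarithmic corrections --- pinched to width $2$ near time $0$ and only marginally wider than a geodesic's own transversal scale thereafter, since forcing the outer pair to remain disjoint pins $W$ near the marginal value of its natural range. The aim is to show that the conditional probability that $g_{(0,0)}$ stays inside the corridor on $[0,t]$, integrated against the law of the corridor restricted to $\{\tau_{LR}>t\}$, contributes a further factor $Ct^{-1}\log^{c_2}t$; together with the two-arm estimate this gives the theorem. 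The mechanism is a two-sided barrier argument run scale by scale: on each of the $\asymp\log t$ dyadic time scales from $O(1)$ to $t$ the corridor width is comparable to the transversal fluctuation scale of $g_{(0,0)}$ over that scale, so the middle geodesic pays a constant-order survival cost per scale, and $\asymp\log t$ scales is precisely what produces the extra factor $t^{-1}$ beyond the two-arm bound.

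The main obstacle is that conditioning on the outer geodesics does \emph{not} leave the landscape between them in a product or otherwise explicit form, since the geodesic property pins the environment along $g_{(-1,0)}$ and $g_{(1,0)}$, and the per-scale survival estimate must be made uniform over the random, conditioned corridor geometry. I would handle this by resampling the landscape strictly inside the corridor given suitable boundary data, comparing --- via an absolute-continuity / Brownian-Gibbs type estimate --- with a free geodesic in the resampled environment and controlling the Radon--Nikodym cost, combined with a union bound over a polynomial discretization of the corridor at each scale. Iterating over the $\asymp\log t$ scales, with a polylogarithmic loss at each one and in the few nested uses of the pair and confinement estimates, yields the stated (deliberately crude) exponent $\log^{52}t$; we expect the truth is $\Theta(t^{-5/3})$ with no logarithmic correction. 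It then remains to assemble the two estimates, integrate the conditional confinement bound against the law of the corridor, and optimize the multiscale parameters.
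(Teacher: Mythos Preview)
Your decomposition is natural but genuinely different from the paper's, and the confinement step has a real gap. You propose to condition on the outer pair $(g_{(-1,0)},g_{(1,0)})$ and then argue that the middle geodesic survives inside the resulting corridor with conditional probability $\lesssim t^{-1}$ via a dyadic multiscale argument (``constant survival cost per scale, $\asymp\log t$ scales''). Two problems: first, a constant cost $c<1$ per scale over $\log t$ scales yields $t^{\log c}$, not $t^{-1}$; nothing in your outline pins down $c$, so the exponent you want is asserted rather than derived. Second, and more seriously, the tool you invoke to make the per-scale estimate uniform --- ``resampling the landscape strictly inside the corridor'' with a ``Brownian--Gibbs type estimate'' --- does not exist for the directed landscape. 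The Brownian Gibbs property belongs to the parabolic Airy line ensemble, not to $\L$ itself, and conditioning on two full geodesics pins the environment along them in a way for which there is no known tractable description of the conditional law in the interior. Your proposal acknowledges this obstacle but does not overcome it.

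The paper sidesteps both issues by a completely different route that yields the exponent $5/2$ (equivalently $5/3$ in $t$) algebraically rather than by iterating a per-scale constant. After the same scaling, it (i) performs a surgery tying the three geodesics to a common origin at an $O(\sqrt\eps)$ cost, (ii) uses the RSK/Gelfand--Tsetlin formula for geodesic watermelons to translate disjointness of the three finite segments into three near-touch events between consecutive lines $\cA_1,\cA_2,\cA_3$ of the parabolic Airy line ensemble, and (iii) uses the Busemann function (an independent two-sided Brownian motion) to translate the semi-infinite condition into two near-maximum events for $W+\cA_1$. This produces five essentially Brownian events, each of probability $\asymp\sqrt\eps$, whose approximate independence is established by a careful case analysis using the Brownian Gibbs property of the Airy line ensemble --- a setting where that property is actually available. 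The product of five $\sqrt\eps$'s gives the sharp $\eps^{5/2}$ (equivalently $t^{-5/3}$), with the polylogarithmic factor coming from discretization and regularity estimates. If you want to rescue your geometric decomposition, you would need either a genuine resampling tool for $\L$ inside a geodesic corridor or a way to compute the confinement exponent exactly; the paper's integrable input (RSK + Busemann) is precisely what supplies that missing piece.
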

The poly-logarithmic term in the bound is a byproduct of some of the estimates available and is expected to be superfluous. 

\newcommand{\ncg}{\mathsf{3DG}(1)}
In the literature, the event of the non-existence of three disjoint geodesics (in any possible direction) is known as $\mathsf{N3G}$. This was investigated recently in \cite{b24} which  ruled out the existence of three disjoint geodesics in any direction. An important ingredient in the proof was to consider the event in the above theorem which we will denote as $\ncg$ owing to the fact that the event involves the existence of three disjoint geodesics. The $1$ in $\ncg$ denotes that we only stipulate disjointness of the infinite upward geodesics up to height $1$. The main ingredient in \cite{b24} is an upper bound of $t^{-16/45}$ of the probability of $\ncg$. 
However, for our purposes we need a sharper estimate. The fact that $5/3$ is the right exponent can be reduced to events about the parabolic Airy line ensemble.  This connection was also observed in \cite{b24} and has been the basis of several recent developments in the study of fractal geometry in the directed landscape in particular with connections to non-coalescence of geodesics. {See e.g., \cite{bgh21, bgh22, gz22, gz22a, gm23} and also \cite{gm24} for a brief review of related developments. In light of this, we expect the above result to be of independent interest and find applications beyond those in this article.

To understand why Theorem \ref{Theorem: 3-disjoint} is important for our results, consider the setting of Theorem~\ref{Theorem: main2}, where we have $n$ infinite upward geodesics from points in $\widetilde{U}_n$, which are sampled at random from $[0,n] \times\{0\}$.  Denote the $x$-coordinates of these sampled points by $x_1 < \dots < x_n$.  For $2 \leq k \leq n-1$, consider the portion of the infinite upward geodesic starting from $(x_k, 0)$ that is ancestral to only one of the $n$ sampled points and therefore contributes to $Anc_1(\widetilde{U}_n)$.  The length of this portion of the geodesic is the time that it takes for this geodesic to coalesce with either the geodesic started from $(x_{k-1}, 0)$ or the geodesic starting from $(x_{k+1},0)$.  Theorem \ref{Theorem: 3-disjoint} can be used to bound the probability that this time is greater than $t$ and, in particular, establishes that this time has finite mean.  Thus for our purposes, we need a bound of the form  $t^{-a}$ for any $a > 1$ which the estimate from \cite{b24} falls short of. Finally, while we do not formally prove a matching lower bound, we expect that the arguments in the upper bound can be reversed without too much difficulty, for instance by invoking reasoning as in \cite{d23}. 

\subsection{Understanding the exponents}

\begin{figure}
\centering
\begin{tikzpicture}[scale=0.6]
\draw [thick] (0,4)--(8,4);
\draw [thick] [red] (2,2.5) ellipse (1cm and 1.5cm);
\draw [thick] [red] (4,2.3) ellipse (1cm and 1.7cm);
\draw [thick] [red] (6,2.7) ellipse (1cm and 1.3cm);
\draw [thick] (0,2.3)--(8,2.3);
\draw (9,4)--(9,2.3);
\draw (9,4)--(8.8,4);
\draw (9,2.3)--(8.8,2.3);
\node at (9.7,3.2){$r^{3/2}$};
\draw (5.2,0.5)--(6.8,0.5);
\draw (5.2,0.5)--(5.2,0.7);
\draw (6.8,0.5)--(6.8,0.7);
\node at (6,0){$r$};
\draw [fill=black](2,4) circle (4pt);
\draw [fill=black](4,4) circle (4pt);
\draw [fill=black](6,4) circle (4pt);
\draw [thick] [decorate, decoration={random steps,segment length=6pt,amplitude=1.5pt}] (1.1, 2.3)--(2,4);
\draw [thick] [decorate, decoration={random steps,segment length=6pt,amplitude=1.5pt}] (2.9, 2.3)--(2,4);
\draw [thick] [decorate, decoration={random steps,segment length=6pt,amplitude=1.5pt}] (3.1, 2.3)--(4,4);
\draw [thick] [decorate, decoration={random steps,segment length=6pt,amplitude=1.5pt}] (4.9, 2.3)--(4,4);
\draw [thick] [decorate, decoration={random steps,segment length=6pt,amplitude=1.5pt}] (5.2, 2.3)--(6,4);
\draw [thick] [decorate, decoration={random steps,segment length=6pt,amplitude=1.5pt}] (6.9, 2.3)--(6,4);
\end{tikzpicture}
\caption{The three dots represent individuals along the top line with descendants surviving at least $r^{3/2}$ time units into the future, and the red ovals represent the descendants of these three individuals.  The area of the ovals is $O(r^{5/2})$.}
\label{expfigure}
\end{figure}

The exponents $7/5$ and $1/2$ arise as consequences of geodesic coalescence and the aforementioned scaling properties of the directed landscape.  We give here a brief heuristic argument for these exponents.  Two infinite upward geodesics that begin one unit apart should coalesce after a time which is $O(1)$.  Therefore, by the scaling property of the directed landscape, if we scale the horizontal axis by $r$, then the vertical axis must be scaled by $r^{3/2}$ for the law of the geodesics to remain the same.  Therefore, two infinite upward geodesics that begin $r$ units apart should coalesce after a time which is $O(r^{3/2})$.  It follows that points along a line segment with descendants surviving $r^{3/2}$ time units into the future will be spaced $O(r)$ units apart, and the region representing the descendants of these individuals will have an area which is $O(r^{5/2})$, as shown in Figure \ref{expfigure}.  Letting $k = r^{5/2}$, we see that points along a horizontal line with $O(k)$ descendants should be spaced $O(k^{2/5})$ apart, which suggests, by taking a derivative, that points with exactly $k$ descendants will be spaced $O(k^{7/5})$ apart.  This leads to the $k^{-7/5}$ behavior in Theorem \ref{Theorem: main1}.

In the same way, to see how the exponent of $1/2$ arises, note that the points that are ancestors of $O(k)$ of the sampled points along the $t = 0$ horizontal line should be at a height of $O(k^{3/2})$.  Further, along any horizontal line at height $O(k^{3/2})$, these points should be spaced $O(k)$ units apart.  Therefore, the total length of the portions of the geodesics that are ancestral to $O(k)$ of the sampled points should be proportional to $k^{3/2} \cdot k^{-1} = k^{1/2}.$ Since the derivative of $k^{1/2}$ is $k^{-1/2}$, this suggests that the total length of the portions of the geodesics that are ancestral to exactly $k$ of the sampled points should be $O(k^{-1/2})$.

One can use similar scaling arguments to examine the potential applicability of Theorems~\ref{Theorem: main2} and \ref{Theorem: main1} to pre-limiting models such as FPP.  Suppose FPP is run until $N$ sites are occupied, at which time the occupied region has radius $O(N^{1/2})$.  Suppose we sample $n$ particles from the boundary of the occupied region.  If exactly $k$ of the sampled particles share a common ancestor, then these $k$ particles will lie along an arc of length $O(k N^{1/2}/n)$.  Letting $r = kN^{1/2}/n$, the most recent common ancestor of these $k$ particles will be much closer to the boundary than to the center of the circle as long as $r^{3/2} \ll N^{1/2}$, which is equivalent to the condition that
\begin{equation}\label{prelimitcond}
\frac{k}{n} \ll N^{-1/6}
\end{equation}
This condition, along with the condition that $n \ll N^{1/2}$ so that the sampled points are sufficiently far apart, is the condition under which we would expect Theorem \ref{Theorem: main2} to give accurate results in pre-limiting models.  If instead the $n$ points are sampled from the entire population, then $k$ particles sharing a common ancestor will occupy a region of area $O(k N/n)$.  This region will have a width of $O(r)$ and a height of $O(r^{3/2})$ for $r = (kN/n)^{2/5}$.  As before, the directed landscape approximation should be accurate as long as $r^{3/2} \ll N^{1/2}$, which is again equivalent to \eqref{prelimitcond}.  In this case, we also need $n \ll N$ to ensure the sampled points are far enough apart.  We therefore expect Theorem \ref{Theorem: main1} to give accurate results in pre-limiting models when \eqref{prelimitcond} holds and $n \ll N$.

When $k$ is large enough that the branches that are ancestral to $k$ sampled points are near the center of the circle, the directed landscape approximation is no longer valid, and nonrigorous predictions in \cite{fgkah16} indicate that the number of mutations inherited by $k$ sampled individuals begins to decay like $k^{-5}$.  Simulation results and site frequency spectrum data from bacterial populations in \cite{fgkah16} approximately show this transition between two different power laws.  Understanding the $k^{-5}$ behavior in a mathematically rigorous way remains an interesting open problem.

\subsection{Organization of the paper}

In Section~\ref{outlinesec}, we outline the strategy behind the proofs of our main results. 
In Section~\ref{Section: Preliminary}, we collect some properties of the directed landscape that we will use later in the paper. We prove Propositions~\ref{Proposition: expectation, fixed time, 1} -- \ref{Proposition: diff2} in Section~\ref{Section: 1-dimension} and Propositions~\ref{Proposition: expectation, 1} -- \ref{Proposition: diff1} in Section~\ref{Section: 2-dimension}. In Section~\ref{Section: 3-arm}, we prove Theorem~\ref{Theorem: 3-disjoint}.  Simulation results are presented in Section~\ref{simsec}.

\section{An outline of the proofs}\label{outlinesec}

The proofs of Theorems \ref{Theorem: main2} and \ref{Theorem: main1} have the same broad strategy with the latter being more involved. The structure of the proof in either case involves considering several closely related but more tractable quantities and arguing that one can control the error in the approximations. 

\subsection{Outline of the proof of Theorem \ref{Theorem: main2}}

The first step in the proof of Theorem \ref{Theorem: main2}} is a scaling argument.  For each $(y,t)\in\R^2$ with $t>0$, let
\begin{equation*}
\widetilde{D}_{(y,t)} :=\{x\in\R: (x,0)\in D_{(y,t)}\}
\end{equation*}
be the descendants of $(y,t)$ at time zero.  Let $\widetilde{m}$ be the Lebesgue measure on $\R$. For any $a\ge0$, let
\begin{equation}\label{Introduction: def T_a, fixed time}
\widetilde{\V}_{a} = \{(y,t)\in\R^2: t> 0,\widetilde{m}(\widetilde{D}_{(y,t)})> a\}
\end{equation}
be the collection of space-time pairs whose descendants at time zero have Lebesgue measure more than $a$. 
We consider the ``length" of the set of points whose descendants at time zero form an interval whose length is positive, but no larger than than $a$.  The following result, whose proof uses only the scaling property of the directed landscape, establishes that this length is proportional to $a^{1/2}$.

\begin{proposition}\label{Proposition: expectation, fixed time, 1}
There exists a constant $C_1' \in (0, \infty)$ such that for any $a > 0$ and any strip $A=\widetilde{A}\times \R_+$, where $\widetilde{A}\subseteq\R$ is measurable, we have
\begin{equation}\label{cdf2}
\E\left[l\big((\widetilde{\V}_{0}\setminus \widetilde{\V}_{a})\cap A\big)\right]=C_1'\widetilde{m}(\widetilde{A})a^{1/2},
\end{equation}
\end{proposition}

The second step in the proof of Theorem \ref{Theorem: main2} is a Poissonization argument.
We consider taking a sample from the outer boundary of the population using a family of Poisson point process $\widetilde{\Pi}=(\widetilde{\Pi}_{\lambda})_{\lambda>0}$ on $\R\times\{0\}$, where $\widetilde{\Pi}_{\lambda}$ has intensity $\lambda$.  The Poisson processes will be coupled so that $\widetilde{\Pi}_\lambda\subseteq \widetilde{\Pi}_{\lambda'}$ whenever $\lambda<\lambda'$. The following result then follows directly from the scaling in Proposition \ref{Proposition: expectation, fixed time, 1}.

\begin{proposition}\label{Proposition: expectation, fixed time, 2}
Let $C_1'$ be the constant from Proposition \ref{Proposition: expectation, fixed time, 1}.
\begin{equation}\label{sfs2}
\E\left[l(Anc_k(\widetilde{\Pi}_{\lambda})\cap A)\right]
= \frac{C_1'\lambda^{-1/2}}{2}\cdot\widetilde{m}(\widetilde{A}) \cdot \frac{\Gamma(k + 1/2)}{k!}.
\end{equation}
\end{proposition}

The third step in the proof of Theorem \ref{Theorem: main2} is to show that a strong law of large numbers also holds for $l(Anc_k(\widetilde{\Pi}_\lambda)\cap A_n)$ if we take $A_n = [0, n] \times \R^+$.  This result will be proved using Birkhoff's ergodic theorem.

\begin{proposition}\label{Proposition: lln, fixed time, points anywhere}
Let $A_n = [0,n]\times\R_+$, and let $C_1'$ be the constant Proposition \ref{Proposition: expectation, fixed time, 1}. Then, as $n\rightarrow\infty$,
$$
\frac{l(Anc_k(\widetilde{\Pi}_\lambda)\cap A_n)}{n}\rightarrow \frac{C_1'\lambda^{-1/2}}{2}\cdot \frac{\Gamma(k + 1/2)}{k!},\qquad \text{a.s. and in } L^1.
$$
\end{proposition}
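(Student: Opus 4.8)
The plan is to deduce the almost sure convergence from Birkhoff's ergodic theorem applied to the stationary sequence obtained by cutting the strip $[0,\infty)\times\R_+$ into unit-width vertical slabs, and then upgrade this to $L^1$ convergence via uniform integrability, which in turn follows from the first-moment bound already established in Proposition \ref{Proposition: expectation, fixed time}. Concretely, for $j\in\Z_{\ge 0}$ set $S_j=[j,j+1)\times\R_+$ and $X_j = l\big(Anc_k(\widetilde\Pi_\lambda)\cap S_j\big)$. The first step is to verify that $(X_j)_{j\ge 0}$ is stationary and ergodic. Stationarity is immediate from the translation invariance in the spatial direction of the pair $(\L,\widetilde\Pi_\lambda)$: the directed landscape is stationary under shifts $(x,s;y,t)\mapsto(x+c,s;y+c,t)$ (Proposition \ref{Proposition: properties of the landscape}), and the Poisson process $\widetilde\Pi_\lambda$ on $\R\times\{0\}$ is translation invariant, so the joint law is invariant under integer horizontal shifts; since $Anc_k$, the geodesic map $g_{(\cdot)}$, and $l(\cdot)$ all commute with such shifts, $(X_j)$ is a stationary sequence. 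For ergodicity, I would argue that the shift is mixing: the value of $X_j$ depends on the geodesics emanating from points in $S_j$ together with the neighboring geodesics they might coalesce with, and a finite-range/decoupling argument — geodesics started far apart in space are, with high probability, influenced by disjoint regions of the landscape up to the (random but a.s. finite) coalescence time — shows that $X_0$ and $X_m$ become asymptotically independent as $m\to\infty$. Alternatively, one can invoke that the directed landscape is mixing under spatial shifts (this is known; see \cite{dov22}) and note that the Poisson process adds an independent mixing component, so the product system is mixing, hence ergodic.

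The second step is the integrability input. By \eqref{sfs2} of Proposition \ref{Proposition: expectation, fixed time} with $\widetilde A=[0,1)$, we have $\E[X_0] = \frac{C_1'\lambda^{-1/2}}{2}\cdot\frac{\Gamma(k+1/2)}{k!}<\infty$. Since $l\big(Anc_k(\widetilde\Pi_\lambda)\cap A_n\big) = \sum_{j=0}^{n-1} X_j$ for integer $n$, Birkhoff's ergodic theorem gives
\begin{equation*}
\frac{1}{n}\sum_{j=0}^{n-1}X_j \longrightarrow \E[X_0] = \frac{C_1'\lambda^{-1/2}}{2}\cdot\frac{\Gamma(k+1/2)}{k!}\qquad\text{a.s. and in }L^1.
\end{equation*}
(The $L^1$ convergence is part of the statement of Birkhoff's theorem for an integrable stationary ergodic sequence.) For non-integer $n$, one controls the discrepancy between $l\big(Anc_k(\widetilde\Pi_\lambda)\cap A_n\big)$ and $\sum_{j=0}^{\lfloor n\rfloor -1} X_j$ by a single additional term $X_{\lfloor n\rfloor}$, which is $o(n)$ a.s. since $n^{-1}X_{\lfloor n\rfloor}\to 0$ (being the difference of two convergent Cesàro averages, or directly from $\sum X_j/j<\infty$ type reasoning); dividing by $n$ and letting $n\to\infty$ yields the claim along the reals.

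The main obstacle I anticipate is the ergodicity (equivalently, triviality of the shift-invariant $\sigma$-field), rather than the mechanical application of Birkhoff. The cleanest route is to establish mixing directly: fix bounded local functionals and show that $X_0$ (which is measurable with respect to the landscape in a spatial neighborhood of $[0,1)$, together with the coalescence structure) and $X_m$ decouple as $m\to\infty$. This requires a quantitative statement that the infinite upward geodesics started from $\widetilde\Pi_\lambda\cap S_0$ are, with probability tending to $1$, determined by the restriction of $\L$ to a region $[-R_m,R_m]\times\R_+$ with $R_m\ll m$ — i.e., control on the transversal fluctuation and lateral reach of geodesics and their coalescence times. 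Such estimates follow from the KPZ $1{:}2{:}3$ scaling together with the three-arm bound in Theorem \ref{Theorem: 3-disjoint} (to ensure coalescence happens quickly), combined with finite-speed-of-propagation-type locality for the directed landscape; one then passes from mixing to ergodicity in the standard way. If one prefers to avoid proving mixing from scratch, citing the known mixing of the directed landscape under spatial translations and noting that tensoring with the (trivially mixing) Poisson input preserves mixing gives ergodicity of the product, after which the argument is routine.
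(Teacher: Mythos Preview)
Your overall framework --- apply Birkhoff to the stationary slab sequence $X_j=l(Anc_k(\widetilde\Pi_\lambda)\cap([j,j+1)\times\R_+))$ and identify the limit as $\E[X_0]$ --- is exactly the paper's strategy. The gap is the ergodicity step, which you treat as routine but which is in fact the heart of the proof.

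Your localization sketch does not work as written. The random variable $X_0$ is \emph{not} determined by $\L$ restricted to any bounded spatial region: a point $(y,t)\in S_0$ lies in $Anc_k(\widetilde\Pi_\lambda)$ according to which infinite upward geodesics from $\R\times\{0\}$ pass through it, and infinite geodesics depend on $\L$ all the way to $+\infty$ in time (equivalently, on the Busemann function, which is not spatially local). So the claim that ``geodesics started from $\widetilde\Pi_\lambda\cap S_0$ are determined by $\L|_{[-R_m,R_m]\times\R_+}$'' is false, and even after correcting the description of what $X_0$ depends on, no spatial truncation of $\L$ suffices. The citation to \cite{dov22} for mixing of $\L$ under spatial shifts is also not something you will find there, and in any case mixing of $\L$ alone would not close the argument without a localization of $X_0$.

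The paper does not attempt to prove ergodicity. Instead it (i) first truncates in height, working with $l(Anc_k(\widetilde\Pi_\lambda)\cap([i-1,i)\times[0,T]))$, (ii) applies Birkhoff to get an a.s.\ and $L^1$ limit that is a priori random, and (iii) shows separately that this limit is deterministic by constructing, along a subsequence, an approximation by genuinely independent summands. Step (iii) is where the real work is: one uses the Busemann representation $g_{(x,0)}(T)=\argmax_y\{\L(x,0;y,T)+W(y)\}$ from Theorem~\ref{Theorem: Brownian motion}, so that the infinite geodesic up to height $T$ is a functional of $\L|_{\R\times[0,T]}$ and the Brownian motion $W$ (which has independent increments and is independent of $\L|_{\R\times[0,T]}$); then the coupling of Proposition~\ref{Proposition: coupling} replaces $\L$ on far-apart boxes by independent landscapes. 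On a high-probability event the true geodesics agree with ``box geodesics'', and a second-moment computation gives convergence in probability to a constant along a subsequence, forcing the Birkhoff limit to be deterministic. Finally Lemma~\ref{Lemma: probability fact} (together with the monotonicity in $T$) passes from the height-truncated statement to the full $X_j$. If you want to salvage your route, you would need essentially these same ingredients to manufacture the asymptotic independence you are asserting.
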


In the proof of Theorem \ref{Theorem: main2}, we will take $C_1$ to be $C_1'/2$.  In this case, Theorem~\ref{Theorem: main2} and Proposition \ref{Proposition: lln, fixed time, points anywhere} take similar forms, except for two major differences. Firstly, the sampling mechanisms are different.  Theorem \ref{Theorem: main2} uses uniform sampling of exactly $n$ points while Proposition~\ref{Proposition: lln, fixed time, points anywhere} uses Poisson sampling.
Secondly, in Theorem \ref{Theorem: main2}, we consider the points $(y,t)\in \R^2$ that are ancestral to $k$ points sampled from the interval $[0, n]$, whereas in Proposition \ref{Proposition: lln, fixed time, points anywhere}, we consider the points $(y,t)\in [0,n] \times \R^+$ that are ancestral to $k$ points sampled from somewhere in $\R$.  

To bridge the gap between Proposition \ref{Proposition: lln, fixed time, points anywhere} and Theorem~\ref{Theorem: main2}, we let
\begin{equation}\label{lambda2}
\widetilde{\lambda}_n^*:=\inf\{\lambda>0: \#(\widetilde{\Pi}_{\lambda}\cap A_n)=n\} 
\end{equation}
be the (random) smallest $\lambda$ so that we sample $n$ points from $[0,n]\times\{0\}$. Note that $\widetilde{\Pi}_{\widetilde{\lambda}_n^*}\cap A_n$ has the same distribution as $\widetilde{U}_n$ in Theorem \ref{Theorem: main2}. The next proposition controls the difference between Poisson sampling and uniform sampling. Because the coupling of the processes for different values of $n$ is less natural when we use $\widetilde{\lambda}_n^*$, we prove convergence in probability rather than almost sure convergence, which is sufficient for the proof of Theorem \ref{Theorem: main2}.

\begin{proposition}\label{Proposition: lln, fixed time, points in [0,n]}
Let $A_n=[0,n]\times\R_+$. Let $\widetilde{\lambda}_n^*$ be defined as in \eqref{lambda2}, and let $C_1'$ be the constant Proposition \ref{Proposition: expectation, fixed time, 1}. Then, as $n\rightarrow\infty$,
$$
\frac{l(Anc_k(\widetilde{\Pi}_{\widetilde{\lambda}_n^*})\cap A_n)}{n}\rightarrow \frac{C_1'}{2}\cdot \frac{\Gamma(k + 1/2)}{k!},\qquad \text{in probability}.
$$
\end{proposition}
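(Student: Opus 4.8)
The plan is to deduce convergence in probability for the sample with exactly $n$ points from the already-established almost sure convergence for Poisson sampling (Proposition \ref{Proposition: lln, fixed time, points anywhere}), using a sandwiching argument based on monotonicity of $\widetilde{\Pi}_\lambda$ in $\lambda$. First I would note that $\widetilde{\lambda}_n^* = n/n \cdot(1 + o(1))$; more precisely, since $\#(\widetilde{\Pi}_\lambda \cap A_n)$ is Poisson with mean $\lambda n$, the strong law of large numbers for the Poisson process gives $\widetilde{\lambda}_n^* \to 1$ a.s. as $n \to \infty$, and in fact $\widetilde{\lambda}_n^* = 1 + O(n^{-1/2}\log n)$ with high probability. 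So for any fixed $\eps > 0$, with probability tending to one we have $1 - \eps \le \widetilde{\lambda}_n^* \le 1 + \eps$.

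The key step is a monotonicity-type comparison. Unlike in many coalescent settings, $Anc_k$ is \emph{not} monotone in the sample: adding a point to $\widetilde{\Pi}_\lambda$ can both create and destroy branches supporting exactly $k$ leaves. To get around this, I would instead work with the monotone quantity $l\big((\widetilde{\V}_0 \setminus \widetilde{\V}_a) \cap A_n\big)$ from Proposition \ref{Proposition: expectation, fixed time}, which depends only on the landscape and not on the sample, together with the observation (to be extracted from the proof of Proposition \ref{Proposition: expectation, fixed time}) that $l(Anc_k(\widetilde{\Pi}_\lambda) \cap A_n)$ can be related to the measure of the set of space-time points whose time-zero descendant set meets $\widetilde{\Pi}_\lambda$ in exactly $k$ points. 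Conditioning on the landscape $\L$, the points of $\widetilde{\Pi}_\lambda$ are an independent Poisson process, so $\#(\widetilde{D}_{(y,t)} \cap \widetilde{\Pi}_\lambda)$ is, conditionally, Poisson with mean $\lambda \widetilde{m}(\widetilde{D}_{(y,t)})$. The sandwich then comes from the fact that on the event $\{1 - \eps \le \widetilde{\lambda}_n^* \le 1+\eps\}$, the random intensity $\widetilde{\lambda}_n^*$ lies between two deterministic values, and a Fubini/dominated-convergence argument — using that $\lambda \mapsto \E[l(Anc_k(\widetilde{\Pi}_\lambda) \cap A_n)]/n$ is continuous at $\lambda = 1$ by the explicit formula \eqref{sfs2} — lets me control $l(Anc_k(\widetilde{\Pi}_{\widetilde{\lambda}_n^*}) \cap A_n)/n$ between $l(Anc_k(\widetilde{\Pi}_{1-\eps}) \cap A_n)/n$-type and $l(Anc_k(\widetilde{\Pi}_{1+\eps}) \cap A_n)/n$-type bounds, up to an error accounting for the points near intensity $1 \pm \eps$ that switch the descendant count across $k$.

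The main obstacle is precisely the lack of monotonicity in $Anc_k$ just mentioned: I need to bound the \emph{discrepancy} $|l(Anc_k(\widetilde{\Pi}_{\lambda'}) \cap A_n) - l(Anc_k(\widetilde{\Pi}_{\lambda}) \cap A_n)|$ for $\lambda < \lambda'$ both close to $1$. The natural approach is to bound this by $\sum_j l(Anc_{k}(\cdot) \,\triangle\, Anc_k(\cdot))$ summed over the (Poisson-many) extra points added when raising the intensity from $\lambda$ to $\lambda'$; each added point can only change the descendant-count of a space-time pair $(y,t)$ if that point lies in $\widetilde{D}_{(y,t)}$, so the affected region is $Anc(\{\text{new point}\})$, whose branch-length contribution near time zero must be controlled — here is where one invokes the coalescence estimates (two geodesics starting $O(1)$ apart coalesce in time $O(1)$ in expectation, itself a consequence of Theorem \ref{Theorem: 3-disjoint} applied pairwise, or more simply a two-geodesic estimate). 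Taking expectations and using \eqref{sfs2} to see that the total expected discrepancy is $O(\eps) \cdot n$, then letting $\eps \to 0$ after $n \to \infty$, closes the argument. Finally I would assemble the pieces: convergence in probability for $\widetilde{\lambda}_n^* \in [1-\eps, 1+\eps]$, the a.s. convergence of the sandwiching Poisson quantities from Proposition \ref{Proposition: lln, fixed time, points anywhere}, and the $O(\eps)$ discrepancy bound, to conclude $l(Anc_k(\widetilde{\Pi}_{\widetilde{\lambda}_n^*}) \cap A_n)/n \to (C_1'/2)\,\Gamma(k+1/2)/k!$ in probability.
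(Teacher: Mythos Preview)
Your proposal is essentially correct and follows the same approach as the paper: show $\widetilde{\lambda}_n^*$ concentrates near $1$, bound the discrepancy $l\big((Anc_k(\widetilde{\Pi}_{1})\,\triangle\,Anc_k(\widetilde{\Pi}_{\widetilde{\lambda}_n^*}))\cap A_n\big)$ in $L^1$, and invoke Proposition~\ref{Proposition: lln, fixed time, points anywhere}. Two minor points of comparison. First, the paper takes $\eps_n=(\log n)/\sqrt{n}\to 0$ from the outset, so no second limit $\eps\to 0$ is needed; your fixed-$\eps$ sandwich works too but is slightly less clean. Second, the discrepancy bound is made precise in the paper (Lemma~\ref{Lemma: perturb the sampling intensity2}) by noting that the symmetric difference is contained in the set $B$ of $(y,t)$ with at most $k$ descendants in $\widetilde{\Pi}_{1-\eps_n}$ and at least one descendant in $\widetilde{\Pi}_{1+\eps_n}\setminus\widetilde{\Pi}_{1-\eps_n}$, and then integrating the Poisson probability $\frac{e^{-(\lambda-\eps)a}((\lambda-\eps)a)^i}{i!}(1-e^{-2\eps a})\le C\eps a\,e^{-(\lambda-\eps)a}$ against the density $\phi(a)=\tfrac{C_1'}{2}a^{-1/2}$ from \eqref{Sampling from a fixed time, density}. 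Your mention of coalescence estimates for controlling $l(Anc(\{\text{new point}\}))$ is a detour you do not need: the geodesic from a single new point has infinite length, so what you really want is the truncation by ``at most $k$ old descendants,'' and that is handled cleanly by the $\phi(a)$ integral rather than by geodesic merging times. Your final reference to \eqref{sfs2} is in the right spirit, since the Lemma~\ref{Lemma: perturb the sampling intensity2} computation is the same integral with the extra factor $2\eps a$.
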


It remains to address the second issue mentioned above. We will show that with high probability, most points that are ancestral to $k$ points sampled from $[0,n] \times \{0\}$ are in the strip $A_n$, and most points in $A_n$ that are ancestral to $k$ points sampled from $\R \times \{0\}$ are ancestral to $k$ points sampled from $[0,n] \times \{0\}$.  
The exceptions to this are essentially edge effects, whose contributions can be bounded by using results on the transversal fluctuations of geodesics.
These observations are made precise in the following proposition.

\begin{proposition}\label{Proposition: diff2}
Let $A_n = [0,n]\times \R_+$. Let $\widetilde{\lambda}_n^*$ be defined as in \eqref{lambda2}. Then, as $n\rightarrow\infty$,
$$
\frac{l(Anc_k(\widetilde{\Pi}_{\widetilde{\lambda}_n^*})\cap A_n)-l(Anc_k(\widetilde{\Pi}_{\widetilde{\lambda}_n^*}\cap A_n))}{n}\rightarrow0,\qquad \text{in probability.} 
$$
\end{proposition}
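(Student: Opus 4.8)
The plan is to bound the numerator by a sum of three types of "error" branch length, each of which we show is $o(n)$ in probability. Write $\Pi = \widetilde{\Pi}_{\widetilde{\lambda}_n^*}$, so that $\Pi \cap A_n$ has the same law as $\widetilde{U}_n$, and note that the two sets $Anc_k(\Pi)\cap A_n$ and $Anc_k(\Pi\cap A_n)$ differ only in which points of the sample a given space-time pair $(y,t)\in A_n$ is ancestral to: relative to $\Pi\cap A_n$, a point $(y,t)$ may in addition be ancestral to some points of $\Pi$ lying outside $[0,n]\times\{0\}$. Therefore $(y,t)\in Anc_k(\Pi\cap A_n)\setminus Anc_k(\Pi)$ precisely when $(y,t)$ is ancestral to exactly $k$ sampled points inside $[0,n]$ but to at least one additional sampled point outside, and $(y,t)\in Anc_k(\Pi)\setminus Anc_k(\Pi\cap A_n)$ when it is ancestral to exactly $k$ points of $\Pi$ in total, at least one of which lies outside $[0,n]$. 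In both cases $(y,t)$ is ancestral to some point of $\Pi$ whose $x$-coordinate is negative or exceeds $n$, while also being ancestral to at least one point with $x$-coordinate in $[0,n]$. Since the infinite upward geodesics from $(x,0)$ are monotone in $x$ (using leftmost geodesics and the ordering of geodesics in the directed landscape), such a $(y,t)$ must lie on, or be "between," the geodesics from the leftmost/rightmost sampled points near the boundary of $[0,n]$; quantitatively, it must be a descendant of a point within $O(1)$ of $0$ or of $n$ and also of a point at distance $\Omega(y)$ or $\Omega(n-y)$ on the other side. Coalescence of the boundary geodesics then forces these contributions to come from two thin "boundary regions."

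\smallskip

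Concretely, I would carry out the following steps. First, replace $\widetilde{\lambda}_n^*$ by a deterministic intensity: since $\widetilde{\lambda}_n^*\to 1$ in probability (the number of points of $\widetilde{\Pi}_\lambda$ in $A_n$ has mean $\lambda n$), and all the relevant branch-length functionals are monotone in the point configuration, it suffices to prove the bound for $\widetilde{\Pi}_\lambda$ with $\lambda$ ranging over a fixed small interval around $1$, and then sandwich. Second, decompose the error branch length. Using \eqref{def length}, the numerator is at most $\int_\R \#\big[(Anc_k(\Pi)\triangle Anc_k(\Pi\cap A_n))^t\big]\,dt$, and by the discussion above every point counted here is a descendant-ancestor witness for a coalescence event involving a sampled point within distance $O(1)$ of an endpoint of $[0,n]$. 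I would split this integral into the contribution near the left endpoint ($x$-coordinates of descendants in, say, $[-\log^2 n, \log^2 n]$), near the right endpoint (similarly), and a "far" contribution where the extra sampled point lies at distance $\geq \log^2 n$ outside $[0,n]$ but is still a co-descendant with a point inside $[0,n]$.

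\smallskip

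Third, control each piece. For the far piece, I invoke Theorem \ref{Theorem: 3-disjoint}: if $(y,t)$ is ancestral to a point in $[0,n]$ and also to a point at $x$-coordinate $\leq -\log^2 n$ (say), then a chain of geodesics from $O(\log^2 n)$ equally spaced starting points between them must all have coalesced by time $t$ at a common location, which (iterating the three-geodesics non-disjointness bound along the chain, as in the argument sketched in the introduction for bounding coalescence times) happens only if $t$ is at least of order $(\log^2 n)^{3/2}$ with the relevant probability decaying polynomially; integrating the resulting tail in $t$ and over the $O(n)$ possible starting configurations shows this contributes $o(n)$. Actually the cleaner route is: the far error is contained in $\{(y,t): (y,t)$ ancestral to a point of $\Pi$ at distance $\geq \log^2 n$ from $[0,n]$ and to a point in $[0,n]\}$, whose time-$t$ slice is nonempty only if two geodesics starting $\gtrsim \log^2 n$ apart have coalesced by time $t$; standard coalescence-time estimates (two geodesics starting distance $r$ apart coalesce in time $O(r^{3/2})$, with polynomial tails, as used elsewhere in the paper) give expected branch length $O(n \cdot (\log^2 n)^{3/2} \cdot (\log^2 n)^{-\delta})$ for the portion beyond the typical coalescence scale — I will need to be a little careful but the point is it is $o(n)$. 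For the two boundary pieces, the relevant branch length is that of points in $A_n$ ancestral to exactly $k$ points of $\Pi\cap[0,n]$ whose leftmost descendant is within $O(\log^2 n)$ of $0$: by translation invariance this has expectation $O(\log^2 n)$ by \eqref{sfs2} applied with $\widetilde A$ the boundary window, hence is $o(n)$, and a Markov bound upgrades this to convergence in probability.

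\smallskip

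The main obstacle is the far piece: making rigorous the claim that "being a common descendant-ancestor of a point inside $[0,n]$ and a point far outside it forces an atypically long time to coalescence" in a way that is summable after integrating over $t$ and over the roughly $n$ candidate sampled points. The cleanest implementation is probably to fix the intensity, observe that for each unit sub-interval of $[0,n]$ the expected branch length of points ancestral to a sampled point in that sub-interval \emph{and} to a sampled point outside $[-\log^2 n, n+\log^2 n]$ is controlled by the probability that the infinite upward geodesics from the two endpoints of an interval of length $\Omega(\log^2 n)$ have not yet coalesced by time $t$, together with \eqref{sfs2}-type first-moment bounds on the branch length accumulated per unit time; summing over the $n$ sub-intervals and using the polynomial-in-$\log n$ decay of the non-coalescence probability then yields the $o(n)$ bound. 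One must also handle the mild dependence between $\widetilde\lambda_n^*$ and the geodesic configuration, but the monotone sandwiching in the first step takes care of that.
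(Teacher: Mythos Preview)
Your decomposition has a structural error at the outset. You assert that the two sets differ only at points $(y,t)\in A_n$, but $Anc_k(\Pi\cap A_n)$ is the set of \emph{all} space-time points ancestral to exactly $k$ sampled points in $[0,n]$ --- including points $(y,t)$ with $y\notin[0,n]$, since geodesics from $[0,n]$ can and do wander outside the strip. This piece of the symmetric difference is never addressed in your argument, and it is not small for free: at height $t$ the geodesics have spread by order $t^{2/3}$, and there is no a priori bound on the height of $Anc_k(\Pi\cap A_n)$ short of the time at which all $n$ geodesics coalesce, which is of order $n^{3/2}$. Your ``far piece'' is about something else entirely (the event that the nearest sampled point \emph{outside} $[0,n]$ is far from the boundary, which is exponentially unlikely for a Poisson process and is not the obstacle).

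The paper handles this by proving separate upper and lower bounds rather than bounding the symmetric difference. The lower bound is a soft containment argument. For the upper bound, the crucial step (Lemma~\ref{Lemma: Height of the Anc_k2}) uses the three-arm estimate of Theorem~\ref{Theorem: 3-disjoint} to show that with high probability $Anc_k(\Pi\cap A_n)\cap Anc_k(\Pi)$ has no points above height $n$: indeed, a point at height $t$ with exactly $k$ sampled descendants (all in $[0,n]$) forces three consecutive geodesics $g_{(x_{i-1},0)},g_{(x_i,0)},g_{(x_{i+k},0)}$ to remain disjoint up to time $t$, and summing the $t^{-5/3}$ bound over the $n$ choices of $i$ gives probability $O(nt^{-5/3}\log^{52}t)$. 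Once the height is capped at $n$, transversal-fluctuation bounds confine everything to a strip only slightly wider than $A_n$. The remaining discrepancy --- points ancestral to $k$ in $[0,n]$ but more than $k$ in $\R$ --- is handled by the observation you partially anticipated (at most two such points per time slice, one straddling each endpoint), and its total length is bounded by the coalescence time of $g_{(x_0,0)}$ with $g_{(x_{k+1},0)}$, a tight random variable (Lemma~\ref{Lemma: Height of the Anc_k1}). Your invocation of \eqref{sfs2} for the boundary pieces is also misapplied: that formula computes expected branch length inside a spatial strip, not the branch length of points whose descendant interval touches a given window, which is a different set.
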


Because $\widetilde{\Pi}_{\widetilde{\lambda}_n^*}\cap A_n$ has the same distribution as $\widetilde{U}_n$, Theorem \ref{Theorem: main2} follows immediately from Propositions \ref{Proposition: lln, fixed time, points in [0,n]} and \ref{Proposition: diff2}.

\subsection{Outline of the proof of Theorem \ref{Theorem: main1}}

While the execution is more complicated, the strategy of the proof of Theorem \ref{Theorem: main1} proceeds in similar steps as Theorem \ref{Theorem: main2}.  Let $m$ be the Lebesgue measure on $\R^2$. For any $a>0$, let
\begin{equation}\label{Introduction: def T_a}
\V_{a} = \{(y,t)\in\R^2: m(D_{(y,t)})> a\},
\end{equation}
be the collection of space-time pairs whose descendants have Lebesgue measure (or volume) more than $a$.
Recall that $\V_a^t$ is the time-$t$ slice of $\V_a$.  As in the case when points are sampled along a line, the translation invariance properties of the directed landscape allow us to use a scaling argument to prove the following proposition. 

\begin{proposition}\label{Proposition: expectation, 1}
There exists a constant $C_2'\in(0,\infty)$ such that for all measurable subsets $\widetilde{A}\subseteq\R$ and all $t\in\R$, we have
\begin{equation}\label{cdf1}
\E[\#(\V_{a}^t\cap \widetilde{A})]=C_2'\cdot\widetilde{m}(\widetilde{A}) a^{-2/5}.  
\end{equation}
\end{proposition}

We next use a Poissonization argument.  We take a sample from the population according to a family of Poisson point processes $\Pi=(\Pi_{\lambda})_{\lambda>0}$ on $\R^2$, where $\Pi_{\lambda}$ has intensity $\lambda$, coupled so that $\Pi_\lambda\subseteq \Pi_{\lambda'}$ whenever $\lambda<\lambda'$.  

\begin{proposition}\label{Proposition: expectation, 2}
Let $C_2'$ be the constant from Proposition \ref{Proposition: expectation, 1}.
Then for all measurable $A\subseteq\R^2$, we have
\begin{equation}\label{sfs1}
\E[l(Anc_k(\Pi_\lambda)\cap A)] =\frac{2C_2'\lambda^{2/5}}{5}\cdot m(A) \cdot \frac{\Gamma(k - 2/5)}{k!}.
\end{equation} 
\end{proposition}

The next step is to establish a strong law of large numbers.  The result relies on a counterpart of Birkhoff's ergodic theorem involving a $\Z^2$ action since it involves the bulk of the growth cluster (both one-dimensional space and one-dimensional time). In addition to these classical ergodic theorems, bounds on the transversal fluctuations of geodesics are necessary to show that the directed landscape has sufficient mixing properties for these results to hold. A key ingredient in the application of the two-dimensional ergodic theorem is a temporal mixing property of the directed landscape, established in \cite{dov22}.

\begin{proposition}\label{Proposition: lln, points anywhere}
Let $A_n = [0,n^{2/5}]\times[0,n^{3/5}]$, and let $C_2'$ be the constant in Proposition \ref{Proposition: expectation, 1}. Then, as $n\rightarrow\infty$,
$$
\frac{l(Anc_k(\Pi_\lambda)\cap A_n)}{n}\rightarrow \frac{2C_2'\lambda^{2/5}}{5}\cdot \frac{\Gamma(k - 2/5)}{k!},\qquad \text{a.s. and in } L^1. 
$$
\end{proposition}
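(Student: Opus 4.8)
The plan is to deduce Proposition~\ref{Proposition: lln, points anywhere} from the expectation formula \eqref{sfs1} together with an ergodic argument, closely paralleling the one-dimensional case that appears in Proposition~\ref{Proposition: lln, fixed time, points anywhere} but now using a \emph{two-dimensional} ergodic theorem. The target box $A_n=[0,n^{2/5}]\times[0,n^{3/5}]$ has area $n$, so \eqref{sfs1} already gives $\E[l(Anc_k(\Pi_\lambda)\cap A_n)]/n = 2C_2'\lambda^{2/5}/5 \cdot \Gamma(k-2/5)/k!$ exactly; the content of the proposition is upgrading this to an almost sure and $L^1$ statement. First I would tile the plane by unit squares $Q_{i,j}=[i,i+1]\times[j,j+1]$ for $(i,j)\in\Z^2$ and set $X_{i,j}=l\big(Anc_k(\Pi_\lambda)\cap Q_{i,j}\big)$. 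By the translation invariance of the directed landscape (Proposition~\ref{Proposition: properties of the landscape}) together with the translation invariance of the Poisson process $\Pi_\lambda$, the field $(X_{i,j})_{(i,j)\in\Z^2}$ is stationary under the $\Z^2$-shift, and by \eqref{sfs1} with $A=Q_{0,0}$ it has finite mean $\mu:=2C_2'\lambda^{2/5}/5\cdot\Gamma(k-2/5)/k!$. Applying the Wiener/Birkhoff multiparameter ergodic theorem to averages over rectangular boxes $R_N=\{0,\dots,N_1-1\}\times\{0,\dots,N_2-1\}$ gives $|R_N|^{-1}\sum_{(i,j)\in R_N}X_{i,j}\to \E[X_{0,0}\mid \mathcal I]$ a.s.\ and in $L^1$, where $\mathcal I$ is the invariant $\sigma$-field. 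A standard mixing/tail-triviality argument for functionals of the directed landscape and an independent Poisson process shows $\mathcal I$ is trivial, so the limit is the constant $\mu$.

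The second step is to transfer this from averages over the integer-aligned grid to the specific boxes $A_n$. Write $A_n$ as an (essentially) disjoint union of the unit squares it contains plus a boundary remainder. Since $A_n$ has side lengths $n^{2/5}$ and $n^{3/5}$, the number of unit squares it fully contains is $n(1+o(1))$, while the boundary remainder is covered by $O(n^{2/5}+n^{3/5})=O(n^{3/5})=o(n)$ unit squares. For the interior part, $l(Anc_k(\Pi_\lambda)\cap A_n)$ restricted to the contained squares is exactly $\sum X_{i,j}$ over a rectangular index set of size $n(1+o(1))$, which the ergodic theorem handles after noting that shape $R_N$ with $N_1\asymp n^{2/5}$, $N_2\asymp n^{3/5}$ is a valid Følner sequence. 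For the boundary part, it suffices to show $n^{-1}\sum_{(i,j)\in B_n}X_{i,j}\to 0$ where $|B_n|=o(n)$; this follows from $\E[X_{0,0}]<\infty$ and a truncation/Cesàro argument — split $X_{i,j}=X_{i,j}\mathbf 1_{X_{i,j}\le K}+X_{i,j}\mathbf 1_{X_{i,j}>K}$, bound the first piece crudely by $K|B_n|/n\to 0$ and the second by the ergodic theorem applied to the stationary field $X_{i,j}\mathbf 1_{X_{i,j}>K}$ (whose mean can be made arbitrarily small by choosing $K$ large, by dominated convergence), using that $B_n$ is contained in a union of $O(1)$ strips each of which is again a Følner shape. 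Putting the interior and boundary estimates together yields the a.s.\ and $L^1$ convergence of $l(Anc_k(\Pi_\lambda)\cap A_n)/n$ to $\mu$.

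I expect the main obstacle to be the ergodicity (triviality of the invariant $\sigma$-field), together with the technical bookkeeping needed to make the multiparameter ergodic theorem apply along the anisotropically scaled boxes $A_n$. For the ergodicity one needs that spatial translates of the directed landscape become asymptotically independent; this should follow from a combination of the spatial independence/mixing properties of the directed landscape (which can be extracted from its construction in \cite{dov22} as a limit of last passage percolation, or from finite-speed-of-propagation-type estimates showing that $Anc_k(\Pi_\lambda)\cap Q_{0,0}$ is with high probability determined by the landscape and the Poisson points in a bounded neighborhood of $Q_{0,0}$) plus the fact that the Poisson process is itself mixing. A clean way to sidestep a full mixing analysis is to observe that $X_{0,0}$ is almost surely arbitrarily well approximated in $L^1$ by a local functional depending only on $\L$ and $\Pi_\lambda$ in a large but bounded window — using that with high probability the relevant geodesics and their coalescence times stay in such a window (this is exactly where Proposition~\ref{Proposition: expectation} and Theorem~\ref{Theorem: 3-disjoint} control the tails) — and local functionals of an independently-marked Poisson field together with a mixing base automatically satisfy a $0$--$1$ law. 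A secondary, more minor obstacle is that $X_{i,j}$ as defined counts branch length in $Q_{i,j}$ coming from the full process $\Pi_\lambda$ on all of $\R^2$, so one must check measurability and that the stationary field is well-defined; this is the two-dimensional analogue of the remarks already made after \eqref{def length} and should be routine.
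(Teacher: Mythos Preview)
Your high-level strategy — tile by unit squares, apply a multiparameter ergodic theorem, then argue the limit is the deterministic mean — matches the paper's. The paper also uses the ergodic theorem of \cite{b71} over $\Z^2$ with the anisotropic rectangles $\{1,\dots,n^2\}\times\{1,\dots,n^3\}$, and handles the passage from integer rectangles to $A_n$ by a sandwiching argument, much as you suggest.

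The genuine gap is in your ergodicity step. Your claim that $X_{0,0}=l(Anc_k(\Pi_\lambda)\cap Q_{0,0})$ is well approximated by a ``local functional'' does not follow from transversal-fluctuation or three-arm estimates: the quantity $Anc_k(\Pi_\lambda)$ is defined via \emph{infinite} upward geodesics $g_{(x,s)}$, which depend on $\L|_{\R\times(s,\infty)}$ for all future times, so no amount of control on coalescence times of \emph{infinite} geodesics makes $X_{0,0}$ measurable with respect to $\L$ and $\Pi_\lambda$ in a bounded spatio-temporal window. The paper confronts this head-on with a dedicated localization lemma (Lemma~\ref{Lemma: localization for sampling from the population}): for the height-truncated variable $l((Anc_k(\Pi_\lambda)\setminus\H_M)\cap B)$ it constructs a random variable $X$ depending only on $\L|_{\R\times[-2M,T-2M]}$ and on $\Pi_\lambda$ in a fixed strip, and shows $\P(X\neq l(R_M))$ decays like $T^{-1/3+o(1)}$. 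The mechanism is to replace each infinite geodesic by the finite geodesic to a fixed terminal point $(0,T-2M)$, and to use coalescence of \emph{finite} geodesics (Proposition~\ref{Proposition: disjoint finite geodesics}, not Theorem~\ref{Theorem: 3-disjoint}) to show the two agree on the relevant region with high probability. This is the substantive new ingredient; your proposal has no analogue of it.

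Given this localization, the paper does not prove ergodicity directly. Instead it exploits the \emph{temporal} independent-increments property of $\L$ (Proposition~\ref{Proposition: independent increment}) — not spatial mixing — to bound $\Cov(X_{i,j}\wedge N,X_{i',j'}\wedge N)$ by a quantity decaying in $|j-j'|$, sums this to a variance bound of order $n^{9+o(1)}\ll n^{10}$, and hence gets convergence in probability to $\E[X_{1,1}\wedge N]$. Combined with the ergodic theorem (which gives a.s.\ and $L^1$ convergence to some invariant limit), the limit is forced to equal the constant. The truncations in $M$ (descendant height) and $N$ (value) are then removed at the end via the Fatou-type Lemma~\ref{Lemma: probability fact}. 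Your proposal has neither the truncations nor the variance route, and the spatial-mixing route you sketch is not available in the literature for the directed landscape in the form you would need.
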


To get Theorem \ref{Theorem: main1} from Proposition \ref{Proposition: lln, points anywhere}, we will take the constant $C_2$ to be $2C_2'/5$.  As in the case when points are sampled from a line, the sampling mechanisms are different between Theorem \ref{Theorem: main1} and Proposition \ref{Proposition: lln, points anywhere}.  To show that this difference does not affect the results, we let 
\begin{equation}\label{lambda1}
\lambda_n^*:=\inf\{\lambda>0: \#(\Pi_{\lambda}\cap A_n)\}=n.
\end{equation}
Note that $\Pi_{\lambda_n^*}\cap A_n$ has the same distribution as $U_n$ in Theorem \ref{Theorem: main1}.  We then have the following result.

\begin{proposition}\label{Proposition: lln, points in the box}
Let $A_n = [0,n^{2/5}]\times[0,n^{3/5}]$. Let $\lambda_n^*$ be defined as in \eqref{lambda1} and let $C_2'$ be the constant in Proposition \ref{Proposition: expectation, 1}. Then, as $n\rightarrow\infty$,
$$
\frac{l(Anc_k(\Pi_{\lambda_n^*})\cap A_n)}{n}\rightarrow \frac{2C_2'}{5}\cdot \frac{\Gamma(k - 2/5)}{k!},\qquad \text{in probability.} 
$$
\end{proposition}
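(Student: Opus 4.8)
The plan is to derive this from Proposition~\ref{Proposition: lln, points anywhere} by showing that replacing the deterministic intensity by the random intensity $\lambda_n^*$ changes the branch length by only $o(n)$ in probability. First I would record that $\lambda_n^*\to 1$ a.s.: under the natural coupling of $(\Pi_\lambda)_{\lambda>0}$ (each potential point carries an independent uniform mark and $\Pi_\lambda$ keeps those with mark $\le\lambda$), the map $\lambda\mapsto\#(\Pi_\lambda\cap A_n)$ is a Poisson counting process of rate $m(A_n)=n$, so $\lambda_n^*$ is the time of its $n$-th jump and $\lambda_n^*\overset{d}{=}n^{-1}\sum_{i=1}^nE_i$ with $E_i$ i.i.d.\ $\mathrm{Exp}(1)$; the strong law gives $\lambda_n^*\to 1$ a.s., so $\P(\lambda_n^*\notin[1-\delta,1+\delta])\to 0$ for each fixed $\delta\in(0,1)$.

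The heart of the argument is a deterministic stability estimate for $l(Anc_k(\cdot)\cap B)$ under enlargement of the point set. Let $\Pi\subseteq\Pi'$ be point sets in $\R^2$ and $\Xi=\Pi'\setminus\Pi$. Since $D_{(y,t)}\cap\Pi'$ is the disjoint union of $D_{(y,t)}\cap\Pi$ and $D_{(y,t)}\cap\Xi$, a space--time point $(y,t)$ lies in $Anc_k(\Pi)\setminus Anc_k(\Pi')$ exactly when $\#(D_{(y,t)}\cap\Pi)=k$ and $\#(D_{(y,t)}\cap\Xi)\ge 1$, so that $Anc_k(\Pi)\setminus Anc_k(\Pi')=Anc(\Xi)\cap Anc_k(\Pi)$ and, symmetrically, $Anc_k(\Pi')\setminus Anc_k(\Pi)=Anc(\Xi)\cap Anc_k(\Pi')$. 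Because $l$ is a monotone, countably additive set function (disjoint subsets of $\R^2$ have disjoint time-slices), this gives, for any bounded measurable $B\subseteq\R^2$ on which the relevant lengths are finite,
\begin{equation*}
\bigl|l(Anc_k(\Pi')\cap B)-l(Anc_k(\Pi)\cap B)\bigr|\le l\bigl((Anc_k(\Pi)\triangle Anc_k(\Pi'))\cap B\bigr)\le 2\,l(Anc(\Xi)\cap B).
\end{equation*}
This is what replaces the (false) monotonicity of $k\mapsto l(Anc_k(\cdot))$: enlarging the sample can push a whole branch from $Anc_k$ into $Anc_{k+1}$, but all such movement is confined to the ``new ancestry'' $Anc(\Xi)$.

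Next I would bound the error term in expectation. Set $\Xi_0=\Pi_{1+\delta}\setminus\Pi_{1-\delta}$, which under the coupling is a Poisson process of intensity $2\delta$ that, jointly with $\L$, has the law of $\Pi_{2\delta}$. The space--time set of points with infinitely many $\Xi_0$-descendants has zero length a.s.\ (by \eqref{cdf1}, $\E[\#(\V_a^t\cap[0,n^{2/5}])]=C_2'n^{2/5}a^{-2/5}\to 0$ as $a\to\infty$, so $\#\{y\in[0,n^{2/5}]:m(D_{(y,t)})=\infty\}=0$ a.s.\ for each $t$, and $\#(D_{(y,t)}\cap\Xi_0)=\infty$ forces $m(D_{(y,t)})=\infty$ given $\L$). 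Hence $l(Anc(\Xi_0)\cap A_n)=\sum_{j\ge 1}l(Anc_j(\Xi_0)\cap A_n)$ a.s., and by \eqref{sfs1} (with $\lambda=2\delta$, $A=A_n$, $m(A_n)=n$),
\begin{equation*}
\E\bigl[l(Anc(\Xi_0)\cap A_n)\bigr]=\frac{2C_2'(2\delta)^{2/5}}{5}\Bigl(\sum_{j\ge 1}\frac{\Gamma(j-2/5)}{j!}\Bigr)n=:C_\star\,\delta^{2/5}\,n,
\end{equation*}
the series converging since $\Gamma(j-2/5)/j!\sim j^{-7/5}$. To conclude, fix a positive integer $k$ and $\eps>0$; on $\{1-\delta\le\lambda_n^*\le 1+\delta\}$ we have $\Pi_{1-\delta}\subseteq\Pi_{\lambda_n^*}$ with $\Pi_{\lambda_n^*}\setminus\Pi_{1-\delta}\subseteq\Xi_0$, so the stability estimate (with $\Pi=\Pi_{1-\delta}$, $\Pi'=\Pi_{\lambda_n^*}$, $B=A_n$) and monotonicity of $Anc$ and $l$ yield
\begin{equation*}
\Bigl|\tfrac1n\,l(Anc_k(\Pi_{\lambda_n^*})\cap A_n)-\tfrac1n\,l(Anc_k(\Pi_{1-\delta})\cap A_n)\Bigr|\le\tfrac2n\,l(Anc(\Xi_0)\cap A_n).
\end{equation*}
Choosing $\delta$ so small that $\bigl|(1-\delta)^{2/5}-1\bigr|\tfrac{2C_2'}{5}\tfrac{\Gamma(k-2/5)}{k!}<\eps/4$, applying Proposition~\ref{Proposition: lln, points anywhere} at intensity $1-\delta$ (a.s.\ convergence of $\tfrac1n l(Anc_k(\Pi_{1-\delta})\cap A_n)$), the Markov bound $\P(l(Anc(\Xi_0)\cap A_n)>\tfrac{\eps}{4}n)\le 4C_\star\delta^{2/5}/\eps$ (uniform in $n$), and $\P(\lambda_n^*\notin[1-\delta,1+\delta])\to 0$, a union bound gives $\limsup_n\P(|\tfrac1n l(Anc_k(\Pi_{\lambda_n^*})\cap A_n)-\tfrac{2C_2'}{5}\tfrac{\Gamma(k-2/5)}{k!}|>\eps)\le 4C_\star\delta^{2/5}/\eps$; letting $\delta\downarrow 0$ (keeping the $\eps/4$ constraint) makes the right side vanish, which is the claim.

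The main obstacle is precisely the non-monotonicity of $l(Anc_k(\cdot))$ in the point configuration, which rules out a direct sandwich between the $\lambda=1\pm\delta$ values; the resolution is the exact identification of $Anc_k(\Pi)\triangle Anc_k(\Pi')$ with $Anc(\Xi)\cap(Anc_k(\Pi)\cup Anc_k(\Pi'))$, after which everything reduces to controlling $\E[l(Anc(\Xi_0)\cap A_n)]$. That last bound is the other delicate point: it requires Proposition~\ref{Proposition: expectation} for \emph{all} $k$ simultaneously together with the summability $\sum_k\Gamma(k-2/5)/k!<\infty$, i.e.\ exactly the $k^{-7/5}$ tail appearing in the main theorems. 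Finally, since the error is only controlled through a Markov bound that is uniform in $n$ but not summable, the conclusion is convergence in probability rather than a.s.\ convergence, consistent with the statement.
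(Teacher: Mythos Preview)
Your proof is correct and follows the same overall strategy as the paper: both control the symmetric difference between $Anc_k(\Pi_{\lambda_n^*})$ and $Anc_k$ of a deterministic-intensity sample by the ancestral set of the ``difference'' points, then bound the latter in expectation via Proposition~\ref{Proposition: expectation}. The implementations differ in two places. First, the paper compares $\Pi_{\lambda_n^*}$ directly to $\Pi_1$ using a shrinking window $\eps_n=(\log n)/\sqrt{n}$, whereas you compare to $\Pi_{1-\delta}$ for a fixed $\delta$ and then send $\delta\downarrow 0$ after $n\to\infty$. Second, the paper's error set (Lemma~\ref{Lemma: perturb the sampling intensity1}) additionally requires $\#(D_{(y,t)}\cap\Pi_{1-\eps})\in\{1,\dots,k\}$; the extra factor $a^i$ with $i\ge 1$ in the integrand makes $\E[l(A_n\cap B)]$ linear in $\eps$, so $\eps_n\to 0$ already gives $o(n)$ in a single limit. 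Your cruder error set $Anc(\Xi_0)$ yields only a $\delta^{2/5}$ bound (this is forced: $\int_0^\infty a^{-7/5}(1-e^{-2\delta a})\,da\asymp\delta^{2/5}$), which is why you need the double limit. Both routes are valid; the paper's is a bit sharper, yours is a bit more transparent in isolating the key set identity $Anc_k(\Pi)\triangle Anc_k(\Pi')\subseteq Anc(\Pi'\setminus\Pi)$. One small point: your treatment of $Anc_\infty(\Xi_0)$ is slightly informal; you can sidestep it entirely by computing $\E[l(Anc(\Xi_0)\cap A_n)]$ directly from the density $\phi(a)$ via $\int_0^\infty\phi(a)(1-e^{-2\delta a})\,da\cdot m(A_n)$ rather than summing \eqref{sfs1} over $j$.
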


The other difference between Theorem \ref{Theorem: main1} and Proposition \ref{Proposition: lln, points anywhere} is that in Theorem \ref{Theorem: main1}, we consider the points $(y,t)\in \R^2$ that are ancestral to $k$ points sampled from the box $A_n$, whereas in Proposition \ref{Proposition: lln, points anywhere}, we consider the points $(y,t)\in A_n$ that are ancestral to $k$ points sampled from somewhere in $\R^2$; see Figure \ref{Figure: diff1} for an illustration.  The following proposition shows that these differences do not affect the limit behavior.

\begin{figure}[h]
\centering
\begin{tikzpicture}[scale=0.25]
\draw[draw=black] (0,0) rectangle ++(20,20);

\draw plot [smooth, tension=1] coordinates {(-2,8) (2, 13) (3.5, 17) (5,21) (4,23) (4,24)};
\draw plot [smooth, tension=1] coordinates {(2,6) (1,8) (2, 13)};
\draw plot [smooth, tension=1] coordinates {(6,10) (4,13) (3.5,17) };
\draw plot [smooth, tension=1] coordinates {(5,15) (6.5,17) (5,21)};
\draw plot [smooth, tension=1] coordinates {(2,21) (3.5,22) (4,23)};
\begin{scope} 
\clip(0,17) rectangle (20,20);
\draw [ultra thick] plot [smooth, tension=1] coordinates {(-2,8) (2, 13) (3.5, 17) (5,21) (4,23) (4,24)};
\end{scope}
\node at (-2,7) {$a$};
\node at (2,5) {$b$};
\node at (6,9) {$c$};
\node at (5,14) {$d$};
\node at (1,22) {$T_1$};

\draw plot [smooth, tension=1]  coordinates {(9,6) (12,10)  (11, 14) (14, 18)};
\draw plot [smooth, tension=1] coordinates {(11,5) (12,8) (12,10)};
\draw plot [smooth, tension=1] coordinates {(13,4) (12,6) (12,8)};
\draw plot [smooth, tension=1] coordinates {(15,10) (14,12) (12, 14) (12, 16)};
\begin{scope} 
\clip(0,10) rectangle (20,16);
\draw [ultra thick] plot [smooth, tension=1]  coordinates {(9,6) (12,10)  (11, 14) (14, 18)};
\end{scope}
\node at (9,5) {$e$};
\node at (11,4) {$f$};
\node at (13,3) {$g$};
\node at (14,14) {$T_2$};

\draw plot [smooth, tension=1] coordinates {(2,6) (1,8) (2, 13)};
\draw plot [smooth, tension=1] coordinates {(6,10) (4,13) (3.5,17) };
\draw plot [smooth, tension=1] coordinates {(5,15) (6.5,17) (5,21)};
\draw plot [smooth, tension=1] coordinates {(2,21) (3.5,22) (4,23)};
\node at (28,7) {$a$};
\node at (32,5) {$b$};
\node at (36,9) {$c$};
\node at (35,14) {$d$};
\node at (31,22) {$T_1$};
\node at (10,-2) {$Anc_k(\Pi_{\lambda_n^*})\cap A_n$};

\draw[ draw=black] (30,0) rectangle ++(20,20);
\draw plot [smooth, tension=1] coordinates {(28,8) (32, 13) (33.5, 17) (35,21) (34,23) (34,24)};
\draw plot [smooth, tension=1] coordinates {(32,6) (31,8) (32, 13)};
\draw plot [smooth, tension=1] coordinates {(36,10) (34,13) (33.5,17) };
\draw plot [smooth, tension=1] coordinates {(35,15) (36.5,17) (35,21)};
\draw plot [smooth, tension=1] coordinates {(32,21) (33.5,22) (34,23)};
\begin{scope} 
\clip(30,21) rectangle (50,23);
\draw [ultra thick] plot [smooth, tension=1] coordinates {(28,8) (32, 13) (33.5, 17) (35,21) (34,23) (34,24)};
\end{scope}
\node at (39,5) {$e$};
\node at (41,4) {$f$};
\node at (43,3) {$g$};
\node at (44,14) {$T_2$};

\draw plot [smooth, tension=1]  coordinates {(39,6) (42,10)  (41, 14) (44, 18)};
\draw plot [smooth, tension=1] coordinates {(41,5) (42,8) (42,10)};
\draw plot [smooth, tension=1] coordinates {(43,4) (42,6) (42,8)};
\draw plot [smooth, tension=1] coordinates {(45,10) (44,12) (42, 14) (42, 16)};
\begin{scope} 
\clip(30,10) rectangle (50,16);
\draw [ultra thick] plot [smooth, tension=1]  coordinates {(39,6) (42,10)  (41, 14) (44, 18)};
\end{scope}
\node at (40,-2) {$Anc_k(\Pi_{\lambda_n^*}\cap A_n)$};
\end{tikzpicture}
\caption{A comparison between $Anc_k(\Pi_{\lambda_n^*})\cap A_n$ (left plot) and $Anc_k(\Pi_{\lambda_n^*}\cap A_n)$ (right plot) with $k=3$. The thick lines represent branches that account for the length. The tree $T_1$ illustrates the case where the branch is counted in one but not the other. In the left plot, the thick portion is in the box $A_n$ and is along the ancestral lineage of ${a,b,c}$ in $\Pi_{\lambda_n^*}$. In the right plot, the thick portion is along the ancestral lineage of ${b,c,d}$ in $\Pi_{\lambda_n^*}\cap A_n$. These differences typically occur near the boundary of the box.  The tree $T_2$ illustrates the typical case where the branch length counted in one is also counted in the other.}
\label{Figure: diff1}
\end{figure}

\begin{proposition}\label{Proposition: diff1}
Let $A_n = [0,n^{2/5}]\times[0,n^{3/5}]$. Let $\lambda_n^*$ be defined as in \eqref{lambda1}. Then, as $n\rightarrow\infty$,
$$
\frac{l(Anc_k(\Pi_{\lambda_n^*})\cap A_n)-l(Anc_k(\Pi_{\lambda_n^*}\cap A_n))}{n}\rightarrow0,\qquad \text{in probability.} 
$$
\end{proposition}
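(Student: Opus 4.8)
The plan is to reduce the claim to a statement about the total length of the geodesics that enter the box $A_n$ through its lower or lateral sides, and then to bound that length via geodesic coalescence. Write $B_1:=Anc_k(\Pi_{\lambda_n^*})\cap A_n$ and $B_2:=Anc_k(\Pi_{\lambda_n^*}\cap A_n)$, so the assertion is $(l(B_1)-l(B_2))/n\to 0$ in probability. Comparing time slices, $|l(B_1)-l(B_2)|\le\int_{\R}\#(B_1^t\triangle B_2^t)\,dt=l(B_1\triangle B_2)$. On the event $\{\lambda_n^*\le 2\}$, which has probability tending to $1$, monotonicity of $Anc$ gives $B_1,B_2\subseteq Anc(\Pi_2)\cap A_n$, and since $m(A_n)=n$ and $\Gamma(j-2/5)/j!=O(j^{-7/5})$, Proposition~\ref{Proposition: expectation} yields $\E[l(Anc(\Pi_2)\cap A_n)]=\sum_{j\ge 1}\E[l(Anc_j(\Pi_2)\cap A_n)]=O(n)<\infty$; hence all the lengths above are finite with high probability and it suffices to show $l(B_1\triangle B_2)/n\to 0$ in probability.

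The first step is the geometric containment
$$
B_1\triangle B_2\ \subseteq\ \bigl(Anc(E_0)\cup Anc(E_-)\cup Anc(E_+)\bigr)\cap A_n,
$$
where $E_0=[0,n^{2/5}]\times\{0\}$, $E_-=\{0\}\times[0,n^{3/5}]$ and $E_+=\{n^{2/5}\}\times[0,n^{3/5}]$ are the bottom and two vertical edges of $A_n$. Indeed, suppose $(y,t)\in B_1\triangle B_2$. Since $\#(D_{(y,t)}\cap\Pi_{\lambda_n^*}\cap A_n)\le\#(D_{(y,t)}\cap\Pi_{\lambda_n^*})$ and these cardinalities are unequal, $(y,t)$ has a descendant $(x,s)\in\Pi_{\lambda_n^*}$ with $(x,s)\notin A_n$; because $s\le t\le n^{3/5}$, either $s<0$, or $0\le s\le n^{3/5}$ and $x\notin[0,n^{2/5}]$. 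The left-most infinite upward geodesic $g_{(x,s)}$ runs from $(x,s)\notin A_n$ to $(y,t)\in A_n$, so (discarding the Lebesgue-null event $(y,t)\in\partial A_n$) it crosses $\partial A_n$ a last time, at a point $(z,\rho)$ with $\rho<t$, which must lie on $E_0$, $E_-$ or $E_+$ — not on the top edge, which sits at time $n^{3/5}\ge t$. A routine genericity argument shows that, almost surely, for every $(x,s)\in\Pi_{\lambda_n^*}$ the geodesic $g_{(x,s)}$ beyond such a crossing point coincides with the left-most infinite upward geodesic from that point; hence $(y,t)\in Anc(\{(z,\rho)\})\subseteq Anc(E_0)\cup Anc(E_-)\cup Anc(E_+)$, while of course $(y,t)\in A_n$. (Alternatively one avoids the genericity statement by working directly with the coalescence classes at time $t$ of the family $\{g_{(x,s)}:(x,s)\in\Pi_{\lambda_n^*}\setminus A_n\}$, grouped by where they enter a $\mathrm{polylog}(n)$-neighbourhood of the box; this needs the same coalescence inputs used below.)

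The right-hand side of the display depends only on $\L$, so it remains to bound its expected length edge by edge. For the bottom edge, $\#\bigl((Anc(E_0))^t\bigr)$ is the number of coalescence classes at time $t$ of the left-most infinite upward geodesics started on $E_0$; since two such geodesics starting at spatial distance $d$ coalesce within time $O(d^{3/2})$, the coalescence-density estimates for the directed landscape (the inputs underlying Proposition~\ref{Proposition: expectation, fixed time}, which concerns horizontal segments) give $\E\bigl[\#(Anc(E_0))^t\bigr]\le C(1+n^{2/5}t^{-2/3})$ for all $t>0$, and since $t^{-2/3}$ is integrable at $0$,
$$
\E\bigl[l(Anc(E_0)\cap A_n)\bigr]\le\int_0^{n^{3/5}}C\bigl(1+n^{2/5}t^{-2/3}\bigr)\,dt=Cn^{3/5}+3Cn^{2/5}\cdot n^{1/5}=O(n^{3/5}).
$$
For a vertical edge $E=\{c\}\times[0,n^{3/5}]$, $\#\bigl((Anc(E))^t\bigr)$ counts the coalescence classes at time $t$ of $\{g_{(c,s)}:0\le s\le t\}$, which a dyadic argument bounds by $O(\mathrm{polylog}\,n)$ in expectation for every $t\le n^{3/5}$: the geodesics started on $\{c\}\times[0,t/C]$ all merge with $g_{(c,0)}$ by time $t$, because a pair $g_{(c,s_1)},g_{(c,s_2)}$ with $s_1<s_2\le t/C$ is at distance $O((s_2-s_1)^{2/3})$ at time $s_2$ and hence coalesces by time $s_2+O(s_2-s_1)\le t$, and one then recurses on $\{c\}\times[t/C,t]$; thus $\E[l(Anc(E)\cap A_n)]=O(n^{3/5}\,\mathrm{polylog}\,n)$. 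Summing the three contributions gives $\E\bigl[l\bigl((Anc(E_0)\cup Anc(E_-)\cup Anc(E_+))\cap A_n\bigr)\bigr]=O(n^{3/5}\,\mathrm{polylog}\,n)=o(n)$, and the proof is completed by Markov's inequality.

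The hard part will be the coalescence counting of the previous paragraph: establishing the $\asymp\ell\,t^{-2/3}$ bound on the number of distinct ancestral lineages emanating from a horizontal segment of length $\ell$, uniformly as $t\downarrow 0$ (where the count diverges but stays integrable), and the polylogarithmic bound for geodesics emanating from a vertical segment, which requires a quantitative coalescence-time estimate holding simultaneously over a dyadic range of scales. These are of the same flavour as the parabolic-Airy-line-ensemble computations behind Theorem~\ref{Theorem: 3-disjoint} and the geodesic-coalescence bounds in \cite{b24,rv21,dov22}, but marshalling them with the required uniformity — together with the routine genericity of the edge-crossing points — is where the real work lies. (The shape of $A_n$ is dictated by the scaling invariance of $\L$, as in the remark after Theorem~\ref{Theorem: main1}; for the present argument all that matters is that both sides of $A_n$ grow, so that $n^{2/5}\cdot(n^{3/5})^{1/3}$ and $n^{3/5}\,\mathrm{polylog}(n)$ are of smaller order than $m(A_n)=n$.)
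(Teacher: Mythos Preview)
Your containment $B_1\triangle B_2\subseteq\bigl(Anc(E_0)\cup Anc(E_-)\cup Anc(E_+)\bigr)\cap A_n$ is false, and the error is structural rather than cosmetic: $B_2=Anc_k(\Pi_{\lambda_n^*}\cap A_n)$ is \emph{not} contained in $A_n$. A point $(y,t)$ with $t>n^{3/5}$ (or with $t\le n^{3/5}$ but $y\notin[0,n^{2/5}]$) can be ancestral to exactly $k$ sampled points of $A_n$ while all of its sampled descendants lie in $A_n$; then the two cardinalities $\#(D_{(y,t)}\cap\Pi_{\lambda_n^*})$ and $\#(D_{(y,t)}\cap\Pi_{\lambda_n^*}\cap A_n)$ coincide, your key sentence ``since these cardinalities are unequal'' fails, and $(y,t)\in B_2\setminus B_1$ simply because $(y,t)\notin A_n$. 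The same oversight appears earlier when you assert $B_2\subseteq Anc(\Pi_2)\cap A_n$. The set $F_{k,n}:=B_2\setminus A_n$ is precisely the piece the paper singles out and treats at length (Section~\ref{Subsubsection: comparison1}, Lemma~\ref{Lemma:  branches outside the box}): one truncates at height $T_n=n^{3/5}\log\log n$, uses a coalescence argument to show $F_{k,n}$ is empty above $T_n$ with high probability, and then does a region-by-region first-moment bound on $A_n$ that invokes the three-arm estimate of Theorem~\ref{Theorem: 3-disjoint}. None of this is captured by your edge-counting, which by construction only sees points inside $A_n$.

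For the portion you do address, $(B_1\triangle B_2)\cap A_n$, your route differs from the paper's and is conceptually appealing: the paper bounds a superset $G_{k,n}$ by the tall/wide dichotomy for $D_{(y,t)}$ (Lemmas~\ref{Lemma: tall bubbles} and~\ref{Lemma: wide bubbles}), whereas you note directly that any such $(y,t)$ lies in $Anc(E_0\cup E_-\cup E_+)$; Lemma~\ref{Lemma: semigroup} already gives the ``genericity'' you flag as needing justification. Your horizontal-edge bound is essentially Propositions~\ref{Proposition: number of intersection points} and~\ref{Proposition: 2-disjoint} and is fine. The vertical-edge claim, however, is not: you assert $\E\bigl[\#(Anc(\{c\}\times[0,t]))^t\bigr]=O(\mathrm{polylog}\,n)$, but by KPZ scaling the law of the number of coalescence classes at time $t$ of $\{g_{(c,s)}:0\le s\le t\}$ does not depend on $t$, and a dyadic look near $s=t$ shows that for each scale $2^{-j}$ the geodesics $g_{(c,\,t-2^{-j})}$ and $g_{(c,\,t-2^{-j-1})}$ fail to coalesce by time $t$ with probability bounded away from $0$ uniformly in $j$. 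Your recursion thus does not terminate with $O(\log t)$ classes in expectation; making this step work, if it can be made to work at all, would require a genuinely different argument.
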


Because $\Pi_{\lambda_n^*}\cap A_n$ has the same distribution as $U_n$, Theorem \ref{Theorem: main1} follows immediately from Propositions \ref{Proposition: lln, points in the box} and \ref{Proposition: diff1}.\\

\subsection{Outline of the proof of Theorem \ref{Theorem: 3-disjoint}}

By rescaling it suffices to prove that for all $\varepsilon \le 1/2,$
\begin{equation}\label{scaled123}
\P(g_{(-\varepsilon,0)}, g_{(0,0)}, \text{ and } g_{(\varepsilon,0)} \text{ are disjoint for all } s\in[0,1])\le C\varepsilon^{5/2}\log^{52} (1/\eps).
\end{equation}

While a detailed proof idea is presented in Section \ref{Section: 3-arm} where all the relevant objects are introduced, for now let us simply mention that by surgery arguments involving geodesic watermelons (sets of disjoint paths cumulatively maximizing their energy), the above event via the parabolic Airy line ensemble (a central object in the KPZ universality class) can be approximated by essentially five independent events. Three of them are of the form that a Brownian excursion on a unit order interval comes within $O(\sqrt {\eps})$ of $0$ in the bulk of the interval which happens with probability $O(\sqrt {\eps}).$ These correspond to the initial part of the three geodesics (up to height $1$) being disjoint. The remaining events ensure that these finite geodesics are subsegments of semi-infinite geodesics. The latter translates to the maximum of three independent Brownian motions on $[0,1]$ falling within $O(\sqrt {\eps})$ of each other. If all the events were independent this would yield an $O(\eps^{5/2})$ upper bound. Thus, much of the proof, which turns out to be quite complicated and involves studying the random curves at progressively finer scales, is devoted towards proving this approximate independence. 

\section{Results for the directed landscape}\label{Section: Preliminary}

We collect here some facts about the directed landscape that we will repeatedly rely on.  We begin by recording some properties of the directed landscape which can be found in \cite{dov22}.

\begin{proposition}[\cite{dov22}, Lemma 10.2]\label{Proposition: properties of the landscape}
The directed landscape admits the following symmetries. Here $r,c\in\R$ and $\rho>0$, and $\stackrel{d}{=}$ refers to equality in distribution when the two sides are viewed as random functions in $C(\R_\uparrow^4, \R)$. 
\begin{enumerate}
\item (Time stationarity)
$$
\L(x,s;y,t)\stackrel{d}{=}\L(x,s+r;y,t+r).
$$   
\item (Spatial stationarity)
$$
\L(x,s;y,t)\stackrel{d}{=}\L(x+c,s;y+c,t).
$$
\item (Rescaling)
$$
\L(x,s;y,t)\stackrel{d}{=} \rho \L(\rho^{-2} x, \rho^{-3}s; \rho^{-2}y, \rho^{-3} t).
$$
\item (Skew stationarity/shear invariance)
$$
\L(x,s;y,t)\stackrel{d}{=}\L(x+cs,s;y+ct,t)+\frac{(x-y)^2 - (x-y-(t-s)c)^2}{t-s}.
$$
\end{enumerate}
\end{proposition}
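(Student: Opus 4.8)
The plan is to derive all four symmetries from the characterization of $\L$ established in \cite{dov22}: $\L$ is the unique (in distribution) continuous random function on $\R^4_{\uparrow}$ whose marginals over disjoint time intervals are independent Airy sheets of the appropriate scale and which satisfies the metric composition property \eqref{metric comp}. For each of the four transformations $T$ in the statement I will argue that $T\L$ --- meaning, in the skew case, the right-hand side with the deterministic quadratic term subtracted --- again enjoys these two defining properties, so that uniqueness forces $T\L\stackrel{d}{=}\L$. One can equivalently argue at the level of the rescaled Brownian last passage percolation fields $\L_n$ whose limit is $\L$, using that each $T$ is a continuous self-map of $C(\R^4_{\uparrow},\R)$ and invoking the continuous mapping theorem; I will draw on this prelimit picture for intuition.

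Time and spatial stationarity are essentially immediate. The time shift $(s,t)\mapsto(s+r,t+r)$ permutes the collection of disjoint time intervals among themselves and commutes with metric composition, so both defining properties are preserved; at the prelimit level this is just stationarity of the i.i.d.\ Brownian environment under shifting its integer index. The spatial shift $(x,y)\mapsto(x+c,y+c)$ acts on each Airy-sheet marginal by translating both of its arguments by $c$, under which the Airy sheet is stationary (inherited from translation invariance of the increments of the driving Brownian motions), and it manifestly commutes with metric composition.

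For the rescaling, consider $\L':=\rho\,\L(\rho^{-2}\cdot,\rho^{-3}\cdot\,;\,\rho^{-2}\cdot,\rho^{-3}\cdot)$. Disjoint time intervals are carried to disjoint time intervals, so $\L'$ still has independent marginals over them; the max in \eqref{metric comp} is over a spatial variable that rescales by $\rho^{-2}$ while both sides scale by $\rho$, so metric composition is preserved; and the Airy-sheet marginal over a given time gap is preserved because the Airy sheet obeys the $1\!:\!2\!:\!3$ scaling relation, which at the prelimit level is nothing but Brownian scaling $B(\rho^2 u)\stackrel{d}{=}\rho B(u)$ combined with the definition of $\L_n$. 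Hence $\L'$ satisfies the axioms and $\L'\stackrel{d}{=}\L$.

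Skew stationarity is the step with real content, and I expect it to be the main obstacle. At the prelimit level the shear $x\mapsto x+cs$, $y\mapsto y+ct$ corresponds to adding a common linear drift to the Brownian environment; completing the square for a drifted Brownian path converts the sheared last passage problem into an undrifted one plus a deterministic term, which after the KPZ rescaling is exactly $\frac{(x-y)^2-(x-y-(t-s)c)^2}{t-s}=2c(x-y)-c^2(t-s)$, and what remains has the law of $\L$. In the characterization language, one checks that the sheared-and-corrected field still has independent Airy-sheet marginals (clear) and still satisfies metric composition (a short algebraic check, since the correction term is additive along the composition $r<s<t$); the single genuinely nontrivial ingredient is that each Airy-sheet marginal is invariant under the shear together with the parabolic correction --- i.e.\ skew-invariance of the Airy sheet --- which reduces to stationarity of the top curve of the parabolic Airy line ensemble after adding back its parabola. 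Granting this known input, uniqueness completes the proof; the delicate bookkeeping is keeping the quadratic correction consistent through the $1\!:\!2\!:\!3$ rescaling.
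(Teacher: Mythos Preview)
The paper does not prove this proposition at all: it is quoted verbatim as Lemma 10.2 of \cite{dov22} and simply recorded for later use. Your sketch is consistent with how these symmetries are established in the original reference (via the characterization of $\L$ by independent Airy-sheet increments plus metric composition, or equivalently by passing the transformations through the Brownian LPP prelimit), but since the paper itself offers no argument, there is nothing to compare against here.
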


\begin{proposition}[\cite{dov22}, Definition 10.1]\label{Proposition: independent increment}
For any disjoint time intervals $\{(s_i, t_i):i\in\{1,\dots, k\}\}$, the random functions 
$$
\L(\cdot,s_i,\cdot,t_i), \qquad i\in\{1,\dots,k\}
$$
are independent.
\end{proposition}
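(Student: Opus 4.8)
This assertion is one of the defining axioms of the directed landscape in \cite{dov22}: there, $\L$ is characterized (in distribution) as the unique random continuous function on $\R^4_{\uparrow}$ whose one-time-increment marginals $\L(\cdot,t;\cdot,t+s)$ are prescribed scaled Airy sheets and which has independent increments over disjoint time intervals, the latter being exactly the statement here. The cleanest route is therefore to quote the relevant definition and construction theorem from \cite{dov22}. To give the statement genuine content I would instead verify it directly from the Brownian last passage percolation prelimit used to construct $\L$ in \cite{dov22}.

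The plan is as follows. Recall that $\L$ arises as the scaling limit of rescaled last passage values $\L_n$ in a field of i.i.d.\ Brownian motions, under the KPZ $1:2:3$ scaling in which the \emph{time} coordinate of the landscape corresponds, after recentering and rescaling, to the ``row/staircase'' direction of the last passage model. The key structural fact, already visible at the prelimit level, is finite-range-in-time dependence: $\L_n(x,s;y,t)$ is a measurable functional of the restriction of the Brownian environment to the (rescaled) time slab associated with $[s,t]$, and for temporally disjoint windows these slabs involve disjoint portions of the i.i.d.\ environment.

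Hence, fixing disjoint time intervals $(s_i,t_i)$ for $i=1,\dots,k$ (disjoint open intervals; shared endpoints would be harmless, a single time level carrying no information), for every $n$ the random functions $\L_n(\cdot,s_i;\cdot,t_i)$, $i=1,\dots,k$, are measurable with respect to disjoint portions of the i.i.d.\ Brownian environment and so are mutually independent. I would then pass to the limit along the joint convergence $\L_n\to\L$ established in \cite{dov22} — uniform convergence on compact subsets of $\R^4_{\uparrow}$, which in particular gives joint convergence of the $k$ marginal processes $\L_n(\cdot,s_i;\cdot,t_i)$ — and invoke the fact that independence of a finite tuple is preserved under weak convergence: the joint law factors as a product in the prelimit, and a product of $k$ probability measures converges weakly to the product of the weak limits. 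This yields that $\L(\cdot,s_i;\cdot,t_i)$, $i=1,\dots,k$, are independent. The only point requiring care is the prelimit bookkeeping that identifies precisely which block of the Brownian environment feeds into $\L_n(\cdot,s_i;\cdot,t_i)$; once this finite-range-in-time structure is pinned down there is no real obstacle, consistent with the fact that in \cite{dov22} this property is recorded as part of the definition rather than proved.
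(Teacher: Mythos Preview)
Your proposal is correct and matches the paper's treatment: the paper provides no proof at all, simply citing Definition 10.1 of \cite{dov22}, since independent increments over disjoint time intervals is one of the defining axioms of the directed landscape rather than a derived property. Your additional sketch of how to verify this from the Brownian LPP prelimit is reasonable and essentially reproduces the logic behind the construction in \cite{dov22}, but it is supplementary content the paper does not attempt to include.
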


We next quote the following two estimates. The first one is a global estimate and the next one is a modulus of continuity estimate. While pre-limiting versions can be found in \cite{gh23}, we will quote results from \cite{dov22} which are directly for the landscape.
First, let $\mathcal{K}$ be the parabolically adjusted version of the directed landscape. That is 
$$\mathcal{K}(x,s;y,t)=\mathcal{L}(x,s;y,t)+ \frac{(x-y)^2}{t-s}.$$
In what follows, $\|\cdot\|$ denotes the Euclidean norm. 

\begin{theorem}[\cite{dov22}, Corollary 10.7]\label{hold432} There is a random number $R$ and constants $c$ and $d$ such that $\P(R>m)\le c \exp(-d m^{3/2})$, and the following holds. For any $u=(x,s;y,t+s) \in \R^4_{\uparrow}$, 
\begin{align} \label{hold12}
|\mathcal{K}(x,s;y,t+s)| \le R t^{1/3} \log^{4/3}\left(\frac{2(\|u\|+2)}{t}\right) \log^{2/3}(\|u\|+2).
\end{align}
\end{theorem}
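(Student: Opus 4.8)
\emph{The plan.} This is the global bound on the parabolically adjusted landscape $\mathcal{K}$, and the natural route is a chaining argument: combine a sharp \emph{one-point} tail bound for $\mathcal{K}$, coming from the exact symmetries of Proposition~\ref{Proposition: properties of the landscape} together with the Tracy--Widom law of the one-dimensional marginal of $\mathcal{L}$, with a quantitative \emph{modulus of continuity} for $\mathcal{K}$ that remains effective up to the diagonal $s=t$, and then union-bound over a dyadic exhaustion of $\R^4_\uparrow$. A Borel--Cantelli step then produces a single a.s.\ finite random $R$ with $\P(R>m)\le c\exp(-dm^{3/2})$.

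\emph{Step 1: the one-point estimate.} Fix $u=(x,s;y,t+s)$, so the relevant duration is $t$. Using time stationarity to reduce to $s=0$, then skew stationarity to align the two spatial endpoints (the shear is precisely the operation that turns $\mathcal{L}$ into $\mathcal{K}$, so $\mathcal{K}$ is invariant under it), and finally the rescaling property with $\rho=t^{1/3}$, one gets $\mathcal{K}(x,s;y,t+s)\stackrel{d}{=}t^{1/3}\,\mathcal{L}(0,0;0,1)$. The marginal $\mathcal{L}(0,0;0,1)$ is, up to normalization, a GUE Tracy--Widom variable (an earlier input in the construction of $\mathcal{L}$ in \cite{dov22}), so $\P(|\mathcal{L}(0,0;0,1)|\ge\lambda)\le C\exp(-c\lambda^{3/2})$ --- the upper tail, with exponent $3/2$, is the binding one, the lower tail being much lighter. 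Hence $\P(|\mathcal{K}(u)|\ge\lambda t^{1/3})\le C\exp(-c\lambda^{3/2})$ for every $u$, and it is exactly this $3/2$ that reappears in the tail of $R$.

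\emph{Step 2: continuity, including near the diagonal.} Next I would invoke the Hölder regularity of $\mathcal{L}$ obtained in its construction: on bounded regions $\mathcal{K}$ is Hölder of any exponent $<1/3$ in the time coordinates and of any exponent $<1/2$ in the space coordinates, with a stretched-exponential tail on the Hölder constant. The delicate regime is $t\downarrow 0$, where both the parabola $\tfrac{(x-y)^2}{t}$ and the modulus degenerate; there I would control $\mathcal{K}$ at a short duration by composing through time, i.e.\ using the metric composition property \eqref{metric comp} to write $\mathcal{L}(\cdot,s;\cdot,s+t)$ as a difference of landscape values over longer, well-controlled time spans (for which Steps~1--2 already apply). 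This yields a modulus of continuity uniform down to the diagonal, at the price of a logarithmic factor in $\|u\|/t$.

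\emph{Step 3: chaining, and the main obstacle.} Decompose $\R^4_\uparrow$ into boxes respecting KPZ scaling --- a box at scale $2^k$ has temporal extent $\asymp 2^k$ and spatial extent $\asymp 2^{2k/3}$, and for $\|u\|\asymp 2^\ell$ there are only polynomially-in-$2^{\ell-k}$ many of them. On each box, place a net fine enough that Step~2 bounds the oscillation of $\mathcal{K}$ off the net by half the target, then union-bound the Step~1 estimate over the polynomially many net points with $\lambda$ of order $(\log(\text{entropy}))^{2/3}$. Inverting $\exp(-c\lambda^{3/2})$ against the entropy forces $\lambda\asymp(\log(\|u\|+2))^{2/3}$ from the location/net count and, together with the $\asymp\log(\|u\|/t)$ dyadic scales that must be chained and summed in the near-diagonal regime, produces $\asymp\log^{4/3}\!\big(\tfrac{2(\|u\|+2)}{t}\big)$ overall; choosing the mesh and the union-bound slack so that the total probability that some box needs constant $>m$ is $\le c\exp(-dm^{3/2})$, and taking $R$ to be the supremum over all boxes of the minimal admissible constant, finishes the proof. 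The crux throughout is Step~2 --- making the continuity estimate genuinely uniform as $t\downarrow 0$ via metric composition --- together with tracking constants through the chaining precisely enough to land on the sharp powers $\log^{4/3}$ and $\log^{2/3}$ rather than weaker ones that a crude union bound would give.
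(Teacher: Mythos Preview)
The paper does not prove this statement at all; it is quoted verbatim as \cite[Corollary 10.7]{dov22}, so there is no in-paper proof to compare against. Your outline --- one-point Tracy--Widom tail via the symmetries of Proposition~\ref{Proposition: properties of the landscape}, a modulus of continuity input (essentially Theorem~\ref{modcont}, which is \cite[Proposition 10.5]{dov22}), and a dyadic chaining/union-bound over KPZ-scaled boxes to produce a single random $R$ with stretched-exponential tail --- is indeed the structure of the argument in \cite{dov22}, and the identification of the near-diagonal regime $t\downarrow 0$ as the delicate point is correct.

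One remark on your Step~2: the suggestion to control short-duration values of $\mathcal{K}$ by ``composing through time'' via \eqref{metric comp} does not directly work as stated, since metric composition gives $\mathcal{L}(x,s;y,t)=\max_z(\mathcal{L}(x,s;z,r)+\mathcal{L}(z,r;y,t))$ only for $r$ \emph{between} $s$ and $t$, not outside --- you cannot write a short-time landscape value as a difference of longer ones this way. In \cite{dov22} the near-diagonal control comes instead from the fact that the construction of $\mathcal{L}$ as a limit of Brownian last passage percolation already supplies uniform H\"older bounds down to the diagonal (their Lemma~10.4 and Proposition~10.5), which are then fed into the chaining. So the mechanism you invoke for the crux is not quite right, though the overall architecture is.
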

For the next result, we continue to follow the notation from \cite{dov22}.  
Let $$K_b^{\eps}=[-b, b]^4\cap \{(x,s;y,t)\in \mathbb{R}^{4}_{\uparrow}: t-s \ge \varepsilon\}.$$
Given $u_1=(x_1,s_1;y_1,t_1)$ and $u_2=(x_2,s_2;y_2,t_2)$ in $\R^4_{\uparrow}$, let $\xi = \xi(u_1,u_2)=\|(x_1,y_1)-(x_2,y_2)\|$ and $\tau = \tau(u_1,u_2)=\|(s_1,t_1)-(s_2,t_2)\|$.  

\begin{theorem}[\cite{dov22}, Proposition 10.5]\label{modcont} Fix $b\ge 2$ and $\varepsilon \le 1.$ There exists a random number $R$ and constants $c$ and $d$ such that $\P(R\ge m) \le c b^{10} \varepsilon^{-6} e^{-d m^{3/2}},$ and the following holds.  For all $u_1, u_2 \in K_b^{\eps}$ for which $\tau \leq \eps^3/b^3$, we have
\begin{equation}\label{unifbnd345}
|\mathcal{K}(u_1)-\mathcal{K}(u_2)|\le R[\tau^{1/3} \log^{2/3} (\tau^{-1})+\xi^{1/2}\log^{1/2}(4b\xi^{-1})].
\end{equation}
\end{theorem}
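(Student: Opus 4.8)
The plan is to first establish a \emph{two-point} version of the bound --- a tail estimate for $|\mathcal{K}(u_1)-\mathcal{K}(u_2)|$ for a single fixed pair, with exponent $1/2$ in the spatial displacement and $1/3$ in the temporal displacement --- and then to upgrade it to the uniform statement over $K_b^{\eps}$ by a Kolmogorov--Chentsov chaining argument over dyadic grids in $\R^4_{\uparrow}$; the logarithmic factors and the polynomial-in-$(b,\eps)$ prefactor in the tail of $R$ are the standard price of the chaining. Note that the parabolically adjusted landscape $\mathcal{K}$, rather than $\mathcal{L}$ itself, is essential here: the term $(x-y)^2/(t-s)$ in $\mathcal{L}$ blows up near $s=t$ and is not Hölder, so it must be subtracted off, and it must be tracked at each step below.

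First I would split an increment into a spatial move and a temporal move. Writing $u_1=(x_1,s_1;y_1,t_1)$ and $u_2=(x_2,s_2;y_2,t_2)$, insert the point $(x_2,s_1;y_2,t_1)\in K_b^{\eps}$ and bound $|\mathcal{K}(u_1)-\mathcal{K}(u_2)|$ by the increment from $u_1$ to $(x_2,s_1;y_2,t_1)$ (only the spatial endpoints move) plus the increment from $(x_2,s_1;y_2,t_1)$ to $u_2$ (only the time endpoints move). For the spatial move, the rescaling symmetry of Proposition~\ref{Proposition: properties of the landscape}(3) normalizes the time gap to $1$, so $(x,y)\mapsto\mathcal{K}(x,s;y,s+1)$ becomes a spatial translate of the Airy sheet. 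I would then invoke the known Brownian comparison for the Airy sheet --- on a compact box its restriction to a horizontal or vertical line is absolutely continuous with respect to Brownian motion with a controlled Radon--Nikodym derivative --- which, combined with the Gaussian modulus of continuity of Brownian motion, yields $\P(|\Delta_{\mathrm{sp}}\mathcal{K}|>m\,\xi^{1/2})\le Ce^{-cm^{2}}$, with $C,c$ depending polynomially on $b$ and $1/\eps$ (the $\eps$-dependence entering because rescaling enlarges the box by a factor $\eps^{-2/3}$).

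The hard part will be the temporal move, where there is no Brownian comparison and one must use the metric composition property \eqref{metric comp} directly. Taking $s_1<s_2$ (the $t$-coordinate is handled symmetrically) and noting that the hypothesis $\tau\le\eps^{3}/b^{3}$ forces the time shift to be tiny compared with $t-s\ge\eps$, split $[s_1,t]$ at $s_2$, so that $\mathcal{L}(x,s_1;y,t)=\max_{z}\big(\mathcal{L}(x,s_1;z,s_2)+\mathcal{L}(z,s_2;y,t)\big)$. Using the rescaling symmetry and the one-point upper and lower tail bounds for $\mathcal{L}$, one first confines the maximizer $z$ to a bounded region and then, via a transversal-fluctuation argument, shows $|z-x|=O(\tau^{2/3})$ (the parabola $-(x-z)^2/(s_2-s_1)$ has width of order $\tau^{2/3}$ once the fluctuations are of order $\tau^{1/3}$), that $\mathcal{L}(x,s_1;z,s_2)=-(x-z)^2/(s_2-s_1)+O(\tau^{1/3})$ with a $3/2$-stretched-exponential tail on the error, and that replacing $z$ by $x$ inside $\mathcal{L}(\cdot,s_2;y,t)$ costs only $O(\tau^{1/3})$ by the spatial estimate just obtained. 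Since $|x-z|=O(\tau^{2/3})$ forces $(x-z)^2/(s_2-s_1)=O(\tau^{1/3})$, and the change of the explicit parabola $(x-y)^2/(t-s)$ under the time shift is only $O(\tau)$, the whole temporal increment of $\mathcal{K}$ is $O(\tau^{1/3})$, giving $\P(|\Delta_{\mathrm{temp}}\mathcal{K}|>m\,\tau^{1/3})\le Ce^{-cm^{3/2}}$. Making the maximizer localization and the parabola approximation quantitative and uniform in the base point, with explicit dependence on $b$ and $\eps$ --- and in particular getting the exponent $1/3$ rather than something weaker --- is the technical core of the argument.

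Finally I would run the chaining. Take nested dyadic grids in $K_b^{\eps}$ with time mesh $2^{-3j}$ and spatial mesh $2^{-2j}$ at scale $j$, so that adjacent grid points contribute comparable increments of order $2^{-j}$ under the two Hölder exponents, the coarsest scale being set by $\eps^{3}/b^{3}$. Apply the two-point bounds obtained above to every adjacent pair at every scale, union bound over the grid --- whose cardinality at the relevant scales is polynomial in $b$ and $1/\eps$, the precise exponents $10$ and $6$ being a matter of bookkeeping --- sum the stretched-exponential probabilities, and invoke Borel--Cantelli to produce a single random $R$ with $\P(R\ge m)\le c\,b^{10}\eps^{-6}e^{-dm^{3/2}}$, the exponent $3/2$ surviving as the slower-decaying of the spatial and temporal tails. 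Telescoping the increments of $\mathcal{K}$ along the grid path from $u_1$ to $u_2$ then gives $|\mathcal{K}(u_1)-\mathcal{K}(u_2)|\le R\big[\tau^{1/3}\log^{2/3}(\tau^{-1})+\xi^{1/2}\log^{1/2}(4b\xi^{-1})\big]$, where the factor $\log^{2/3}(\tau^{-1})$ comes from bounding $\sum_{j}(\log N_j)^{2/3}2^{-j}$ over the $\asymp\log(1/\tau)$ temporal scales (with $N_j$ the grid cardinality, using $\log(1/\tau)\gtrsim\log(b/\eps)$), and $\log^{1/2}(4b\xi^{-1})$ comes from the analogous sum over spatial scales, the Gaussian tail there producing the exponent $1/2$ in place of $2/3$.
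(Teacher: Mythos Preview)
The paper does not prove this theorem; it is quoted verbatim as Proposition~10.5 of \cite{dov22} and used as a black box. There is therefore no ``paper's own proof'' to compare against.

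Your sketch is, in broad strokes, a faithful outline of the argument actually given in \cite{dov22}: a two-point tail bound (their Lemma~10.4) combined with a dyadic chaining (their Lemma~10.3, a Kolmogorov--Chentsov-type lemma adapted to stretched-exponential tails) over $K_b^{\eps}$. The split into spatial and temporal increments, the use of metric composition for the temporal part, and the origin of the logarithmic corrections from the chaining sum are all correct. One point of divergence: for the spatial increment you invoke Brownian absolute continuity of the Airy sheet with a controlled Radon--Nikodym derivative, whereas \cite{dov22} instead derives the two-point spatial tail directly from moment bounds on the Airy sheet (coming in turn from last-passage representations and determinantal formulas). Your route would work in principle, but the Radon--Nikodym control available at that time was not uniform in the endpoints in the clean way you need, so the direct tail route is both what was used and what is easiest to make rigorous. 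Otherwise the proposal is a sound reconstruction of the original proof.
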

The next result is a spatial de-correlation result. For any $A\subseteq\R^2$, by a slight abuse of notation, we write $\L|_A$ for the restriction of $\L$ to $\R_{\uparrow}^4\cap(A\times A)$, so that we consider the values of $\L(x,s;y,t)$ when $s<t$ and $(x,s),(y,t)\in A$.

\begin{proposition}[\cite{d24n}, Proposition 2.6]\label{Proposition: coupling}
Fix $n \in \N$. For every $i = 1,\dots,n$ and $\eps > 0$, let $K_{i,\eps} =[i-1/4,i+1/4]\times[0,\eps]$.
We can couple $n+1$ copies of the directed landscape $\L_0,\L_1,\dots,\L_n$ so that $\L_1,...,\L_n$ are independent and almost surely for all small enough $\eps >0$ and $i = 1,\dots,n$ we have $\L_0|_{K_{i,\eps}} =\L_i|_{K_{i,\eps}}$.  
\end{proposition}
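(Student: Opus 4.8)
The plan is to reduce the claim, by a monotonicity observation, to producing a single coupling under which $\L_0$ agrees with $\L_i$ on $K_{i,\eps}$ with probability tending to $1$ as $\eps\downarrow 0$, and then to build that coupling as a weak limit of couplings constructed at the discrete level, exploiting the finite range of dependence of the last passage models converging to the directed landscape. For the reduction, note that $K_{i,\eps}\subseteq K_{i,\eps'}$ whenever $\eps\le\eps'$, so the event $E_\eps:=\bigcap_{i=1}^{n}\{\L_0|_{K_{i,\eps}}=\L_i|_{K_{i,\eps}}\}$ satisfies $E_{\eps'}\subseteq E_\eps$ for $\eps\le\eps'$. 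Consequently $\{E_\eps \text{ holds for all small enough }\eps\}=\bigcup_{m\ge 1}E_{1/m}$, and since $E_{1/m}$ increases with $m$, this event has probability one precisely when $\P(E_{1/m})\to 1$. It therefore suffices to exhibit a single coupling of directed landscapes $\L_0,\L_1,\dots,\L_n$ with $\L_1,\dots,\L_n$ independent and with $\P(E_\eps^c)\to 0$ as $\eps\to 0$.

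To construct such a coupling I would work with rescaled Brownian last passage percolation $\L^{(N)}$, which converges to $\L$ as $N\to\infty$ by \cite{dov22}, together with the elementary fact that a last passage value along paths confined to a bounded space-time window is a function of the driving Brownian increments over that window alone, so that last passage restricted to disjoint windows is \emph{exactly} independent. Let $R_i:=[i-\tfrac12,i+\tfrac12]\times[0,1]$; these boxes are disjoint up to boundaries and, in the $N$th model, correspond to windows $W_i^{(N)}$. Take $\L_1^{(N)},\dots,\L_n^{(N)}$ built from independent copies of the Brownian environment, and build $\L_0^{(N)}$ from the environment that copies, for each $i$, the increments of the $i$th environment over $W_i^{(N)}$ and uses fresh independent increments elsewhere; since last passage depends only on increments and the $W_i^{(N)}$ are spatially disjoint, this is again a genuine Brownian environment, and $\L_0^{(N)}$ and $\L_i^{(N)}$ assign the same value to every path confined to $W_i^{(N)}$.

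The key quantitative input is a short-time localization estimate: by prelimit versions of Theorems~\ref{hold432} and~\ref{modcont} (see \cite{gh23}), a geodesic between two points at time-distance at most $\eps$ has transversal fluctuation of order $\eps^{2/3}$ with a stretched-exponential tail, uniformly in $N$. Hence, uniformly in $N$ and with probability at least $1-p_\eps$ where $p_\eps\to 0$, every geodesic of $\L_0^{(N)}$ and of $\L_i^{(N)}$ with both endpoints in $[i-\tfrac14,i+\tfrac14]\times[0,\eps]$ stays inside $W_i^{(N)}$, and on this event $\L_0^{(N)}|_{K_{i,\eps}}=\L_i^{(N)}|_{K_{i,\eps}}$. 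I would then pass to a subsequential weak limit of the joint laws of $(\L_0^{(N)},\dots,\L_n^{(N)})$, which is tight since each marginal converges; in the limit every marginal is a directed landscape by \cite{dov22}, the copies $\L_1,\dots,\L_n$ remain independent, and, because $\{\L_0|_{K_{i,\eps}}=\L_i|_{K_{i,\eps}}\}$ is closed in the topology of uniform convergence on compact sets, the portmanteau theorem yields $\P(\L_0|_{K_{i,\eps}}=\L_i|_{K_{i,\eps}})\ge 1-p_\eps$ for every $i$. Summing over $i$ gives $\P(E_\eps^c)\le n\,p_\eps\to 0$, as needed.

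The main obstacle is the uniform-in-$N$ short-time localization bound together with an effective rate $p_\eps\to 0$; this is the one genuinely model-specific ingredient, and the transversal-fluctuation estimates it uses are precisely the continuum statements recorded in Theorems~\ref{hold432}--\ref{modcont}. The other steps---reassigning Brownian increments over disjoint windows, and the tightness and portmanteau argument for the joint law---are soft, although some care is required to identify the limiting marginals and to verify that independence of $\L_1,\dots,\L_n$ survives the weak limit. One could instead attempt to argue directly with $\L$ via its construction from the parabolic Airy line ensemble and a Brownian-Gibbs resampling across disjoint spatial intervals, but then the exact independence available at the discrete level is replaced by a total-variation cost from the resampling, so the discrete route seems cleaner.
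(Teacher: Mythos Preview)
This proposition is not proved in the present paper; it is quoted verbatim from \cite{d24n}, Proposition~2.6, and used as a black box. There is therefore no proof in the paper to compare your proposal against.

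That said, your sketch is a plausible outline of how one would argue. The reduction via monotonicity of $E_\eps$ is correct, and the core mechanism---that in Brownian last passage percolation the environment over spatially disjoint windows consists of independent Brownian increments, so one can splice independent copies together without changing the marginal law---is exactly the right idea. The localization step (geodesics with endpoints in $K_{i,\eps}$ stay inside $R_i$ with high probability as $\eps\to 0$, uniformly in $N$) is standard and follows from the transversal-fluctuation estimates you cite, combined with planarity to pass from a few extreme geodesics to all geodesics with endpoints in the box.

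Two points would need tightening in a full proof. First, the passage to the weak limit: you need tightness of the joint law, which is not automatic from tightness of the marginals, though here it follows because the marginals are jointly continuous functions of a single tight family of environments. Second, the event $\{\L_0|_{K_{i,\eps}}=\L_i|_{K_{i,\eps}}\}$ is closed, but you actually need the agreement to hold for the \emph{full} landscape values, not just for the ``restricted-to-$R_i$'' last passage values; on the localization event these coincide, and you need this coincidence to survive the limit. Both are manageable, but they are where a referee would ask for details.
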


We now collect some results concerning geodesics in the directed landscape.  Recall that for $(x,s) \in \R^2$, we denote by $g_{(x,s)}$ the left-most infinite upward geodesic starting from $(x,s)$, and for $u \geq 0$, we write $g_{(x,s)}(u)$ for the spatial position of this geodesic at time $s + u$.  Also, for $(x,s;y,t)\in\R^4_{\uparrow}$, we denote by $g_{(x,s;y,t)}$ the left-most finite geodesic connecting $(x,s)$ and $(y,t)$, and for $0 \le u \le t-s$, we write $g_{(x,s;y,t)}(u)$ for the spatial position of this geodesic at time $s+u$.

The next proposition, which pertains to the lengths of concatenated paths in the directed landscape, is useful for proving results about geodesics.

\begin{proposition}[\cite{d23}, Fact 3 on page 16]\label{Proposition: concatenation}
Let $\gamma_1$ be a path from $(x_1,s_1)$ to $(x_2,s_2)$, and let $\gamma_2$ be a path from $(x_2,s_2)$ to $(x_3,s_3)$. Define the concatenation of $\gamma_1$ and $\gamma_2$ as
$$
(\gamma_1\oplus\gamma_2)(t)=\left\{
\begin{split}
&\gamma_1(t),\qquad &t\in[0,s_2-s_1],\\
&\gamma_2(t-(s_2-s_1)),\qquad &t\in(s_2-s_1,s_3-s_1].
\end{split}
\right.
$$
Then $||\gamma_1\oplus\gamma_2||_\L=||\gamma_1||_\L+||\gamma_2||_\L$.
\end{proposition}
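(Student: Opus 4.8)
The plan is to prove the two inequalities $\|\gamma_1\oplus\gamma_2\|_\L\le\|\gamma_1\|_\L+\|\gamma_2\|_\L$ and $\|\gamma_1\oplus\gamma_2\|_\L\ge\|\gamma_1\|_\L+\|\gamma_2\|_\L$ separately, working directly from the definition of path length as an infimum over partitions. First one checks that $\gamma_1\oplus\gamma_2$ is a genuine path from $(x_1,s_1)$ to $(x_3,s_3)$: it is continuous because $\gamma_1(s_2-s_1)=x_2=\gamma_2(0)$, and it has the right endpoints. Throughout, write $T_1=s_2-s_1$ and $T_2=s_3-s_2$, so that $\gamma_1\oplus\gamma_2$ is defined on $[0,T_1+T_2]$ and passes through $(x_2,s_2)$ at time $T_1$.

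For the upper bound, given any partitions $0=r_0<\dots<r_j=T_1$ of $[0,T_1]$ and $0=u_0<\dots<u_m=T_2$ of $[0,T_2]$, the concatenated list $0=r_0<\dots<r_j=T_1<T_1+u_1<\dots<T_1+u_m=T_1+T_2$ is a partition of $[0,T_1+T_2]$ whose associated sum for $\gamma_1\oplus\gamma_2$ equals the sum for $\gamma_1$ over $(r_i)$ plus the sum for $\gamma_2$ over $(u_i)$, since the two blocks of increments see disjoint time ranges and agree with $\gamma_1$ and $\gamma_2$ respectively. Taking the infimum over both families of partitions gives $\|\gamma_1\oplus\gamma_2\|_\L\le\|\gamma_1\|_\L+\|\gamma_2\|_\L$.

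For the lower bound, fix an arbitrary partition $\mathcal P:0=t_0<\dots<t_k=T_1+T_2$ of $[0,T_1+T_2]$ and form the refinement $\mathcal P'=\mathcal P\cup\{T_1\}$. The key point is that inserting a point only decreases the partition sum: if $T_1$ lies strictly inside some interval $(t_{i-1},t_i)$, then the super-additivity of the directed metric, $\L(p;r)\ge\L(p;q)+\L(q;r)$ (valid here because all time coordinates along the path are increasing, so the relevant triples lie in $\R^4_\uparrow$), applied with $p=(\gamma(t_{i-1}),s_1+t_{i-1})$, $q=(\gamma(T_1),s_2)$, $r=(\gamma(t_i),s_1+t_i)$, shows the sum over $\mathcal P$ is at least the sum over $\mathcal P'$. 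Now split $\mathcal P'$ at $T_1$ into a partition $\mathcal P_1$ of $[0,T_1]$ and a partition $\mathcal P_2$ of $[T_1,T_1+T_2]$ (equivalently of $[0,T_2]$ after shifting); the sum over $\mathcal P'$ for $\gamma_1\oplus\gamma_2$ equals the sum over $\mathcal P_1$ for $\gamma_1$ plus the sum over $\mathcal P_2$ for $\gamma_2$, each of which is at least $\|\gamma_1\|_\L$ and $\|\gamma_2\|_\L$ respectively by definition of the infimum. Hence every partition sum for $\gamma_1\oplus\gamma_2$ is $\ge\|\gamma_1\|_\L+\|\gamma_2\|_\L$, and taking the infimum over $\mathcal P$ completes the argument. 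Combining the two bounds yields the claimed equality.

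There is no real obstacle here beyond bookkeeping; the only mildly delicate point is the direction of monotonicity under refinement, which runs opposite to the classical (total-variation) convention precisely because $\L$ is a \emph{directed} metric satisfying a reverse triangle inequality, so that the infimum in the definition of $\|\cdot\|_\L$ is attained by the coarsest possible partition along a geodesic. One should state the super-additivity step carefully for a general refinement (iterating the single-point insertion) rather than just for the one point $T_1$, although for this proposition only the insertion of $T_1$ is needed.
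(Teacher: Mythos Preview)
Your proof is correct. The paper does not supply its own proof of this proposition; it simply cites \cite{d23} for the statement, so your direct argument from the definition of $\|\cdot\|_\L$ together with the reverse triangle inequality $\L(p;r)\ge\L(p;q)+\L(q;r)$ is exactly the kind of elementary verification the citation is standing in for.
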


The next result establishes that geodesics do not cross.  We sometimes refer to this property as planarity, or the ordering of geodesics.

\begin{proposition}\label{Proposition: ordering of geodesics}  The following statements hold with probability one:
\begin{enumerate}
\item \textup{(\cite{bgh22}, Lemma 2.7)} For all $s<t$, $x_1\le x_2$, and $y_1\le y_2$, we have $g_{(x_1,s;y_1,t)}(u)\le g_{(x_2,s;y_2,t)}(u)$ for all $0\le u\le t-s$.

\item \textup{(\cite{rv21}, Theorem 3.19)} For all $s \in \R$ and $x < y$, we have $g_{(x,s)}(u) \leq g_{(y,s)}(u)$ for all $u \geq 0$.
\end{enumerate}
\end{proposition}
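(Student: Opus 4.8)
The plan is to derive both non-crossing statements from one mechanism: a \emph{surgery} (path-swapping) argument against the extremal characterization of the left-most geodesic. As a preliminary I would record the standard fact that a sub-path of a geodesic is again a geodesic between its endpoints; this is immediate from Proposition~\ref{Proposition: concatenation} and the metric composition property~\eqref{metric comp} (decomposing $\|g\|_\L$ at an intermediate point gives $\geq$ by metric composition, and $\|g\|_\L = \L$ of the endpoints forces equality). Everything is deterministic once we restrict to the full-probability event on which the relevant (left-most) geodesics exist and are well defined.

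For part (1), suppose toward a contradiction that the left-most finite geodesics $g_1 = g_{(x_1,s;y_1,t)}$ and $g_2 = g_{(x_2,s;y_2,t)}$ satisfy $g_1(u^*) > g_2(u^*)$ for some $u^* \in (0, t-s)$. Since $g_1(0) = x_1 \le x_2 = g_2(0)$ and $g_1(t-s) = y_1 \le y_2 = g_2(t-s)$, continuity lets me pick the last coincidence time $v_0 < u^*$ and the first coincidence time $v_1 > u^*$, with common locations $c := g_1(v_0) = g_2(v_0)$ and $c' := g_1(v_1) = g_2(v_1)$ and with $g_1 > g_2$ throughout $(v_0, v_1)$. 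I then form the \emph{double swap} $\gamma_1 := g_1|_{[0,v_0]} \oplus g_2|_{[v_0,v_1]} \oplus g_1|_{[v_1,t-s]}$, a path from $(x_1,s)$ to $(y_1,t)$. By Proposition~\ref{Proposition: concatenation} and the sub-path fact, $\|\gamma_1\|_\L = \L(x_1,s;c,s+v_0) + \L(c,s+v_0;c',s+v_1) + \L(c',s+v_1;y_1,t)$; but since $g_1$ also passes through $(c,s+v_0)$ and $(c',s+v_1)$, the identical telescoping expresses $\|g_1\|_\L$ as the same sum, so $\|\gamma_1\|_\L = \|g_1\|_\L = \L(x_1,s;y_1,t)$ and $\gamma_1$ is a geodesic with the same endpoints as $g_1$. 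Since $\gamma_1 = g_2 < g_1$ on the nonempty interval $(v_0,v_1)$, this contradicts $g_1$ being the left-most such geodesic. (Notably, the double swap makes the argument self-contained, avoiding any need for the auxiliary ``crossing inequality'' $\L(x_1,s;y_2,t)+\L(x_2,s;y_1,t)\le\L(x_1,s;y_1,t)+\L(x_2,s;y_2,t)$.)

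For part (2), suppose $g_{(x,s)}(u^*) > g_{(y,s)}(u^*)$ for some $u^* > 0$, while $g_{(x,s)}(0) = x < y = g_{(y,s)}(0)$; let $v_0 \in [0,u^*)$ be the last coincidence time before $u^*$, with common location $c$. Form the \emph{single swap} $\tilde g := g_{(x,s)}|_{[0,v_0]} \oplus g_{(y,s)}|_{[v_0,\infty)}$, starting at $(x,s)$. I claim $\tilde g$ is an infinite upward geodesic: its asymptotic slope is $0$ because it agrees with $g_{(y,s)}$ beyond time $s+v_0$, and for each $w > v_0$ the length of $\tilde g|_{[0,w]}$ is $\L(x,s;c,s+v_0) + \L(c,s+v_0;g_{(y,s)}(w),s+w)$ by Proposition~\ref{Proposition: concatenation} and the sub-path fact, which by~\eqref{metric comp} is $\geq \L(x,s;g_{(y,s)}(w),s+w)$; being a path with those endpoints it is also $\leq$ that value, so $\tilde g|_{[0,w]}$ is a geodesic. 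Then $g_{(x,s)}$, being the left-most infinite upward geodesic from $(x,s)$, must lie pointwise to the left of $\tilde g$, whereas $\tilde g(u^*) = g_{(y,s)}(u^*) < g_{(x,s)}(u^*)$ — a contradiction.

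The only routine-but-fiddly part is the continuity bookkeeping that extracts the crossing times $v_0$ (and $v_1$). The genuinely load-bearing step is verifying that each swapped path is still a geodesic (respectively an infinite upward geodesic): both times this comes down to the fact that the two original geodesics pass through the same space-time point(s), so their $\L$-lengths decompose over those points in exactly the same way, together with the one-sided bound supplied by metric composition~\eqref{metric comp}. I do not expect a real obstacle here — this is the standard planarity argument for the directed landscape (cf.\ \cite{bgh22} for part (1) and \cite{rv21} for part (2)), and once the surgery identities are in place the contradiction with the left-most property is immediate.
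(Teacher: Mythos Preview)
The paper does not supply its own proof of this proposition; it simply records the two statements with citations to \cite{bgh22} (Lemma~2.7) and \cite{rv21} (Theorem~3.19). Your surgery argument is exactly the standard mechanism underlying those cited proofs, and it is correct: the double swap in part~(1) and the single swap in part~(2) both produce a competing geodesic (respectively, infinite upward geodesic) from the same starting data that lies strictly to the left on a nonempty interval, contradicting the left-most property. The key identities you isolate --- that sub-paths of geodesics are geodesics, and hence that the swapped pieces have equal $\L$-lengths through the common crossing points --- are precisely what makes the swap cost-free.

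One small point worth being explicit about in part~(2): you invoke ``$g_{(x,s)}$, being the left-most infinite upward geodesic, must lie pointwise to the left of $\tilde g$''. This is the intended meaning of ``left-most'' in the paper and in \cite{rv21}, but strictly speaking the paper only asserts existence and uniqueness of a left-most infinite upward geodesic without spelling out that it is the pointwise minimum over all infinite upward geodesics from $(x,s)$. If you want the argument to be fully self-contained you should either cite this characterization from \cite{rv21} or note that it follows from the same surgery: given two infinite upward geodesics from $(x,s)$, their pointwise minimum is again one (by the swap), so a pointwise-minimal one exists and is what ``left-most'' refers to. With that in hand, your proof is complete and matches the approach in the cited references.
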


The following proposition states that all geodesics eventually meet, and that once they meet, they coalesce and never separate.

\begin{proposition}[\cite{rv21}, Corollary 3.20]\label{Proposition: geodesic coalesecent}
With probability one, for all points $(x,s) \in \R^2$ and $(y,t) \in \R^2$, there exists $u \geq \max\{s,t\}$ such that $g_{(x,s)}(u-s) = g_{(y,t)}(u-t)$.  Furthermore, we have $g_{(x,s)}(v-s) = g_{(y,t)}(v-t)$ for all $v \geq u$.
\end{proposition}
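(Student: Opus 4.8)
\emph{Proof proposal.} The statement has two parts: (i) any two infinite upward geodesics occupy the same spatial position at some common time, and (ii) once they do, they coincide at all later times. The plan is to prove (ii) first --- it is what makes the coalescence time well defined, and it feeds into the proof of (i). For (ii), the key point is that if $g=g_{(x,s)}$ passes through a space--time point $(z,u)$, then the time--shifted tail $w\mapsto g(u-s+w)$ is the left-most infinite upward geodesic starting from $(z,u)$. That this tail is an infinite upward geodesic from $(z,u)$ is immediate: a restriction of a geodesic is a geodesic on every compact interval by Proposition~\ref{Proposition: concatenation}, and its asymptotic slope is unchanged. To see it is the \emph{left-most} one, suppose an infinite upward geodesic $h$ from $(z,u)$ were strictly to its left at some time; then concatenating the segment of $g$ up to $(z,u)$ with $h$ (using Proposition~\ref{Proposition: concatenation} and planarity, Proposition~\ref{Proposition: ordering of geodesics}) produces an infinite upward geodesic from $(x,s)$ weakly left of $g$ and strictly left somewhere, contradicting the left-most property of $g$. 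The one delicate point is verifying that this concatenation really is a geodesic on each compact initial interval, which is equivalent to $(z,u)$ being a geodesic point between $(x,s)$ and the far endpoint of $h$; I would obtain this by using that $g$ is the limit, as $T\to\infty$, of the left-most finite geodesics from $(x,s)$ to $(0,T)$ --- for which the geodesic--point property is automatic because the far endpoint is shared --- and passing to the limit. With the tail identified, two geodesics meeting at $(z,u)$ have tails there that both equal the unique left-most infinite upward geodesic from $(z,u)$, hence agree; this is the ``furthermore'' clause.

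For (i), I would use (ii) to reduce to a common starting time: letting $t_0=\max\{s,t\}$, the tails of $g_{(x,s)}$ and $g_{(y,t)}$ from time $t_0$ are, by (ii), the left-most infinite upward geodesics from their positions at time $t_0$, so it suffices to treat two geodesics started at time $0$, and then, by translation and rescaling (Proposition~\ref{Proposition: properties of the landscape}), $g_{(0,0)}$ and $g_{(1,0)}$. Writing $g_{(0,0)}=\lim_{T\to\infty}g_{(0,0;0,T)}$ and $g_{(1,0)}=\lim_{T\to\infty}g_{(1,0;0,T)}$ (uniformly on compacts), note that for each $T$ the two finite geodesics share the endpoint $(0,T)$, so by the same surgery as above --- which is clean here, the far endpoint being common --- they coalesce at a first time $\tau_T\le T$ and agree on $[\tau_T,T]$. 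If $g_{(0,0)}$ and $g_{(1,0)}$ failed to coalesce, then by (ii) and planarity they would be strictly ordered at every positive time, which forces $\tau_T\to\infty$ along $T=2^k$; so it is enough to prove that $\{\tau_{2^k}\}_{k\ge1}$ is tight, since a tight sequence of nonnegative random variables diverges to $+\infty$ with probability zero.

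Proving this tightness is the main obstacle. By rescaling, $\tau_T$ has the same law as $T\cdot\tau'(T^{-2/3})$, where $\tau'(\eps)$ is the coalescence time in $[0,1]$ of the left-most geodesics from $(0,0)$ and $(\eps,0)$ to $(0,1)$; tightness of $\{\tau_T\}$ is thus equivalent to the estimate that $\tau'(\eps)/\eps^{3/2}$ is tight as $\eps\to0$, i.e.\ that two geodesics to $(0,1)$ from sources $\eps$ apart coalesce by time $M\eps^{3/2}$ with probability tending to $1$ as $M\to\infty$, uniformly in small $\eps$. I would prove this with a multi--scale argument tracking the two geodesics at geometrically spaced heights, using the global fluctuation estimate (Theorem~\ref{hold432}) to confine them to windows of the KPZ transversal scale, the modulus--of--continuity estimate (Theorem~\ref{modcont}) to preclude unlikely detours, and planarity together with the independence of the landscape over disjoint time intervals (Proposition~\ref{Proposition: independent increment}) to chain scales together. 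I expect this to be the technical heart: in contrast to lattice last--passage models, the naive Licea--Newman counting of disjoint geodesics yields nothing here, because in the continuum distinct geodesics can be arbitrarily close at any fixed time.

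Finally, to pass from coalescence of each fixed pair to coalescence of all pairs simultaneously, I would work on the full--probability event on which left-most infinite upward geodesics from every rational space--time point exist, planarity holds, and every pair of these coalesces; by (ii), coalescence is an equivalence relation among left-most infinite upward geodesics. Given an arbitrary $(x,s)$ and a rational time $s'>s$, the tail of $g_{(x,s)}$ from $s'$ is (by (ii)) the left-most infinite upward geodesic from $(g_{(x,s)}(s'-s),s')$, which by planarity (Proposition~\ref{Proposition: ordering of geodesics}) lies between $g_{(a,s')}$ and $g_{(b,s')}$ for any rationals $a\le g_{(x,s)}(s'-s)\le b$; since those two coalesce, so does $g_{(x,s)}$ with $g_{(a,s')}$. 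Doing the same for $(y,t)$ and using transitivity through rational points yields coalescence of $g_{(x,s)}$ and $g_{(y,t)}$, which completes the argument.
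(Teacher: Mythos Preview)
The paper does not give its own proof of this proposition; it is quoted from Rahman--Vir\'ag \cite{rv21}, Corollary~3.20, and used as a black box throughout. So there is nothing in the present paper to compare your argument against.

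A few remarks on your proposal nonetheless. Your argument for the ``furthermore'' clause is essentially the content of the paper's Lemma~\ref{Lemma: semigroup}, but note that the paper's proof of that lemma \emph{invokes} Proposition~\ref{Proposition: geodesic coalesecent} itself (it manufactures a later common point $(y',t')$ of the two candidate tails and swaps the segments between $(z,u)$ and $(y',t')$, which sidesteps the issue you raise). So the paper's route is circular from your standpoint, and the gap you flag---that concatenating the initial segment of $g_{(x,s)}$ up to $(z,u)$ with an arbitrary infinite upward geodesic from $(z,u)$ need not be a geodesic, because the metric composition at height $u$ need not be saturated at $z$---is genuine; your proposed fix via limits of finite geodesics to a common far endpoint is only a sketch.

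For part~(i), your reduction to tightness of $\{\tau_T\}$ and the rescaling to tightness of $\tau'(\eps)/\eps^{3/2}$ are fine, but the multi-scale argument you gesture at is where all the content would lie and you do not carry it out. One correction: your claim that ``the naive Licea--Newman counting of disjoint geodesics yields nothing here, because in the continuum distinct geodesics can be arbitrarily close'' is not accurate. Proposition~\ref{Proposition: number of intersection points} shows that the number of distinct points on a fixed horizontal segment hit by infinite upward geodesics has finite expectation, and the paper uses exactly this in a Licea--Newman style counting argument to prove Proposition~\ref{Proposition: 2-disjoint}, which already gives the quantitative bound $\P(g_{(0,0)}\text{ and }g_{(1,0)}\text{ disjoint on }[0,t])\le Ct^{-2/3}$ and hence almost-sure coalescence for each fixed pair.
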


The next result gives a bound on the transversal fluctuations of geodesics.

\begin{theorem}\label{Theorem: transversal fluctuation}
There exist constants $C_3,C_4\in(0,\infty)$ such that for all $(x,s)\in\R^2$ and $T>0$, the infinite upward geodesic $g_{(x,s)}$ satisfies
\begin{equation*}
\P\left(\sup_{u\in[0,T]}|g_{(x,s)}(u)-g_{(x,s)}(0)|\ge a \right)\le C_3 e^{-C_4 a ^3/T^2}.   
\end{equation*}
The analogous statement holds for any finite geodesic as well. By the scaling property and the shear invariance of the directed landscape, we can without loss of generality assume that the finite geodesic is $g_{(0,0;0,1)}.$ In this case, $T$ must be taken to be at most $1.$ Note that using symmetry (switching $(0,0)$ and $(0,1)$), it suffices to take $T$ to be at most $1/2.$
\end{theorem}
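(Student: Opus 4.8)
The plan is to bound the transversal fluctuation via a union bound over dyadic time scales combined with the global estimate on the parabolically adjusted landscape $\mathcal{K}$ from Theorem \ref{hold432}. First I would reduce to the finite-geodesic statement for $g_{(0,0;0,1)}$: by spatial and temporal stationarity (Proposition \ref{Proposition: properties of the landscape}) we may take $(x,s) = (0,0)$; the infinite-geodesic case follows from the finite case because the restriction of $g_{(0,0)}$ to $[0,T]$ is the geodesic from $(0,0)$ to $(g_{(0,0)}(T), T)$, and after applying the scaling and shear symmetries one can normalize the endpoint to $(0,1)$ (the shear only shifts by a linear function, which is negligible on a compact window, and scaling turns the window $[0,T]$ into a sub-window of $[0,1]$, forcing $T \le 1$, then $T \le 1/2$ by the endpoint symmetry). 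So it suffices to show $\P(\sup_{u\in[0,1/2]}|g_{(0,0;0,1)}(u)| \ge a) \le C_3 e^{-C_4 a^3}$ for the appropriate normalization, and the general $T$ then follows by rescaling.

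The main step is the following ``if the geodesic wanders, it is too short'' argument. Suppose $g = g_{(0,0;0,1)}$ reaches spatial position $\pm a$ at some time $u_0 \in [0, 1/2]$. By the metric composition property \eqref{metric comp} and Proposition \ref{Proposition: concatenation}, $\mathcal{L}(0,0;0,1) = \mathcal{L}(0,0; g(u_0), u_0) + \mathcal{L}(g(u_0), u_0; 0, 1)$. Rewriting both terms using $\mathcal{K}$, the two quadratic corrections are $-g(u_0)^2/u_0$ and $-g(u_0)^2/(1-u_0)$, which sum to at most $-g(u_0)^2 \cdot 4 = -4a^2$ when $u_0 \in [0,1/2]$ (since $1/u_0 + 1/(1-u_0) \ge 4$, with the worst case at $u_0 = 1/2$ giving exactly $-8a^2$; in any case it is $\le -4a^2$). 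On the other hand $\mathcal{K}$ of each sub-pair, and of the whole pair $(0,0;0,1)$, is controlled by the random variable $R$ of Theorem \ref{hold432}: on the event $\{R \le m\}$, each $|\mathcal{K}|$ term is bounded by $m$ times a polylogarithmic factor in $\|u\| + 2 \le a + C$ (note $\|u\|$ is at most of order $a$ on the window). Hence on $\{R \le m\}$ we obtain
\begin{equation*}
\mathcal{L}(0,0;0,1) \le -4a^2 + C m \log^2(a + 2),
\end{equation*}
and also $\mathcal{L}(0,0;0,1) = \mathcal{K}(0,0;0,1) \ge -m$. Combining, the wandering event forces $4a^2 \le C m \log^2(a+2) + m$, i.e. $m \ge c a^2 / \log^2(a+2)$. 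Therefore $\{\sup_{u}|g(u)| \ge a\} \subseteq \{R \ge c a^2 / \log^2(a+2)\}$, and the tail bound $\P(R > t) \le c e^{-dt^{3/2}}$ gives a bound of order $\exp(-d' a^3 / \log^3(a+2))$.

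There is a gap between the $\exp(-d' a^3/\log^3(a+2))$ that this crude argument yields and the clean $\exp(-C_4 a^3/T^2)$ claimed, so the remaining—and somewhat delicate—step is to remove the logarithmic loss. I would do this by a multi-scale/chaining refinement: rather than applying the global bound directly at scale $a$, split $[0,1/2]$ into dyadic sub-intervals and at each scale use the modulus-of-continuity estimate Theorem \ref{modcont} together with the composition identity to show that a displacement of size $a$ must be ``built up'' across scales in a way that each scale contributes a genuinely $a^3$-type (not $a^3/\log^3 a$) deficit, or alternatively bound directly the probability that the geodesic exits a parabolic corridor using Theorem \ref{hold432} applied on the shorter sub-intervals where the relevant $\|u\|$ is itself of order the local scale rather than of order $a$; optimizing the dyadic decomposition removes the logarithms. (For the final general-$T$ statement one then simply rescales: the window $[0,T]$ with displacement $a$ maps under the scaling symmetry with $\rho = T^{1/2}$—adjusted for the cubic time scaling—to a unit-order window with displacement $a/T^{2/3}$, wait—more precisely, using $\mathcal{L}(x,s;y,t) \stackrel{d}{=} \rho\mathcal{L}(\rho^{-2}x,\rho^{-3}s;\rho^{-2}y,\rho^{-3}t)$ with the time window $T = \rho^3$ one gets spatial scale $\rho^2 = T^{2/3}$, so displacement $a$ corresponds to $a/T^{2/3}$ in unit scale, and $(a/T^{2/3})^3 = a^3/T^2$, matching the exponent.) I expect the delogarithmization in this last step to be the main obstacle; the rest is bookkeeping with the symmetries and the composition property.
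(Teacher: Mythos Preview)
Your approach is quite different from the paper's, and there are two genuine gaps.

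\textbf{The reduction from infinite to finite geodesics is circular.} You write that after restricting $g_{(0,0)}$ to $[0,T]$ one can ``normalize the endpoint to $(0,1)$'' by scaling and shearing. But the shear parameter needed is $c = g_{(0,0)}(T)/T$, which is random and depends on the landscape; applying a shear with a landscape-measurable parameter does not produce a copy of $\mathcal{L}$, so no distributional identity is available. Even if you try to bound the linear shift $|c|\cdot u$ separately, this requires controlling $|g_{(0,0)}(T)|$, which is exactly a special case of what you are trying to prove. The paper avoids this entirely: by scale invariance the statement for all $T$ is equivalent to the statement for any single $T$, and one can send $T\to 0$ and use that $g_{(0,0)}$ and $g_{(0,0;0,1)}$ agree almost surely on some random initial interval $[0,\varepsilon]$ (the 1-star property from \cite{bgh22}). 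Concretely, for any $T'>0$,
\[
\P\Big(\sup_{u\in[0,1]}|g_{(0,0)}(u)|\ge a\Big)
\le \P\Big(\sup_{u\in[0,T']}|g_{(0,0;0,1)}(u)|\ge aT'^{2/3}\Big) + \P\big(g_{(0,0)}\ne g_{(0,0;0,1)}\text{ on }[0,T']\big),
\]
and the second term vanishes as $T'\to 0$ while the first is $\le C_3 e^{-C_4 a^3}$ by the finite case.

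\textbf{The delogarithmization step is not just bookkeeping.} Your ``wandering forces shortness'' inequality via Theorem~\ref{hold432} genuinely yields $\exp(-c\,a^3/\log^3 a)$: the polylogarithmic factors in the global bound on $\mathcal{K}$ are intrinsic to that estimate, and a dyadic decomposition in $u_0$ does not remove them because the worst scale is $u_0\asymp 1/2$, where the interval length is order one and no smallness of $t^{1/3}$ is gained. Getting the clean $\exp(-C_4 a^3)$ from landscape estimates alone requires a sharper one-point input (e.g.\ the exact upper-tail exponent $3/2$ for $\mathcal{K}(p;q)$ at a fixed pair, not the global bound with logs) combined with a chaining that handles the supremum; this is a real argument, not a refinement of what you wrote. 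The paper sidesteps the issue altogether by quoting the pre-limiting bound for Exponential LPP \cite[Proposition 2.1(ii)]{bbb23} and transferring it via convergence of geodesics \cite{dov22}.
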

\begin{proof} A version of this result for the pre-limiting model of Exponential LPP can be found in \cite[Proposition 2.1 (ii)]{bbb23}. While the same argument could be made to work for the directed landscape, a shorter argument relies on the convergence of geodesics in the pre-limit to their limiting counterparts (see \cite{dov22}). To translate the pre-limiting bounds to the limiting counterpart, we will use the fact that the geodesic in Exponential LPP from $(0,0)$ to $(n,n)$ rotated by $45^{\circ}$, scaled by $n$ vertically and $n^{2/3}$ horizontally, converges to the geodesic $g_{(0,0;0,1)}$ from which the statement about finite geodesics follows. Now to extend to the case of the infinite geodesic $g_{(0,0)}$, by scale invariance, it suffices to prove the statement for $T\to 0.$ Almost surely the finite geodesic $g_{(0,0;0,1)}$ and the semi-infinite geodesic $g_{(0,0)}$ agree on an initial interval $[0,\varepsilon]$ for some random strictly positive $\varepsilon>0.$ This for instance follows from the main result in \cite{bgh22} which proves that a fixed point is almost surely a $1$-star, i.e. any two geodesics emanating from the given point must have a non-trivial overlap. The result is now an immediate consequence of the statement for finite geodesics.
\end{proof}

The next result is a quantitative consequence of geodesic coalescence.

\begin{proposition}\label{Proposition: number of intersection points}
For each interval $I\subseteq\R$ and $t>0$, let 
$$
N_{I,t}:=\#\{y\in I: (y,t)=g_{(x,0)}(t)\text{ for some }x\in\R\}
$$
be the number of points in $I \times \{t\}$ that lie on an infinite upward geodesic starting from time zero. We write $N_t=N_{[-1,1],t}$. Then $\E\left[N_{t}\right]<\infty$ for each fixed $t>0$.
\end{proposition}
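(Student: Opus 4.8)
The plan is to bound $N_t$ by a sum of ``disjointness'' indicators and then estimate the expectation of that sum using the transversal fluctuation bound (Theorem~\ref{Theorem: transversal fluctuation}) together with geodesic coalescence (Proposition~\ref{Proposition: geodesic coalesecent}). First observe that by coalescence and planarity, the set of points $(y,t)$ on the time-$t$ slice that lie on some infinite upward geodesic from time zero is exactly the set of $y$ such that the infinite upward geodesic started from $(y,t)$ extended \emph{downward} --- or more precisely, such that $(y,t)$ is the coalescence image at time $t$ of geodesics from some interval of starting points. The key structural fact is: if $y_1 < y_2 < y_3$ are three such points in $[-1,1]$, then the backward ``fibers'' $\{x : g_{(x,0)}(t) = y_i\}$ are three disjoint intervals (disjoint by planarity), and the geodesics landing at $y_1$, $y_2$, $y_3$ are pairwise disjoint on $[0,t]$ --- because if two of them met before time $t$ they would have coalesced and landed at the same point. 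So $N_t$ large forces many pairwise-disjoint geodesics crossing $[0,t]$ and ending in $[-1,1]$.

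Concretely, I would discretize: for each $j \in \Z$, let $E_j$ be the event that there is an infinite upward geodesic started from $(x,0)$ with $x \in [j, j+1)$ whose position at time $t$ lies in $[-1,1]$. On $E_j$, by Theorem~\ref{Theorem: transversal fluctuation} applied to the geodesic $g_{(j,0)}$ (and $g_{(j+1,0)}$), which by planarity sandwich all geodesics started in $[j,j+1)$, the geodesic must travel a transversal distance at least $|j| - 2$ over a time interval of length $t$; hence $\P(E_j) \le C_3 e^{-C_4 (|j|-2)^3/t^2}$ for $|j| \ge 3$, say. Meanwhile, every point counted by $N_t$ is the image of geodesics starting in some $[j,j+1)$ with $E_j$ occurring; but a single $E_j$ could contribute more than one landing point in $[-1,1]$, so I instead bound $N_t$ by the number of integers $j$ such that a geodesic from $[j-1,j+2)$ lands in $[-2,2]$, plus a bounded correction --- i.e. $N_t \le \sum_j \mathbf{1}_{E_j'}$ for a slightly enlarged family of events $E_j'$ with the same type of Gaussian-cubic tail. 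Then $\E[N_t] \le \sum_{j \in \Z} \P(E_j') < \infty$ because the tail $e^{-C_4(|j|-c)^3/t^2}$ is summable in $j$ for each fixed $t$.

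The step I expect to be the main obstacle is controlling the possibility that a single starting interval $[j,j+1)$ gives rise to \emph{many} distinct landing points in $[-1,1]$ --- a priori the map $x \mapsto g_{(x,0)}(t)$ restricted to a unit interval could have a complicated (Cantor-like) image. The resolution is that distinct landing points correspond to disjoint geodesics on $[0,t]$, and the total transversal ``spread'' of geodesics started in $[j,j+1)$ and landing in $[-1,1]$ is at most $2$ at time $t$, while the sandwiching geodesics $g_{(j,0)}$, $g_{(j+1,0)}$ have bounded transversal fluctuation with high probability; so one must argue that the number of disjoint geodesics inside a bounded ``parallelogram'' in space-time is almost surely finite and has finite expectation. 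This can be handled by a further union bound over dyadic subdivisions of $[j,j+1)$ combined with the observation that two geodesics from nearby starting points coalesce quickly (again via Theorem~\ref{Theorem: transversal fluctuation}, bounding the probability that geodesics from $(x,0)$ and $(x+\delta,0)$ remain disjoint past a short time), which forces the expected number of surviving disjoint strands to be bounded; summing the resulting bound over $j$ with the Gaussian-cubic decay in $|j|$ gives $\E[N_t] < \infty$.
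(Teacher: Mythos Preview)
Your first step (restricting the starting points to a bounded interval via transversal fluctuation) is fine and matches the paper. The gap is in the second step, where you try to control the number of distinct landing points in $[-1,1]$ produced by geodesics starting from a single unit interval $[j,j+1)$. You propose to handle this by a dyadic subdivision together with the claim that ``two geodesics from nearby starting points coalesce quickly (again via Theorem~\ref{Theorem: transversal fluctuation}).'' But Theorem~\ref{Theorem: transversal fluctuation} is a one-geodesic transversal fluctuation bound; it says nothing about two nearby geodesics coalescing. The relevant two-arm estimate is Proposition~\ref{Proposition: 2-disjoint}, and its proof in the paper actually \emph{uses} Proposition~\ref{Proposition: number of intersection points}, so invoking it here would be circular. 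Moreover, even granting a two-arm bound of order $\delta\, t^{-2/3}$ for starting points $\delta$ apart, the na\"ive dyadic sum $\sum_n 2^n \cdot 2^{-n} t^{-2/3}$ diverges, so the bookkeeping would need more care than you indicate.

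The paper sidesteps this entirely. After localizing the starting points to $[-k,k]$ with probability at least $1-e^{-ck^3}$, it also localizes the time-$2$ positions of the geodesics $g_{(x,1)}$ for $|x|\le 1$ to $[-k,k]$, so that on this high-probability event $N_1$ is bounded by the number of points on the line $y=1$ hit by \emph{finite} geodesics $g_{(x,0;y,2)}$ with $|x|,|y|\le k$. For this last quantity there is an off-the-shelf tail bound (Lemma~3.12 of \cite{gz22}), giving $\P(N_1>k)\le e^{-ck^\alpha}$ directly. The missing idea in your argument is precisely this reduction to finite geodesics and the appeal to the existing literature on their coalescence structure.
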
 

\begin{proof} For simplicity let us assume $t=1.$ The argument for any other $t$ is exactly the same.  By Theorem \ref{Theorem: transversal fluctuation} and the ordering of geodesics, with probability at least $1-e^{-c k^3},$ no geodesic $g_{(x,0)}$ with $|x| \ge k$ intersects $[-1,1] \times \{1\}$, and every geodesic $g_{(x,1)}$ with $|x| \le 1$ intersects $[-k,k] \times \{2\}.$  On this event, $N_{t}$ is upper bounded by the number of points on the line $y = 1$ that intersect finite geodesics $g_{(x,0;y,2)}$, where $|x| \leq k$ and $|y|\le k.$  This observation along with \cite[Lemma 3.12]{gz22} implies that $\P(N_1 > k)\le e^{-ck^{\alpha}}$ for some $\alpha> 0$ which finishes the proof. 
\end{proof}
The next two results estimate the probability of non-coalescence. 

\begin{proposition}\label{Proposition: 2-disjoint}
There exists a constant $C\in(0,\infty)$ such that
\begin{equation}\label{coal1234}
\P(g_{(0,0)} \text{ and } g_{(1,0)} \text{ are disjoint for all } s\in[0,t])\le Ct^{-2/3}, 
\end{equation}
or equivalently,
$$
\P(g_{(0,0)} \text{ and } g_{(M,0)} \text{ are disjoint for all } s\in[0,t])\le CMt^{-2/3}.
$$
\end{proposition}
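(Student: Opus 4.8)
\emph{Plan.} The plan is to first reduce, via the rescaling symmetry, to bounding the probability that two infinite upward geodesics started at a \emph{small} separation $\eps$ fail to coalesce by time $1$, and then to reinterpret that probability in terms of the branching structure of the geodesic tree: it is exactly the probability that the (monotone, piecewise-constant) map $x\mapsto g_{(x,0)}(1)$ has a jump inside the starting interval $[0,\eps]$. Since this jump set is a stationary locally finite point process, a first-moment bound reduces everything to showing that its intensity is finite, which is essentially the content of Proposition~\ref{Proposition: number of intersection points}.

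\emph{Step 1 (rescaling).} Applying Proposition~\ref{Proposition: properties of the landscape}(3) with $\rho=t^{1/3}$, and using that the rescaling map $(x,s)\mapsto(\rho^{-2}x,\rho^{-3}s)$ is an orientation-preserving bijection carrying the left-most infinite upward geodesic from $(x,s)$ to the left-most infinite upward geodesic from $(\rho^{-2}x,\rho^{-3}s)$, the event that $g_{(0,0)}$ and $g_{(1,0)}$ are disjoint on $[0,t]$ has the same probability as the event that $g_{(0,0)}$ and $g_{(\eps,0)}$ are disjoint on $[0,1]$, where $\eps=t^{-2/3}$. So it suffices to show that
$$
f(\eps):=\P\bigl(g_{(0,0)}\text{ and }g_{(\eps,0)}\text{ are disjoint for all }s\in[0,1]\bigr)
$$
satisfies $f(\eps)\le C\eps$ for all $\eps>0$; taking $\eps=t^{-2/3}$ gives \eqref{coal1234}, and taking $\eps=Mt^{-2/3}$ gives the displayed reformulation (the bound being vacuous once $Mt^{-2/3}\ge 1$).

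\emph{Step 2 (reformulation via branch points).} Since $g_{(0,0)}$ and $g_{(\eps,0)}$ start at distinct points and do not cross (Proposition~\ref{Proposition: ordering of geodesics}(2)), they are disjoint on $[0,1]$ precisely when they have not coalesced by time $1$, i.e.\ $g_{(0,0)}(1)<g_{(\eps,0)}(1)$. I would then study the non-decreasing map $x\mapsto g_{(x,0)}(1)$ on $\R$. By Proposition~\ref{Proposition: number of intersection points}, the set of time-$1$ positions of infinite upward geodesics starting at time $0$ is a.s.\ locally finite, so this map has a.s.\ finite range over any bounded interval and is therefore a.s.\ a non-decreasing step function; let $J\subseteq\R$ be its (random, locally finite) set of jump points. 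Two distinct starting points have coalesced by time $1$ exactly when this map is constant between them, so $g_{(0,0)}(1)<g_{(\eps,0)}(1)$ iff $J\cap[0,\eps]\neq\emptyset$, whence, using spatial stationarity of $\L$ (Proposition~\ref{Proposition: properties of the landscape}(2)),
$$
f(\eps)=\P\bigl(J\cap[0,\eps]\neq\emptyset\bigr)\le\E\bigl[\#(J\cap[0,\eps])\bigr]=\kappa\,\eps,\qquad \kappa:=\E\bigl[\#(J\cap[0,1])\bigr].
$$

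\emph{Step 3 (finiteness of the intensity — the main obstacle).} It remains to check $\kappa<\infty$, which is the only substantive point. I would bound $\#(J\cap[0,1])$ by the number $M$ of distinct values of $g_{(x,0)}(1)$ for $x\in[0,1]$; by the ordering of geodesics all these values lie between $g_{(0,0)}(1)$ and $g_{(1,0)}(1)$, so on the event $E_K$ that both $g_{(0,0)}$ and $g_{(1,0)}$ have transversal fluctuation at most $K$ up to time $1$ one has $M\le N_{[-K-1,\,K+2],\,1}$ in the notation of Proposition~\ref{Proposition: number of intersection points}. Theorem~\ref{Theorem: transversal fluctuation} gives $\P(E_K^{c})\le Ce^{-cK^{3}}$, while the proof of Proposition~\ref{Proposition: number of intersection points} gives stretched-exponential tails (hence all moments) for $N_{[-m,m],1}$ (the extension from $[-1,1]$ to $[-m,m]$ being immediate from stationarity). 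A routine conditioning on the value of $K$ together with Cauchy--Schwarz then yields $\kappa\le\E[M]<\infty$, completing the proof. Conceptually, Step~3 is just the statement that geodesic branch points do not accumulate in a bounded spatial window by a fixed time, and since this is already packaged in Proposition~\ref{Proposition: number of intersection points} the argument is short; the only place one must be slightly careful is in converting the qualitative local finiteness into the quantitative moment bound on $M$.
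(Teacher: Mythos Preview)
Your proof is correct and takes essentially the same approach as the paper: both rescale to separation $\eps=t^{-2/3}$ and then exploit that non-coalescence by time $1$ corresponds to the monotone map $x\mapsto g_{(x,0)}(1)$ taking distinct values, with the total number of such values controlled by Proposition~\ref{Proposition: number of intersection points}. The paper phrases this via a discrete grid (geodesics from $i\eps$, $i\in[-t^{2/3},t^{2/3}]\cap\Z$, so that $\E[N]\ge 2(t^{2/3}-1)p$), whereas you pass directly to the stationary jump process $J$ and its intensity $\kappa$; these are the same argument, yours being a slightly cleaner continuous formulation.
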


\begin{proof} By scale invariance, it suffices to compute the probability, which we call $p$, that $g_{(0,0)}$ and $g_{(t^{-2/3},0)}$ do not meet by time $1.$ Now consider the geodesics $g_{(it^{-2/3},0)}$ where $i \in [-t^{2/3},t^{2/3}] \cap \Z.$ 
Let $N$ be the number of points on the line $y = 1$ that intersect one or more of these geodesics.  By the same argument as in the proof of Proposition \ref{Proposition: number of intersection points}, it follows that $\E(N)< \infty.$  Further, by planarity, each $i$ such that $g_{(it^{-2/3},0)}$ and $g_{((i+1)t^{-2/3},0)}$ do not meet before height $1$, increases $N$ by $1$.   Finally, by translation invariance, the probability of the non-coalescence event is independent of $i.$  It follows that $E[N] \geq 2(t^{2/3} - 1)p$, and therefore $p \leq Ct^{-2/3}$ (a version of this argument for Exponential LPP first appeared in \cite{bhs22}).
\end{proof}

\begin{proposition}\label{Proposition: disjoint finite geodesics}
There exists a constant $C$ such that for all $M>1$ and $t\in\R_+$, we have
\begin{equation}\label{disfin1}
\P(g_{(-1,0;-M,t)} \text{ and } g_{(1,0;M,t)} \text{ are disjoint})\le CMt^{-1}\exp(\log^{5/6}(t\vee 1)).
\end{equation}
or equivalently, for $x_1<x_2$ and $y_1<y_2$ with $y_2-y_1>x_2-x_1$,
\begin{align}\label{disfin2}
&\P(g_{(x_1,0;y_1,t)} \text{ and } g_{(x_2,0;y_2,t)} \text{ are disjoint}) \nonumber \\
&\qquad \qquad \le C(x_2-x_1)^{1/2}(y_2-y_1)t^{-1}\exp\left(\log^{5/6}\left(\frac{t}{(x_2-x_1)^{3/2}}\vee 1\right)\right).
\end{align}
\end{proposition}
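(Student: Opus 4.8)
By the spatial stationarity, shear invariance, and rescaling properties of $\L$ in Proposition~\ref{Proposition: properties of the landscape} it suffices to prove \eqref{disfin1}. Translating in space centers the startpoints at $0$; a shear --- which maps geodesics to geodesics and changes $\L$ only by a deterministic function of the endpoints, hence preserves the event that two geodesics are disjoint --- centers the endpoints at $0$; and rescaling by $\rho=\big((x_2-x_1)/2\big)^{1/2}$ normalizes the startpoints to $\pm1$, leaving $M:=(y_2-y_1)/(x_2-x_1)>1$ and replacing $t$ by $t/\rho^3$. Since the quantities $(x_2-x_1)^{1/2}(y_2-y_1)\,t^{-1}$ and $t/(x_2-x_1)^{3/2}$ are invariant under this rescaling, \eqref{disfin2} reduces to \eqref{disfin1}; and since the right-hand side of \eqref{disfin1} exceeds $1$ whenever $t$ is bounded or $M$ is comparable to $t$, we may assume $t$ is large and $M$ lies in the complementary regime. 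Set $\gamma^{\pm}:=g_{(\pm1,0;\pm M,t)}$. By the ordering of geodesics (Proposition~\ref{Proposition: ordering of geodesics}), $\gamma^-(s)\le\gamma^+(s)$ for all $s\in[0,t]$, so the disjointness event coincides with $\{G(s)>0 \text{ for all } s\in(0,t)\}$, where the gap $G(s):=\gamma^+(s)-\gamma^-(s)$ is pinned at $G(0)=2$ and $G(t)=2M$. The guiding heuristic is that $G$ behaves like a bridge pinned at the small values $2$ and $2M$, so that $\P(G>0\text{ on }(0,t))$ should be comparable to $G(0)G(t)/t$, of order $M/t$, in analogy with the reflection estimate $1-e^{-2ab/t}$ for a Brownian bridge from $a>0$ to $b>0$.

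\textbf{Strategy for the upper bound.}
To realize this we bound $\P(G>0\text{ on }(0,t))$ by a concatenation argument using the independence of $\L$ over disjoint time intervals (Proposition~\ref{Proposition: independent increment}), the two-arm estimate of Proposition~\ref{Proposition: 2-disjoint}, and the transversal-fluctuation bound of Theorem~\ref{Theorem: transversal fluctuation}. Fix a mesh $0=s_0<s_1<\dots<s_N=t$ spaced dyadically towards both endpoints, so that the scales near $s=0$ and near $s=t$ are resolved, with $N=O(\log t)$. On each $[s_{j-1},s_j]$ the restriction of $\gamma^{\pm}$ is the geodesic joining its own endpoint positions $\gamma^{\pm}(s_{j-1}),\gamma^{\pm}(s_j)$, and conditionally on the vector $\big(\gamma^{\pm}(s_j)\big)_j$ the restrictions of $\gamma^{\pm}$ to distinct subintervals are independent. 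By the finite-geodesic form of Theorem~\ref{Theorem: transversal fluctuation}, off an event of probability at most $C\exp(-c\log^3 t)$ each $\gamma^{\pm}(s_j)$ lies within $O(t^{2/3}\log t)$ of $\pm$\,the straight-line interpolant of the corresponding endpoints; hence the conditional expectation may be replaced by a sum over a $\mathrm{poly}(t)$-size mesh of admissible position vectors, the discretization error being absorbed via the modulus-of-continuity estimate (Theorem~\ref{modcont}). For each admissible configuration one bounds the conditional disjointness probability on each subinterval: on subintervals in the bulk, where the gap is on its natural scale, Proposition~\ref{Proposition: 2-disjoint} (after comparing $\gamma^{\pm}$ to infinite upward geodesics by planarity) supplies a power of the length of the subinterval; on the subintervals abutting $s=0$ the pinning $G(0)=2$ forces an additional two-arm-type event worth a factor $O(1)$, and on those abutting $s=t$ the pinning $G(t)=2M$ is worth a factor $O(M)$. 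Multiplying the per-subinterval estimates and summing over admissible configurations (using the global estimate, Theorem~\ref{hold432}, to discard atypical landscape values) yields $CMt^{-1}$ times a correction that accumulates the logarithmic losses of the $e^{-dm^{3/2}}$ tails of Theorems~\ref{hold432} and~\ref{modcont} across the $O(\log t)$ scales, which can be arranged to be $\exp\!\big(\log^{5/6}(t\vee1)\big)$.

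\textbf{Main obstacle.}
The difficulty lies not in the per-scale estimates, which are soft consequences of Proposition~\ref{Proposition: 2-disjoint} and transversal fluctuations, but (i) in transferring Proposition~\ref{Proposition: 2-disjoint}, stated for infinite upward geodesics, to the finite geodesics $\gamma^{\pm}$: the natural sandwiching of $\gamma^{\pm}$ between infinite upward geodesics succeeds only on an event of probability $\Theta(1)$, rather than high probability, when $M\ll t^{2/3}$, so the whole event cannot simply be dominated by a single two-arm event and the comparison must be localized to subintervals on which the relevant orderings are typical; and (ii) in controlling the correlations between the behavior of $G$ on different scales, which necessitates the conditioning on the mesh positions above and a careful accounting of the resulting $\mathrm{poly}(t)$-fold union bounds. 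It is in handling (i) and (ii) that the poly-logarithmic slack is incurred; the exponent $5/6$ is an artifact of this route and the stated bound is not expected to be sharp.
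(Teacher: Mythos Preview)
Your reduction of \eqref{disfin2} to \eqref{disfin1} is correct, but after that point your approach diverges from the paper's and runs into a real gap.

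The paper's proof is a two-line argument. First, by planarity (Proposition~\ref{Proposition: ordering of geodesics}), if $g_{(-1,0;-M,t)}$ and $g_{(1,0;M,t)}$ are disjoint then so are $g_{(-1,0;i,t)}$ and $g_{(1,0;i+1,t)}$ for some integer $i\in[\lfloor -M\rfloor,\lfloor M\rfloor]$, giving a union bound over $O(M)$ terms. Second, by shear invariance each such term has the same probability, and that probability is $O\!\big(t^{-1}\exp(\log^{5/6}t)\big)$ by a direct citation of \cite[Theorem~1.1]{h20}. The factor $M$ comes from the union bound; the factor $t^{-1}\exp(\log^{5/6}t)$ is a known, nontrivial theorem in the literature.

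Your strategy, by contrast, attempts to manufacture the $t^{-1}$ exponent from scratch using only Proposition~\ref{Proposition: 2-disjoint} (which gives $t^{-2/3}$ for \emph{infinite} geodesics), transversal fluctuations, and a multi-scale decomposition. This is essentially attempting to reprove Hammond's theorem, and the sketch does not explain how the exponent improves from $2/3$ to $1$. The sentence ``Proposition~\ref{Proposition: 2-disjoint}\dots\ supplies a power of the length of the subinterval'' on bulk intervals, together with $O(1)$ and $O(M)$ factors at the two boundaries, does not obviously multiply out to $Mt^{-1}$: on bulk subintervals the gap is on scale $t^{2/3}$ and the two-arm event is not rare at all, so those factors are $O(1)$, and you are left with only the boundary contributions. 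Getting the full $t^{-1}$ requires exploiting the \emph{simultaneous} pinning at both ends in a way that your per-interval conditional independence argument does not capture; this is precisely the content of \cite{h20}, whose proof uses Brownian Gibbs resampling and is not a soft consequence of the tools you invoke. You should either cite that result directly (as the paper does) or recognize that your multi-scale route, as written, stalls at $t^{-2/3}$.
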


\begin{proof}
It suffices to prove \eqref{disfin1}, as \eqref{disfin2} follows from it by scale and shear invariance.
Further it also suffices to assume $t\ge 1.$ For brevity,  let us denote the event that 
$g_{(-1,0;x,t)}$ and $g_{(1,0;y,t)}$ are disjoint as $\mathsf{NC}(x,y)$ ($\mathsf{NC}$ for non-coalescence).
It follows by planarity that $\mathsf{NC}(-M,M)$ implies $\mathsf{NC}(i,i+1)$ for some integer $i$ such that $\lfloor -M \rfloor \leq i \leq \lfloor M\rfloor$.  Thus the following estimate holds:
$$
\P(\mathsf{NC}(-M,M))\le \sum_{i=\lfloor -M \rfloor}^{\lfloor M \rfloor}\P(\mathsf{NC}(i,i+1)).
$$
By the shear invariance of $\mathcal{L}$, the probability $\P(\mathsf{NC}(i,i+1))$ is the same for all $i.$ This probability is $O(t^{-1}\exp (\log^{5/6}(t \vee 1))$ (see \cite[Theorem 1.1]{h20}). 
\end{proof}

The next result says that the passage time from ``infinity" i.e., the Busemann function, is a Brownian motion. For any $x \in \R$ and $T \in \R$, define 
\begin{equation}\label{busemann1}
\mathcal{B}(x,T)= \lim_{n\to \infty} \big( \mathcal{L}(x,T;0,n)-\mathcal{L}(0,0;0,n) \big).
\end{equation}
That the limit exists is shown in various papers, for instance, \cite{gz22}.

\begin{theorem}\label{Theorem: Brownian motion}
Fix $T > 0$.  For each $x\in \R$, let $W(x) = \mathcal{B}(x, T) - \mathcal{B}(0,T)$.  Then $W$ is two-sided Brownian motion with variance parameter $2$.  Furthermore, $W$ is measurable with respect to $\sigma$-field generated by $\L|_{\R\times(T,\infty)}$, and for all $x\in\R$,
\begin{equation}\label{gargmax}
g_{(x,0)}(T)=\argmax_{y\in\R} \{\L(x,0;y,T)+W(y)-W(x)\},
\end{equation}
where the argmax is chosen to be the left-most one if it is not unique.
\end{theorem}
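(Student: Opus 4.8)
The plan is to prove the three assertions — that $W$ is a two-sided Brownian motion with variance parameter $2$, that $W$ is $\L|_{\R\times(T,\infty)}$-measurable, and the variational identity \eqref{gargmax} — essentially independently, with the last one carrying most of the work via the variational structure of Busemann functions. The measurability is immediate once the $\L(0,0;0,n)$-normalization in \eqref{busemann1} is cancelled: since that normalization does not depend on the spatial argument, $W(x)=\mathcal{B}(x,T)-\mathcal{B}(0,T)=\lim_{n\to\infty}\big(\L(x,T;0,n)-\L(0,T;0,n)\big)$, and every term on the right involves $\L$ only at pairs of times lying in $[T,\infty)$; as $\L$ is continuous, $\L|_{\R\times[T,\infty)}$ is a measurable function of $\L|_{\R\times(T,\infty)}$, which gives the claim.

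For the Brownian law, I would use time stationarity and the rescaling relation (Proposition \ref{Proposition: properties of the landscape}): for each fixed $n>T$ the process $x\mapsto\L(x,T;0,n)-\L(0,T;0,n)$ has the same law as $x\mapsto(n-T)^{1/3}\big(\L((n-T)^{-2/3}x,0;0,1)-\L(0,0;0,1)\big)$. Writing $\L(z,0;0,1)-\L(0,0;0,1)=\mathcal{K}(z,0;0,1)-\mathcal{K}(0,0;0,1)-z^2$ and using that $z\mapsto\mathcal{K}(z,0;0,1)$ is, near $0$, locally absolutely continuous with respect to a Brownian motion of diffusion coefficient $2$ — a standard Brownian–Gibbs input, with Radon–Nikodym control strong enough to survive a scaling limit — one checks that as $n\to\infty$ the finite-dimensional marginals of the rescaled processes converge to those of $\sqrt2$ times a standard two-sided Brownian motion (the $z^2$ term being negligible on the shrinking spatial scale). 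Since $W$ is the almost sure limit of these processes, its law is the limiting one. (This identification of the Busemann function restricted to a time slice is also recorded in \cite{rv21,gz22}, which one may simply quote.)

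The variational identity is the substantive part, proved by metric composition. Fix $x$ and $n>T$. By \eqref{metric comp}, $\L(x,0;0,n)=\max_{z\in\R}\big(\L(x,0;z,T)+\L(z,T;0,n)\big)$, with left-most maximizer $z_n:=g_{(x,0;0,n)}(T)$, the location at time $T$ of the finite geodesic to $(0,n)$. Subtracting $\L(0,0;0,n)$, one has $\L(x,0;0,n)-\L(0,0;0,n)=\max_z f_n(z)=f_n(z_n)$ where $f_n(z):=\L(x,0;z,T)+\big(\L(z,T;0,n)-\L(0,0;0,n)\big)$; the left side tends to $\mathcal{B}(x,0)$, and for each fixed $z$, $f_n(z)\to f(z):=\L(x,0;z,T)+\mathcal{B}(z,T)$. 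Two standard facts close the argument: first, the finite geodesics $g_{(x,0;0,n)}$ converge, uniformly on compact time intervals, to the infinite upward geodesic $g_{(x,0)}$ \cite{rv21}, so $z_n\to z_*:=g_{(x,0)}(T)$; second, $f_n\to f$ locally uniformly in $z$, which follows from equicontinuity of $z\mapsto\L(z,T;0,n)$ on compacts uniformly in $n$ (bound the increment by $\sup_w\big(\L(z_1,T;w,T+1)-\L(z_2,T;w,T+1)\big)$ over the relevant compact $w$-range, the latter being compact with high probability by Theorem \ref{Theorem: transversal fluctuation}, and invoke Theorem \ref{modcont}). Then $f_n(z_n)\to f(z_*)$, so $\mathcal{B}(x,0)=f(z_*)$, while $f(z)=\lim_n f_n(z)\le\lim_n f_n(z_n)=f(z_*)$ for every $z$; hence $z_*$ maximizes $f$. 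Substituting $\mathcal{B}(z,T)=W(z)+\mathcal{B}(0,T)$ and dropping the $z$-independent constants $\mathcal{B}(0,T)$ and $W(x)$ yields $g_{(x,0)}(T)\in\argmax_{y}\big(\L(x,0;y,T)+W(y)-W(x)\big)$. To see this is the left-most (in fact the unique) maximizer, note that $W$ is $\L|_{\R\times(T,\infty)}$-measurable whereas $\L(x,0;\cdot,T)$ is $\L|_{\R\times[0,T]}$-measurable, so by Proposition \ref{Proposition: independent increment} they are independent; and the sum of a continuous function that tends to $-\infty$ parabolically (which $\L(x,0;\cdot,T)$ does, by Theorem \ref{hold432}) and an independent two-sided Brownian motion attains its maximum at a unique location almost surely.

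The main obstacle is the interchange of $n\to\infty$ with the maximum over $z$ in the metric-composition step: one must establish the locally uniform convergence $f_n\to f$ and rule out maximizers escaping to infinity, both of which are reduced to uniform-in-$n$ regularity of $z\mapsto\L(z,T;0,n)$ via the transversal-fluctuation and modulus-of-continuity estimates, and making that uniformity precise is the delicate point. If one also insists on deriving the Brownian law from scratch rather than quoting it, the secondary difficulty is extracting the diffusion coefficient $2$ with errors controlled well enough to pass through the rescaling.
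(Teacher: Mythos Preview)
The paper does not actually prove this theorem: its proof consists entirely of citing Lemma~4.4 and Lemma~4.7 of \cite{gz22} as the $r=0$ case. Your proposal, by contrast, sketches a direct argument via metric composition and convergence of finite geodesics to the infinite upward geodesic. So the two approaches differ in that the paper outsources everything, while you reconstruct the variational identity from first principles using the Busemann structure; your argument is more informative but also more work, and the locally-uniform convergence $f_n\to f$ (which you correctly flag as the delicate step) is precisely the kind of technical lemma that \cite{gz22} packages once and for all.

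One small point to watch: your uniqueness argument for the argmax (independence of $W$ and $\L(x,0;\cdot,T)$, then parabolic decay plus Brownian motion has a.s.\ unique maximizer) is valid for each fixed $x$, but the theorem as stated asserts \eqref{gargmax} for all $x\in\R$ simultaneously with the left-most convention. Your convergence $z_n\to z_*$ only shows that $z_*$ lies in the argmax set, not that it is the left-most element when the set is larger than a singleton. For the simultaneous statement you would need either to upgrade the geodesic convergence from \cite{rv21} to a statement about left-most geodesics converging to left-most maximizers, or to argue separately that the exceptional $x$ with non-unique argmax still satisfy the left-most identity. This is a genuine but minor gap relative to the full ``for all $x$'' claim.
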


\begin{proof}
The fact that $W$ is two-sided Brownian motion is the $r = 0$ case of Lemma 4.4 in \cite{gz22}.  The result \eqref{gargmax} is the $r = 0$ case of Lemma 4.7 in \cite{gz22}.
\end{proof}

\section{Sampling from the edge of the population}\label{Section: 1-dimension}

This section is devoted to the proof of Theorem \ref{Theorem: main2}. As already mentioned, the proof proceeds through the approximating statements in Propositions \ref{Proposition: expectation, fixed time, 1}, \ref{Proposition: expectation, fixed time, 2}, \ref{Proposition: lln, fixed time, points anywhere}, \ref{Proposition: lln, fixed time, points in [0,n]}, and \ref{Proposition: diff2}.  In Section \ref{sub3.1}, we use scaling properties of the directed landscape to prove Propositions \ref{Proposition: expectation, fixed time, 1} and \ref{Proposition: expectation, fixed time, 2}.  In Section \ref{sub3.2}, we use Birkhoff's ergodic theorem to prove Proposition \ref{Proposition: lln, fixed time, points anywhere}.  In Section \ref{sub3.3}, we show that our results are not affected by small changes in the sampling rate, which will lead to a proof of Proposition \ref{Proposition: lln, fixed time, points in [0,n]}.  Finally, in Section \ref{sub3.4}, we prove Proposition \ref{Proposition: diff2}.

\subsection{Proof of Propositions \ref{Proposition: expectation, fixed time, 1} and \ref{Proposition: expectation, fixed time, 2}}\label{sub3.1}

In this section, we will use a scaling argument to prove Propositions \ref{Proposition: expectation, fixed time, 1} and and \ref{Proposition: expectation, fixed time, 2}.
We start by recording a semigroup property of the geodesics.

\begin{lemma}\label{Lemma: semigroup}
Let $g_{(x,s)}(\cdot)$ be the left-most infinite upward geodesic starting from $(x,s)$. Let $t>s$, and let $y=g_{(x,s)}(t-s)$.
Then $g_{(x,s)}(u+t-s)=g_{(y,t)}(u)$ for all $u\ge 0$. That is, restrictions of left-most infinite upward geodesics are still left-most infinite upward geodesics.
\end{lemma}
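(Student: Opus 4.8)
The plan is to prove Lemma~\ref{Lemma: semigroup} by combining the concatenation property of path lengths (Proposition~\ref{Proposition: concatenation}) with the defining optimality of geodesics, and then to promote the resulting ``a geodesic'' statement to the \emph{left-most} infinite upward geodesic via the ordering/planarity of geodesics (Proposition~\ref{Proposition: ordering of geodesics}).

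\textbf{Step 1: The restriction is an infinite upward geodesic.} Set $y = g_{(x,s)}(t-s)$, and let $h$ be the restriction of $g_{(x,s)}$ to $[t-s,\infty)$, reparametrized so that $h(u) = g_{(x,s)}(u + t - s)$ for $u \ge 0$; thus $h(0) = y$. I first check that $h$ is an infinite geodesic starting from $(y,t)$. Fix $u > 0$. By definition of $g_{(x,s)}$ being an infinite geodesic, its restriction to $[0, u + t - s]$ is a geodesic from $(x,s)$ to $(g_{(x,s)}(u+t-s), s + u + t - s) = (h(u), t + u)$, so its $\L$-length equals $\L(x,s; h(u), t+u)$. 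Write this restriction as the concatenation $\gamma_1 \oplus \gamma_2$, where $\gamma_1$ is the restriction to $[0, t-s]$ (a path from $(x,s)$ to $(y,t)$) and $\gamma_2$ is $h$ restricted to $[0,u]$ (a path from $(y,t)$ to $(h(u), t+u)$). By Proposition~\ref{Proposition: concatenation}, $\|\gamma_1\|_\L + \|\gamma_2\|_\L = \|\gamma_1 \oplus \gamma_2\|_\L = \L(x,s;h(u),t+u)$. Since $\gamma_1 \oplus \gamma_2$ is a geodesic, we also have $\|\gamma_1\|_\L + \|\gamma_2\|_\L \ge \L(x,s;y,t) + \L(y,t;h(u),t+u)$ by the triangle inequality for the directed metric, while $\|\gamma_1\|_\L \le \L(x,s;y,t)$ and $\|\gamma_2\|_\L \le \L(y,t;h(u),t+u)$ always hold. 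Combining these forces $\|\gamma_2\|_\L = \L(y,t;h(u),t+u)$, i.e.\ $\gamma_2$ is a geodesic from $(y,t)$ to $(h(u),t+u)$. As $u > 0$ was arbitrary, $h$ is an infinite geodesic starting from $(y,t)$. The upward condition $\lim_{u\to\infty} u^{-1} h(u) = \lim_{v\to\infty} (v - (t-s))^{-1} g_{(x,s)}(v) = 0$ is inherited directly from that of $g_{(x,s)}$.

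\textbf{Step 2: Identifying $h$ with the left-most infinite upward geodesic $g_{(y,t)}$.} It remains to show $h = g_{(y,t)}$, i.e.\ that $h$ is the \emph{left-most} such geodesic. Suppose not; then there is some $u_0 > 0$ with $g_{(y,t)}(u_0) < h(u_0)$ (we cannot have $g_{(y,t)}(u_0) > h(u_0)$ since $g_{(y,t)}$ is left-most). Consider the path from $(x,s)$ obtained by following $g_{(x,s)}$ on $[0, t-s]$ and then following $g_{(y,t)}$ afterwards, i.e.\ $\gamma_1 \oplus g_{(y,t)}$. By Proposition~\ref{Proposition: concatenation} and the fact that both pieces are geodesics, this concatenation is itself a geodesic from $(x,s)$ to $(g_{(y,t)}(u), t+u)$ for every $u$, hence an infinite upward geodesic starting from $(x,s)$ that at time $t + u_0$ lies strictly to the left of $g_{(x,s)}$. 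But $g_{(x,s)}$ was chosen to be the left-most infinite upward geodesic from $(x,s)$; on the almost-sure event where left-most infinite upward geodesics from all starting points exist and are well-defined (Corollary~3.20 of \cite{rv21}, invoked in the paper), this is a contradiction. Therefore $h(u) = g_{(y,t)}(u)$ for all $u \ge 0$, which is exactly the claim $g_{(x,s)}(u + t - s) = g_{(y,t)}(u)$.

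\textbf{Main obstacle.} The conceptually routine part is Step~1; the delicate point is Step~2, where one must be careful about what ``left-most'' means and make sure the comparison geodesic $\gamma_1 \oplus g_{(y,t)}$ is genuinely a competitor for the title of left-most infinite upward geodesic from $(x,s)$ --- in particular that it really is an infinite geodesic (handled by the concatenation property) and is upward (inherited from $g_{(y,t)}$). One subtlety worth a sentence in the writeup is that everything should be stated on the single almost-sure event on which left-most infinite upward geodesics exist and are consistent across all starting points, so that the contradiction in Step~2 is legitimate; the uniqueness of the left-most object is what rules out a ``tie'' between $h$ and $\gamma_1 \oplus g_{(y,t)}$.
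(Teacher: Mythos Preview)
Your Step~1 is fine (and more explicit than the paper's one-line dismissal of this point). The gap is in Step~2: you assert that $\gamma_1 \oplus g_{(y,t)}$ is an infinite upward geodesic from $(x,s)$, citing Proposition~\ref{Proposition: concatenation}. But that proposition only says the length of a concatenation is the sum of the lengths of the pieces; it does \emph{not} say that concatenating two geodesics yields a geodesic. Concretely, for $u>0$ the restriction $(\gamma_1 \oplus g_{(y,t)})|_{[0,t-s+u]}$ has length $\L(x,s;y,t)+\L(y,t;g_{(y,t)}(u),t+u)$, and for this to equal $\L(x,s;g_{(y,t)}(u),t+u)$ you need $(y,t)$ to lie on a geodesic from $(x,s)$ to $(g_{(y,t)}(u),t+u)$ --- which is exactly what is in question and is not automatic.

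The paper fixes this by invoking coalescence (Proposition~\ref{Proposition: geodesic coalesecent}): the two infinite upward geodesics $h=g_{(x,s)}(\cdot+t-s)$ and $g_{(y,t)}$ from $(y,t)$ merge at some $(y',t')$. For any $t''>t'$ the original path $g_{(x,s)}|_{[0,t''-s]}$ is a genuine geodesic passing through $(y,t)$ and $(y',t')$, and swapping its middle segment (the piece of $h$ from $(y,t)$ to $(y',t')$) for the corresponding piece of $g_{(y,t)}$ preserves the length, since both pieces have length $\L(y,t;y',t')$. Thus the swapped path is a geodesic from $(x,s)$ to $(g_{(x,s)}(t''-s),t'')$ for every $t''>t'$, hence an infinite upward geodesic, and now the left-most contradiction goes through. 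Your argument can be repaired along exactly these lines; the missing ingredient is coalescence, which turns an ad hoc concatenation into a local surgery on a known geodesic.
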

\begin{proof}
It is obvious that  $g_{(x,s)}(\cdot+t-s)$ is an infinite upward geodesic starting from $(y,t)$. It remains to show that it is also the left-most one. By Proposition \ref{Proposition: geodesic coalesecent}, the infinite upward geodesics $g_{(x,s)}(\cdot+t-s)$ and $g_{(y,t)}(\cdot)$ will coalesce at some $(y',t')$. Let $\gamma_1(u)=g_{(y,t)}(u)$ for $0\le u\le t-t'$ be the portion of $g_{(y,t)}(\cdot)$ that connects $(y,t)$ and $(y',t')$. Likewise, let $\gamma_2(u)=g_{(x,s)}(u+t-s)$ for $0\le u\le t-t'$ be the portion of $g_{(x,s)}(\cdot)$ that connects $(y,t)$ and $(y',t')$. See Figure \ref{Figure: semigroup} for an illustration. Suppose $\gamma_1(u)\neq \gamma_2(u)$ for some $u\in[0,t-t']$. We claim that we can construct a new infinite upward geodesic starting from $(x,s)$ by switching from $\gamma_2$ to $\gamma_1$, which contradicts the fact that $g_{(x,s)}(\cdot)$ is the left-most infinite upward geodesic.

\begin{figure}[h]
\centering
\begin{tikzpicture}[scale=0.25]
\draw plot [smooth, tension=1] coordinates {(15,0) (12, 2.5) (12, 5) };
\draw [dashed] plot [smooth, tension=1] coordinates {(12, 5) (13, 10) (10, 15) };
\draw plot [smooth, tension=1] coordinates {(12, 5) (16, 10) (10, 15)};
\draw  plot [smooth, tension=1] coordinates {(10, 15) (9, 17.5) (11,20)};

\fill (15,0) circle(8pt); 
\fill (12,5) circle(8pt); 
\fill (10,15) circle(8pt); 
\fill (11,20) circle(8pt); 

\node at (12,0) {$(x,s)$};
\node at (9,5) {$(y,t)$};
\node at (7,15) {$(y',t')$};
\node at (7, 20) {$(y'',t'')$};
\node at (10, 2.5) {$\gamma_0$};
\node at (11, 10) {$\gamma_1$};
\node at (18, 10) {$\gamma_2$};
\node at (10.5, 17.5) {$\gamma_3$};
\end{tikzpicture}
\caption{A geodesic $\gamma_1$ from $(y,t)$ to $(y',t')$ and a geodesic $\gamma_0\oplus\gamma_2\oplus\gamma_3$ from $(x,s)$ to $(y'',t'')$. One can switch from $\gamma_2$ to $\gamma_1$ so that $\gamma_0\oplus\gamma_1\oplus\gamma_3$ is a geodesic from $(x,s)$ to $(y'',t'')$.}
\label{Figure: semigroup}
\end{figure}

To validate the switching from $\gamma_2$ to $\gamma_1$, it suffices to consider the portion of the geodesic from time $s$ to time $t''$, where $t''>t'$, since portions of finite geodesics are also finite geodesics. Let $y''=g_{(x,s)}(t''-s)$. Also, let $\gamma_0$ be the portion of $g_{(x,s)}(\cdot)$ that connects $(x,s)$ and $(y,t)$ and let $\gamma_3$ be the portion of $g_{(x,s)}(\cdot)$ that connects $(y',t')$ and $(y'',t'')$. Because $\gamma_1$ and $\gamma_2$ are restrictions of infinite upward geodesics, we have $||\gamma_1||_\mathcal{L}=||\gamma_2||_\mathcal{L} = \L(y,t;y',t')$. Then, by Proposition \ref{Proposition: concatenation}, 
\begin{align*}
||\gamma_0\oplus \gamma_1\oplus\gamma_3||_\mathcal{L} = ||\gamma_0||_\mathcal{L}+||\gamma_1||_\mathcal{L}+||\gamma_3||_\mathcal{L} = ||\gamma_0||_\mathcal{L}+||\gamma_2||_\mathcal{L}+||\gamma_3||_\mathcal{L} = ||\gamma_0\oplus\gamma_2\oplus\gamma_3||_\mathcal{L}.
\end{align*}
Because $\gamma_0\oplus\gamma_2\oplus\gamma_3$ is the restriction of an infinite upward geodesic, the quantity above is equal to $\L(x,s;y'',t'')$, which shows that $\gamma_0\oplus \gamma_1\oplus\gamma_3$ is a finite geodesic.
\end{proof}

Recall that $\widetilde{\V}_a$ defined in \eqref{Introduction: def T_a, fixed time} consists of the individuals whose set of descendants at time zero has Lebesgue measure greater than $a$.  The following result is an immediate consequence of Lemma \ref{Lemma: semigroup}.

\begin{corollary}\label{Corollary: Vtildenew}
If $(y',t')$ is an ancestor of $(y,t)$, i.e. $g_{(y,t)}(t'-t)=y'$, then $\widetilde{D}_{(y',t')}\supseteq \widetilde{D}_{(y,t)}$. In particular, if $(y,t)\in \widetilde{\V}_{a}$, then $(y',t')\in \widetilde{\V}_{a}$.
\end{corollary}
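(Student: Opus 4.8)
The plan is to deduce the containment $\widetilde{D}_{(y',t')} \supseteq \widetilde{D}_{(y,t)}$ as a direct consequence of the semigroup property in Lemma \ref{Lemma: semigroup}, applied to left-most infinite upward geodesics emanating from time zero. I would first fix a realization on the probability-one event on which Lemma \ref{Lemma: semigroup} holds simultaneously for all starting points and all intermediate times; this is a full-measure event, since the proof of that lemma only uses Proposition \ref{Proposition: geodesic coalesecent} and the almost sure uniqueness of left-most infinite upward geodesics from every point.

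The next step is to unwind the definitions. Since $(y',t')$ is an ancestor of $(y,t)$, we have $t'\ge t$ and $g_{(y,t)}(t'-t)=y'$. Let $x\in\widetilde{D}_{(y,t)}$, which by definition means $(x,0)\in D_{(y,t)}$, i.e. $t\ge 0$ and $g_{(x,0)}(t)=y$. Because $t'\ge t\ge 0$, the time ordering needed for $(x,0)\in D_{(y',t')}$ is automatic, so the only thing to verify is $g_{(x,0)}(t')=y'$.

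The key step is then to apply Lemma \ref{Lemma: semigroup} with starting point $(x,0)$ and intermediate time $t$: since $y=g_{(x,0)}(t)$, the lemma gives $g_{(x,0)}(u+t)=g_{(y,t)}(u)$ for all $u\ge 0$. Taking $u=t'-t\ge 0$ yields $g_{(x,0)}(t')=g_{(y,t)}(t'-t)=y'$, hence $(x,0)\in D_{(y',t')}$, i.e. $x\in\widetilde{D}_{(y',t')}$; this establishes $\widetilde{D}_{(y',t')}\supseteq\widetilde{D}_{(y,t)}$. For the final assertion, if $(y,t)\in\widetilde{\V}_a$ then $t>0$ and $\widetilde{m}(\widetilde{D}_{(y,t)})>a$; since $t'\ge t>0$ and Lebesgue measure is monotone under inclusion, the containment just proved gives $\widetilde{m}(\widetilde{D}_{(y',t')})\ge\widetilde{m}(\widetilde{D}_{(y,t)})>a$, so $(y',t')\in\widetilde{\V}_a$. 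There is no real obstacle here beyond bookkeeping with the time-orientation convention (ancestors carry larger time coordinates), which is exactly what makes the indices $u+t$ and $t'-t$ match up correctly when invoking Lemma \ref{Lemma: semigroup}.
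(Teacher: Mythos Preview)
Your proof is correct and follows exactly the route the paper intends: the paper states the corollary as an immediate consequence of Lemma~\ref{Lemma: semigroup}, and you have simply unpacked that consequence by applying the semigroup identity $g_{(x,0)}(u+t)=g_{(y,t)}(u)$ at $u=t'-t$. Nothing more is needed.
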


We now establish a scaling property for $\widetilde{\V}_a$.  Define the scaling map $S_{\rho}: \R^2 \rightarrow \R^2$ by 
\begin{equation}\label{scaling map}
S_\rho(x,s)=(\rho^{-2}x,\rho^{-3}s)\qquad\text{ for all } (x,s)\in\R^2.
\end{equation}
For any $A\subseteq\R^2$, we write $S_{\rho}(A)$ for the image of $A$ under $S_{\rho}$.

\begin{proposition}\label{Proposition: properties of T_a, fixed time}
We have $S_\rho(\widetilde{\V}_{a})\stackrel{d}{=}\widetilde{\V}_{\rho^{-2}a}$.  Also, $\P(\widetilde{\V}_{a}=\emptyset)=0$ for all $a>0$.
\end{proposition}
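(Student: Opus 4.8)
The plan is to prove the two statements separately, with the scaling identity coming first and the non-emptiness following from it together with a $0$–$1$ type argument. For the scaling identity, the key is that the descendant structure is entirely determined by the left-most infinite upward geodesics $g_{(x,s)}$, and these transform covariantly under $S_\rho$ by the rescaling symmetry in Proposition \ref{Proposition: properties of the landscape}(3). Concretely, I would first record the fact that if $\L^\rho(x,s;y,t) := \rho^{-1}\L(\rho^2 x, \rho^3 s; \rho^2 y, \rho^3 t)$, then $\L^\rho \stackrel{d}{=} \L$ as random functions on $C(\R^4_\uparrow,\R)$, and that the left-most infinite upward geodesics of $\L^\rho$ are exactly the images under $S_\rho$ of those of $\L$ (this uses that rescaling preserves lengths up to the factor $\rho^{-1}$, preserves the ordering used to select left-most geodesics, and preserves the sublinear-displacement condition $\lim_{u\to\infty} u^{-1}g(u)=0$ defining ``upward''). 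Denote by $g^\rho_{(x,s)}$ the left-most infinite upward geodesic of $\L^\rho$; then $g^\rho_{(x,s)}(u) = \rho^{-2} g_{(\rho^2 x,\rho^3 s)}(\rho^3 u)$.

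Next I would trace how $\widetilde D_{(y,t)}$ transforms. By definition $\widetilde D_{(y,t)} = \{x : g_{(x,0)}(t) = y\}$, so for the rescaled landscape the descendants of $(y,t)$ at time zero are $\widetilde D^\rho_{(y,t)} = \{x : g^\rho_{(x,0)}(t) = y\} = \{x : \rho^{-2} g_{(\rho^2 x, 0)}(\rho^3 t) = y\} = \rho^{-2}\,\widetilde D_{(\rho^2 y, \rho^3 t)}$, hence $\widetilde m(\widetilde D^\rho_{(y,t)}) = \rho^{-2}\,\widetilde m(\widetilde D_{(\rho^2 y,\rho^3 t)})$. Therefore $(y,t) \in \widetilde\V^\rho_a$ iff $\widetilde m(\widetilde D_{(\rho^2 y,\rho^3 t)}) > \rho^2 a$ iff $(\rho^2 y,\rho^3 t) \in \widetilde\V_{\rho^2 a}$, i.e. $\widetilde\V^\rho_a = S_\rho(\widetilde\V_{\rho^2 a})$. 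Since $\L^\rho \stackrel{d}{=} \L$, we have $\widetilde\V^\rho_a \stackrel{d}{=} \widetilde\V_a$, so $S_\rho(\widetilde\V_{\rho^2 a}) \stackrel{d}{=} \widetilde\V_a$; replacing $a$ by $\rho^{-2}a$ gives $S_\rho(\widetilde\V_a) \stackrel{d}{=} \widetilde\V_{\rho^{-2}a}$ as claimed. (One should check that $\widetilde m(\widetilde D_{(y,t)})$ is a well-defined random variable, which follows from the measurability remarks already made in the paper around \eqref{def length}.)

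For the claim $\P(\widetilde\V_a = \emptyset) = 0$: by the scaling identity just proved, $\P(\widetilde\V_a=\emptyset) = \P(S_\rho(\widetilde\V_a)=\emptyset) = \P(\widetilde\V_{\rho^{-2}a}=\emptyset)$, so the probability $p(a):=\P(\widetilde\V_a=\emptyset)$ does not depend on $a>0$; call it $p$. Now I would argue $p = 0$ by producing, with positive probability, a point $(y,t)$ whose time-zero descendant set has large Lebesgue measure. One clean route: fix $t=1$ and observe that $\widetilde m(\widetilde D_{(y,1)})$ summed (integrated) over the $y$ on the line $\{s=1\}$ that are hit by some geodesic from time zero covers all of $\R$, so $\sup_y \widetilde m(\widetilde D_{(y,1)}) > 0$ almost surely; combined with the ergodicity/translation-invariance in the spatial direction (Proposition \ref{Proposition: properties of the landscape}(2)), for any $a$ there is, with probability one, some $y$ with $\widetilde m(\widetilde D_{(y,1)}) > a$ — indeed, by stationarity the event $\{\exists\, y\in[m,m+1] : \widetilde m(\widetilde D_{(y,1)})>a\}$ has a probability independent of $m\in\Z$, and if that probability were zero for some $a$ then a.s. $\widetilde m(\widetilde D_{(y,1)}) \le a$ for all $y$, which is impossible since these descendant sets partition (a full-measure subset of) $\R$ into at most countably many pieces (by Proposition \ref{Proposition: number of intersection points}, only finitely many geodesics from time zero hit any bounded interval at time $1$) and countably many sets of measure $\le a$ cannot cover $\R$ unless... — here one must be slightly careful: countably many sets of measure $\le a$ can of course cover $\R$. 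So instead I would use: by Proposition \ref{Proposition: number of intersection points}, $\E[N_{[m,m+1],1}]<\infty$, so a.s. for some interval $[m,m+1]$ the number of geodesic-hit points is at most its mean times a constant on a positive-probability set, forcing at least one descendant set restricted to that unit interval to have measure $\ge$ a constant $c>0$; applying scaling $S_\rho$ then amplifies $c$ to any desired $a$, giving $p(a)<1$ for all $a$, and since $p(a)\equiv p$ we get $p<1$; but we already showed $p$ is constant, and a further application of independence of disjoint time intervals (Proposition \ref{Proposition: independent increment}) across the slices $\{s=j\}$, $j=1,2,\dots$, makes $\{\widetilde\V_a=\emptyset\}$ a tail-type event, forcing $p\in\{0,1\}$, hence $p=0$.

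The main obstacle I anticipate is the last step — pinning down $\P(\widetilde\V_a=\emptyset)=0$ rather than merely $<1$. The scaling symmetry only gives that this probability is constant in $a$; upgrading ``constant and $<1$'' to ``$=0$'' requires a genuine $0$–$1$ argument, and the cleanest is to exhibit the relevant event as independent of the configuration near time zero (so that it is measurable with respect to $\L$ restricted to $\{s \ge \delta\}$ for every $\delta>0$, hence a tail event by Proposition \ref{Proposition: independent increment}), then apply Kolmogorov's $0$–$1$ law. Making ``$\widetilde\V_a=\emptyset$ is a tail event'' precise is the delicate point: one uses Corollary \ref{Corollary: Vtildenew}, which says $\widetilde\V_a$ is upward-closed along geodesics, so emptiness of $\widetilde\V_a$ is inherited from emptiness of $\widetilde\V_a\cap(\R\times(\delta,\infty))$ for every $\delta$ (if some point in $\R\times(0,\delta]$ had a large descendant set, so would all of its ancestors, which lie in $\R\times(\delta,\infty)$ arbitrarily far up), and the latter is measurable with respect to $\L|_{\R\times(\delta/2,\infty)}$ modulo checking that the geodesics and descendant sets above level $\delta/2$ are determined by the landscape there.
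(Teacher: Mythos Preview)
Your scaling argument is correct and essentially identical to the paper's.

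For the non-emptiness claim, however, you have overcomplicated matters and left a genuine gap. Your proposed $0$--$1$ law step does not work: the event $\{\widetilde\V_a \cap (\R\times(\delta,\infty)) = \emptyset\}$ is \emph{not} measurable with respect to $\L|_{\R\times(\delta/2,\infty)}$, because $\widetilde D_{(y,t)}$ is by definition the set of time-zero descendants, and the geodesics $g_{(x,0)}$ depend on the landscape all the way down to time $0$. You flag this as ``the delicate point'' but do not resolve it, and it cannot be resolved along those lines.

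The paper's argument bypasses the $0$--$1$ law entirely with a one-line observation you missed: almost surely the geodesics $g_{(0,0)}$ and $g_{(1,0)}$ coalesce at some point $(y,t)$ (Proposition~\ref{Proposition: geodesic coalesecent}), and by the ordering of geodesics $\widetilde D_{(y,t)} \supseteq [0,1]$, so $\widetilde m(\widetilde D_{(y,t)}) \ge 1$. Hence $\P(\bigcup_{a>0}\widetilde\V_a = \emptyset) = 0$. Since $\widetilde\V_a$ decreases in $a$, this equals $\lim_{a\downarrow 0}\P(\widetilde\V_a = \emptyset)$, and since you already showed this probability is constant in $a$, it is zero for every $a$. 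No tail-event machinery, no appeal to Proposition~\ref{Proposition: number of intersection points}, no ``$p<1$ then $p\in\{0,1\}$'' two-step.
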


\begin{proof}
Define $\L_{\rho}$ on $\R^4_{\uparrow}$ by
$$
\L_{\rho}(x,s;y,t)=\rho \L(\rho^{-2}x, \rho^{-3}s; \rho^{-2}y, \rho^{-3}t)=\rho\L(S_{\rho}(x,s);S_{\rho}(y,t)).
$$
Note that $\L$ and $\L_\rho$ are equal in distribution by the scaling property of the directed landscape (part 3 of Proposition \ref{Proposition: properties of the landscape}). Quantities defined with respect to $\L_{\rho}$ will be subscripted by $\rho$. For example, $g_{(x,s),\rho}$ denotes the geodesic starting from $(x,s)$ in $\L_{\rho}$, and $\widetilde{D}_{(y,t),\rho}$ denotes the descendants of $(y,t)$ in $\L_{\rho}$. Note that we have the following equality by definition:
$$
S_{\rho}(g_{(x,s)})=g_{S_{\rho}(x,s),\rho}.
$$
That is, if $g_{(x,s)}$ is an infinite upward geodesic in $\L$, then its image under $S_{\rho}$ is the infinite upward geodesic $g_{S_{\rho}(x,s),\rho}$ in $\L_{\rho}$. It follows that 
$$
S_{\rho}(\widetilde{D}_{(y,t)}) = \widetilde{D}_{S_{\rho}(y,t),\rho}.
$$
Note that $\widetilde{m}(\widetilde{D}_{(y,t)})\ge a$ if and only if $\widetilde{m}(\widetilde{D}_{S_{\rho}(y,t),\rho})=\widetilde{m}(S_{\rho}(\widetilde{D}_{(y,t)}))\ge \rho^{-2} a$. Therefore,
\begin{align*}
\widetilde{\V}_{a} = \{(y,t)\in\R^2: \widetilde{m}(\widetilde{D}_{(y,t)})\ge a\}=\{(y,t)\in\R^2 :\widetilde{m}(\widetilde{D}_{S_{\rho}(y,t),\rho})\ge \rho^{-2}a\}.
\end{align*}
Therefore, 
\begin{align*}
S_{\rho}(\widetilde{\V}_a)=\{S_{\rho}(y,t): \widetilde{m}(\widetilde{D}_{S_{\rho}(y,t),\rho})\ge \rho^{-2} a\}=\{(y,t): \widetilde{m}(\widetilde{D}_{(y,t),\rho})\ge \rho^{-2} a\}=\widetilde{\V}_{\rho^{-2}a,\rho}.
\end{align*}
Because $\L$ and $\L_\rho$ are equal in distribution, it follows that $S_{\rho}(\V_{a})\stackrel{d}{=}\V_{\rho^{-2}a}$.

To prove the second claim, consider the geodesics $g_{(0,0)}$ and $g_{(1,0)}$. By Proposition \ref{Proposition: geodesic coalesecent}, with probability 1, these geodesics coalesce at some $(y,t)\in\R^2$ and there exists a random $\eps>0$ such that $(y,t)\in\widetilde{\V}_{\eps}$. Therefore $\P(\cup_{a>0} \widetilde{\V}_{a} =\emptyset)=\lim_{a\rightarrow0}\P(\widetilde{\V}_{a}=\emptyset)=0$. By the first claim, the probability $\P(\widetilde{\V}_{a}=\emptyset)$ does not depend on $a$, and we have $\P(\widetilde{\V}_{a}=\emptyset)=0$ for all $a>0$. 
\end{proof}

The next lemma bounds the expected number of points in $(0,1]\times\{t\}$ whose set of descendants at time zero has Lebesgue measure greater than 2 but less than 4. Because the integral of this bound over $t$ converges, we will be able to use this bound to show that the constant $C_1'$ in Proposition \ref{Proposition: expectation, fixed time, 1} is finite. The primary tool that we need to prove this bound is the three-arm estimate in Theorem \ref{Theorem: 3-disjoint}.
\begin{lemma}\label{Lemma: C is finite, fixed time}
For all $p\in(1,\infty)$, there exists a constant $C_p$ such that 
$$
\E\big[\#\big((\widetilde{\V}_{2}^t\setminus \widetilde{\V}_{4}^t) \cap (0,1] \big) \big]\le C_pt^{-5/(3p)}\log^{52/p}(t\vee 2).
$$
\end{lemma}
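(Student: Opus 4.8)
The plan is to bound the expected number of points $(y,t)$ in $(0,1]\times\{t\}$ with $\widetilde{m}(\widetilde{D}_{(y,t)})\in(2,4]$ by a union over a fine net on the line $(0,1]\times\{t\}$, reducing the event to a three-arm (i.e.\ $\ncg$-type) event on which Theorem \ref{Theorem: 3-disjoint} can be applied. First I would observe that if $(y,t)\in\widetilde{\V}_2^t\setminus\widetilde{\V}_4^t$, then $\widetilde{D}_{(y,t)}$ is a sub-interval of $\R\times\{0\}$ (since the set of time-zero points whose upward geodesic passes through $(y,t)$ is an interval by planarity, Proposition \ref{Proposition: ordering of geodesics}) of length in $(2,4]$; in particular its two endpoints are at distance roughly $O(1)$ apart. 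The key topological fact is that two such points $(y,t)$ and $(y',t)$ on the same horizontal line, both with descendant interval of length in $(2,4]$, have \emph{disjoint} descendant intervals unless one of the intervals contains the other — but containment is ruled out once the lengths are comparable (at most a factor $2$), so distinct such points give disjoint intervals in $\R\times\{0\}$, each of length $>2$. Hence the number of such points with $y\in(0,1]$ is finite, but to control its expectation I need a pointwise probability bound.

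Next I would set up the three-arm reduction. Cover $(0,1]$ by $\lceil 1/\delta\rceil$ intervals of length $\delta$ for a $\delta$ to be chosen (roughly $\delta = t^{-2/3}$ or so, matching the transversal-fluctuation scale at which geodesics from an $O(1)$-separated pair meet by time $t$). For each such interval $J$, if it contains a point $(y,t)$ with $\widetilde{m}(\widetilde{D}_{(y,t)})\in(2,4]$, then there exist three time-zero points $a<b<c$ with $c-a\le 4$ (roughly) whose upward geodesics do not all coalesce before time $t$ — indeed the left endpoint, right endpoint, and an intermediate point of the descendant interval have upward geodesics that have not merged into one by time $t$ (otherwise $\widetilde{D}$ would be larger, contradicting the upper bound $4$, or the geodesics would already have coalesced, making $(y,t)$ not an endpoint of the descendant structure). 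After translating so that the middle point is at $0$ and rescaling by the scaling property (Proposition \ref{Proposition: properties of the landscape}, part 3) together with shear invariance, this becomes exactly the event in Theorem \ref{Theorem: 3-disjoint}: three infinite upward geodesics from $O(1)$-separated points staying disjoint up to time $t$. Thus the probability that a fixed interval $J$ of length $\delta$ contains such a point is at most the probability of an $\ncg$-type event, which by Theorem \ref{Theorem: 3-disjoint} (after the appropriate rescaling, noting $\delta$-separated starting points rescale to height $t\delta^{3/2}$) is $O\big(t^{-5/3}\log^{52}t\big)$ up to constants depending on $\delta$-scaling. Summing over the $O(1/\delta)$ intervals and optimizing gives a bound of order $t^{-5/3}\log^{52}t$ for $\E[\#(\cdots)]$; taking the bound to the power $1/p$ via Hölder-type interpolation against the a.s.\ finiteness of the count (or more simply, since $\#(\widetilde{\V}_2^t\setminus\widetilde{\V}_4^t)\cap(0,1]$ is integer-valued, bounding $\E[X]\le \P(X\ge 1)^{1-1/p}\,\E[X^{p'}]^{1/p'}$ using a moment bound on $X$) yields the stated $C_p t^{-5/(3p)}\log^{52/p}(t\vee 2)$.

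I would fill in the moment bound on $X := \#\big((\widetilde{\V}_2^t\setminus\widetilde{\V}_4^t)\cap(0,1]\big)$ using the disjointness of the descendant intervals: since distinct contributing points give disjoint length-$(>2)$ intervals in $\R\times\{0\}$, and by transversal fluctuations (Theorem \ref{Theorem: transversal fluctuation}) all relevant time-zero points lie in a window of size $O(t^{2/3}\log^{1/3}t)$ with overwhelming probability, we get $X = O(t^{2/3}\log^{1/3}t)$ deterministically on a high-probability event, and more than exponentially small contribution otherwise; this gives all moments of $X$ bounded by a power of $t$, which is more than enough for the interpolation step to lose only a $\log$-power and convert $\P(X\ge 1)\le C t^{-5/3}\log^{52}t$ into the claim. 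The main obstacle, I expect, is the careful verification that "$\widetilde{m}(\widetilde{D}_{(y,t)})\in(2,4]$ at some point of a short interval $J$" genuinely forces a three-arm event with $O(1)$-separated endpoints rather than a merely two-arm event — one has to rule out the degenerate possibility that the descendant interval is "witnessed" by only two non-coalesced geodesics, which requires using the strict intermediate point of the interval and the fact that, were those two outer geodesics to have merged before time $t$, the intermediate geodesic would have had to merge with them as well by planarity and coalescence (Proposition \ref{Proposition: geodesic coalesecent}), contradicting $(y,t)$ lying strictly inside $\widetilde{\V}_2\setminus\widetilde{\V}_4$. Getting the geometry of this reduction airtight, and matching the separation and height scales so that Theorem \ref{Theorem: 3-disjoint} applies with the stated exponent, is where the real work lies.
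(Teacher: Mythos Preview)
There is a genuine gap in your counting argument. Your covering at height $t$ cannot deliver the claimed bound of order $t^{-5/3}\log^{52}t$ on $\E[X]$ (or even on $\P(X\ge 1)$). If $y\in(0,1]$ satisfies $\widetilde m(\widetilde D_{(y,t)})\in(2,4]$, the three points $a<b<c$ at time $0$ that witness the three-arm event are only constrained to have $c-a\le 4$; their \emph{location} is free to range over an interval of width $\asymp t^{2/3}$ by the transversal fluctuation estimate. A union bound over the $\sim t^{2/3}$ possible integer locations of $b$ therefore gives at best $\P(X\ge 1)\lesssim t^{2/3}\cdot t^{-5/3}\log^{52}t=t^{-1}\log^{52}t$, not $t^{-5/3}$. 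Summing your $O(1/\delta)$ intervals does not help, since the same pool of time-$0$ starting locations feeds all of them. Your subsequent H\"older interpolation $\E[X]\le \P(X\ge1)^{1/p}\,\|X\|_{p'}$ combined with the deterministic bound $X\lesssim t^{2/3}$ only produces $\E[X]\lesssim t^{2/3-1/p}$ (if one inputs $\P(X\ge1)\lesssim t^{-1}$) or $t^{2/3-5/(3p)}$ (even granting the unproven $\P(X\ge1)\lesssim t^{-5/3}$), both strictly weaker than the target $t^{-5/(3p)}$.

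The paper's proof works from time $0$ rather than time $t$, and this is not cosmetic. Each $y\in\widetilde{\V}_2^t\setminus\widetilde{\V}_4^t$ is hit by $g_{(i,0)}$ for some integer $i$ in its descendant interval (which has length $>2$), and the event $E_i=\{g_{(i-4,0)},g_{(i,0)},g_{(i+4,0)}\text{ disjoint to time }t\}$ must then hold; this gives an \emph{injection} from such $y$'s into integers $i$, so $\#\big((\widetilde{\V}_2^t\setminus\widetilde{\V}_4^t)\cap(-K,K]\big)\le\sum_{i}\mathbbm{1}_{E_i}\mathbbm{1}_{\{g_{(i,0)}(t)\in(-K,K]\}}$. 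H\"older is applied \emph{termwise} to split $E_i$ from the landing event, and \emph{this} is where the exponent $p$ enters: $\E[\mathbbm{1}_{E_i}\mathbbm{1}_{\{g_{(i,0)}(t)\in(-K,K]\}}]\le\P(E_i)^{1/p}\P(g_{(i,0)}(t)\in(-K,K])^{1/q}$. The sum $\sum_i\P(g_{(i,0)}(t)\in(-K,K])^{1/q}$ is $O(K)$ with $K=\lceil t^{2/3}\rceil$, and the crucial final step is translation invariance: the expected count over $(0,1]$ equals $\frac{1}{2K}$ times the expected count over $(-K,K]$, which cancels the factor $K$. Your approach has no analogue of this averaging and cannot recover the missing $t^{2/3}$.
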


\begin{proof}
Fix $t>0$. For each integer $i$, let $E_i$ be the event that the geodesics $g_{(i-4,0)}$, $g_{(i,0)}$, and $g_{(i+4,0)}$ are disjoint for all $s\in [0, t]$.
For each $y\in\R$, let $I_y$ be the collection of integers $i$ such that the event $E_i$ occurs and $g_{(i,0)}(t)=y$. We claim that $I_y\neq\emptyset$ if $y\in\widetilde{\V}_{2}^t\setminus \widetilde{\V}_{4}^t$. Indeed, since $\widetilde{m}(\widetilde{D}_{(y,t)})\ge 2$ and $\widetilde{D}_{(y,t)}$ is an interval, there exists an integer $i$ such that $g_{(i,0)}(t)=y$. Also, the geodesics $g_{(i-4,0)}$, $g_{(i,0)}$, and $g_{(i+4,0)}$ are disjoint for all $s\in [0, t]$, because otherwise we would have $y\in\widetilde{\V}_{4}^t$.
Let $K\ge 1$ be an integer to be determined later, and let $\widetilde{A}_K=(\widetilde{\V}_{2}^t\setminus \widetilde{\V}_{4}^t)\cap (-K,K]$. It then follows from the argument above that
\begin{align*}
\#\widetilde{A}_K&\le \sum_{y\in\widetilde{A}_K}\#I_y=\sum_{y\in\widetilde{A}_K} \sum_{i\in\Z}\mathbbm{1}_{E_i}\mathbbm{1}_{\{g_{(i,0)}(t)=y\}}\\
&=\sum_{i\in\Z}\sum_{y\in\widetilde{A}_K}\mathbbm{1}_{E_i}\mathbbm{1}_{\{g_{(i,0)}(t)=y\}}=\sum_{i\in\Z}\mathbbm{1}_{E_i}\mathbbm{1}_{\{g_{(i,0)}(t)\in \widetilde{A}_K\}}\le\sum_{i\in\Z}\mathbbm{1}_{E_i}\mathbbm{1}_{\{-K<g_{(i,0)}(t)\le K\}}.
\end{align*} 
Taking expectations, we have
$$
\E[\#\widetilde{A}_K]\le \sum_{i\in \Z} \E[\mathbbm{1}_{E_i} \mathbbm{1}_{\{-K< g_{(i,0)}(t)\le K\}}].
$$
By H\"older's inequality, for any $p,q\in(1,\infty)$ with $1/p+1/q=1$, we have
$$
\E[\#\widetilde{A}_K]\le \sum_{i\in \Z} \P(E_i)^{1/p}\P(-K< g_{(i,0)}(t)\le K)^{1/q}.
$$
By Theorem \ref{Theorem: 3-disjoint}, there exists a constant $C\in(0,\infty)$ such that 
$$
\P(E_i)\le Ct^{-5/3}\log ^{52}(t\vee 2).
$$
Therefore,
\begin{equation}\label{Sampling from a fixed time: C is finite}
\E[\#\widetilde{A}_K]\le C_pt^{-5/(3p)}\log^{52/p}(t\vee 2)\sum_{i\in \Z}\P(-K<g_{(i,0)}(t)\le K)^{1/q}.
\end{equation}  
We bound $\P(-K< g_{(i,0)}(t)\le K)$ by 1 if $|i|< 2K$, and use Theorem \ref{Theorem: transversal fluctuation} if $|i|\ge 2K$ to get
\begin{align*}
&\sum_{i\in\Z}\P(-K< g_{(i,0)}(t)\le K)^{1/q}\\
&\qquad=\sum_{|i|< 2K}\P(-K< g_{(i,0)}(t)\le K)^{1/q}+\sum_{|i|\ge 2K}\P(-K< g_{(i,0)}(t)\le K)^{1/q}\\
&\qquad\le (4K-1)+2\sum_{i\ge2K}\P(|g_{(i,0)}(0)-g_{(i,0)}(t)|\ge i-K)^{1/q}\\
&\qquad=(4K-1)+2\sum_{i\ge K}\P(|g_{(i+K,0)}(0)-g_{(i+K,0)}(t)|\ge i)^{1/q}\\
&\qquad\le (4K-1)+2C_3 \sum_{i\ge K} e^{-C_4 i^3 /(t^2 q)}.
\end{align*}
We now take $K=\lceil t^{2/3}\rceil $. For $i\ge K$, we have $e^{-C_4 i^3 /(t^2 q)}\le e^{-C_4 i /(t^{2/3}q)}$ and hence
\begin{align*}
\sum_{i\in\Z}\P(-K< g_{(i,0)}(t)\le K)^{1/q}\le (4K-1)+2C_3 \sum_{i\ge K} e^{-C_4 i /(t^{2/3}q)}\le(4K-1)+\frac{2C_3}{1-e^{-C_4 /(t^{2/3}q)}}.
\end{align*}
The last expression is bounded above by $C_p t^{2/3}$ for some constant $C_p$ as $t\rightarrow\infty$, and is bounded by a constant $C$ as $t\rightarrow0$. Therefore, there exists a constant $C_p$ such that
$$
\sum_{i\in\Z}\P(-K< g_{(i,0)}(t)\le K)^{1/q}\le C_p\max\{1,t^{2/3}\}.
$$
Plugging this in \eqref{Sampling from a fixed time: C is finite}, we get
$$
\E[\#\widetilde{A}_K]\le C_pt^{-5/(3p)}\log^{52/p}(t\vee 2)\max\{1,t^{2/3}\}.
$$
Recall that $\widetilde{A}_K=(\widetilde{\V}_{2}^t\setminus \widetilde{\V}_{4}^t)\cap (-K,K]$ and that $K=\lceil t^{2/3} \rceil\ge \max\{1,t^{2/3}\}$. It follows, using the translation invariance property of the directed landscape, that
$$
\E\big[\#\big((\widetilde{\V}_{2}^t\setminus \widetilde{\V}_{4}^t) \cap (0,1] \big) \big]=\frac{1}{2K}\E[\#\widetilde{A}_K]\le C_pt^{-5/(3p)}\log^{52/p}(t\vee 2),
$$
which completes the proof.
\end{proof}

We now complete the proof of Proposition \ref{Proposition: expectation, fixed time, 1}. 

\begin{proof}[Proof of Proposition \ref{Proposition: expectation, fixed time, 1}]
Let $A=\widetilde{A}\times\R_+$  be defined as in the statement of the proposition. For any $a>0$, we define a measure $\widetilde{\nu}_a$ on $\R$ by 
$$
\widetilde{\nu}_a(\widetilde{A})=\E\Big[l\big((\widetilde{\V}_0\setminus\widetilde{\V}_a)\cap A\big)\Big].
$$
The countable additivity follows from the linearity of expectation and the monotone convergence theorem. The measure $\widetilde{\nu}_a$ is translation invariant by part 2 of Proposition \ref{Proposition: properties of the landscape}. Therefore, there exists some constant $\eta(a)\in[0,\infty]$ such that $\widetilde{\nu}_a(A)=\eta(a)\widetilde{m}(\widetilde{A})$.

Recalling the definition of $\widetilde{S}_\rho$ from \eqref{scaling map, fixed time} and applying Proposition~\ref{Proposition: properties of T_a, fixed time} with $\rho = \sqrt{a}$, we have
\begin{align*}
\#(\widetilde{\V}^t_{a}\cap \widetilde{A})
=\#\Big(\widetilde{S}_{\sqrt{a}}(\widetilde{\V}_{a}^t\cap \widetilde{A})\Big) =\#\Big(\widetilde{S}_{\sqrt{a}}(\widetilde{\V}^t_{a})\cap \widetilde{S}_{\sqrt{a}}(\widetilde{A})\Big)
\stackrel{d}{=}\#\Big(\widetilde{\V}_{1}^{a^{-3/2}t}\cap \widetilde{S}_{\sqrt{a}}(\widetilde{A})\Big).
\end{align*}
Taking expectations, integrating over $t\ge 0$, and using the change of variable $s=a^{-3/2}t$, we have
\begin{align}\label{eta}
\E\left[l\big((\widetilde{\V}_{0} \setminus \widetilde{\V}_{a})\cap A\big)\right]&=
\int_0^\infty
\E\Big[\#\big((\widetilde{\V}_{0}^t\setminus \widetilde{\V}_{a}^t) \cap \widetilde{A}\big)\Big]\ dt\nonumber\\
&=\int_0^\infty \E\Big[\#\big((\widetilde{\V}_{0}^{a^{-3/2}t}\setminus \widetilde{\V}_{1}^{a^{-3/2}t}) \cap \widetilde{S}_{\sqrt{a}}(\widetilde{A})\big) \Big] \: dt\nonumber\\
&=\int_0^\infty \E\big[\#\Big((\widetilde{\V}_{0}^{s}\setminus \widetilde{\V}_{1}^{s})\cap \widetilde{S}_{\sqrt{a}}(\widetilde{A})\big)\Big]\ a^{3/2} \: ds\nonumber\\
&=\eta(1)\widetilde{m}(\widetilde{S}_{\sqrt{a}}(\widetilde{A}))a^{3/2}\nonumber\\
&=\eta(1) \widetilde{m}(\widetilde{A})a^{1/2}. 
\end{align}
Therefore, we have $\eta(a)=\eta(1)a^{1/2}$ and we can take $C_1'=\eta(1)$. To prove \eqref{cdf2}, it remains to show that $C_1'=\eta(1)\in(0,\infty)$.

We first rule out the possibility that $\eta(1)=0$. Proceeding by contradiction, suppose that $\eta(1)=0$. Then $\eta(a)=\eta(1)a^{1/2}=0$ for all $a>0$. Taking $A=\R^2$ and taking the limit as $a\rightarrow\infty$, we get $\E[l(\widetilde{\V}_0)]=0$. By Corollary \ref{Corollary: Vtildenew}, $\widetilde{\V}_a^s\neq\emptyset$ implies that $\widetilde{\V}_a^t\neq\emptyset$ for all $t\ge s$. Therefore, the fact that $\E[l(\widetilde{\V}_a)]=0$ implies that $\P(\widetilde{\V}_{a}=\emptyset)=1$, which contradicts the second claim in Proposition \ref{Proposition: properties of T_a, fixed time}.

We next rule out the possibility that $\eta(1)=\infty$. Let $A=(0,1]\times\R_+$. By Lemma \ref{Lemma: C is finite, fixed time}, we have
$$
\E\left[l\big((\widetilde{\V}_{2} \setminus \widetilde{\V}_{4})\cap A\big)\right]=
\int_0^\infty
\E\Big[\#\big((\widetilde{\V}_{2}^t\setminus \widetilde{\V}_{4}^t) \cap \widetilde{A}\big)\Big]\ dt=C_5<\infty.
$$
For any $a>0$, the same scaling argument given in \eqref{eta} gives 
$$
\E\left[l\big((\widetilde{\V}_{2a} \setminus \widetilde{\V}_{4a})\cap A\big)\right]=\E\left[l\big((\widetilde{\V}_{2} \setminus \widetilde{\V}_{4})\cap A\big)\right]a^{1/2}=C_5 a^{1/2}.
$$
Therefore,
$$
\eta(1)=\E\left[l\big((\widetilde{\V}_{0} \setminus \widetilde{\V}_{1})\cap A\big)\right]=\sum_{i=-\infty}^0\E\left[l\big((\widetilde{\V}_{2^{i-1}} \setminus \widetilde{\V}_{2^{i}})\cap A\big)\right]=\sum_{i=-\infty}^0 C_5 2^{(i-2)/2}<\infty,
$$
which completes the proof.
\end{proof}

\begin{proof}[Proof of Proposition \ref{Proposition: expectation, fixed time, 2}]
It suffices to consider the case when $\widetilde{m}(\widetilde{A})<\infty$. If $\widetilde{m}(\widetilde{D}_{(y,t)})=a$, then $(y,t)\in Anc_k(\widetilde{\Pi}_\lambda)$ with probability $e^{-\lambda a}(\lambda a)^k/k!$. Using \eqref{cdf2}, if we let 
\begin{equation}\label{Sampling from a fixed time, density}
\phi(a)=\frac{d}{da}\E\Big[l\big((\widetilde{\V}_0\setminus\widetilde{\V}_a)\cap ([0,1]\times\R)\big)\Big] = \frac{C_1' a^{-1/2}}{2} 
\end{equation}
be the ``density" for the length of the branches whose descendants at time zero have Lebesgue measure $a$, then one would expect to integrate over $a$ to get 
\begin{equation}\label{Sampling from a fixed time: integration over a}
\E\Big[l\big(Anc_k(\widetilde{\Pi}_\lambda)\cap A\big)\Big]=\int_0^\infty \frac{e^{-\lambda a}(\lambda a)^k}{k!}\cdot \frac{C_1'a^{-1/2}}{2}\cdot\widetilde{m}(\widetilde{A}) \ da.
\end{equation}
We establish \eqref{Sampling from a fixed time: integration over a} using a discrete approximation. We have
$$
\E\big[l\big(Anc_k(\widetilde{\Pi}_\lambda)\cap A\big)\Big] = \E\left[\int_0^\infty \#\big(Anc_k(\widetilde{\Pi}_\lambda)\cap A\big)^t\ dt\right]=\int_0^\infty \E\Big[\#\big(Anc_k(\widetilde{\Pi}_\lambda)\cap A\big)^t\Big]\ dt.
$$
For each $n\ge 1$, we have
\begin{align*}
\E\Big[\#\big(Anc_k(\widetilde{\Pi}_\lambda)\cap A\big)^t\Big] &=\sum_{i=-\infty}^\infty \E\big[\#\big(Anc_k(\widetilde{\Pi}_\lambda)\cap (\widetilde{\V}_{2^{i/n}}\setminus\widetilde{\V}_{2^{(i+1)/n}})\cap A\big)^t \Big]\\
&\le \sum_{i=-\infty}^\infty \left(\sup_{x\in (2^{i/n}, 2^{(i+1)/n}]} \frac{e^{-\lambda x}(\lambda x)^k}{k!}\cdot\E\Big[\#\big( (\widetilde{\V}_{2^{i/n}}\setminus\widetilde{\V}_{2^{(i+1)/n}})\cap A\big)^t \Big]\right).
\end{align*}
For $a\in(2^{i/n}, 2^{(i+1)/n}]$, we define
$$
f_{n,t}(a) = \sup_{x\in (2^{i/n}, 2^{(i+1)/n}]} \frac{e^{-\lambda x}(\lambda x)^k}{k!} \cdot\frac{\E[\#( (\widetilde{\V}_{2^{i/n}}\setminus\widetilde{\V}_{2^{(i+1)/n}})\cap A)^t ]}{2^{(i+1)/n}-2^{i/n}},
$$
and 
\begin{align*}
f_{n}(a) = \int_0^{\infty} f_{n,t}(a)\ dt= \sup_{x\in (2^{i/n}, 2^{(i+1)/n}]} \frac{e^{-\lambda x}(\lambda x)^k}{k!} \cdot\frac{\E[l( (\widetilde{\V}_{2^{i/n}}\setminus\widetilde{\V}_{2^{(i+1)/n}})\cap A) ]}{2^{(i+1)/n}-2^{i/n}}.
\end{align*}
Then
$$
\E\Big[\#\big(Anc_k(\widetilde{\Pi}_\lambda)\cap A\big)^t\Big] \le\int_0^\infty f_{n,t}(a)\ da,\qquad\E\Big[l\big(Anc_k(\widetilde{\Pi}_\lambda)\cap A\big)\big] \le\int_0^\infty f_{n}(a)\ da.
$$
We now claim that we can apply the dominated convergence theorem to conclude that
\begin{equation}\label{Sampling from a fixed time, discretization}
\E\Big[\#\big(Anc_k(\widetilde{\Pi}_\lambda)\cap A\big)\Big] \le\lim_{n\rightarrow\infty}\int_0^\infty f_n(a)\ da=\int_0^\infty \lim_{n\rightarrow\infty}f_n(a)\ da.
\end{equation}
For the pointwise limit, using \eqref{cdf2} again, we get
\begin{equation}\label{Sampling from a fixed time, pointwise limit}
\lim_{n\rightarrow\infty} f_n(a) = \frac{e^{-\lambda a}(\lambda a)^k}{k!}\widetilde{m}(\widetilde{A})\phi(a).
\end{equation}
To see that $\{f_n\}_{n\ge 2}$ is dominated, note that $\phi(\cdot)$ is decreasing. Suppose $a\in(2^{i/n}, 2^{(i+1)/n}]$. Then
\begin{align*}
f_n(a)&=\sup_{x\in (2^{i/n}, 2^{(i+1)/n}]} \frac{e^{-\lambda x}(\lambda x)^k}{k!} \cdot\frac{\E[l( (\widetilde{\V}_{2^{i/n}}\setminus\widetilde{\V}_{2^{(i+1)/n}})\cap A) ]}{2^{(i+1)/n}-2^{i/n}}\\
&\le\frac{e^{-\lambda 2^{i/n}} (\lambda 2^{(i+1)/n})^k}{k!} \cdot \widetilde{m}(\widetilde{{A}})\phi(2^{i/n})\\
&\le \frac{e^{-\lambda 2^{i/n}} (2^{(i+1)/n})^k \phi(2^{i/n})}{e^{-\lambda 2^{(i+1)/n}} (2^{i/n})^k \phi(2^{(i+1)/n})} \frac{e^{-\lambda a} (\lambda a)^k}{k!}\cdot \widetilde{m}(\widetilde{A}) \phi(a)\\
&\le e^{(\sqrt{2}-1)\lambda a} 2^{k+1/2} \frac{e^{-\lambda a} (\lambda a)^k}{k!}\cdot \frac{C_1'a^{-1/2}}{2}\cdot\widetilde{m}(\widetilde{A}),
\end{align*}
which is integrable. This concludes the proof of \eqref{Sampling from a fixed time, discretization}. Then by \eqref{Sampling from a fixed time, density} and \eqref{Sampling from a fixed time, pointwise limit}, we have
$$
\E\Big[l\big(Anc_k(\widetilde{\Pi}_\lambda)\cap A\big)\Big]\le \int_0^\infty \frac{e^{-\lambda a}(\lambda a)^k}{k!}\cdot \frac{C_1'a^{-1/2}}{2}\cdot\widetilde{m}(\widetilde{A})\ da.
$$
If we replace the supremum by the infimum in the definitions of  $f_{n,t}$ and $f_n$, then a similar argument yields
$$
\E\Big[l\big(Anc_k(\widetilde{\Pi}_\lambda)\cap A\big)\Big]\ge \int_0^\infty \frac{e^{-\lambda a}(\lambda a)^k}{k!}\cdot \frac{C_1'a^{-1/2}}{2}\cdot\widetilde{m}(\widetilde{A}) \ da.
$$
Therefore,
\begin{align*}
\E\Big[l\big(Anc_k(\widetilde{\Pi}_\lambda)\cap A\big)\Big]&=\int_0^\infty \frac{e^{-\lambda a}(\lambda a)^k}{k!}\cdot \frac{C_1'a^{-1/2}}{2}\cdot\widetilde{m}(\widetilde{A}) \ da\\
&= \frac{C_1'\lambda^k\widetilde{m}(\widetilde{A})}{2k!}\int_0^\infty e^{-\lambda a} a^{k-1/2}\ da\\
&=\frac{C_1' \lambda^{-1/2}}{2}\cdot\widetilde{m}(\widetilde{A}) \cdot\frac{\Gamma(k + 1/2)}{k!},
\end{align*}
which completes the proof.
\end{proof}

\subsection{Proof of Proposition \ref{Proposition: lln, fixed time, points anywhere}}\label{sub3.2}

We now establish a strong law of large numbers as stated in Proposition \ref{Proposition: lln, fixed time, points anywhere}. The spatial stationarity of $\L$ and $\widetilde{\Pi}_{\lambda}$ implies the convergence to a possibly random limit. We use a coupling argument based on Proposition \ref{Proposition: coupling} and a variance computation to deduce that the limit is deterministic.

We start with a general fact about identically distributed non-negative random variables.

\begin{lemma}\label{Lemma: probability fact}
Let $\{X_i\}_{i\in I}$ be a collection of identically distributed nonnegative random variables with finite mean. Let $\{I_n\}_{n=1}^\infty$ be a sequence of finite subsets of $I$ and $S_n=\sum_{i\in I_n} X_i$. Suppose that 
$S_n/\#I_n$ converges a.s. and in $L^1$ and that $\liminf_{n\rightarrow\infty} S_n/\#I_n\ge \E[X_1]$ a.s. Then, as $n\rightarrow\infty$, we have $S_n/\#I_n\rightarrow \E[X_1]$ a.s. and in $L^1$.
\end{lemma}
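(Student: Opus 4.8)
The plan is to pin down the almost sure limit of $S_n/\#I_n$ explicitly, showing it must equal the constant $\E[X_1]$, by playing the $L^1$ convergence against the $\liminf$ hypothesis. Denote by $L$ the random variable to which $S_n/\#I_n$ converges both almost surely and in $L^1$ (these limits agree a.s., so $L$ is well defined). The first and only computational ingredient is that since the $X_i$ are identically distributed with finite mean, linearity of expectation gives $\E[S_n/\#I_n] = (\#I_n)^{-1}\sum_{i\in I_n}\E[X_i] = \E[X_1]$ for every $n$.

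Next I would exploit the $L^1$ convergence: it implies $\E[S_n/\#I_n]\to\E[L]$, and comparing with the previous identity forces $\E[L]=\E[X_1]$. On the other hand, because $S_n/\#I_n\to L$ almost surely, the quantity $\liminf_{n\to\infty}S_n/\#I_n$ equals $L$ almost surely, so the hypothesis $\liminf_{n\to\infty}S_n/\#I_n\ge\E[X_1]$ a.s.\ is precisely the statement $L\ge\E[X_1]$ a.s. Thus $L-\E[X_1]$ is an almost surely nonnegative random variable with zero expectation, hence $L=\E[X_1]$ almost surely. This establishes the claimed almost sure convergence, and the $L^1$ convergence is already part of the hypothesis (and in any case now follows since the limit is a constant), which finishes the argument.

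There is no substantive obstacle in this lemma; it is a short measure-theoretic bookkeeping step. The only points needing (entirely routine) care are that $L^1$ convergence transfers to convergence of expectations, and that a nonnegative random variable of mean zero is zero almost surely — both standard. The content of the lemma is really that it will let us upgrade a Birkhoff-type convergence to a \emph{random} limit, together with a one-sided lower bound obtained by a separate coupling/subadditivity argument, into convergence to the \emph{deterministic} value $\E[X_1]$.
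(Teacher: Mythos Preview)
Your proof is correct and follows essentially the same approach as the paper. The only cosmetic difference is that the paper obtains the bound $\E[L]\le \E[X_1]$ via Fatou's lemma (applied to the nonnegative sequence $S_n/\#I_n$), whereas you obtain the equality $\E[L]=\E[X_1]$ directly from the assumed $L^1$ convergence; both routes then combine with the hypothesis $L\ge \E[X_1]$ a.s.\ to force $L=\E[X_1]$.
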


\begin{proof}
By Fatou's lemma, we have 
$$
\E\left[\liminf_{n\rightarrow\infty} S_n/\# I_n\right]\le \liminf_{n\rightarrow\infty}\E[S_n/\# I_n]=\E[X_1].
$$
Combining this with the assumption that $\liminf_{n\rightarrow\infty} S_n/\#I_n\ge \E[X_1]$ a.s., we have 
$$
\liminf_{n\rightarrow\infty} S_n/\#I_n=\E[X_1]\qquad\text{a.s.}.
$$ 
The claim then follows from the assumption that $S_n/\#I_n$ converges a.s. and in $L^1$.
\end{proof}

\begin{proof}[Proof of Proposition \ref{Proposition: lln, fixed time, points anywhere}]
Fix $T>0$. By part 2 of Proposition \ref{Proposition: properties of the landscape}, the random variables 
$$
\Big(l\big(Anc_{k}(\widetilde{\Pi}_{\lambda})\cap([i-1, i)\times[0,T])\big)\Big)_{i\in \Z}
$$ 
are stationary in $i$. Also, equation \eqref{sfs2} implies that
$$
\E\Big[l\big(Anc_{k}(\widetilde{\Pi}_{\lambda})\cap([i-1, i)\times[0,T])\big)\Big]<\infty.
$$ 
It then follows from Birkhoff's ergodic theorem that 
\begin{equation}\label{Sampling from a fixed time, law of large numbers, ergodic}
\lim_{N\rightarrow\infty}\frac{1}{N}\sum_{i=1}^N l\left(Anc_{k}(\widetilde{\Pi}_{\lambda})\cap([i-1, i)\times[0,T])\right)\text{ exists a.s. and in } L^1. 
\end{equation}
We claim that there is there exists a subsequence $(N_m)_{m\ge 1}$ and some constant $c$ such that
\begin{equation}\label{Sampling from a fixed time, law of large numbers, subsequence converges to constant}
\frac{1}{N_m}\sum_{i=1}^{N_m} l\big(Anc_{k}(\widetilde{\Pi}_{\lambda})\cap([i-1, i)\times[0,T])\big)\rightarrow c,\qquad\text{in probability}.
\end{equation}
Suppose for now that \eqref{Sampling from a fixed time, law of large numbers, subsequence converges to constant} holds. Then the limit in \eqref{Sampling from a fixed time, law of large numbers, ergodic} is deterministic, and using the $L^1$ convergence of \eqref{Sampling from a fixed time, law of large numbers, ergodic}, we can conclude that
\begin{equation}\label{Sampling from a fixed time, slln}
\lim_{N\rightarrow\infty}\frac{1}{N}\sum_{i=1}^N l\big(Anc_{k}(\widetilde{\Pi}_{\lambda})\cap([i-1, i)\times[0,T])\big)= \E\big[l\big(Anc_{k}(\widetilde{\Pi}_{\lambda})\cap([0, 1)\times[0,T])\big)\big],\ \text{ a.s. and in } L^1.  
\end{equation}
We then apply Lemma \ref{Lemma: probability fact} with $X_i=l(Anc_{k}(\widetilde{\Pi}_{\lambda})\cap([i-1, i)\times\R_+))$ and $I_n=\{1,2,\dots, n\}$. Note that $\E[X_1]<\infty$ by \eqref{sfs2}. For each $T>0$, let $X_{i,T}=l(Anc_{k}(\widetilde{\Pi}_{\lambda})\cap([i-1, i)\times[0,T]))$ and $S_{n,T}=\sum_{i=1}^n X_{i,T}$. Using \eqref{Sampling from a fixed time, slln}, we have
$$
\liminf_{n\rightarrow\infty}\frac{S_n}{n}\ge \sup_{T>0}\left(\lim_{n\rightarrow\infty} \frac{S_{n,T}}{n}\right)=\sup_{T>0}\E[X_{1,T}]=\E[X_1].
$$
Also, by Birkhoff's ergodic theorem, $S_{n}/n$ converges a.s. and in $L^1$. It then follows from Lemma ~\ref{Lemma: probability fact} that $S_{n}/n$ converges to $\E[X_1]$ a.s and in $L^1$. Proposition \ref{Proposition: lln, fixed time, points anywhere} then follows from \eqref{sfs2}.

It remains to show \eqref{Sampling from a fixed time, law of large numbers, subsequence converges to constant}. To prove \eqref{Sampling from a fixed time, law of large numbers, subsequence converges to constant}, we will make use of the approximate independence of the directed landscape restricted to disjoint boxes. For any $\eps>0$, let $K_{i,\eps} =[i-1/4,i+1/4]\times[0,\eps]$ be defined as in Proposition \ref{Proposition: coupling}. Also, for any $n\in \N$, let $\L_0,\L_1,\dots,\L_n$ be coupled as described in Proposition \ref{Proposition: coupling}. We can choose $\eps_n>0$ which goes to 0 as $n\rightarrow\infty$ such that
$$
\lim_{n\rightarrow\infty}\P(\L_0|_{K_{i,\eps_n}} =\L_i|_{K_{i,\eps_n}}\text{ for all }i=1,\dots, n)=1.
$$
Recall the scaling map $S_{\rho}$ from \eqref{scaling map}. Let $\rho_n=(\eps_n/T)^{1/3}$, $a_{n,i}=\rho_n^{-2}(i-1/4)$ and $b_{n,i}=\rho_n^{-2}(i+1/4)$ so that the box $B_{n,i}:=S_{\rho_n}(K_{i,\eps_n})=[a_{n,i}, b_{n,i}]\times [0,T]$ has height $T$. By choosing a smaller $\eps_n$ if necessary, we may assume that $\rho_n^{-1}$ is an integer. By the KPZ scaling property in Proposition \ref{Proposition: properties of the landscape}, we may choose $\L_{n,1},\dots, \L_{n,n}$ to be independent, and coupled with $\L$ such that
$$
\lim_{n\rightarrow\infty}\P(\L|_{B_{n,i}} =\L_{n,i}|_{B_{n,i}}\text{ for all }i=1,\dots, n)=1.
$$
Let $W$ be the two sided Brownian motion given by Theorem \ref{Theorem: Brownian motion}. For each box $B_{n,i}$ and each $x\in[a_{n,i},b_{n,i}]$, we define
$$
g^{B_{n,i}}_{(x,0)}(T)=\argmax_{y\in[a_{n,i},b_{n,i}]} \{\L_{n,i}(x,0; y,T)+W(y)-W(x)\},
$$
and for $0< t< T$, we define
$$
g^{B_{n,i}}_{(x,0)}(t)=\argmax_{y\in[a_{n,i},b_{n,i}]} \{\L_{n,i}(x,0; y,t)+\L_{n,i}(y,t; g^{B_{n,i}}_{(x,0)}(T),T)\},
$$
where the argmax is chosen to be the left-most one if it is not unique. We think of $g_{(x,0)}^{B_{n,i}}(\cdot)$ as geodesic starting from $(x,0)$ associated with the box $B_{n,i}$.

Since $W$ is independent of $\{\L_{n,i}\}_{i=1}^{n}$ and has independent increments, the geodesics associated with $B_{n,i}$ and $B_{n,j}$ are independent if $i$ and $j$ are distinct. For each $(y,t)\in B_{n,i}$, each integer $k\ge 1$, and each $a>0$, we also define the analogs of $\widetilde{D}_{(y,t)}$, $Anc_{k}(\widetilde{\Pi}_{\lambda})$, and $\widetilde{\V}_{a}$ as
\begin{align*}
\widetilde{D}_{(y,t)}^{B_{n,i}} &:= \{x\in[a_{n,i},b_{n,i}]: g_{(x,0)}^{B_{n,i}}(t)=y\},\\
Anc_{k}(\widetilde{\Pi}_{\lambda})^{B_{n,i}}&:= \{(y,t)\in {B_{n,i}}: \#(\widetilde{D}_{(y,t)}^{B_{n,i}}\cap \widetilde{\Pi}_\lambda)= k\},\\
\widetilde{\V}_{a}^{B_{n,i}} &:= \{(y,t)\in B_{n,i}: \widetilde{m}(\widetilde{D}_{(y,t)}^{B_{n,i}})> a\}.
\end{align*}

Let $B'_{n,i}:=[(2a_{n,i}+b_{n,i})/3,(a_{n,i}+2b_{n,i})/3)\times[0,T]$ be a box contained in $B_{n,i}$. Let $E_{n,i}$ be the event that the following holds; see Figure \ref{Figure} for an illustration. 
\begin{enumerate}
\item $\L|_{B_{n,i}} =\L_{n,i}|_{B_{n,i}},$
\item $\sup_{t\in[0,T]} |g_{((5a_{n,i}+b_{n,i})/6,0)}(t)-(5a_{n,i}+b_{n,i})/6|< (b_{n,i}-a_{n,i})/6,$
\item $\sup_{t\in[0,T]}|g_{((a_{n,i}+5b_{n,i})/6,0)}(t)-(a_{n,i}+5b_{n,i})/6|< (b_{n,i}-a_{n,i})/6.$
\end{enumerate}
Finally, we write $E_n:=\cap_{i=1}^n E_{n,i}$. Recall that $\P(\L|_{B_{n,i}} =\L_{n,i}|_{B_{n,i}}\text{ for all }i=1,\dots, n)\rightarrow1$ as $n\rightarrow\infty$. Also, by Theorem \ref{Theorem: transversal fluctuation}, we have
\begin{align*}
&\P\left(\sup_{t\in[0,T]}\left|g_{((a_{n,i}+5b_{n,i})/6,0)}(t)-\frac{a_{n,i}+5b_{n,i}}{6}\right|< \frac{b_{n,i}-a_{n,i}}{6}\right)\ge 1-C_3e^{-C_4/(12^3\eps_n^2)},
\end{align*}
and the same bound holds for $g_{((5a_{n,i}+b_{n,i})/6,0)}$. Therefore, $\P(E_n)\rightarrow1$ because $\eps_n\rightarrow0$.

\newcommand{\Emmett}[5]{
\draw (#1,#2)
\foreach \x in {1,...,#3}
{   -- ++(rand*#4,#5)
};
}
\pgfmathsetseed{1336}

\begin{figure}[h]
\centering
\begin{tikzpicture}[scale=0.3]
\draw[draw=white, fill=gray!30] (16,0) rectangle ++(16,15);
\draw[black] (16,0) -- (16,15);
\draw[black, dashed] (32,0) -- (32,15);
\draw[draw=black] (0,0) rectangle ++(48,15);
\Emmett{8}{0}{1500}{0.2}{0.01}
\Emmett{40}{0}{1500}{0.2}{0.01}
\node at (0,-1) {$a_{n,i}$};
\node at (16,-1) {$(2a_{n,i}+b_{n,i})/3$};
\node at (32,-1) {$(a_{n,i}+2b_{n,i})/3$};
\node at (48,-1) {$b_{n,i}$};
\node at (24, 15/2) {$B'_{n,i}$};
\node at (50, 15/2) {$B_{n,i}$};
\node at (5,10) {$g_{((5a_{n,i}+b_{n,i})/6,0)}$};
\node at (37,5) {$g_{((a_{n,i}+5b_{n,i})/6,0)}$};
\end{tikzpicture}
\caption{An illustration of the event $E_{n,i}$. Here $B_{n,i}$ is the whole box and $B'_{n,i}$ is the shaded box in the middle. On the event $E_{n,i}$, the geodesics $g_{((a_{n,i}+5b_{n,i})/6,0)}$ and $g_{((5a_{n,i}+b_{n,i})/6,0)}$ stay in the white boxes.}
\label{Figure}
\end{figure}

The ordering of the geodesics in Proposition \ref{Proposition: ordering of geodesics} implies that, on the  event $E_{n,i}$, for any $(y,t)\in B'_{n,i}$, we have $\widetilde{D}_{(y,t)}\subseteq[(5a_{n,i}+b_{n,i})/6, (a_{n,i}+5b_{n,i})/6]$. Also, for any $x\in[(5a_{n,i}+b_{n,i})/6,(a_{n,i}+5b_{n,i})/6]$, the geodesic $g_{(x,0)}$ lies in the box $B_{n,i}$ for $t\in[0,T]$. Therefore, using Theorem \ref{Theorem: Brownian motion}, on $E_{n,i}$, for all $x\in[(5a_{n,i}+b_{n,i})/6,(a_{n,i}+5b_{n,i})/6]$, we have
\begin{align*}
g_{(x,0)}(T)&=\argmax_{y\in\R} \{\L(x,0; y,T)+W(y)-W(x)\}\\
 &=\argmax_{y\in[a_{n,i}, b_{n,i}]} \{\L(x,0; y,T)+W(y)-W(x)\}\\
 &=\argmax_{y\in[a_{n,i}, b_{n,i}]} \{\L_{n,i}(x,0; y,T)+W(y)-W(x)\}=g_{(x,0)}^{B_{n,i}}(T),
\end{align*}
and $g_{(x,0)}(t)=g_{(x,0)}^{B_{n,i}}(t)$ for all $0\le t\le T$. It follows that
\begin{equation}\label{Sampling from a fixed time, law of large numbers, coupling difference}
\lim_{n\rightarrow\infty}\P\left(\sum_{i=1}^n l\big(Anc_{k}(\widetilde{\Pi}_{\lambda})\cap B'_{n,i}\big)\neq\sum_{i=1}^n l\big(Anc_{k}(\widetilde{\Pi}_{\lambda})^{B_{n,i}}\cap B'_{n,i}\big)\right)=0.  
\end{equation}

For $n\ge 1$ and $1\le i\le n$, let $X_{n,i}=l(Anc_{k}(\widetilde{\Pi}_{\lambda})^{B_{n,i}}\cap{B'_{n,i}})/(b_{n,i}-a_{n,i})$. Then for each fixed $n$, the random variables $\{X_{n,i}\}_{i=1}^n$ are independent. We now show that
\begin{equation}\label{Sampling from a fixed time, law of large numbers, SLLN for triangular array}
\lim_{n\rightarrow\infty}\frac{1}{n}\sum_{i=1}^n (X_{n,i}-\E[X_{n,i}])= 0\qquad\text{ in } L^2.
\end{equation}
Note that one can upper bound $l(Anc_{k}(\widetilde{\Pi}_{\lambda})^{B_{n,i}}\cap{B'_{n,i}})$ by considering the worst case where the geodesics starting from points in $\widetilde{\Pi}_{\lambda}\cap[a_{n,i}, b_{n,i}]$ immediately coalesce into groups of size $k$. We get
$$
\#\{x\in[a_{n,i}, b_{n,i}]: (x,t)\in Anc_{k}(\widetilde{\Pi}_{\lambda})^{B_{n,i}}\}\le \frac{\#(\widetilde{\Pi}_{\lambda}\cap[a_{n,i}, b_{n,i}])}{k},
$$ 
and hence
$$
X_{n,i}=\frac{l(Anc_k(\widetilde{\Pi}_{\lambda})^{B_{n,i}}\cap{B'_{n,i}})}{b_{n,i}-a_{n,i}}\le\frac{T\#(\widetilde{\Pi}_{\lambda}\cap[a_{n,i}, b_{n,i}])}{k(b_{n,i}-a_{n,i})}.
$$
Note also that $b_{n,i}-a_{n,i}=1/(2\rho_n^2)\rightarrow\infty$ as $n\rightarrow\infty$. It follows that 
$$
\E[X_{n,i}^2]\le \E\left[\left(\frac{T\#(\widetilde{\Pi}_{\lambda}\cap[a_{n,i}, b_{n,i}])}{k(b_{n,i}-a_{n,i})}\right)^2\right]=\frac{T^2}{k^2}\cdot\frac{\lambda^2(b_{n,i}-a_{n,i})^2+\lambda(b_{n,i}-a_{n,i})}{(b_{n,i}-a_{n,i})^2},
$$
which is uniformly bounded in $n$. Therefore, as $n\rightarrow\infty$
$$
\E\left[\left(\frac{1}{n}\sum_{i=1}^n (X_{n,i}-\E[X_{n,i}])\right)^2\right]\le \frac{\E[X_{n,1}^2]}{n} \rightarrow0,
$$
which completes the proof of \eqref{Sampling from a fixed time, law of large numbers, SLLN for triangular array}.
Also $\E[X_{n,i}]\le \lambda T/k$ is uniformly bounded in $n$. Therefore we may choose a subsequence $(n_m)_{m\ge 0}$ such that 
$$
\frac{1}{n_m}\sum_{i=1}^{n_m} X_{n_m,i}=\frac{1}{n_m(b_{n_m,i}-a_{n_m,i})}\sum_{i=1}^{n_m} l(Anc_k(\widetilde{\Pi}_{\lambda})^{B_{n_m,i}}\cap{B'_{n_m,i}})
$$
converges to a constant in probability. Combining this with \eqref{Sampling from a fixed time, law of large numbers, coupling difference}, we conclude that there is a subsequence $(n_m)_{m\ge 1}$ such that
$$
\frac{1}{n_m(b_{n_m,i}-a_{n_m,i})}\sum_{i=1}^{n_m} l(Anc_{k}(\widetilde{\Pi}_\lambda)\cap{B'_{n_m,i}})
$$
converges to some constant in probability. Recall that 
$$B'_{n,i}=[(2a_{n,i}+b_{n,i})/3,(a_{n,i}+2b_{n,i})/3)\times[0,T]=[\rho_n^{-2}(i-1/12),\rho_n^{-2}(i+1/12))\times[0,T].$$ 
Applying the same argument where we replace $[i-1/12, i+1/12)$ by $[i-1,i-5/6),\dots,[i-1/6,i)$ and choosing another subsequence which we still write as $(n_m)_{m\ge 1}$, we conclude that 
$$
\frac{1}{n_m\rho_{n_m}^{-2}}l(Anc_{k}(\widetilde{\Pi}_\lambda)\cap[0,n_m\rho_{n_m}^{-2}])
$$
converges to a constant in probability. Since $\rho_n^{-1}$ was chosen to be an integer, this concludes the proof of \eqref{Sampling from a fixed time, law of large numbers, subsequence converges to constant}.
\end{proof}

\subsection{Proof of Proposition \ref{Proposition: lln, fixed time, points in [0,n]}}\label{sub3.3}

In this section, we prove Proposition \ref{Proposition: lln, fixed time, points in [0,n]}, which establishes that the convergence proved in Proposition \ref{Proposition: lln, fixed time, points anywhere} still holds in probability when the sampling parameter is chosen so that exactly $n$ points are sampled from $[0,n]$.  As noted in the introduction, convergence in probability is more natural in this setting and is sufficient for the proof of Theorem \ref{Theorem: main2}.  We first prove a lemma which states that $Anc_k(\widetilde{\Pi}_{\lambda})$ does not change much if we change the sampling intensity $\lambda$ by a small amount. 

\begin{lemma}\label{Lemma: perturb the sampling intensity2}
Fix $\lambda>\eps>0$. Let $A=\widetilde{A}\times \R_+$ where $\widetilde{A}$ is a measurable subset of $\R$. Let
$$
B=\bigcup_{1\le i\le k\le j, i\neq j} \left(Anc_{i}(\widetilde{\Pi}_{\lambda-\eps})\cap Anc_{j}(\widetilde{\Pi}_{\lambda+\eps})\right)
$$
be the pairs $(y,t)\in\R^2$ that are ancestral to at most $k$ points in $\widetilde{\Pi}_{\lambda-\eps}$, at least $k$ points in $\widetilde{\Pi}_{\lambda+\eps}$, and at least one point in $\widetilde{\Pi}_{\lambda+\eps}\setminus\widetilde{\Pi}_{\lambda-\eps}$. Then there exists a constant $C_{k,\lambda}$ depending on $k$ and $\lambda$ such that if $\eps<\lambda/2$, then
$$
\E[l(A\cap B)]\le C_{k,\lambda}\widetilde{m}(\widetilde{A})\eps.
$$  
\end{lemma}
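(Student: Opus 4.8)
The plan is to bound $l(A\cap B)$ by the length of a slightly larger but more tractable set, then condition on the landscape $\L$ together with the ``old'' points $\widetilde\Pi_{\lambda-\eps}$, integrate out the ``new'' points $\Pi^{\mathrm{new}}:=\widetilde\Pi_{\lambda+\eps}\setminus\widetilde\Pi_{\lambda-\eps}$ using Campbell's formula, and finally invoke \eqref{cdf2} through a layer-cake identity. For the first (reduction) step, note that since $\widetilde\Pi_{\lambda-\eps}\subseteq\widetilde\Pi_{\lambda+\eps}$, any $(y,t)\in B$ has $\#(\widetilde D_{(y,t)}\cap\widetilde\Pi_{\lambda-\eps})=i\le k$ and $\#(\widetilde D_{(y,t)}\cap\widetilde\Pi_{\lambda+\eps})=j$ with $j>i$ (the conditions $i\le k\le j$, $i\ne j$ force this), so $\widetilde D_{(y,t)}$ contains $j-i\ge 1$ points of $\Pi^{\mathrm{new}}$. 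Hence $B\subseteq B'$, where
\[
B':=\{(y,t):\#(\widetilde D_{(y,t)}\cap\widetilde\Pi_{\lambda-\eps})\le k\}\cap\{(y,t):\widetilde D_{(y,t)}\cap\Pi^{\mathrm{new}}\ne\emptyset\},
\]
and it suffices to bound $\E[l(A\cap B')]$.

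\textbf{Integrating out the new points.} We use the standard construction of the monotone family $(\widetilde\Pi_\mu)$ (via a Poisson process on $\R\times\R_+$), so that $\Pi^{\mathrm{new}}$ is a Poisson process on $\R$ of intensity $2\eps$, independent of $\mathcal G:=\sigma(\L,\widetilde\Pi_{\lambda-\eps})$. Writing $l(A\cap B')=\int_{\R_+}\#((B')^t\cap\widetilde A)\,dt$, observe that for each $y$ counted in $(B')^t\cap\widetilde A$ we may pick $x\in\widetilde D_{(y,t)}\cap\Pi^{\mathrm{new}}$, which satisfies $g_{(x,0)}(t)=y$, and distinct $y$'s yield distinct $x$'s; therefore
\[
\#((B')^t\cap\widetilde A)\le\sum_{x\in\Pi^{\mathrm{new}}}\mathbbm 1\{g_{(x,0)}(t)\in\widetilde A\}\,\mathbbm 1\{\#(\widetilde D_{(g_{(x,0)}(t),t)}\cap\widetilde\Pi_{\lambda-\eps})\le k\}.
\]
The summand is $\mathcal G$-measurable, so Campbell's formula gives that the $\mathcal G$-conditional expectation of the right-hand side equals $2\eps$ times the same integrand integrated $dx$ over $\R$. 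Integrating over $t$, taking full expectations, and using that conditionally on $\L$ the count $\#(\widetilde D_{(y,t)}\cap\widetilde\Pi_{\lambda-\eps})$ is Poisson with mean $(\lambda-\eps)\,\widetilde m(\widetilde D_{(y,t)})$, we obtain
\[
\E[l(A\cap B')]\le 2\eps\,\E\!\left[\int_{\R_+}\!\!\int_{\R}\mathbbm 1\{g_{(x,0)}(t)\in\widetilde A\}\,h\!\big(\widetilde m(\widetilde D_{(g_{(x,0)}(t),t)})\big)\,dx\,dt\right],
\]
where $h(a):=\P(\mathrm{Poisson}((\lambda-\eps)a)\le k)=e^{-(\lambda-\eps)a}\sum_{j=0}^k((\lambda-\eps)a)^j/j!$.

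\textbf{Evaluating via \eqref{cdf2}.} For fixed $\L$ and $t$, grouping the inner integral by the value $y=g_{(x,0)}(t)$ and using $\int_\R\mathbbm 1\{g_{(x,0)}(t)=y\}\,dx=\widetilde m(\widetilde D_{(y,t)})$ turns it into $\sum_{y\in\widetilde A}\psi(\widetilde m(\widetilde D_{(y,t)}))$ with $\psi(a):=a\,h(a)$, where only $y$ with $(y,t)\in\widetilde\V_0$ contribute. The functional $\psi\mapsto\E[\int_{\R_+}\sum_{y\in\widetilde\V_0^t\cap\widetilde A}\psi(\widetilde m(\widetilde D_{(y,t)}))\,dt]$ is linear and monotone for $\psi\ge0$, and by \eqref{cdf2} applied to $\widetilde\V_a\setminus\widetilde\V_b$ it sends $\mathbbm 1_{(a,b]}$ to $C_1'\widetilde m(\widetilde A)(b^{1/2}-a^{1/2})$; approximating a general $\psi$ by simple functions and using monotone convergence, it therefore equals $\tfrac{1}{2}C_1'\widetilde m(\widetilde A)\int_0^\infty\psi(a)a^{-1/2}\,da$. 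Consequently
\[
\E[l(A\cap B')]\le C_1'\widetilde m(\widetilde A)\,\eps\int_0^\infty a^{1/2}h(a)\,da=C_1'\widetilde m(\widetilde A)\,\eps\sum_{j=0}^k\frac{\Gamma(j+3/2)}{j!\,(\lambda-\eps)^{3/2}}.
\]
Since $\eps<\lambda/2$ gives $(\lambda-\eps)^{-3/2}<(2/\lambda)^{3/2}$, the sum is bounded by a constant depending only on $k$ and $\lambda$, and taking $C_{k,\lambda}:=C_1'(2/\lambda)^{3/2}\sum_{j=0}^k\Gamma(j+3/2)/j!$ finishes the argument.

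\textbf{Main obstacle.} The routine parts are the Campbell computation (needing only measurability bookkeeping and the independence of $\Pi^{\mathrm{new}}$) and the final explicit Gamma integral. The one delicate point is the layer-cake identity in the third step: one must justify that the branch-length functional, when restricted to branches whose time-zero descendant set has volume in $(a,b]$, has expectation exactly $C_1'\widetilde m(\widetilde A)(b^{1/2}-a^{1/2})$, so that integrating against $\psi$ produces the $\tfrac12 a^{-1/2}\,da$ density; this is where the exponent $1/2$ from Proposition~\ref{Proposition: expectation, fixed time} enters and is the step that must be set up carefully (using $\widetilde\V_0\setminus\widetilde\V_b=(\widetilde\V_0\setminus\widetilde\V_b)$ and additivity of $l$ along the filtration of levels, plus the measurability of $t\mapsto\#\widetilde\V_a^t$).
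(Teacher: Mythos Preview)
Your proof is correct and follows essentially the same route as the paper. Both arguments bound $B$ by the set of points with at most $k$ descendants in $\widetilde\Pi_{\lambda-\eps}$ and at least one descendant in $\widetilde\Pi_{\lambda+\eps}\setminus\widetilde\Pi_{\lambda-\eps}$, then integrate against the density $\phi(a)=\tfrac{C_1'}{2}a^{-1/2}$ coming from \eqref{cdf2}. The paper packages this more compactly: it writes the conditional probability given $\widetilde m(\widetilde D_{(y,t)})=a$ directly as $\tfrac{e^{-(\lambda-\eps)a}((\lambda-\eps)a)^i}{i!}(1-e^{-2\eps a})$, bounds $1-e^{-2\eps a}\le 2\eps a$, and integrates against $\phi$ using the discrete approximation already established in the proof of Proposition~\ref{Proposition: expectation, fixed time}. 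Your Campbell-formula step recovers the factor $2\eps\cdot a$ in a different way (the $2\eps$ from the intensity, the $a$ from the change of variable $x\mapsto y=g_{(x,0)}(t)$), and your layer-cake argument is an alternate justification of the $\phi(a)\,da$ integration. The net computation and final constant are the same.
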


\begin{proof}
For each $i\ge 1$, let
$$
B_i = Anc_{i}(\widetilde{\Pi}_{\lambda-\eps}) \cap\{(y,t)\in\R^2: \widetilde{D}_{(y,t)}\cap (\widetilde{\Pi}_{\lambda+\eps}\setminus\widetilde{\Pi}_{\lambda-\eps})\neq\emptyset\}
$$
be the pairs $(y,t)\in\R^2$ that are ancestral to $i$ points in $\widetilde{\Pi}_{\lambda-\eps}$ and at least one point in $\widetilde{\Pi}_{\lambda+\eps}\setminus\widetilde{\Pi}_{\lambda-\eps}$.
Then, $B\subseteq \cup_{1\le i\le k} B_i$ by definition. Therefore
$$
\E[l(A\cap B)]\le \sum_{1\le i\le k}\E[l(A\cap B_i)].
$$
Let $\phi(\cdot)$ be defined as in \eqref{Sampling from a fixed time, density}. If $\widetilde{m}(\widetilde{D}_{(y,t)})=a$, then the probability that $(y,t)$ is ancestral to $i$ points in $\widetilde{\Pi}_{\lambda-\eps}$ and at least one point in $\widetilde{\Pi}_{\lambda+\eps}\setminus\widetilde{\Pi}_{\lambda-\eps}$ is
$$
\frac{e^{-(\lambda-\eps)a}(\lambda-\eps)^i a^i }{i!}(1-e^{-2\eps a}).
$$
Therefore, using that $1 - e^{-2 \eps a} \leq 2 \eps a$, 
\begin{align}\label{ElAB}
\E[l(A\cap B)]&\le\sum_{1\le i\le k}
\int_0^\infty \widetilde{m}(\widetilde{A})\phi(a) \cdot \frac{e^{-(\lambda-\eps)a}(\lambda-\eps)^i a^i }{i!}(1-e^{-2\eps a})\ da \nonumber \\
&\leq  \sum_{1 \leq i \leq k} \int_0^\infty \widetilde{m}(\widetilde{A}) \cdot \frac{C_1' a^{-1/2}}{2} \cdot \frac{e^{-(\lambda-\eps)a}(\lambda-\eps)^i a^i }{i!} \cdot 2 \eps a \ da \nonumber \\
&\le C_{k,\lambda}\widetilde{m}(\widetilde{A})\eps
\end{align}
because the integral is finite, which completes the proof.
\end{proof}

\begin{proof}[Proof of Proposition \ref{Proposition: lln, fixed time, points in [0,n]}]

For $n$ sufficiently large, we apply the result of Lemma \ref{Lemma: perturb the sampling intensity2} with $\lambda=1$, $\eps_n = (\log n)/\sqrt{n}$ and $A_n=[0,n]\times\R_+$ to get
$$
\E[l(A_n\cap B)]\le C_{k,\lambda}\sqrt{n}\log n.
$$
In particular, as $n\rightarrow\infty$, we have
$$
\frac{l(A_n\cap B)}{n}\rightarrow 0,\qquad\text{in probability}.
$$
Recall the definition of $\widetilde{\lambda}_n^*$ from \eqref{lambda2}. Then, as $n\rightarrow\infty$,
\begin{equation}\label{lambda star}
\P(\widetilde{\lambda}_n^*\in [1-\eps_n,1+\eps_n])\ge \P(\#(\widetilde{\Pi}_{1-\eps_n}\cap A_n)< n<\#(\widetilde{\Pi}_{1+\eps_n}\cap A_n))\rightarrow1.    
\end{equation}
On the event $\widetilde{\lambda}_n^*\in [1-\eps_n,1+\eps_n]$, we have
$$
Anc_k(\widetilde{\Pi}_{1})\ \triangle\  Anc_k(\widetilde{\Pi}_{\widetilde{\lambda}_n^*})\subseteq B.
$$
It follows that as $n\rightarrow\infty$, 
\begin{equation}\label{Sampling from a fixed time, comparison}
\frac{l\Big(\big(Anc_k(\widetilde{\Pi}_{1})\ \triangle\  Anc_k(\widetilde{\Pi}_{\widetilde{\lambda}_n^*})\big)\cap A_n\Big)}{n}\rightarrow 0,\qquad\text{in probability}.   
\end{equation}
The claim of the proposition then follows from \eqref{Sampling from a fixed time, comparison} and Proposition \ref{Proposition: lln, fixed time, points anywhere}.
\end{proof}

\begin{remark}\label{Remark}
The same reasoning applies if we take $A_n=[0,n]\times[0,T]$ for a fixed $T>0$. Using \eqref{Sampling from a fixed time, slln}, we conclude that as $n\rightarrow\infty$,
$$
\frac{l\big(Anc_k(\widetilde{\Pi}_{\widetilde{\lambda}_n^*})\cap([0,n]\times[0,T])\big)}{n}\rightarrow\E\Big[l\big(Anc_{k}(\widetilde{\Pi}_1)\cap([0, 1)\times[0,T])\big)\Big]\qquad\text{in probability}.
$$
\end{remark}

\subsection{Proof of Proposition \ref{Proposition: diff2}}\label{sub3.4}

To obtain Proposition \ref{Proposition: diff2} from Proposition \ref{Proposition: lln, fixed time, points in [0,n]}, we will derive lower and upper bounds for $l(Anc_k(\widetilde{\Pi}_{\widetilde{\lambda}_n^*}\cap A_n))$ in terms of $l(Anc_k(\widetilde{\Pi}_{\widetilde{\lambda}_n^*})\cap A_n)$. 
The following lemma bounds the length of the set of points that are ancestors of $k$ points in $[0,n]$ and additional sampled points outside of the interval.

\begin{lemma}\label{Lemma: Height of the Anc_k1}
Let $\eps_n=(\log n)/\sqrt{n}$. Let $A_n$ and $\widetilde{\lambda}_n^*$ be defined as in Proposition \ref{Proposition: lln, fixed time, points in [0,n]}. Then, as $n\rightarrow\infty$, we have
$$
\frac{l\big(Anc_k(\widetilde{\Pi}_{\widetilde{\lambda}_n^*}\cap A_n)\setminus Anc_k(\widetilde{\Pi}_{\widetilde{\lambda}_n^*})\big)}{n}\rightarrow 0\qquad \text{ in probability}.
$$
\end{lemma}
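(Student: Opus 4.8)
\emph{Plan of proof.} Unwinding the definitions, a pair $(y,t)$ lies in $Anc_k(\widetilde\Pi_{\widetilde\lambda_n^*}\cap A_n)\setminus Anc_k(\widetilde\Pi_{\widetilde\lambda_n^*})$ precisely when its descendant set at time zero, $\widetilde D_{(y,t)}$, contains exactly $k$ of the sampled points lying in $[0,n]$ and at least one sampled point lying outside $[0,n]$; call such a pair \emph{bad}. The first and main step is to show that every bad pair lies on one of the two boundary geodesics $g_{(0,0)}$ or $g_{(n,0)}$. Indeed, by the ordering of geodesics (Proposition~\ref{Proposition: ordering of geodesics}) the map $x\mapsto g_{(x,0)}(t)$ is nondecreasing, so $\widetilde D_{(y,t)}$ is an interval; since for a bad pair this interval contains a point of $[0,n]$ (there are $k\ge 1$ of them) and a point outside $[0,n]$, it must contain $0$ or $n$, forcing $g_{(0,0)}(t)=y$ or $g_{(n,0)}(t)=y$. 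As the graph of an infinite upward geodesic meets each horizontal line in exactly one point, this yields
$$
l\big(Anc_k(\widetilde\Pi_{\widetilde\lambda_n^*}\cap A_n)\setminus Anc_k(\widetilde\Pi_{\widetilde\lambda_n^*})\big)\le |S_0|+|S_n|,
$$
where $S_0$ (resp.\ $S_n$) is the set of $t\ge 0$ for which $(g_{(0,0)}(t),t)$ (resp.\ $(g_{(n,0)}(t),t)$) is bad and $|\cdot|$ denotes Lebesgue measure. It then suffices to prove $|S_0|/n\to 0$ and $|S_n|/n\to 0$ in probability.

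\emph{Reducing $|S_0|$ to a coalescence time.} Next I would enumerate the points of $\widetilde\Pi_{\widetilde\lambda_n^*}$ in $(0,\infty)$ as $z_1^+<z_2^+<\cdots$; almost surely exactly $n$ of them, namely $z_1^+,\dots,z_n^+$, lie in $[0,n]$ (this is how $\widetilde\lambda_n^*$ is defined). Let $\sigma_j$ be the coalescence time of $g_{(0,0)}$ and $g_{(z_j^+,0)}$, which is finite by Proposition~\ref{Proposition: geodesic coalesecent}. By the ordering of geodesics $\sigma_1\le\sigma_2\le\cdots$, and $z_j^+\in\widetilde D_{(g_{(0,0)}(t),t)}$ if and only if $\sigma_j\le t$. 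Hence, for $n\ge k+1$, the pair $(g_{(0,0)}(t),t)$ has exactly $k$ sampled descendants in $[0,n]$ only when $t\in[\sigma_k,\sigma_{k+1})$, so $|S_0|\le\sigma_{k+1}$. Moreover, using the ordering of geodesics once more, $\sigma_{k+1}\le\sigma^{(M)}$ on the event $\{z_{k+1}^+\le M\}$, where $\sigma^{(M)}$ denotes the coalescence time of $g_{(0,0)}$ and $g_{(M,0)}$, a quantity measurable with respect to $\mathcal L$ alone.

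\emph{Showing $\sigma_{k+1}/n\to 0$.} Fix $\delta>0$ and an integer $M\ge 1$. On $\{\widetilde\lambda_n^*\ge 1-\eps_n\}$ we have $\widetilde\Pi_{1-\eps_n}\subseteq\widetilde\Pi_{\widetilde\lambda_n^*}$, so $\{z_{k+1}^+>M\}$ forces $\widetilde\Pi_{1-\eps_n}$ to have at most $k$ points in $(0,M]$, which for $\eps_n<1/2$ has probability at most $\P(\mathrm{Poisson}(M/2)\le k)$, uniformly in $n$. Combining $\sigma_{k+1}\le\sigma^{(M)}$ with Proposition~\ref{Proposition: 2-disjoint}, which gives $\P(\sigma^{(M)}>\delta n)\le CM(\delta n)^{-2/3}$, with \eqref{lambda star} (so $\P(\widetilde\lambda_n^*<1-\eps_n)\to 0$), and with a union bound, one obtains
$$
\P(\sigma_{k+1}>\delta n)\le\P(\widetilde\lambda_n^*<1-\eps_n)+\P\big(\mathrm{Poisson}(M/2)\le k\big)+CM(\delta n)^{-2/3}.
$$
Letting $n\to\infty$ and then $M\to\infty$ gives $\P(\sigma_{k+1}>\delta n)\to 0$, hence $|S_0|/n\to 0$ in probability. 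The bound $|S_n|/n\to 0$ follows by the identical argument applied to the $k+1$ right-most sampled points of $[0,n]$ together with the geodesic $g_{(n,0)}$, after using the spatial stationarity of $\mathcal L$ and $\widetilde\Pi$ (Proposition~\ref{Proposition: properties of the landscape}) to relocate the relevant pair of coalescing geodesics to the origin before invoking Proposition~\ref{Proposition: 2-disjoint}.

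\emph{Main obstacle.} The one genuinely substantive point is the geometric reduction in the first step: recognizing that the entire discrepancy between the two notions of ancestry is carried by the two boundary geodesics $g_{(0,0)}$ and $g_{(n,0)}$, which turns a two-dimensional branch-length estimate into a one-dimensional coalescence-time bound. Once this is in hand, the rest is routine, the only bookkeeping being the control of the random sampling intensity $\widetilde\lambda_n^*$ through \eqref{lambda star}.
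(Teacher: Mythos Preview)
Your proposal is correct and follows essentially the same route as the paper: both arguments use the ordering of geodesics to localize the discrepancy to the two boundary geodesics (you use $g_{(0,0)}$ and $g_{(n,0)}$; the paper equivalently observes that $(x_0,0)$ or $(x_{n+1},0)$ must lie in $\widetilde D_{(y,t)}$, giving at most two bad points per height), and then bound the relevant time by a coalescence time.

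The only noteworthy difference is in the endgame. The paper compares directly with the fixed-intensity process $\widetilde\Pi_{1/2}$: on $\{\widetilde\lambda_n^*\ge 1-\eps_n\}$ one has $x_{0,1/2}\le x_0$ and $x_{k+1}\le x_{k+1,1/2}$, so the bad time on the left is dominated by the coalescence time $\xi_1$ of $g_{(x_{0,1/2},0)}$ and $g_{(x_{k+1,1/2},0)}$, a single tight random variable not depending on $n$; this yields the stronger conclusion that the difference in lengths is $O_P(1)$, without invoking Proposition~\ref{Proposition: 2-disjoint}. Your argument instead bounds by $\sigma_{k+1}$ and shows $\sigma_{k+1}/n\to 0$ via the quantitative two-arm estimate plus a separate $M\to\infty$ limit. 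Both are valid; the paper's version is slightly slicker and gives more, while yours is more self-contained in that it makes the decay rate explicit.
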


\begin{proof}
For any $\lambda>0$, let $\{(x_{i,\lambda},0)\}_{i\in\Z}$ be the sampled points in $\widetilde{\Pi}_{\lambda}$, ordered so that $$
\dots<x_{0,\lambda}<0\le x_{1,\lambda}<x_{2,\lambda}<\dots<x_{N_{\lambda}, \lambda}\le n<x_{N_{\lambda}+1,\lambda}<\dots.
$$ 
Then there are $N_{\lambda}$ points sampled from $[0,n]$. Note that the choice of $\widetilde{\lambda}_n^*$ guarantees that $N_{\widetilde{\lambda}_n^*}=n$. For ease of notation, we write $x_i=x_{i,\widetilde{\lambda}_n^*}$. 

Because $\P(\widetilde{\lambda}_n^*\in[1-\eps_n,1+\eps_n])\rightarrow1$ as $n\rightarrow\infty$ by \eqref{lambda star}, we may assume that $\widetilde{\lambda}_n^*\in[1-\eps_n,1+\eps_n]$ holds in the rest of the argument. If $(y,t)\in Anc_k(\widetilde{\Pi}_{\widetilde{\lambda}_n^*} \cap A_n)\setminus Anc_k(\widetilde{\Pi}_{\widetilde{\lambda}_n^*})$, then there exist $i\in\{1,\dots, n\}$ and $j\notin\{1,\dots, n\}$ such that both $(x_i,0)$ and $(x_j,0)$ are descendants of $(y,t)$. By the ordering of geodesics in Proposition \ref{Proposition: ordering of geodesics}, we deduce that $(x_0,0)\in \widetilde{D}_{(y,t)}$ or $(x_{n+1},0)\in\widetilde{D}_{(y,t)}$. Therefore, for any $t>0$, we have
$$
\#\Big\{y\in\R: (y,t)\in Anc_k(\widetilde{\Pi}_{\widetilde{\lambda}_n^*} \cap A_n)\setminus Anc_k(\widetilde{\Pi}_{\widetilde{\lambda}_n^*})\Big\}\le 2.
$$
Moreover, if $(x_0,0)\in \widetilde{D}_{(y,t)}$ with $(y,t)\in Anc_k(\widetilde{\Pi}_{\widetilde{\lambda}_n^*}\cap A_n)$, then using the ordering of geodesics again, we have $\widetilde{D}_{(y,t)}\cap\widetilde{\Pi}_{\widetilde{\lambda}_n^*} \cap A_n=\{(x_i,0)\}_{i=1}^k$. In this case, the time $t$ is bounded above by the coalescence time of the geodesics $g_{(x_1,0)}$ and $g_{(x_{k+1},0)}$ because otherwise we would have $\{(x_i,0)\}_{i=1}^{k+1}\subseteq \widetilde{D}_{(y,t)}$, contradicting that $(y,t)\in Anc_k(\widetilde{\Pi}_{\widetilde{\lambda}_n^*}\cap A_n)$. Also, if $n$ is sufficiently large and $\widetilde{\lambda}_n^*\in[1-\eps_n,1+\eps_n]$, then
$$
x_{0,1/2}\le x_{0,1-\eps_n}< x_1<x_{k+1}\le x_{k+1,1-\eps_n}\le x_{k+1,1/2}.
$$
Therefore, by the ordering of geodesics, the time $t$ is bounded from above by the coalescence time of $g_{(x_{0,1/2},0)}$ and $g_{(x_{k+1,1/2}, 0)}$ which we call $\xi_1$.
Similarly, if $(x_n,0)\in\widetilde{D}_{(y,t)}$, $n$ is sufficiently large, and $\widetilde{\lambda}_n^*\in[1-\eps_n,1+\eps_n]$, then the time $t$ is bounded from above by the coalescence time of $g_{(x_{N_{1/2}-k,1/2},0)}$ and $g_{(x_{N_{1/2}+1,1/2},0)}$ which we call $\xi_2$. Therefore, 
$$
l\left(Anc_k(\widetilde{\Pi}_{\widetilde{\lambda}_n^*}\cap A_n)\setminus Anc_k(\widetilde{\Pi}_{\widetilde{\lambda}_n^*})\right)
$$
is bounded above by $\xi_1+\xi_2$. Because this is a tight random variable, the lemma follows.
\end{proof}

The next lemma bounds the probability that there are points at height greater than $t$ that are ancestral to exactly $k$ sampled points, all of which are in $[0,n]$. In the proof of Lemma \ref{Lemma: Height of the Anc_k2}, we will again use the three-arm estimate given in Theorem \ref{Theorem: 3-disjoint}. Note that by combining Theorem \ref{Theorem: 3-disjoint} with the scaling property in part 3 of Proposition \ref{Proposition: properties of the landscape}, we can see that for $M\ge 1$, the probability that the geodesics $g_{(-M,0)}$, $g_{(0,0)}$ and $g_{(-M,0)}$ remain disjoint until time $t$ is bounded above by $CM^{5/2}t^{-5/2}\log^{52}(t\vee 2)$.

\begin{lemma}\label{Lemma: Height of the Anc_k2}
Let $\eps_n=(\log n)/\sqrt{n}$. Let $A_n$ and $\widetilde{\lambda}_n^*$ be defined as in Proposition \ref{Proposition: lln, fixed time, points in [0,n]}. Then there exists a constant $C_{k}$ depending only on $k$ such that for all $t\ge 2$, we have
\begin{align*}
&\P\Big(Anc_k(\widetilde{\Pi}_{\widetilde{\lambda}_n^*}\cap A_n)\cap Anc_k(\widetilde{\Pi}_{\widetilde{\lambda}_n^*})\cap (\R\times[t,\infty))\neq \emptyset\Big)\\
&\qquad\qquad\le C_{k}nt^{-5/3}\log^{52}t + \P\Big(\widetilde{\lambda}_n^{*}\notin[1-\eps_n,1+\eps_n]\Big).    
\end{align*}
\end{lemma}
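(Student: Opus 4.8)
\emph{Outline of the plan.} The idea is to condition on the (landscape‑independent) Poisson family $\widetilde{\Pi}=(\widetilde{\Pi}_\lambda)_{\lambda>0}$, reduce the stated event to a union over sampled points of three‑arm non‑coalescence events, and then apply Theorem~\ref{Theorem: 3-disjoint} term by term after rescaling. Write the points of $\widetilde{\Pi}_{\widetilde{\lambda}_n^*}$ as $\dots<x_0<0\le x_1<\dots<x_n\le n<x_{n+1}<\dots$, and suppose $(y,t')$ with $t'\ge t$ lies in $Anc_k(\widetilde{\Pi}_{\widetilde{\lambda}_n^*}\cap A_n)\cap Anc_k(\widetilde{\Pi}_{\widetilde{\lambda}_n^*})$. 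Since $\widetilde{\Pi}_{\widetilde{\lambda}_n^*}\cap A_n\subseteq\widetilde{\Pi}_{\widetilde{\lambda}_n^*}$ and both intersections with $\widetilde{D}_{(y,t')}$ have size $k$, the $k$ points of $\widetilde{D}_{(y,t')}\cap\widetilde{\Pi}_{\widetilde{\lambda}_n^*}$ all lie in $[0,n]$; as $\widetilde{D}_{(y,t')}$ is an interval by the ordering of geodesics, they are consecutive, say $x_i,\dots,x_{i+k-1}$ with $x_i\in[0,n]$. Then $x_{i-1},x_{i+k}\notin\widetilde{D}_{(y,t')}$, so $g_{(x_{i-1},0)}(t')<g_{(x_i,0)}(t')=y<g_{(x_{i+k},0)}(t')$, and by Proposition~\ref{Proposition: geodesic coalesecent} the geodesics $g_{(x_{i-1},0)},g_{(x_i,0)},g_{(x_{i+k},0)}$ are pairwise disjoint on $[0,t']\supseteq[0,t]$. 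Setting $M_i:=\lceil\max(x_i-x_{i-1},\,x_{i+k}-x_i)\rceil\ge 1$, planarity (Proposition~\ref{Proposition: ordering of geodesics}) upgrades this, using $x_i-M_i\le x_{i-1}$ and $x_i+M_i\ge x_{i+k}$, to the statement that $g_{(x_i-M_i,0)},g_{(x_i,0)},g_{(x_i+M_i,0)}$ are disjoint on $[0,t]$.

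\emph{Union bound and rescaling.} First discard $\{\widetilde{\lambda}_n^*\notin[1-\eps_n,1+\eps_n]\}$, which supplies the second term in the bound. On its complement, condition on $\widetilde{\Pi}$, so that $\widetilde{\lambda}_n^*$ and all the $x_j,M_i$ become $\widetilde{\Pi}$‑measurable constants; since $\widetilde{\lambda}_n^*$ depends only on $\widetilde{\Pi}$ and $\L$ is independent of $\widetilde{\Pi}$, translation invariance of $\L$ and the reduction above give
\begin{align*}
&\P\big(Anc_k(\widetilde{\Pi}_{\widetilde{\lambda}_n^*}\cap A_n)\cap Anc_k(\widetilde{\Pi}_{\widetilde{\lambda}_n^*})\cap(\R\times[t,\infty))\neq\emptyset,\ \widetilde{\lambda}_n^*\in[1-\eps_n,1+\eps_n]\big)\\
&\qquad\le\E\Big[\mathbbm{1}_{\{\widetilde{\lambda}_n^*\in[1-\eps_n,1+\eps_n]\}}\sum_{i\,:\,x_i\in[0,n]}p(M_i,t)\Big],
\end{align*}
where $p(M,t):=\P(g_{(-M,0)},g_{(0,0)},g_{(M,0)}\text{ disjoint for all }s\in[0,t])$. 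By the rescaling property in Proposition~\ref{Proposition: properties of the landscape}, $p(M,t)$ equals the probability that $g_{(-1,0)},g_{(0,0)},g_{(1,0)}$ are disjoint on $[0,M^{-3/2}t]$, so Theorem~\ref{Theorem: 3-disjoint} gives $p(M,t)\le CM^{5/2}t^{-5/3}\log^{52}t$ whenever $M^{-3/2}t\ge 2$; and when $M^{-3/2}t<2$ one has $M^{5/2}t^{-5/3}>2^{-5/3}$, so after enlarging $C$ the same inequality is trivial because its right‑hand side then exceeds $C\,2^{-5/3}(\log 2)^{52}\ge 1\ge p(M,t)$. Hence $p(M,t)\le CM^{5/2}t^{-5/3}\log^{52}t$ for all $M\ge 1$, $t\ge 2$, and the displayed expectation is at most $Ct^{-5/3}\log^{52}t\cdot\E\big[\mathbbm{1}_{\{\widetilde{\lambda}_n^*\in[1-\eps_n,1+\eps_n]\}}\sum_{i:x_i\in[0,n]}M_i^{5/2}\big]$.

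\emph{The moment bound.} It remains to show $\E\big[\mathbbm{1}_{\{\widetilde{\lambda}_n^*\in[1-\eps_n,1+\eps_n]\}}\sum_{i:x_i\in[0,n]}M_i^{5/2}\big]\le C_k n$. Bounding $M_i\le 1+\sum_{j=i}^{i+k}(x_j-x_{j-1})$ and using $(\sum_{l=1}^{k+2}a_l)^{5/2}\le(k+2)^{3/2}\sum_l a_l^{5/2}$, this reduces to estimating $\E[\mathbbm{1}_{\{\dots\}}\sum_j(x_j-x_{j-1})^{5/2}]$ over the $O(n)$ relevant gaps of $\widetilde{\Pi}_{\widetilde{\lambda}_n^*}$. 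On $\{\widetilde{\lambda}_n^*\in[1-\eps_n,1+\eps_n]\}\subseteq\{\widetilde{\lambda}_n^*\in[1/2,2]\}$ (for $n$ large) we have $\widetilde{\Pi}_{1/2}\subseteq\widetilde{\Pi}_{\widetilde{\lambda}_n^*}\subseteq\widetilde{\Pi}_2$, so every gap of $\widetilde{\Pi}_{\widetilde{\lambda}_n^*}$ sits inside a gap of the intensity‑$1/2$ Poisson process $\widetilde{\Pi}_{1/2}$, and each such $\widetilde{\Pi}_{1/2}$‑gap envelops at most $1+\#(\widetilde{\Pi}_2\cap(\text{that gap}))$ gaps of $\widetilde{\Pi}_{\widetilde{\lambda}_n^*}$. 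Since the gap lengths of $\widetilde{\Pi}_{1/2}$ and the occupation counts of $\widetilde{\Pi}_2$ have all moments finite with exponential tails, a routine Palm computation yields the bound $C_k n$. Combining the three parts gives the lemma.

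\emph{Where the difficulty lies.} The genuinely nontrivial input is Theorem~\ref{Theorem: 3-disjoint}: it is only because its exponent $5/3$ is sharp (up to the polylog) that we obtain the leading term $n\,t^{-5/3}\log^{52}t$, which is summable in $t$; the weaker $t^{-16/45}$‑type estimate would be useless here. Everything else is bookkeeping — the interval/planarity reduction of the first step, the elementary inequality $M^{5/2}t^{-5/3}>2^{-5/3}$ which lets us bypass a casework split on atypically large gaps, and the last moment estimate, which is routine precisely because $\sum_i M_i^{5/2}$ is a sum of $n$ terms each of $k$‑dependent bounded expectation and so incurs no $\log n$ loss (unlike a maximal‑gap bound would).
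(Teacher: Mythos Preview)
Your proof is correct and follows essentially the same approach as the paper: reduce the event to a union of three-arm non-coalescence events via planarity, apply Theorem~\ref{Theorem: 3-disjoint} after rescaling to get $p(M,t)\le CM^{5/2}t^{-5/3}\log^{52}t$, and then bound $\E[\sum_i M_i^{5/2}]\le C_k n$. The only noteworthy difference is in this last moment estimate: the paper exploits that, conditional on $\widetilde{\lambda}_n^*$, the points $x_1,\dots,x_n$ are i.i.d.\ uniform on $[0,n]$, so $(x_{i+k}-x_{i-1})/n$ has a Beta distribution with uniformly bounded $5/2$-moment (with a separate argument, also via the $\widetilde{\Pi}_{1/2}$ coupling, for the two boundary terms involving $x_0$ and $x_{n+1}$); you instead use the sandwich $\widetilde{\Pi}_{1/2}\subseteq\widetilde{\Pi}_{\widetilde{\lambda}_n^*}\subseteq\widetilde{\Pi}_2$ for all gaps uniformly. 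Both routes are valid and yield the same bound.
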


\begin{proof}
If $(y,t)\in Anc_k(\widetilde{\Pi}_{\widetilde{\lambda}_n^*}\cap A_n)\cap Anc_k(\widetilde{\Pi}_{\widetilde{\lambda}_n^*})$, then there exists $i\in\{1,\dots, n-k+1\}$ such that
\begin{equation*}
D_{(y,t)}\cap\widetilde{\Pi}_{\widetilde{\lambda}_n^*}=D_{(y,t)}\cap\widetilde{\Pi}_{\widetilde{\lambda}_n^*}\cap A_n=\{(x_i,0),\dots, (x_{i+k-1},0)\}. 
\end{equation*}
Then, before time $t$, the geodesics  $g_{(x_{i},0)}$ and $g_{(x_{i+k-1},0)}$ coalesce, and neither $g_{(x_{i-1},0)}$ and $g_{(x_{i},0)}$ nor $g_{(x_{i+k-1},0)}$ and $g_{(x_{i+k},0)}$ coalesce. In particular, the geodesics $g_{(x_{i-1},0)}$, $g_{(x_{i},0)}$, and $g_{(x_{i+k},0)}$ are disjoint until time $t$. Note also that the probability that these geodesics are disjoint is monotone in the gaps $x_{i}-x_{i-1}$ and $x_{i+k}-x_{i}$. Conditioning on $(\widetilde{\Pi}_{\lambda})_{\lambda>0}$ and applying Theorem \ref{Theorem: 3-disjoint} with $M=\max\{x_i-x_{i-1}, x_{i+k}-x_i,1\}$ for $i=1,2,\dots, n-k+1$, we have
\begin{align*}
&\P\Big(Anc_k(\widetilde{\Pi}_{\widetilde{\lambda}_n^*}\cap A_n)\cap Anc_k(\widetilde{\Pi}_{\widetilde{\lambda}_n^*})\cap (\R\times[t,\infty))\neq \emptyset\Big|\widetilde{\Pi}\Big)\\
&\qquad\le Ct^{-5/3}\log^{52} (t\vee 2) \sum_{i=1}^{n-k+1}\max\{x_i-x_{i-1}, x_{i+k}-x_i,1\}^{5/2}.
\end{align*}
Taking expectations, we have
\begin{align}\label{height Anc_k}
&\P\Big(Anc_k(\widetilde{\Pi}_{\widetilde{\lambda}_n^*}\cap A_n)\cap Anc_k(\widetilde{\Pi}_{\widetilde{\lambda}_n^*})\cap (\R\times[t,\infty))\neq \emptyset\Big)\nonumber\\
&\qquad\le Ct^{-5/3}\log^{52} (t\vee 2) \sum_{i=1}^{n-k+1}\E\Big[\max\{x_i-x_{i-1}, x_{i+k}-x_i,1\}^{5/2}\mathbbm{1}_{\{\widetilde{\lambda}_n^*\in[1-\eps_n,1+\eps_n]\}}\Big]\nonumber\\
&\qquad\qquad+\P\Big(\widetilde{\lambda}_n^*\in[1-\eps_n,1+\eps_n]\Big).
\end{align}
For $1\le i<i'\le n$, the random variable $(x_{i'}-x_{i})/n$ has a $Beta(i'-i, n+1-(i'-i))$ distribution (see Section 4.1 of \cite{p65}). Therefore, for $2\le i\le n-k$, we have
$$
\E\Big[\max\{x_i-x_{i-1}, x_{i+k}-x_i,1\}^{5/2}\mathbbm{1}_{\{\widetilde{\lambda}_n^*\in[1-\eps_n,1+\eps_n]\}}\Big]\le \E\big[(x_{i+k}-x_{i-1}+1)^{5/2}\big]\le C_k.
$$
Also, if $n$ is sufficiently large and $\widetilde{\lambda}_n^*\in[1-\eps_n,1+\eps_n]$, then $x_{0,1/2}\le x_0< 0$ and $n<x_{n+1}\le x_{N_{1/2}+1,1/2}$. Then for $i=1$, we have
\begin{align*}
\E\Big[\max\{x_1-x_{0}, x_{k+1}-x_1,1\}^{5/2}\mathbbm{1}_{\{\widetilde{\lambda}_n^*\in[1-\eps_n,1+\eps_n]\}}\Big]
&\le \E\Big[(x_{k+1}-x_0+1)^{5/2}\mathbbm{1}_{\{\widetilde{\lambda}_n^*\in[1-\eps_n,1+\eps_n]\}}\Big]\\
&\le \E[(x_{k+1}-x_{0,1/2}+1)^{5/2}]\\
&\le 3^{5/2}\big(\E[x_{k+1}^{5/2}]+\E[|x_{0,1/2}|^{5/2}]+1\big)\\
&\le C_{k},
\end{align*}
and a similar estimate holds for $i=n-k+1$. Combining this bound with the estimate for $2\le i\le n-k$, we have
\begin{equation}\label{beta expectation}
\sum_{i=1}^{n-k+1}\E\Big[\max\{x_i-x_{i-1}, x_{i+k}-x_i\}^{5/2}\mathbbm{1}_{\{\widetilde{\lambda}_n^*\in[1-\eps_n,1+\eps_n]\}}\Big]\le C_{k}n.
\end{equation}
The lemma now follows from \eqref{height Anc_k} and \eqref{beta expectation}.
\end{proof}

We now prove Proposition \ref{Proposition: diff2}. We will show that with high probability, one can lower and upper bound $l(Anc_k(\widetilde{\Pi}_{\widetilde{\lambda}_n^*}\cap A_n))$ using $l(Anc_k(\widetilde{\Pi}_{\widetilde{\lambda}_n^*})\cap A_n)$ and some error terms.
\begin{proof}[Proof Proposition \ref{Proposition: diff2}]
We will prove that for any subsequence $(n_m)_{m\ge 1}$, there is a further subsequence $(n_{m_l})_{l\ge 1}$ such that
\begin{equation}\label{Sampling from a fixed time, lower bound}
\liminf_{l\rightarrow\infty}\frac{l\Big(Anc_k\Big(\widetilde{\Pi}_{\widetilde{\lambda}_{n_{m_l}}^*}\cap A_{n_{m_l}}\Big)\Big)}{n}\ge  \frac{C_1'}{2}\cdot\frac{\Gamma(k + 1/2)}{k!}\qquad\text{a.s.},
\end{equation}
and another further subsequence $(n_{m_{l'}})_{l'\ge 1}$ such that
\begin{equation}\label{Sampling from a fixed time, upper bound}
\limsup_{l\rightarrow\infty}\frac{l\Big(Anc_k\Big(\widetilde{\Pi}_{\widetilde{\lambda}_{n_{m_{l'}}}^*}\cap A_{n_{m_{l'}}}\Big)\Big)}{n_{m_{l'}}}\le  \frac{C_1'}{2}\cdot \frac{\Gamma(k+1/2)}{k!}\qquad\text{a.s.}.
\end{equation}
The statement of the proposition then follows from \eqref{Sampling from a fixed time, lower bound} and \eqref{Sampling from a fixed time, upper bound}.

We start with the proof of \eqref{Sampling from a fixed time, lower bound}, which does not require Lemmas \ref{Lemma: Height of the Anc_k1} and \ref{Lemma: Height of the Anc_k2}. Fix $\eps>0$ and $T>0$. Let $E_n$ be the event that the following hold:
\begin{enumerate}
\item $\sup_{t\in[0,T]} |g_{(\eps n,0)}(t)-\eps n|\le \eps n,$
\item $\sup_{t\in[0,T]}|g_{((1-\eps) n, 0)}(t)-(1-\eps)n|\le \eps n.$
\end{enumerate}
By Theorem \ref{Theorem: transversal fluctuation}, we have $\P(E_n^c)\le 2C_3 e^{-C_4 \eps^3 n^3/T^2}$, which is summable in $n$. Also, on the event $E_n$, we have
$$
Anc_k(\widetilde{\Pi}_{\widetilde{\lambda}_n^*}\cap A_n)\supseteq   Anc_k(\widetilde{\Pi}_{\widetilde{\lambda}_n^*})\cap ([2\eps n,(1-2\eps)n]\times[0,T]),
$$
and hence
$$
\liminf_{n\rightarrow\infty}\frac{l\big(Anc_k(\widetilde{\Pi}_{\widetilde{\lambda}_n^*}\cap A_n)\big)}{n}\ge  \liminf_{n\rightarrow\infty} \frac{l\big(Anc_k(\widetilde{\Pi}_{\widetilde{\lambda}_n^*})\cap ([2\eps n,(1-2\eps)n]\times[0,T])\big)}{n}.
$$
Therefore, by Remark \ref{Remark}, we can choose a subsequence $(n_{m_l})$ such that
$$
\liminf_{l\rightarrow\infty}\frac{l\Big(Anc_k\Big(\widetilde{\Pi}_{\widetilde{\lambda}_{n_{m_l}}^*}\Big)\cap A_{n_{m_l}}\Big)}{n}\ge (1-4\eps)\E[l\big(Anc_{k}(\widetilde{\Pi}_1)\cap([0, 1)\times[0,T])\big)].
$$
Recall from \eqref{sfs2} that 
$$
\E[l\big(Anc_{k}(\widetilde{\Pi}_1)\cap([0, 1)\times\R_+)\big)]=\frac{C_1'}{2}\cdot\frac{\Gamma(k+1/2)}{k!}.
$$
Equation \eqref{Sampling from a fixed time, lower bound} then follows from a diagonalization argument as we let $\eps\rightarrow0$ and $T\rightarrow\infty$.

We now prove \eqref{Sampling from a fixed time, upper bound}.
Fix $\eps>0$. Let $F_n$ be the event that
\begin{enumerate}
\item $Anc_k(\widetilde{\Pi}_{\widetilde{\lambda}_n^*}\cap A_n)\cap Anc_k(\widetilde{\Pi}_{\widetilde{\lambda}_n^*})\cap (\R\times[n,\infty))=\emptyset$,
\item $\sup_{t\in[0,n]} |g_{(-\eps n,0)}(t)+\eps n|\le \eps n,$
\item $\sup_{t\in[0,n]}|g_{((1+\eps) n, 0)}(t)-(1+\eps)n|\le \eps n.$
\end{enumerate}
By Theorem \ref{Theorem: transversal fluctuation} and Lemma \ref{Lemma: Height of the Anc_k2}, we have
$$
\P(F_n^c)\le Cn^{-2/3}\log^{52} n+\P\big(\widetilde{\lambda}_n^*\notin \big[1-(\log n)/\sqrt{n},1+(\log n)/\sqrt{n}\big]\Big)+2C_3 e^{-C_4 \eps^3 n}.
$$
One may choose $(n_{m_l})_{l\ge 1}$ so that $\sum_l \P(F_{n_{m_l}}^c)<\infty$. If the event $F_n$ occurs, then for each $(y,t)\in Anc_k(\widetilde{\Pi}_{\widetilde{\lambda}_n^*}\cap A_n)$, either $(y,t)\notin Anc_k(\widetilde{\Pi}_{\widetilde{\lambda}_n^*})$ or $(y,t)\in Anc_k(\widetilde{\Pi}_{\widetilde{\lambda}_n^*})$. For the latter case, we have $t\le n$ by the first condition of $F_n$ and then $-2\eps n\le y\le (1+2\eps )n$ by the second and third conditions of $F_n$. Then 
\begin{align*}
&l\big(Anc_k(\widetilde{\Pi}_{\widetilde{\lambda}_n^*}\cap A_n)\big)\\
&\le l\big(Anc_k(\widetilde{\Pi}_{\widetilde{\lambda}_n^*}\cap A_n)\setminus Anc_k(\widetilde{\Pi}_{\widetilde{\lambda}_n^*})\big)+ l\big(Anc_k(\widetilde{\Pi}_{\widetilde{\lambda}_n^*})\cap ([-2\eps n,(1+2\eps)n]\times[0,n])\big)\\
&\le l\big(Anc_k(\widetilde{\Pi}_{\widetilde{\lambda}_n^*}\cap A_n)\setminus Anc_k(\widetilde{\Pi}_{\widetilde{\lambda}_n^*})\big)+ l\big(Anc_k(\widetilde{\Pi}_{\widetilde{\lambda}_n^*})\cap ([-2\eps n,(1+2\eps)n]\times\R_+)\big).
\end{align*}
Then, by Remark \ref{Remark} and Lemma \ref{Lemma: Height of the Anc_k1}, choosing a further subsequence if necessary, we have
\begin{align*}
\limsup_{l\rightarrow\infty}\frac{l\big(Anc_k(\widetilde{\Pi}_{\lambda_{n_{m_{l'}}}^*}\cap A_{n_{m_{l'}}})\big)}{n_{m_{l'}}}\le (1+4\eps)\frac{C_1'}{2}\cdot\frac{\Gamma(k+1/2)}{k!}.
\end{align*}
Equation \eqref{Sampling from a fixed time, upper bound} then follows from a diagonalization argument as we let $\eps\rightarrow0$.
\end{proof}

\section{Sampling from the entire population}\label{Section: 2-dimension}

We will proceed by proving Propositions~\ref{Proposition: expectation, 1} -- \ref{Proposition: diff1}. In Section~\ref{Subsection: expectation1}, we will prove Propositions \ref{Proposition: expectation, 1} and \ref{Proposition: expectation, 2} using the scaling invariance property of the directed landscape. In Section~\ref{Subsection: lln, points anywhere}, we prove a strong law of large numbers as stated in Proposition~\ref{Proposition: lln, points anywhere}. It is worth noting that, while the directed landscape is independent over disjoint time intervals, the quantity $l(Anc_k(\Pi_\lambda)\cap A_n)$ does not enjoy this property because the geodesic $g_{(x,s)}$ depends on $\L|_{\R\times(s,\infty)}$. A coupling argument is carried out in Lemma~\ref{Lemma: localization for sampling from the population} so that this dependence is localized in the temporal direction.  In Section~\ref{Subsection: robust1}, we establish a stability result in terms of the sampling rate, allowing us to change from a deterministic sampling rate in Proposition~\ref{Proposition: lln, points anywhere} to a random sampling rate in Proposition~\ref{Proposition: lln, points in the box}. Section~\ref{Subsection: diff1} concludes the proof of Theorem~\ref{Theorem: main1} by proving Proposition~\ref{Proposition: diff1}.

\subsection{Proof of Propositions \ref{Proposition: expectation, 1} and \ref{Proposition: expectation, 2}}\label{Subsection: expectation1}

We first need to establish a scaling result that is similar to Proposition~\ref{Proposition: properties of T_a}.
For any $A\subseteq \R^2$, let 
\begin{equation*}
H(A)= \sup\{|t'-t|:(y,t), (y',t')\in A\} 
\end{equation*}
be the height of $A$ and 
\begin{equation*}
W(A)= \sup\{|x'-x|: (x,s),(x',s')\in A\}
\end{equation*}
be the width of $A$. We take $H(\emptyset)=W(\emptyset)=0$ by convention. For any $a\ge 0$, let
\begin{equation}\label{Definition: height and width of descendants}
\H_{ a}:=\{(y,t)\in\R^2:H(D_{(y,t)})> a\},\qquad\W_{a}:=\{(y,t)\in\R^2:W(D_{(y,t)})> a\}   
\end{equation}
be the collection of space-time pairs whose descendants have height (resp. width) more than $a$. Also, recall $\V_a$ from \eqref{Introduction: def T_a}. The following result, which is similar to Corollary \ref{Corollary: Vtildenew} is an immediate consequence of Lemma \ref{Lemma: semigroup}.

\begin{corollary}\label{Corollary: semigroup}
If $(y',t')$ is an ancestor of $(y,t)$, i.e., $g_{(y,t)}(t'-t)=y'$, then $D_{(y',t')}\supseteq D_{(y,t)}$. In particular, if $(y,t)\in\V_{a}$ (resp. $\H_a$ or $\W_a$), then $(y',t')\in\V_{a}$ (resp. $\H_a$ or $\W_a$).
\end{corollary}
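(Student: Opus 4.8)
The plan is to deduce this directly from the semigroup property of left-most infinite upward geodesics (Lemma \ref{Lemma: semigroup}), in exactly the same way that Corollary \ref{Corollary: Vtildenew} was obtained in the one-dimensional setting. First I would fix an arbitrary descendant $(x,s)\in D_{(y,t)}$. If $s=t$ then $(x,s)=(y,t)$ and membership in $D_{(y',t')}$ is immediate, so assume $s<t$; by definition $g_{(x,s)}(t-s)=y$. Since $(y',t')$ is an ancestor of $(y,t)$, we have $t'\ge t$ and $g_{(y,t)}(t'-t)=y'$. Applying Lemma \ref{Lemma: semigroup} with the intermediate point $(y,t)$ gives $g_{(x,s)}\big(u+(t-s)\big)=g_{(y,t)}(u)$ for all $u\ge 0$; evaluating at $u=t'-t$ yields $g_{(x,s)}(t'-s)=g_{(y,t)}(t'-t)=y'$, so $(x,s)\in D_{(y',t')}$. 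As $(x,s)$ was arbitrary, this proves $D_{(y,t)}\subseteq D_{(y',t')}$.

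The ``in particular'' assertions then follow from monotonicity of the three set functionals under inclusion: Lebesgue measure $m(\cdot)$ is monotone, and $H(\cdot)$ and $W(\cdot)$ are monotone because they are defined as suprema over the set. Hence $(y,t)\in\V_a$ forces $m(D_{(y',t')})\ge m(D_{(y,t)})>a$, i.e.\ $(y',t')\in\V_a$, and the same comparison works verbatim for $\H_a$ and $\W_a$.

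There is no substantive obstacle here; the one delicate point — that the semigroup identity must hold as an exact equality of curves, which requires working consistently with the \emph{left-most} geodesic — has already been absorbed into Lemma \ref{Lemma: semigroup}, so the argument above is essentially one line once that lemma is in hand.
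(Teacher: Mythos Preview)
Your argument is correct and is exactly the approach the paper intends: the paper states this corollary as ``an immediate consequence of Lemma~\ref{Lemma: semigroup}'' (just as for Corollary~\ref{Corollary: Vtildenew}) and gives no further details, so you have simply spelled out the one-line deduction the authors leave implicit.
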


We now collect some results about $\H_a,\W_a,$ and $\V_a$ which are consequences of the scaling property of $\L$.  This result is proved in exactly the same way as Proposition \ref{Proposition: properties of T_a, fixed time}, so we do not repeat the argument.

\begin{proposition}\label{Proposition: properties of T_a}
Recall the scaling map $S_{\rho}: \R^2 \rightarrow \R^2$ defined by 
$S_\rho(x,s)=(\rho^{-2}x,\rho^{-3}s)$.
The following hold for $\V_{a}$, $\H_{a}$, and $\W_{a}$:
\begin{enumerate}
\item We have $S_\rho(\V_{a})\stackrel{d}{=}\V_{\rho^{-5}a}$, $S_\rho(\H_{a})\stackrel{d}{=}\H_{\rho^{-3}a}$, and $S_\rho(\W_{a})\stackrel{d}{=}\W_{\rho^{-2}a}$.

\item We have $\P(\V_{a}=\emptyset)=\P(\H_{a}=\emptyset)=\P(\W_{a}=\emptyset)=0$ for all $a>0$.
\end{enumerate}
\end{proposition}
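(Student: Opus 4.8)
The plan is to transcribe the proof of Proposition~\ref{Proposition: properties of T_a, fixed time} almost verbatim, the only changes being bookkeeping: wherever that proof uses the one-dimensional Lebesgue measure $\widetilde m$ of the time-zero descendant set, I instead use, as appropriate, the two-dimensional Lebesgue measure $m$ of $D_{(y,t)}$, its height functional $H$, or its width functional $W$, and I track how each of these transforms under the scaling map $S_\rho(x,s)=(\rho^{-2}x,\rho^{-3}s)$. Since $S_\rho$ rescales the spatial coordinate by $\rho^{-2}$ and the time coordinate by $\rho^{-3}$, it multiplies $m$ by $\rho^{-5}$, it multiplies $H$ by $\rho^{-3}$, and it multiplies $W$ by $\rho^{-2}$; these are exactly the three exponents appearing in part~(1).

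For part~(1) I would set $\L_\rho(x,s;y,t):=\rho\,\L(S_\rho(x,s);S_\rho(y,t))$, which has the same law as $\L$ by part~3 of Proposition~\ref{Proposition: properties of the landscape}, and decorate by a subscript $\rho$ every object built from $\L_\rho$. Because $S_\rho$ (for $\rho>0$) is an order-preserving bijection of $\R^2$ that carries the graph of an infinite upward geodesic of $\L$ to the graph of an infinite upward geodesic of $\L_\rho$ — the upward condition $u^{-1}g(u)\to0$ being preserved since $S_\rho$ turns it into $\rho\,(u^{-1}g(u))\to0$, and the left-most property being preserved by monotonicity — we get $S_\rho(g_{(x,s)})=g_{S_\rho(x,s),\rho}$ and hence $S_\rho(D_{(y,t)})=D_{S_\rho(y,t),\rho}$. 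Then $m(D_{(y,t)})>a$ iff $m(D_{S_\rho(y,t),\rho})>\rho^{-5}a$, and similarly with $H$ in place of $m$ and exponent $\rho^{-3}$, and with $W$ and exponent $\rho^{-2}$. Reading off the definitions \eqref{Introduction: def T_a} and \eqref{Definition: height and width of descendants} exactly as in the proof of Proposition~\ref{Proposition: properties of T_a, fixed time} gives $S_\rho(\V_a)=\V_{\rho^{-5}a,\rho}\stackrel{d}{=}\V_{\rho^{-5}a}$, $S_\rho(\H_a)=\H_{\rho^{-3}a,\rho}\stackrel{d}{=}\H_{\rho^{-3}a}$, and $S_\rho(\W_a)=\W_{\rho^{-2}a,\rho}\stackrel{d}{=}\W_{\rho^{-2}a}$.

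For part~(2), part~(1) shows (as in the fixed-time case, using that $S_\rho$ is a bijection) that $\P(\V_a=\emptyset)$, $\P(\H_a=\emptyset)$, and $\P(\W_a=\emptyset)$ do not depend on $a>0$, so it suffices to produce, almost surely, some point whose descendant set has positive volume (resp.\ positive height, resp.\ positive width), because then letting $a\to0$ forces the common value of the probability to be $0$. For $\W_a$ and $\H_a$ this is immediate from Proposition~\ref{Proposition: geodesic coalesecent}: $g_{(0,0)}$ and $g_{(1,0)}$ coalesce at some $(y,t)$, so $D_{(y,t)}\ni(0,0),(1,0)$ gives $W(D_{(y,t)})\ge1$, and $g_{(0,0)}$ and $g_{(0,1)}$ coalesce at some $(y',t')$, so $D_{(y',t')}\ni(0,0),(0,1)$ gives $H(D_{(y',t')})\ge1$. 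The step I expect to be the main obstacle, and the only place where the argument is not a literal copy of the one-dimensional case, is the statement $\P(\V_a=\emptyset)=0$, since the descendant set of a coalescence point of two geodesics started at the same time is a priori one-dimensional. Here I would argue that if $(y,t)$ is the coalescence point of $g_{(0,0)}$ and $g_{(1,0)}$, then for every $s\in[0,t)$ and every $x$ with $g_{(0,0)}(s)\le x\le g_{(1,0)}(s)$, Lemma~\ref{Lemma: semigroup} together with the ordering of geodesics (Proposition~\ref{Proposition: ordering of geodesics}) sandwiches $g_{(x,s)}$ between $g_{(0,0)}$ and $g_{(1,0)}$ on $[s,\infty)$, so $g_{(x,s)}(t-s)=y$ and $(x,s)\in D_{(y,t)}$; thus $D_{(y,t)}\supseteq\{(x,s):0\le s<t,\ g_{(0,0)}(s)\le x\le g_{(1,0)}(s)\}$. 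Since $g_{(1,0)}(0)-g_{(0,0)}(0)=1$ and both geodesics are continuous, there is a random $\delta>0$ with $g_{(1,0)}(s)-g_{(0,0)}(s)\ge1/2$ on $[0,\delta]$, whence $m(D_{(y,t)})\ge\delta/2>0$, so $\V_a\neq\emptyset$ for all small $a$; the remaining details are routine and identical to Proposition~\ref{Proposition: properties of T_a, fixed time}.
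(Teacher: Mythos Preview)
Your proposal is correct and takes the same route as the paper, which in fact omits the proof entirely and simply says ``proved in exactly the same way as Proposition~\ref{Proposition: properties of T_a, fixed time}.'' You have been more careful than the paper in one place: for $\P(\V_a=\emptyset)=0$ you correctly observe that the one-dimensional argument does not literally transfer, and you supply the missing step by showing that the full planar region sandwiched between $g_{(0,0)}$ and $g_{(1,0)}$ up to their coalescence time lies in $D_{(y,t)}$ and has positive area; this uses Lemma~\ref{Lemma: semigroup} to identify the post-time-$s$ restrictions of $g_{(0,0)}$ and $g_{(1,0)}$ as left-most infinite upward geodesics from $(g_{(0,0)}(s),s)$ and $(g_{(1,0)}(s),s)$, after which the sandwich follows from Proposition~\ref{Proposition: ordering of geodesics}(2).
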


\begin{proof}[Proof of Proposition \ref{Proposition: expectation, 1}]
Firstly, we show that \eqref{cdf1} holds when $a=1$, with $C_2'\in[0,\infty]$. Fix $t\in\R$. We define a measure $\widetilde{\mu}$ on $\R$ by
\begin{equation}\label{mu1}
\widetilde{\mu}(\widetilde{A}) :=\E[\#(\V_{1}^t\cap \widetilde{A})]\qquad\text{ for all measurable } \widetilde{A}\subseteq\R.
\end{equation}
The countable additivity of $\widetilde{\mu}$ follows from the linearity of expectation and the monotone convergence theorem. The measure $\widetilde{\mu}$ is translation invariant by part 2 of Proposition \ref{Proposition: properties of the landscape}. It follows that there exists a constant $C_2'\in[0,\infty]$ such that $\widetilde{\mu}(\widetilde{A})=C_2'\widetilde{m}(\widetilde{A})$. Part 1 of Proposition \ref{Proposition: properties of the landscape} implies that the constant $C_2'$ does not depend on $t$.

We now show that \eqref{cdf1} holds for all $a>0$, with $C_2'\in[0,\infty]$.  Recall the scaling map $S_\rho$ from \eqref{scaling map}. We define its action on the spatial axis by 
\begin{equation}\label{scaling map, fixed time}
\widetilde{S}_{\rho}(x)=\rho^{-2} x,\qquad\text{for all }x\in\R. 
\end{equation}
For any $\widetilde{A}\subseteq\R$, we write $\widetilde{S}_{\rho}(\widetilde{A})$ for the image of $\widetilde{A}$ under $\widetilde{S}_{\rho}$.
Applying part 1 of Proposition \ref{Proposition: properties of T_a} with $\rho = a^{1/5}$, we have $S_{a^{1/5}}(\V_a)\stackrel{d}{=}\V_1$. Taking the time-$(a^{-3/5}t)$ slice, we get
$$
(\widetilde{S}_{a^{1/5}}(\V_a^t))=(S_{a^{1/5}}(\V_a))^{a^{-3/5}t}\stackrel{d}{=}\V_1^{a^{-3/5}t}.
$$
Therefore,
\begin{align*}
\#(\V^t_{a}\cap \widetilde{A})=\#\left(\widetilde{S}_{a^{1/5}}(\V_{a}^t\cap \widetilde{A})\right)=\#\left(\widetilde{S}_{a^{1/5}}(\V^t_{a})\cap \widetilde{S}_{a^{1/5}}(\widetilde{A})\right)\stackrel{d}{=}\#\left(\V_1^{a^{-3/5}t}\cap \widetilde{S}_{a^{1/5}}(\widetilde{A})\right).
\end{align*}
Taking expectations, and then using \eqref{mu1}, we have
$$
\E[\#(\V^t_{a}\cap \widetilde{A})] = \E\Big[\#\big(\V_{1}^{a^{-3/5}t}\cap \widetilde{S}_{a^{1/5}}(\widetilde{A})\big)\Big]=C_2'\widetilde{m}\big(\widetilde{S}_{a^{1/5}}(\widetilde{A})\big)=C_2'\widetilde{m}(\widetilde{A})a^{-2/5}.
$$
To prove \eqref{cdf1}, it remains to show that $C_2'\in(0,\infty)$.

We first rule out the possibility that $C_2'=0$. Fix $a>0$. Seeking a contradiction, we suppose that $C_2'=0$. Then $\E[\#\V^t_{a}]=0$ for all $t\in\R$. This implies that $\P(\V^t_{a}=\emptyset \text{ for all }t\in\Q)=1$. It then follows from Corollary \ref{Corollary: semigroup} that $\P(\V_{a}=\emptyset)=1$, which contradicts part 2 of Proposition \ref{Proposition: properties of T_a}.

\begin{figure}[h]
\centering
\begin{tikzpicture}[scale=0.25]
\draw (-20,0)--(20,0);
\draw (-20,10)--(20,10);
\draw [dashed] plot [smooth cycle,tension=1] coordinates {(-15,10) (-16, 4) (-14, -2) (-11, -6) (-10, 2) (-14, 6)};
\draw [dashed] plot [smooth cycle,tension=1] coordinates {(10,10) (9,9) (0, 8) (3, 4) (8, 2) (12, 4) (17, 6) (12,8)};
\draw plot [smooth ,tension=1] coordinates {(-13,0) (-12, 3) (-15, 6) (-15,8) (-15,10) (-14, 12)};
\draw plot [smooth ,tension=1] coordinates {(8,0) (3, 1) (-2,4) (-1,6) (-1, 8) (1,10) (-2, 12)};
\fill (-15,10) circle(8pt);
\fill (10,10) circle(8pt); 
\node at (-21,0) {$0$};
\node at (-21,10) {$t$};
\node at (-12,11) {$(y_1,t)$};
\node at (-7,-4) {$D_{(y_1,t)}$};
\node at (11,11) {$(y_2,t)$};
\node at (19,7) {$D_{(y_2,t)}$};
\end{tikzpicture}
\caption{If $H(D_{(y,t)})>t$, then there is a geodesic starting from time zero passing through $(y,t)$. Otherwise, the geodesic has to be ``wide" to have a large volume, in which case we can find a geodesic with a large transversal fluctuation.}\label{Figure: tall wide}
\end{figure}

We next rule out the possibility that  $C_2'=\infty$. Note that if $m(D_{(y,t)})$ is large, then $D_{(y,t)}$ has to be either ``tall" or ``wide" (see Figure \ref{Figure: tall wide}). We take $t=1$, $a=1$, and $\widetilde{A}=[-1,1]$. Recall the definition of $\H_a$ from \eqref{Definition: height and width of descendants}. We decompose $\V^1_{1}\cap \widetilde{A}$ according to whether the height is greater than $1$ or not:
$$
\V^1_{1}\cap \widetilde{A}= (\V^1_{1}\cap \widetilde{A}\cap \H_{1}^1) \cup (\V^1_{1}\cap \widetilde{A}\cap(\H_{1}^c)^1).
$$
If $(y,1)\in\V^1_{1}\cap \widetilde{A}\cap \H_{1}^1$, then by the definition of $\H_{1}$, we have $H(D_{(y,1)})>1$ and there exists a geodesic $g_{(x,0)}$ that passes through $(y,1)$. Recall the definition of $N_1$ from Proposition \ref{Proposition: number of intersection points}. Then
$$
\#(\V^1_{1}\cap \widetilde{A}\cap \H_{1}^1)\le N_{1}.
$$
In particular, 
$$
\E\big[\#(\V^1_{1}\cap \widetilde{A}\cap \H_{1}^1)\big]\le \E[N_{1}]<\infty.
$$
Let $\widetilde{B}=\V^1_{1}\cap \widetilde{A}\cap (\H_{1}^1)^c$. It remains to show that $\E[\# \widetilde{B}]<\infty$. 
Note that the sets $D_{(y,1)}$ are disjoint for distinct $y\in \widetilde{A}$, and that $m(D_{(y,1)})> 1$ and $H(D_{(y,1)})\le 1$ for all $y\in\widetilde{B}$. Therefore, if we let
\begin{equation*}
D_A:=\bigcup_{(x,s)\in A}D_{(x,s)}
\end{equation*}
be the descendants of $A\subseteq\R^2$, then 
$$
\#\widetilde{B}\le \sum_{y\in \widetilde{B}}m(D_{(y,1)}) =m (D_{\widetilde{B}\times \{1\}})\le m \left(D_{\widetilde{A}\times \{1\}}\cap(\R\times[0,1])\right).
$$
Taking expected values, we have
$$
\E[\#\widetilde{B}]\le \E\left[m \left(D_{\widetilde{A}\times \{1\}}\cap(\R\times[0,1])\right)\right].
$$
To bound the right-hand side, for any $K>0$, let $E_K$ be the event 
that 
$$
m\left(D_{\widetilde{A}\times \{1\}}\cap(\R\times[0,1])\right)\ge 2(K+1).
$$
We claim that the event $E_K$ occurs only if one of the following events occurs:
\begin{enumerate}
\item $\sup_{u\in[0,1]}|g_{(K/2+1,0)}(u)-(K/2+1)|\ge K/2$,
\item $\sup_{u\in[0,1]}|g_{(-K/2-1,0)}(u)+(K/2+1)|\ge K/2$.
\end{enumerate}
Indeed, unless one of these two events occurs, if $(x,s)\in D_{(y,1)}$, where $-1\le y\le 1$,  $|x|>K+1$ and $0\le s\le 1$, then the geodesic $g_{(x,s)}$ must cross either the geodesic $g_{(K/2+1,0)}$ or $g_{(-K/2-1,0)}$, which contradicts the ordering of geodesics in Proposition \ref{Proposition: ordering of geodesics}. Therefore, by Theorem \ref{Theorem: transversal fluctuation}, we have
$$
\P(E_K)\le 2\P\left(\sup_{u\in[0,1]} |g_{(K/2+1,0)}(u)-(K/2+1)|\ge K/2\right)\le 2C_3 e^{-C_4 K^3/8}.
$$
It follows that $\E[\#\widetilde{B}]<\infty$, which concludes the proof.
\end{proof}

\begin{proof}[Proof of Proposition \ref{Proposition: expectation, 2}]
It suffices to consider the case where $\widetilde{m}(A^t)<\infty$ for all $t\in \R$, as one can replace $A$ by $A\cap([-n,n]\times\R)$ and take the limit as $n\rightarrow\infty$. Recalling the definition of the branch length from \eqref{def length}, we have
\begin{equation*}
\E\Big[l\big(Anc_k(\Pi_\lambda)\cap A\big)\Big]
= \E\left[\int_{-\infty}^\infty \#(Anc_k(\Pi_\lambda)\cap A)^t\ dt\right]=\int_{-\infty}^\infty \E\Big[\#\big(Anc_k(\Pi_\lambda)\cap A\big)^t\Big]\ dt.
\end{equation*}
To evaluate the expectation, note that if $m(D_{(y,t)})=a$, then $(y,t)\in Anc_k(\Pi_\lambda)$ with probability $e^{-\lambda a}(\lambda a)^k/k!$. 
Using \eqref{cdf1}, let 
\begin{equation}\label{Sampling from the population, density}
\phi(a)=-\frac{d}{da}\E[\#([0,1]\cap\V_a^t)] = \frac{2C_2' a^{-7/5}}{5}  
\end{equation}
be the ``density" for the number of points whose descendants have Lebesgue measure ``exactly" $a$.  Just as in the proof of \eqref{Sampling from a fixed time: integration over a}, we can use a discrete approximation to justify integrating over $a$ to obtain
\begin{align}\label{Sampling from the entire population, expectation}
\E\Big[\#\big(Anc_k(\Pi_\lambda)\cap A\big)^t\Big]&=\int_0^\infty \frac{e^{-\lambda a}(\lambda a)^k}{k!}\cdot \frac{2C_2' a^{-7/5}}{5}\cdot\widetilde{m}(A^t)\ da\nonumber\\
&= \frac{2C_2'\lambda^k}{5k!}\cdot\widetilde{m}(A^t)\int_0^\infty e^{-\lambda a} a^{k-7/5}\ da\nonumber\\
&=\frac{2C_2' \lambda^{2/5}}{5}\cdot\widetilde{m}(A^t) \cdot\frac{\Gamma(k-2/5)}{k!}.
\end{align}
Equation \eqref{sfs1} then follows from integrating over $t$.
\end{proof}

\begin{remark}\label{Remark: scaling of H_a}
Similar to the proof of \eqref{cdf1}, one can show that there exists a constant $C_2''\in(0,\infty)$ such that 
$$
\E\Big[\#\big(\mathcal{H}_a^t \cap \widetilde{A}\big)\Big] = C_2''\widetilde{m}(\widetilde{A})a^{-2/3},
$$
for all $t\in\R$ and all measurable subsets $\widetilde{A}\subseteq\R$. We will use this remark in Lemma~\ref{Lemma: tall bubbles} later.
\end{remark}

\subsection{Proof of Proposition \ref{Proposition: lln, points anywhere}}\label{Subsection: lln, points anywhere}

We now turn to the law of large numbers result.  As mentioned at the beginning of Section \ref{Section: 2-dimension}, we now introduce a coupling so that the dependence of $l(Anc_k(\Pi_\lambda)\cap A_n)$ on $\L$ is localized. In the next lemma, we consider the set $R_M$ of points in a unit box $B$ that are ancestral to $k$ sampled points and whose set of descendants has height at most $M$. We show that with high probability, $l(R_M)$ agrees with a random variable $X$ which is obtained by replacing infinite upward geodesics by finite geodesics in the definition of $Anc_{k}(\Pi_\lambda)$.  The random variable $X$ therefore depends on $\L$ only within a horizontal strip of height $T$. The dominant term in the bound on $\P(X\neq l(R_M))$ is an estimate for the probability that two finite geodesics do not intersect and comes from Proposition \ref{Proposition: disjoint finite geodesics} with $x_2-x_1=O(\log T)$ and $y_2-y_1=O(T^{2/3}\log T)$.

\begin{lemma}\label{Lemma: localization for sampling from the population}
Recall the definition of $\H_a$ from \eqref{Definition: height and width of descendants}. Let $B=[-1/2, 1/2]\times[0,1]$. For any $M>1$ and any $T>2M+1$, let
$$
R_M=(Anc_{k}(\Pi_\lambda)\setminus\H_M)\cap B,
$$
as shown in Figure \ref{Figure: R_M} below. Then there exists a random variable $X$ measurable with respect to the $\sigma$-field generated by $\Pi_{\lambda}\cap (\R\times [-2M,1])$ and $\L|_{\R\times[-2M,T-2M]}$ and a constant $C_M$ depending on $M$ such that
$$
\P(X\neq l(R_M))\le C_M  T^{-1/3}(\log ^{3/2}T)\exp(\log ^{5/6}T).
$$
\end{lemma}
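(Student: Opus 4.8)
The plan is to define $X$ by running a \emph{truncated} version of the construction of $R_M$ in which every infinite upward geodesic is replaced by a finite geodesic that sees only $\L$ below level $T-2M$, and then to show that the two constructions coincide off an event whose probability is controlled by Proposition~\ref{Proposition: disjoint finite geodesics}. For each $(x,s)\in\R^2$ with $-2M\le s\le T-2M$, let $\hat g_{(x,s)}$ be the left-most finite geodesic from $(x,s)$ to $(0,T-2M)$. By Proposition~\ref{Proposition: concatenation} the restriction $\hat g_{(x,s)}|_{[0,u]}$ is itself a left-most finite geodesic, and via the metric composition property \eqref{metric comp} and Theorem~\ref{Theorem: Brownian motion} this is the argmax description of $g_{(x,s)}$ with the Busemann function from level $1$ replaced by its finite truncation $\L(\cdot,1;0,T-2M)$ (which converges to the Busemann function by \eqref{busemann1}). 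Setting $\hat D_{(y,t)}=\{(x,s):-2M\le s\le t,\ \hat g_{(x,s)}(t-s)=y\}$, $\widehat{Anc}_k=\{(y,t):\#(\hat D_{(y,t)}\cap\Pi_\lambda)=k\}$ and $\hat{\H}_M=\{(y,t):H(\hat D_{(y,t)})>M\}$, define $X:=l\big((\widehat{Anc}_k\setminus\hat{\H}_M)\cap B\big)$. Since $\hat g_{(x,s)}$ depends only on $\L|_{\R\times[s,T-2M]}$ and only points of $\Pi_\lambda\cap(\R\times[-2M,1])$ can be $\hat{}$-descendants of a point of $B$, $X$ has the required measurability.

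Next I would fix a radius $L=L_M\log T$ with $L_M$ large and take $G$ to be the intersection of: (i) no $g_{(x,s)}$ or $\hat g_{(x,s)}$ with $|x|>L$, $s\in[-2M,1]$, meets $B$; (ii) every $g_{(x,s)}$ and $\hat g_{(x,s)}$ with $|x|\le L$, $s\in[-2M,1]$, stays in $[-2L,2L]$ up to level $1$ and in $[-2T^{2/3}L,2T^{2/3}L]$ up to level $T-2M$; (iii) $\hat g_{(x,s)}\equiv g_{(x,s)}$ on $[0,1-s]$ for every $(x,s)$ with $|x|\le L$, $s\in[-2M,1]$. On $G$ one checks directly that $Anc_k(\Pi_\lambda)\setminus\H_M$ and $\widehat{Anc}_k\setminus\hat{\H}_M$ agree on $B$: if $(y,t)\in B$ has $H(D_{(y,t)})\le M$ then by (i)--(ii) all of $D_{(y,t)}$ lies in $[-L,L]\times[-M,1]$, hence $D_{(y,t)}=\hat D_{(y,t)}$ by (iii); while if $H(D_{(y,t)})>M$, going up the true descendant tree to a level just below $t-M$ (which lies in $(-2M,1)$) and again invoking (i)--(iii) shows $H(\hat D_{(y,t)})>M$ too, and conversely. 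In particular, a sampled point below level $-2M$ lying in $D_{(y,t)}$ automatically forces $(y,t)\in\H_M$, so truncating to $[-2M,1]$ is harmless --- this is why the buffer $-2M$, rather than $-M$, is used. Thus $X=l(R_M)$ on $G$.

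It remains to bound $\P(G^c)\le\P(\text{(i) or (ii) fails})+\P(\text{(iii) fails and (i),(ii) hold})$. The first term is handled by Theorem~\ref{Theorem: transversal fluctuation} (which is valid for finite geodesics as well) together with planarity (Proposition~\ref{Proposition: ordering of geodesics}): summing $C_3e^{-C_4a^3/(2M+1)^2}$ over sampled starting points at spatial distance $a\ge L$ from $B$, and bounding the probability that a geodesic of length $\le T$ deviates by more than $2T^{2/3}L$, yields a total that is at most $C_MT^{-100}$ once $L_M$ is large, which is negligible. For the second term, suppose $\hat g_{(x,s)}\not\equiv g_{(x,s)}$ on $[0,1-s]$ for some $(x,s)$ with $|x|\le L$, $s\in[-2M,1]$; since restrictions of left-most geodesics are left-most, the discrepancy already shows at level $1$. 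A surgery argument --- comparing the portion of $g_{(x,s)}$ above level $1$ (which ends at $(z,T-2M)$ with $|z|\le 2T^{2/3}L$ on (ii)) with the portion of $\hat g_{(x,s)}$ above level $1$ (which ends at $(0,T-2M)$), using planarity, Proposition~\ref{Proposition: geodesic coalesecent}, and left-minimality to rule out the coincidence $z=0$ --- then produces, off a negligible event, two \emph{disjoint} finite geodesics that start at level $1$ at horizontal positions in $[-2L,2L]$ and end at level $T-2M$ at horizontal positions in $[-2T^{2/3}L,2T^{2/3}L]$ with one of them ending at $0$. By planarity the whole (uncountable) family of such pairs is sandwiched between two fixed extremal pairs, so Proposition~\ref{Proposition: disjoint finite geodesics} applies with $x_2-x_1=O(L)=O(\log T)$, $y_2-y_1=O(T^{2/3}L)=O(T^{2/3}\log T)$, and time gap $\asymp T$, bounding the probability by
\[
C(\log T)^{1/2}\cdot(T^{2/3}\log T)\cdot T^{-1}\cdot\exp\!\big(\log^{5/6}T\big)=CT^{-1/3}(\log^{3/2}T)\exp\!\big(\log^{5/6}T\big),
\]
which is the claimed estimate. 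I expect the two main obstacles to be: (a) the surgery step --- turning a geodesic/surrogate discrepancy into a genuinely disjoint pair of finite geodesics with the right transversal separations, and correctly organizing the uncountable union of starting points through planarity; and (b) the bookkeeping verifying that the events $\H_M\cap B$ and $\widehat{Anc}_k\cap B$ are faithfully reconstructed from data inside the strip $\R\times[-2M,T-2M]$ on the good event.
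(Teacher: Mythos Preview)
Your construction of $X$ via finite geodesics to $(0,T-2M)$ and the analogs $\hat D$, $\widehat{Anc}_k$, $\hat{\H}_M$ is exactly what the paper does, and your verification that $X=l(R_M)$ on the good event --- including the two-sided check of the $\H_M$ condition by picking a descendant at a level in $(-2M,t-M)$ --- mirrors the paper's argument.

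The one substantive difference is in how you control the event that (iii) fails. You propose a surgery: start from a particular $(x,s)$ with $\hat g_{(x,s)}\ne g_{(x,s)}$ on $[0,1-s]$, argue that the two segments above level $1$ are disjoint finite geodesics, then sandwich between fixed extremal ones via planarity. This can be made to work --- if $g_{(x,s)}$ hits $(0,T-2M)$ then left-mostness forces $\hat g=g$, while if it hits $(z,T-2M)$ with $z\ne0$ the upper segments are ordered and any touching point would force agreement below --- but it requires the case analysis you yourself flag as obstacle (a), and Proposition~\ref{Proposition: geodesic coalesecent} (about infinite geodesics) is not the right citation here.

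The paper bypasses the surgery entirely. It fixes from the outset two outward-slanting finite geodesics
\[
g_1=g_{(-K-1/2,\,-2M;\ -K-1/2-T^{2/3}\log T,\ T-2M)},\qquad
g_2=g_{(K+1/2,\,-2M;\ K+1/2+T^{2/3}\log T,\ T-2M)},
\]
and works with the \emph{coalescence} event $E_{K,7}=\{g_1\text{ and }g_2\text{ meet}\}$. On $E_{K,7}$ they merge at some $(y^*,t^*)$; the transversal-fluctuation events force the infinite geodesics $g_{(\pm(K+1/2),-2M)}$ to be sandwiched between $g_1$ and $g_2$, hence also to pass through $(y^*,t^*)$. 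Then for every $(x,s)$ in the region between $g_1$ and $g_2$, both $g_{(x,s)}$ and $\hat g_{(x,s)}$ are forced through $(y^*,t^*)$ by planarity, and below that point each is the left-most geodesic from $(x,s)$ to $(y^*,t^*)$, so they coincide. Proposition~\ref{Proposition: disjoint finite geodesics} is then applied once, directly to $g_1,g_2$ with $K=\log T$, giving exactly your final estimate. Same inputs, same bound; the logic simply runs forward (coalescence $\Rightarrow$ agreement everywhere) rather than backward through a per-point surgery.
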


\begin{figure}[h]
\centering
\begin{tikzpicture}[scale=0.3]
\draw[draw=black] (-3,0) rectangle ++(6,6);
\draw [dashed] plot [smooth cycle,tension=1] coordinates {(0,4) (-5,0) (-4, -2) (-1, -3) (1, -1) (1, 2)};
\draw[dotted] (0,4)--(5,4);
\draw[dotted] (-1,-3)--(5,-3);
\draw[<->] (5,-3) -- (5,4);

\draw (0,1) [fill=black]circle (2pt);
\draw (-1,-1) [fill=black]circle (2pt);
\draw (-3,-2) [fill=black]circle (2pt);
\draw (0.5,4) [fill=black]circle (6.67pt);

\node at (-4,6){$B$};
\node at (0,5){$(y,t)$};
\node at (9.5,0){$H(D_{(y,t)})\le M$};
\node at (-7.5,-1){$D_{(y,t)}$};
\end{tikzpicture}
\caption{A point $(y,t)\in R_M=(Anc_{k}(\Pi_\lambda)\setminus\H_M)\cap B$ with $k=3$. The small dots in the figure represent sampled points in $\Pi_\lambda$.}\label{Figure: R_M}
\end{figure}

\begin{proof}
For any $(x,s;y,t)\in \R^4_{\uparrow}$ and $0\le u\le t-s$, recall that $g_{(x,s;y,t)}$ is the left-most geodesic connecting $(x,s)$ and $(y,t)$ and $g_{(x,s;y,t)}(u)$ is the spatial position of the geodesic $g_{(x,s;y,t)}$ at time $s+u$. The random variable $X$ is obtained by considering the finite geodesics from $(x,s)\in\R\times[-2M,T-2M)$ to the point $(0,T-2M)$. For $y\in \R$ and $-2M\le t\le T-2M$, we define the analogs of $D_{(y,t)}$, $\H_a$, and $Anc_k(\Pi_{\lambda})$ for these fininte geodesics as 
\begin{align*}
D'_{(y,t)} &:= \{(x,s): -2M\le s\le t,  g_{(x,s;0,T-2M)}(t-s)=y\},\\
\H'_a&:=\{(y,t): -2M\le t\le T-2M, H(D'_{(y,t)})>a\},\\
Anc'_k(\Pi_{\lambda}) &:= \{(y,t):-2M\le t\le T-2M, \#(D_{(y,t)}'\cap\Pi_{\lambda})=k\}.
\end{align*}
We define the random variable $X$ by
$$
R_M':=(Anc'_{k}(\Pi_\lambda)\setminus\H'_M)\cap B,\qquad
X:=l(R_M').
$$
Note that $X$ is defined in terms of the finite geodesics $\{g_{(x,s;0,T-2M)}\}_{x\in\R,-2M\le s\le T-2M}$, and we are considering $(y,t)\in B$ with $H'(D_{(y,t)})\le M$. It follows that $X$ is measurable with respect to $\Pi_{\lambda}\cap (\R\times [-2M,1])$ and $\L|_{\R\times[-2M,T-2M]}$. 

It remains to show that $X=l(R_M)$ with high probability. Let $K>0$ be a constant to be determined. Let $E_K$ be the intersection of the following events. The events $E_{K,1}$ to $E_{K,6}$ impose conditions on transversal fluctuations of four infinite upward geodesics, and the event $E_{K,7}$ is a coalescence event between two finite geodesics. See Figure \ref{Figure: coupling} for an illustration. We will show that $\P(E_k^c)$ is small and that $X=l(R_M)$ on $E_k$.
\begin{figure}[h]
\centering
\begin{tikzpicture}[scale=0.25]
\draw[draw=black, fill=gray!30] (8,0) rectangle ++(24,12);
\draw[draw=black, fill=gray!30] (18,8) rectangle ++(4,4);
\draw[black, dashed] (0,0) -- (40,0);
\draw[black, dashed] (0,8) -- (40,8);
\draw[black, dashed] (0,12) -- (40,12);
\draw[black, dashed] (-10,32) -- (50,32);

\draw plot [smooth,  tension=1] coordinates {(36,0) (38, 10) (25,16) (20,18)(20,20) };
\draw plot [smooth, tension=1] coordinates {(4,0) (6, 10) (12,15.5)(20,20) };
\draw plot [smooth, tension=1] coordinates {(20,20) (21,22) (20, 24) };
\draw plot [smooth, tension=1] coordinates {(28,0) (29, 5) (25,10) (27, 13) (25, 16) };
\draw plot [smooth, tension=1] coordinates {(12,0) (10, 4) (9,9) (12,15.5) };
\draw plot [smooth, tension=1] coordinates {(20,24) (12,26) (6, 28) (0,30) (-10, 32)};
\draw plot [smooth, tension=1] coordinates {(20,24) (20,25) (24, 26) (32, 28) (40, 30) (50,32)};
\draw plot [smooth, tension=1] coordinates {(20,24)(18, 27) (20, 30) (15, 33) (12,36) };

\begin{scope}
\clip(0,0) rectangle (100,20);
\draw [pattern={ Dots[angle=45,distance={10pt}]}]
(36,0) --  plot [smooth,  tension=1] coordinates {(36,0) (38, 10) (25,16) (20,18)(20,20) } --
plot [smooth, tension=1] coordinates {(20,20) (12, 15.5) (6,10)(4,0)}  -- (4,0);
\end{scope}

\fill (-10,32) circle (8pt);
\fill (50,32) circle (8pt);
\fill (20,20) circle (8pt);
\fill (4,0) circle (8pt);
\fill (12,0) circle (8pt);
\fill (28,0) circle (8pt);
\fill (36,0) circle (8pt);

\node at (23,20) {$(y^*, t^*)$};
\node at (12, 25) {$g_1$};
\node at (3, 5) {$g_1$};
\node at (32, 27) {$g_2$};
\node at (40, 5) {$g_2$};
\node at (23, 35) {$\{g_{(x,-2M)}\}_{-K-1/2\le x\le K+1/2}$};
\node at (45,0) {$t=-2M$};
\node at (48,30) {$t=T-2M$};
\node at (45,8) {$t=0$};
\node at (45,12) {$t=1$};
\node at (36, 6) {$B_1$};
\node at (20,10) {$B$};
\node at (20,5) {$B_2$};
\node at (4,-1) {$-K-1/2$};
\node at (13,-1) {$-K/2-1/2$};
\node at (27,-1) {$K/2+1/2$};
\node at (36,-1) {$K+1/2$};
\node at (-5, 33.3) {$-K-1/2-T^{2/3}\log T$};
\node at (45,33.3) {$K+1/2+T^{2/3}\log T$};
\end{tikzpicture}
\caption{A realization of the event $E_{K}$. The geodesics $g_1$ and $g_2$ coalesce at $(y^*,t^*)$. The region $B_1$ is the dotted region enclosed by $g_1$, $g_2$, and $t=-2M$. For $(x,s)\in B_1$, the infinite upward geodesic $g_{(x,s)}$ and the finite geodesic $g_{(x,s; 0,T-2M)}$ pass through $(y^*,t^*)$ and agree up to time $t^*$.  If the event $E_K$ occurs, then the box $B_2 :=[-3K/4-1/2,3K/4+1/2]\times[-2M,1]$ is a subset of $B_1$.}
\label{Figure: coupling}
\end{figure}
\begin{enumerate}
\item $E_{K,1}: \sup_{u\in[0,2M+1]}|g_{(K/2+1/2,-2M)}(u)-(K/2+1/2)|< K/4$,
\item $E_{K,2}: \sup_{u\in[0,2M+1]}|g_{(-K/2-1/2,-2M)}(u)+(K/2+1/2)|< K/4$,
\item $E_{K,3}: \sup_{u\in[0,2M+1]}|g_{(K+1/2,-2M)}(u)-(K+1/2)|< K/4$,
\item $E_{K,4}: \sup_{u\in[0,2M+1]}|g_{(-K-1/2,-2M)}(u)+(K+1/2)|< K/4$,
\item $E_{K,5}: \sup_{u\in[0,T]}|g_{(K+1/2,-2M)}(u)-(K+1/2)|< T^{2/3}\log T$,
\item $E_{K,6}: \sup_{u\in[0,T]}|g_{(-K-1/2,-2M)}(u)+(K+1/2)|< T^{2/3}\log T$.
\item $E_{K,7}$ is the event that the finite geodesics $g_1=g_{(-K-1/2,-2M;-K-1/2-T^{2/3}\log T,T-2M)}$ and \newline $g_2=g_{(K+1/2,-2M;K+1/2+T^{2/3}\log T,T-2M)}
$ are nondisjoint.
\end{enumerate}
By Theorem \ref{Theorem: transversal fluctuation}, we have
$$
\P(E_{K,1}^c)=\P(E_{K,2}^c)=\P(E_{K,3}^c)=\P(E_{K,4}^c)\le C_3 e^{-C_4 K^3/(64(2M+1)^2)},
$$
and
$$
\P(E_{K,5}^c)=\P(E_{K,6}^c)\le C_3 e^{-C_4 \log ^3 T}.
$$
By Proposition \ref{Proposition: disjoint finite geodesics}, we have
$$
\P(E_{K,7}^c)\le C(2K+1)^{1/2}(2K+1+2T^{2/3}\log T)\cdot\frac{1}{T}\cdot\exp\left(\log^{5/6}\left(\frac{T}{(2K+1)^{3/2}}\vee 1\right)\right).
$$
Take $K=\log T$, and we have $\P(E_K)\ge 1-C_M T^{-1/3}(\log ^{3/2}T)\exp(\log ^{5/6}T)$.

It remains to show that $X=l(R_M)$ on $E_K$, which will imply the statement of the lemma. We make the following observations:
\begin{enumerate}
\item If the event $E_{K,7}$ occurs, then the geodesics $g_1$ and $g_2$ coalesce at some point $(y^*,t^*)$ where $t^*\le T-2M$. If the events $E_{K,5}$ and $E_{K,6}$ also occur, then applying Proposition \ref{Proposition: ordering of geodesics} to the finite geodesics $g_1$ and $g_2$ and the restrictions of $g_{(-K-1/2,-2M)}$ and $g_{(-K-1/2,-2M)}$ up to time $T-2M$, we conclude that the infinite upward geodesics $g_{(-K-1/2,-2M)}$ and $g_{(K+1/2,-2M)}$ must pass through $(y^*,t^*)$. Also, let
$$
B_1=\{(y,t)\in \R^2: -2M\le t\le t^*, g_1(t+2M)\le y\le g_2(t+2M)\}
$$
be the region between the geodesics $g_1$ and $g_2$ up to time $t^*$ (see Figure \ref{Figure: coupling}). Using Proposition~\ref{Proposition: ordering of geodesics} again, we see that for any $(x,s)\in B_1$, the geodesics $g_{(x,s)}$ and $g_{(x,s;0,T-2M)}$ agree up to time $t^*$, i.e., 
\begin{equation}\label{Sampling from the entire population: local1}
g_{(x,s)}(u)=g_{(x,s; 0,T-2M)}(u)\qquad\text{for all } (x,s)\in B_1 \text{ and }0\le u\le t^*-s.
\end{equation}
\item If the events $E_{K,3}$ and $E_{K,4}$ occur and we define $B_2 :=[-3K/4-1/2,3K/4+1/2]\times[-2M,1]$, then 
\begin{equation}\label{B2 B1}
B_2\subseteq B_1 .  
\end{equation}
\item If the events $E_{K,1}$ and $E_{K,2}$ occur, then descendants of individuals in $B$ stay within the box $B_2$. That is,
\begin{equation}\label{Sampling from the entire population: local2}
D_{(y,t)}\cap(\R\times[-2M,1])\subseteq B_2,\qquad\text{ for all }(y,t)\in B.
\end{equation}
\item If the event $E_K$ occurs, then we claim that
\begin{equation}\label{coupling height}
H(D_{(y,t)})\le M \qquad\text{ for all }(y,t)\in R_M'.
\end{equation}
Indeed, if $H(D_{(y,t)})>M$, then there exists $x\in\R$ and $s\in[-2M,t-M)$ such that the infinite upward geodesic $g_{(x,s)}$ passes through $(y,t)$. By \eqref{B2 B1} and \eqref{Sampling from the entire population: local2}, we have $(x,s)\in B_2\subseteq B_1$. Then \eqref{Sampling from the entire population: local1} implies that the finite geodesic $g_{(x,s;0,T-2M)}$ also passes through $(y,t)$. Therefore, we have $H(D'_{(y,t)})>M$, contradicting that $(y,t)\in R_M'\subseteq(\H'_M)^c$.
\end{enumerate}

We now prove that on the event $E_K$, the equality $R_M=R_M'$ holds, which implies that $X=l(R_M')=l(R_M)$. 

If $(y,t)\in R_M$, then we have $H(D_{(y,t)})\le M$ by definition and hence $D_{(y,t)}\subseteq \R\times[-M,1]$. Combining this with \eqref{B2 B1} and \eqref{Sampling from the entire population: local2} gives $D_{(y,t)}\subseteq B_2\subseteq B_1$. It then follows from \eqref{Sampling from the entire population: local1} that $D_{(y,t)}=D'_{(y,t)}$ and hence $(y,t)\in R_M'$. 

If $(y,t)\in R_M'$, then we have $H(D_{(y,t)})\le M$ from \eqref{coupling height}. Repeating the argument as in the case where $y\in R_M$ gives  us $D_{(y,t)}=D'_{(y,t)}$, which implies that $(y,t)\in R_M$. 
\end{proof}

We are now ready to prove Proposition \ref{Proposition: lln, points anywhere}. We use an ergodic theorem and the space-time stationarity of $\L$ to deduce the a.s. and $L^1$ convergence to a possibly random limit, and then use a variance computation to conclude that a strong law of large numbers holds, as stated in Proposition \ref{Proposition: lln, points anywhere}.
\begin{proof}[Proof of Proposition \ref{Proposition: lln, points anywhere}]
Recall the definition of $A_n$ from the statement of Proposition \ref{Proposition: lln, points anywhere}. For $m\in[n^5,(n+1)^5)$, where $n$ is a positive integer, we have
\begin{align*}
\frac{l(Anc_k(\Pi_\lambda)\cap A_{n^5})}{(n+1)^5}\le \frac{l(Anc_k(\Pi_\lambda)\cap A_m)}{m}\le\frac{l(Anc_k(\Pi_\lambda)\cap A_{(n+1)^5})}{n^5}.
\end{align*}
Therefore, it suffices to show that
\begin{equation}\label{Sampling from the entire population, integer side length}
\lim_{n\rightarrow\infty}\frac{l(Anc_k(\Pi_\lambda)\cap A_{n^5})}{n^5}= \frac{2C_2'\lambda^{2/5}}{5}\cdot \frac{\Gamma(k - 2/5)}{k!},\qquad \text{a.s. and in } L^1,  
\end{equation}
holds when we only consider positive integers $n$.

We will divide $A_{n^5}$ into $n^5$ unit boxes and consider the restrictions of $Anc_k(\Pi_\lambda)$ to those boxes. We will also apply a truncation so that the variance can be bounded. Let $M,N>1$ be fixed integers. For $i,j\in\Z$, let $B_{i,j}=[i-1,i]\times[j-1,j]$, and let 
$$
X_{i,j}=l\Big((Anc(\Pi_{\lambda})\setminus\H_M)\cap B_{i,j}\Big).
$$ 
By parts 1 and 2 of Proposition \ref{Proposition: properties of the landscape}, $\{X_{i,j}\}_{i,j\in\Z}$ is stationary under shifts in the time or space direction. Therefore, we can apply an ergodic theorem which holds for random variables indexed by $\Z^2$, as stated in Theorems~1 and 3 of \cite{b71}. Using the notation of \cite{b71}, let $G=\Z^2$, let the set $A_n=\{1,\dots ,n^2\}\times\{1,\dots, n^3\}$, and let $\gamma$ be the counting measure. We can then conclude from \cite{b71} that as $n\rightarrow\infty$,
\begin{equation}\label{Sampling from the entire population, ergodic}
\frac{1}{n^5}\sum_{i=1}^{n^2}\sum_{j=1}^{n^3} (X_{i,j}\wedge N)\text{ converges a.s. and in } L^1.
\end{equation}

We next show that the limit in \eqref{Sampling from the entire population, ergodic} is $\E[X_{1,1}\wedge N]$. Consider two boxes $B_{i,j}$ and $B_{i',j'}$ where $j'>j+2M+1$. Applying Lemma \ref{Lemma: localization for sampling from the population} to the box $B_{i,j}$ with $T=j'-j$, there exists a constant $C_M$ which could vary from line to line, and a random variable $X$, measurable with respect to the $\sigma$-field generated by $\Pi_{\lambda}\cap (\R\times[j-2M-1,j])$ and $\L|_{\R\times[j-2M-1,j'-2M-1]}$, such that
$$
\P(X\neq X_{i,j})\le C_M  (j'-j)^{-1/3}\log^{3/2} (j'-j)\exp(\log^{5/6}(j'-j)).
$$
Then for any $N>0$, we have
$$
|\E[X\wedge N]-\E[X_{i,j}\wedge N]|\le C_M N (j'-j)^{-1/3}\log^{3/2} (j'-j)\exp(\log^{5/6}(j'-j)),
$$
and 
\begin{align*}
&|\E[(X\wedge N)(X_{i',j'}\wedge N)]-\E[(X_{i,j}\wedge N)(X_{i',j'}\wedge N)]|\\
&\qquad\le C_M N^2 (j'-j)^{-1/3}\log^{3/2}(j'-j)\exp(\log^{5/6}(j'-j)).
\end{align*}
Since $X_{i',j'}$ is measurable with respect to the $\sigma$-field generated by $\Pi_\lambda\cap (\R\times(j'-2M-1,j'])$ and $\L|_{\R\times(j'-2M-1,\infty)}$, by the independent increments property in Proposition \ref{Proposition: independent increment}, the random variables $X$ and $X_{i',j'}$ are independent. Therefore, $\Cov(X\wedge N,X_{i',j'}\wedge N)=0$, and it follows that
\begin{align*}
&|\Cov(X_{i,j}\wedge N,X_{i',j'}\wedge N)|\\
&\qquad=|\Cov((X_{i,j}\wedge N)-(X\wedge N),X_{i',j'}\wedge N)+\Cov(X\wedge N,X_{i',j'}\wedge N)|\\
&\qquad=|\Cov((X_{i,j}\wedge N)-(X\wedge N),X_{i',j'}\wedge N)|\\
&\qquad=\big|\E\big[((X_{i,j}\wedge N)-(X\wedge N))(X_{i',j'}\wedge N)\big]-\E\big[(X_{i,j}\wedge N)-(X\wedge N)\big]\E\big[X_{i',j'}\wedge N\big]\big|\\
&\qquad\le C_M N^2 (j'-j)^{-1/3}\log^{3/2} (j'-j)\exp(\log^{5/6}(j'-j)).
\end{align*}
For two boxes $B_{i,j}$ and $B_{i',j'}$ where $|j-j'|\le 2M+1$, we use the trivial bound
$$
|\Cov(X_{i,j}\wedge N,X_{i',j'}\wedge N)|\le N^2.
$$
Therefore,
\begin{align*}
\Var\left(\sum_{i=1}^{n^2}\sum_{j=1}^{n^3} \left(X_{i,j}\wedge N\right)\right)&= \sum_{1\le i,i'\le n^2}\sum_{1\le j,j'\le n^3} \Cov(X_{i,j}\wedge N,X_{i',j'}\wedge N)\\
&=\sum_{1\le i,i'\le n^2}\Bigg(\sum_{|j-j'|\le 2M+1} \Cov(X_{i,j}\wedge N,X_{i',j'}\wedge N)\\
&\qquad\qquad\qquad+\sum_{|j-j'|> 2M+1} \Cov(X_{i,j}\wedge N,X_{i',j'}\wedge N)\Bigg)\\
&\le Cn^4\left(Mn^3\cdot N^2+n^3\sum_{T=2M+2}^{n^3}C_M N^2 T^{-1/3}(\log^2 T) \exp(\log^{5/6}T)\right)\\
&\le C_M N^2 n^9 (\log^2 n)\exp(\log^{5/6}n^3).
\end{align*}
It follows that 
\begin{equation}\label{Sampling from the entire population, wlln}
\frac{1}{n^5}\sum_{i=1}^{n^2}\sum_{j=1}^{n^3} (X_{i,j}\wedge N)\rightarrow \E[X_{1,1}\wedge N]\qquad\text{in probability}.   
\end{equation}
Note that the definition of $X_{i,j}$ depends on $M$, so the right hand side depends on both $M$ and $N$. Combining \eqref{Sampling from the entire population, ergodic} with \eqref{Sampling from the entire population, wlln}, we get
$$
\frac{1}{n^5}\sum_{i=1}^{n^2}\sum_{j=1}^{n^3} (X_{i,j}\wedge N)\rightarrow \E[X_{1,1}\wedge N]\qquad\text{a.s. and in } L^1.
$$
Taking the supremum over $M$ and $N$, we have
\begin{align*}
\liminf_{n\rightarrow\infty}\frac{1}{n^5} l\big(Anc_k(\Pi_\lambda)\cap A_{n^5}\big)&\ge \sup_{M,N}\liminf_{n\rightarrow\infty}\frac{1}{n^5}\sum_{i=1}^{n^2}\sum_{j=1}^{n^3} (X_{i,j}\wedge N)\\
&=\sup_{M,N} \E[X_{1,1}\wedge N]\\
&=\E\big[l(Anc_k(\Pi_\lambda)\cap A_1)\big].
\end{align*}
Recall from Proposition \ref{Proposition: expectation, 2} that $\E[l(Anc_k(\Pi_\lambda)\cap A_{n^5})]<\infty$. We apply Theorems 1 and 3 of \cite{b71} again to conclude that $l(Anc_k(\Pi_\lambda)\cap A_{n^5})/n^5$ converges a.s. and in $L^1$. It then follows from Lemma \ref{Lemma: probability fact} that
$$
\frac{l(Anc_k(\Pi_\lambda)\cap A_{n^5})}{n^5}\rightarrow \E[l(Anc_k(\Pi_{\lambda})\cap A_1)]\qquad\text{a.s. and in } L^1,
$$
which, together with equation \eqref{sfs1}, concludes the proof of \eqref{Sampling from the entire population, integer side length}.
\end{proof}

\subsection{Proof of Proposition \ref{Proposition: lln, points in the box}}\label{Subsection: robust1}

Now that we have proved Proposition \ref{Proposition: lln, points anywhere}, which gives a strong law of large numbers for $l(Anc_k(\Pi_{\lambda})\cap A_n)$, the next step is to show that we get a similar result, but with convergence in probability instead of almost sure convergence, when we sample in such a way that there are exactly $n$ points sampled from $A_n$.  
To prove this convergence, we first establish a result similar to Lemma \ref{Lemma: perturb the sampling intensity2}
which shows that $Anc_k(\Pi_\lambda)$ does not change much if we change the sampling intensity $\lambda$ by a small amount.

\begin{lemma}\label{Lemma: perturb the sampling intensity1}

Fix $\lambda> \eps>0$, and fix a positive integer $k$. Let $A$ be a measurable subset of $\R^2$. Let
$$
B=\bigcup_{1\le i\le k\le j, i\neq j} \left(Anc_{i}(\Pi_{\lambda-\eps})\cap Anc_{j}(\Pi_{\lambda+\eps})\right)
$$
be the pairs $(y,t)\in\R^2$ that are ancestral to at most $k$ points in $\Pi_{\lambda-\eps}$, at least $k$ points in $\Pi_{\lambda+\eps}$, and at least one point in $\Pi_{\lambda+\eps}\setminus\Pi_{\lambda-\eps}$. Then there exists a constant $C_{k,\lambda}$ depending on $k$ and $\lambda$ such that for all $\eps<\lambda/2$ and all measurable $A\subseteq\R^2$,
$$
\E[l(A\cap B)]\le C_{k,\lambda}m(A)\eps.
$$  
\end{lemma}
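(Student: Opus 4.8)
The plan is to closely follow the proof of Lemma~\ref{Lemma: perturb the sampling intensity2}, replacing the one-dimensional density $\phi(a)=C_1'a^{-1/2}/2$ used there with the two-dimensional density $\phi(a)=2C_2'a^{-7/5}/5$ furnished by \eqref{Sampling from the population, density}. The first step is to reduce $B$ to a manageable union: for each $i\ge 1$ set
$$
B_i=Anc_i(\Pi_{\lambda-\eps})\cap\{(y,t)\in\R^2:D_{(y,t)}\cap(\Pi_{\lambda+\eps}\setminus\Pi_{\lambda-\eps})\neq\emptyset\}.
$$
Since $\Pi_{\lambda-\eps}\subseteq\Pi_{\lambda+\eps}$, any $(y,t)\in Anc_i(\Pi_{\lambda-\eps})\cap Anc_j(\Pi_{\lambda+\eps})$ with $i\neq j$ has $j>i$, so $D_{(y,t)}$ must contain a point of $\Pi_{\lambda+\eps}\setminus\Pi_{\lambda-\eps}$; hence $B\subseteq\bigcup_{1\le i\le k}B_i$ and $\E[l(A\cap B)]\le\sum_{i=1}^k\E[l(A\cap B_i)]$, so it suffices to bound each summand.

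The second step is to estimate $\E[l(A\cap B_i)]$ by first conditioning on $\L$ and then averaging over the Poisson marks. If $m(D_{(y,t)})=a$, then the probability that $(y,t)\in B_i$ equals $\frac{e^{-(\lambda-\eps)a}((\lambda-\eps)a)^i}{i!}(1-e^{-2\eps a})$, because the restrictions of $\Pi_{\lambda-\eps}$ and of $\Pi_{\lambda+\eps}\setminus\Pi_{\lambda-\eps}$ to $D_{(y,t)}$ are independent Poisson variables with means $(\lambda-\eps)a$ and $2\eps a$. Running the same discrete approximation in the volume variable $a$ that was used to prove \eqref{sfs1} (the dominated–convergence argument behind \eqref{Sampling from the entire population, expectation}), with the Poisson weight $e^{-\lambda a}(\lambda a)^k/k!$ there replaced by the weight just displayed, and using the translation invariance of $\L$ and $\Pi_\lambda$ to reduce to the case $m(A)<\infty$, yields
$$
\E[l(A\cap B_i)]=m(A)\int_0^\infty\frac{2C_2'a^{-7/5}}{5}\cdot\frac{e^{-(\lambda-\eps)a}((\lambda-\eps)a)^i}{i!}(1-e^{-2\eps a})\,da.
$$

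The final step is to bound this integral uniformly in $\eps$. Using $1-e^{-2\eps a}\le 2\eps a$, together with $\eps<\lambda/2$ (so that $\lambda-\eps\in(\lambda/2,\lambda)$ and hence $e^{-(\lambda-\eps)a}\le e^{-\lambda a/2}$ and $(\lambda-\eps)^i\le\lambda^i$), the right-hand side is at most
$$
2\eps\,m(A)\int_0^\infty\frac{2C_2'a^{-7/5}}{5}\cdot\frac{\lambda^i}{i!}\,e^{-\lambda a/2}a^{i+1}\,da\le C_{i,\lambda}\,m(A)\,\eps,
$$
where the integral is finite (its integrand is $O(a^{i-2/5})$ near $a=0$, which is integrable since $i\ge1$, and decays exponentially at infinity) and the bound is uniform over $\eps\in(0,\lambda/2)$. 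Summing over $i=1,\dots,k$ gives the lemma with $C_{k,\lambda}=\sum_{i=1}^kC_{i,\lambda}$. I expect no genuine obstacle here: the only step needing care is the discrete approximation producing the displayed identity for $\E[l(A\cap B_i)]$, and that is essentially identical to the corresponding argument in the proof of Proposition~\ref{Proposition: expectation} — in particular the set $\{(y,t):m(D_{(y,t)})=\infty\}$ carries zero branch length by \eqref{cdf1} and can be discarded throughout.
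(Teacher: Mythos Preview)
Your proof is correct and follows essentially the same approach as the paper: both reduce to the sets $B_i$, use the Poisson probability $\frac{e^{-(\lambda-\eps)a}((\lambda-\eps)a)^i}{i!}(1-e^{-2\eps a})$ together with the density $\phi(a)=2C_2'a^{-7/5}/5$, and bound $1-e^{-2\eps a}\le 2\eps a$ to extract the factor of $\eps$. The only cosmetic difference is that you state an equality for $\E[l(A\cap B_i)]$ and then explicitly uniformize the constants via $e^{-(\lambda-\eps)a}\le e^{-\lambda a/2}$ and $(\lambda-\eps)^i\le\lambda^i$, whereas the paper writes an inequality and simply notes the integral is finite.
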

\begin{proof}
For each $i>0$, let
$$
B_i = Anc_{i}(\Pi_{\lambda-\eps}) \cap\{(y,t)\in\R^2: D_{(y,t)}\cap (\Pi_{\lambda+\eps}\setminus\Pi_{\lambda-\eps})\neq\emptyset\}
$$
be the pairs $(y,t)\in\R^2$ that are ancestral to $i$ points in $\Pi_{\lambda-\eps}$ and at least one point in $\Pi_{\lambda+\eps}\setminus\Pi_{\lambda-\eps}$.
Then, $B\subseteq \cup_{1\le i\le k} B_i$ by definition. Therefore,
\begin{align*}
\E[l(A\cap B)]\le \sum_{1\le i\le k} \E[l(A\cap B_i)]=\sum_{1\le i\le k} \int_{-\infty}^\infty \E\left[\#(A\cap B_i)^t\right]\ dt.
\end{align*}
Similar to the proof of Proposition \ref{Proposition: expectation, 2}, we let $\phi(\cdot)$ be defined as in \eqref{Sampling from the population, density}. If $m(D_{(y,t)})=a$, then the probability that $(y,t)$ is ancestral to $i$ points in $\Pi_{\lambda-\eps}$ and at least one point in $\Pi_{\lambda+\eps}\setminus\Pi_{\lambda-\eps}$ is
$$
\frac{e^{-(\lambda-\eps)a}(\lambda-\eps)^i a^i }{i!}(1-e^{-2\eps a}).
$$
Therefore, using an argument similar to the ones used to prove \eqref{Sampling from the entire population, expectation} and \eqref{ElAB},
\begin{align}\label{perturb bound}
\E[l(A\cap B)]&\le\sum_{1\le i\le k} \int_{-\infty}^\infty 
\int_0^\infty \widetilde{m}(A^t)\phi(a) \cdot \frac{e^{-(\lambda-\eps)a}(\lambda-\eps)^i a^i }{i!}(1-e^{-2\eps a})\ da\ dt\nonumber\\
&\leq \sum_{1\le i\le k} \int_0^\infty 
\left(\int_{-\infty}^\infty   \widetilde{m}(A^t)\ dt\right) \cdot \frac{2C_2' a^{-7/5}}{5}\cdot \frac{e^{-(\lambda-\eps)a}(\lambda-\eps)^i a^i }{i!} \cdot 2 \eps a \ da\nonumber\\
&\le C_{k,\lambda}m(A)\eps
\end{align}
because the integral is finite for all $i$, which completes the proof.
\end{proof}

\begin{proof}[Proof of Proposition \ref{Proposition: lln, points in the box}]
Define $B$ as in the statement of Lemma \ref{Lemma: perturb the sampling intensity1}. For $n$ sufficiently large, we apply Lemma \ref{Lemma: perturb the sampling intensity1} with $\lambda=1$, $\eps_n=(\log n)/\sqrt{n}$ and $A_n=[0,n^{2/5}]\times[0,n^{3/5}]$ to get
$$
\E[l(A_n\cap B)]\le C_{k,\lambda}\sqrt{n}\log n.
$$
In particular, as $n\rightarrow\infty$, by Markov's inequality we have
$$
\frac{l(A_n\cap B)}{n}\rightarrow 0,\qquad\text{in probability}.
$$
Recall the definition of $\lambda_n^*$ from \eqref{lambda1}. Then, as $n\rightarrow\infty$,
\begin{equation}\label{Sampling from the population: Poisson deviation}
\P(\lambda_n^*\in [1-\eps_n,1+\eps_n])\ge \P(\#(\Pi_{1-\eps_n}\cap A_n)< n<\#(\Pi_{1+\eps_n}\cap A_n))\rightarrow1.    
\end{equation}
On the event $\lambda_n^*\in [1-\eps_n,1+\eps_n]$, we have
$$
Anc_k(\Pi_{1})\ \triangle\  Anc_k(\Pi_{\lambda_n^*})\subseteq B.
$$
It follows that as $n\rightarrow\infty$, 
\begin{equation}\label{pertubation1}
\frac{l\Big(\big(Anc_k(\Pi_{1})\ \triangle\ Anc_k(\Pi_{\lambda_n^*})\big)\cap A_n\Big)}{n}\rightarrow 0,\qquad\text{in probability}.
\end{equation}
The claim of the proposition then follows from \eqref{pertubation1} and Proposition \ref{Proposition: lln, points anywhere}.
\end{proof}

\subsection{Proof of Proposition \ref{Proposition: diff1}}\label{Subsection: diff1}

To complete the proof of Theorem \ref{Theorem: main1}, it remains only to prove Proposition \ref{Proposition: diff1}. Recall that $A_n=[0,n^{2/5}]\times[0,n^{3/5}]$. To prove Proposition \ref{Proposition: diff1}, we need to compare the sets $Anc_k(\Pi_{\lambda_n^*})\cap A_n$ and $Anc_k(\Pi_{\lambda_n^*}\cap A_n)$. Recall from Figure \ref{Figure: diff1} in the introduction that $Anc_k(\Pi_{\lambda_n^*})\cap A_n$ consists of points in $A_n$ that are ancestral to $k$ sampled points in $\R^2$, while $Anc_k(\Pi_{\lambda_n^*}\cap A_n)$ consists of points in $\R^2$ that are ancestral to $k$ sampled points in $A_n$. Typically, points that are ancestral to $k$ sampled points in $A_n$ will themselves be in $A_n$, and points in $A_n$ that are ancestral to $k$ sampled points will have their descendants contained in $A_n$. Therefore, these sets will be close with high probability. Nevertheless, some discrepancies will occur near the boundary of $A_n$. These discrepancies need to be bounded, which requires some rather technical arguments.

Suppose $(y,t)\in\R^2$ is in one of these sets but not the other. Then there are two possibilities. 
\begin{enumerate}
\item $(y,t)\notin A_n$, and $(y,t)\in Anc_k(\Pi_{\lambda_n^*}\cap A_n)$. That is $(y,t)$ is not in the box $A_n$ but ancestral to $k$ points in the box.
\item $(y,t)\in Anc_{j}(\Pi_{\lambda_n^*})\cap A_n$ for some $j\ge k$, and $D_{(y,t)}\cap A_n^c\neq\emptyset$. That is, $(y,t)$ is in the box $A_n$ and is ancestral to at least $k$ of the sample points, and some of its descendants go outside the box $A_n$.
\end{enumerate}
We refer the reader back to Figure \ref{Figure: diff1} for an illustration, where the thick portion of $T_1$ in the right plot corresponds to the first possibility above, and the thick portion of $T_1$ in the left plot corresponds to the second possibility above.
If we write, for $A\subseteq\R^2$,
$$
Anc_{\ge k}(A):=\bigcup_{j\ge k}Anc_j(A),\qquad \text{ for all } A\subseteq\R^2
$$
for the set of points $(y,t)$ that are ancestral to at least $k$ points in $A$, then the discussion above implies that
$$
(Anc_k(\Pi_{\lambda_n^*}) \cap A_n)\ \triangle\  Anc_k(\Pi_{\lambda_n^*}\cap A_n)\subseteq F_{k,n}\cup G_{k,n},
$$
where
$$
F_{k,n}:=Anc_k(\Pi_{\lambda_n^*}\cap A_n)\setminus A_n
$$
is the set described in case 1 above and 
$$
G_{k,n} := \{(y,t)\in Anc_{\ge k}(\Pi_{\lambda_n^*})\cap A_n: D_{(y,t)}\cap A_n^c\neq \emptyset\}
$$
is the set described in case 2 above. We claim that for all positive integers $k$, as $n\rightarrow\infty$, 
\begin{equation}\label{Sampling from the entire population: comparison1}
\frac{l(F_{k,n})}{n}\rightarrow 0,\qquad\text{in probability},   
\end{equation}
and
\begin{equation}\label{Sampling from the entire population: comparison2}
\frac{l(G_{k,n})}{n}\rightarrow0,\qquad\text{in probability}.
\end{equation}
Proposition \ref{Proposition: diff1} is an immediate consequence of \eqref{Sampling from the entire population: comparison1} and \eqref{Sampling from the entire population: comparison2}. We prove \eqref{Sampling from the entire population: comparison1} in Section \ref{Subsubsection: comparison1}, and \eqref{Sampling from the entire population: comparison2} in Section \ref{Subsubsection: comparison2}.

\subsubsection{Proof of \eqref{Sampling from the entire population: comparison1}}\label{Subsubsection: comparison1}

In this section, we prove \eqref{Sampling from the entire population: comparison1} using a first moment computation with truncation. Recall that $F_{k,n}$ is the set of points that are not in $A_n$ but are ancestral to $k$ points in $A_n$.

\begin{proof}[Proof of \eqref{Sampling from the entire population: comparison1}]
To bound $l(F_{k,n})$, we split the branches at time $T_n = n^{3/5}\log\log n$. We have
\begin{equation}\label{Sampling from the entire population, diff1, case1}
l(F_{k,n})=l\big(F_{k,n}\cap(\R\times[0,T_n])\big)+l\big(F_{k,n}\cap(\R\times(T_n,\infty))\big).
\end{equation}
The choice of $T_n$ is somewhat flexible here.  However, we need to choose $T_n \gg n^{3/5}$ to ensure that with high probability, all geodesics starting from the box $A_n$ have coalesced by time $T_n$, and we need $T_n$ to be sufficiently small that \eqref{Sampling from the entire population, branches outside the box} below holds.

For the first term on the right hand side of \eqref{Sampling from the entire population, diff1, case1}, we will show in Lemma \ref{Lemma:  branches outside the box} that
\begin{equation}\label{Sampling from the entire population, branches outside the box}
\E\Big[l\big(F_{k,n}\cap(\R\times[0,T_n])\big)\Big] = o(n).
\end{equation}
It will then follow from Markov's inequality that
$$
\frac{l\big(F_{k,n}\cap(\R\times[0,T_n])\big)}{n}\rightarrow0,\qquad\text{in probability.}
$$
For the second term on the right hand side of \eqref{Sampling from the entire population, diff1, case1}, note that for $n>k$, the set $F_{k,n}\cap(\R\times(T_n,\infty))$ is nonempty only if the geodesics starting from the box $A_n$ have not coalesced into one geodesic at time $T_n=n^{3/5}\log\log  n$. We claim that this event occurs with probability going to 0 and hence
$$
\lim_{n\rightarrow\infty}\P\big(F_{k,n}\cap(\R\times(T_n,\infty))\neq\emptyset\big)=0,
$$
which completes the proof of \eqref{Sampling from the entire population: comparison1}.

It remains to justify our claim that, with probability going to 1, all the geodesics starting from the box $A_n$ have coalesced into one geodesic at time $T_n$. Denote this event by $E_n$. Let $z_n=n^{2/5}\log\log\log n$. Let $E_{n,1}$ be the event that the geodesics $g_{(-z_n,0)}$ and $g_{(n^{2/5}+z_n,0)}$ coalesce by time $T_n$. Let $E_{n,2}$ be the event that 
$$
\sup_{u\in[0,n^{3/5}]}|g_{(-z_n,0)}(u)-(-z_n)|< z_n,\qquad \sup_{u\in[0,n^{3/5}]}|g_{(n^{2/5}+z,0)}(u)-(n^{2/5}+z_n)|< z_n.
$$
Note that if both events $E_{n,1}$ and $E_{n,2}$ occur, then the event $E_n$ occurs. See Figure \ref{Figure: box coalescence} for an illustration. Also, Theorem \ref{Theorem: transversal fluctuation} and Proposition \ref{Proposition: 2-disjoint} guarantee that both events $E_{n,1}$ and $E_{n,2}$ occur with probability going to 1, which completes the proof.
\begin{figure}[h]
\centering
\begin{tikzpicture}[scale=0.25]
\draw [dotted] plot coordinates {(0, 0) (30,0)};
\draw [dotted] plot coordinates {(0, 4) (30,4)};
\draw [dotted] plot coordinates {(0, 11) (30,11)};
\draw[draw=black,fill=gray!30] (13,0) rectangle ++(4,4);
\draw plot [smooth, tension=1] coordinates {(10, 0)(9,2) (11,4) (14.5, 6) (15, 8)};
\draw plot [smooth, tension=1] coordinates {(20, 0)(18,2) (19,5) (15, 8)};
\draw plot [smooth, tension=1] coordinates {(15,8) (14.5, 9) (14,11)};

\fill (10,0) circle(8pt);
\fill (20,0) circle(8pt);
\fill (14,11) circle(8pt);

\node at (10,-1) {$(-z_n,0)$};
\node at (20,-1) {$(n^{2/5}+z_n, 0)$};
\node at (-1,0) {$0$};
\node at (-1,4) {$n^{3/5}$};
\node at (-5,11) {$T_n=n^{3/5}\log\log n$};
\node at (15,2) {$A_n$};
\end{tikzpicture}
\caption{When $E_{n,2}$ occurs, the box $A_n$ is between the geodesics $g_{(-z_n,0)}$ and $g_{(n^{2/5}+z_n,0)}$. If $E_{n,1}$ also occurs, then all the geodesics starting from the box $A_n$ have coalesced at time $T_n$.}
\label{Figure: box coalescence}
\end{figure}
\end{proof}

It remains to prove \eqref{Sampling from the entire population, branches outside the box} in the following Lemma. The dominant contribution comes from the geodesics starting from sampled points in the box $A_n$ that are close to the top boundary of $A_n$.
\begin{lemma}\label{Lemma:  branches outside the box}
For any $T\ge n^{3/5}$, we have
\begin{align*}
&\E\Big[l\big(F_{k,n}\cap (\R\times[0,T])\big)\Big]\le \frac{CTn^{2/5}}{\log n}.
\end{align*}
In particular, for $T_n=n^{3/5}\log\log n$,
$$
\E\Big[l\big(F_{k,n}\cap(\R\times[0,T_n])\big)\Big]\le C(n\log\log n )/\log n = o(n).
$$
\end{lemma}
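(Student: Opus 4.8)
The plan is to split $F_{k,n}\cap(\R\times[0,T])$ according to which side of the box $A_n=[0,n^{2/5}]\times[0,n^{3/5}]$ a point of $F_{k,n}$ lies beyond: the \emph{top part} $\mathsf{Top}:=F_{k,n}\cap(\R\times(n^{3/5},T])$ and the \emph{side part} $\mathsf{Side}:=F_{k,n}\cap\big((\R\setminus[0,n^{2/5}])\times[0,n^{3/5}]\big)$ (itself a left and a right piece). Since $\mathsf{Side}$ does not depend on $T$ and $Tn^{2/5}/\log n\ge n/\log n$ for $T\ge n^{3/5}$, it suffices to prove $\E[l(\mathsf{Top})]\le CTn^{2/5}/\log n$ and $\E[l(\mathsf{Side})]=o(n)$; in fact $\mathsf{Side}$ turns out to be only $O(n^{3/5}\,\mathrm{polylog}\,n)$. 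The ``in particular'' statement then follows by taking $T=T_n=n^{3/5}\log\log n$.

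\emph{The top part.} Since $(\R\times(n^{3/5},\infty))\cap A_n=\emptyset$, we have $\#(\mathsf{Top}^t)\le N(t):=\#\{\,g_q(t-s_q): q=(x_q,s_q)\in\Pi_{\lambda_n^*}\cap A_n\,\}$, the number of distinct positions at time $t$ of the $n$ infinite upward geodesics started at the sampled points. By Lemma~\ref{Lemma: semigroup}, $N(t)$ equals the number of distinct positions at time $t$ of the geodesics started at $(z_i,n^{3/5})$, where $z_i$ is the height-$n^{3/5}$ position of the $i$-th such geodesic. Theorem~\ref{Theorem: transversal fluctuation} and a union bound show that with probability $1-O(ne^{-c(\log n)^3})$ all $z_i$ lie in a deterministic interval $J_n$ of length $O(n^{2/5}\log n)$. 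On this event, comparing with the geodesics started from the integer net of $J_n$ and using Proposition~\ref{Proposition: 2-disjoint} together with Proposition~\ref{Proposition: number of intersection points} (and the monotonicity of $t\mapsto N(t)$ on $[n^{3/5},\infty)$ from Proposition~\ref{Proposition: geodesic coalesecent}) gives $\E[N(t)]\le C\big(1+n^{2/5}(\log n)(t-n^{3/5})^{-2/3}\big)$ for $t\ge n^{3/5}+1$, while $\E[N(n^{3/5})]\le Cn^{3/5}$ handles $t\in[n^{3/5},n^{3/5}+1]$. Integrating over $t\in(n^{3/5},T]$ yields $\E[l(\mathsf{Top})]\le C\big(T+n^{2/5}(\log n)T^{1/3}+n^{3/5}\big)$, which is $\le CTn^{2/5}/\log n$ once $n$ is large, using $T\ge n^{3/5}$.

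\emph{The side part.} Consider the left piece (the right is symmetric). On the event $\lambda_n^*\in[1-\eps_n,1+\eps_n]$, which has probability tending to $1$ by \eqref{Sampling from the population: Poisson deviation}, a variant of the argument of Lemma~\ref{Lemma: perturb the sampling intensity1} restricted to $A_n$ lets us replace $\Pi_{\lambda_n^*}\cap A_n$ by $\Pi_1\cap A_n$ at the cost of a set whose expected branch length over $B':=[-n^{2/5}\log n,0)\times[0,n^{3/5}]$ is $O(m(B')\eps_n)=O(\sqrt n(\log n)^2)$; and Theorem~\ref{Theorem: transversal fluctuation} shows the left piece is contained in $B'$ with overwhelming probability. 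It then remains to prove the uniform bound $\E\big[\#\big(Anc_k(\Pi_1\cap A_n)^t\cap[-n^{2/5}\log n,0)\big)\big]\le C_k$ for all $t\in[0,n^{3/5}]$, which upon integrating gives $\E[l(\text{left piece via }\Pi_1)]\le C_kn^{3/5}$, so that $\E[l(\mathsf{Side})]\le C_kn^{3/5}+O(\sqrt n(\log n)^2)=o(n)$. For this uniform bound we run a first-moment computation conditionally on $\L$: a point $(y,t)$ at distance $d>0$ to the left of $A_n$ lies in $Anc_k(\Pi_1\cap A_n)$ with probability $e^{-a}a^k/k!$, where $a=m(D_{(y,t)}\cap A_n)$ and $D_{(y,t)}$ is the (a.s.\ finite-volume) descendant cone. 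We organize these apexes by the full cone volume $v:=m(D_{(y,t)})$, whose density in $(y,t)$ is governed by \eqref{cdf1}, and use the transversal-fluctuation tail of Theorem~\ref{Theorem: transversal fluctuation} to show $a\le Cv\,e^{-cd^3/v^{6/5}}$ once $d$ exceeds a fixed multiple of $v^{2/5}$ (morally: a descendant of $(y,t)$ reaches $\{x\ge0\}$ only if $g_{(0,s)}$ swings left by $d$, costing $e^{-cd^3/v^{6/5}}$ for a cone of depth $\sim v^{3/5}$). Substituting this into $(a^k/k!)\wedge e^{-a}$ and integrating against the volume density $\propto v^{-7/5}\,dv$, then summing over integer $d\ge1$, all resulting sums and integrals converge to a constant depending only on $k$. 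The discretization in $v$ and the passage from the pointwise estimate on $a$ to the integral bound parallel the proof of \eqref{sfs1} in Proposition~\ref{Proposition: expectation}, and Corollary~\ref{Corollary: semigroup} is used to see the relevant apexes are organized by a single volume parameter.

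\emph{Main obstacle.} The delicate step is precisely this uniform estimate $\E[\#(Anc_k(\Pi_1\cap A_n)^t\cap[-n^{2/5}\log n,0))]\le C_k$: one must simultaneously exploit the hierarchical scarcity of descendant-cone apexes (quantified by \eqref{cdf1}), the Poisson cost of being ancestral to only $k$ sampled points, and the transversal-fluctuation cost of a large cone overlapping $A_n$ only slightly, and combine them with enough quantitative decay in $d$ to sum over all distances — all while controlling the genuine correlation between the total volume of a cone and the portion of it lying inside $A_n$, and the bookkeeping around the random intensity $\lambda_n^*$. The top-part estimate, by contrast, is comparatively routine once Lemma~\ref{Lemma: semigroup} is used to reduce everything to geodesics started at height $n^{3/5}$.
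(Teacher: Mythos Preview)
Your Top/Side decomposition by the location of the \emph{ancestor} is genuinely different from the paper's approach. The paper instead partitions the \emph{sampled points} into three regions of $A_n$---a thin top strip $A_{n,1}$ of height $1/\log n$, thin side strips $A_{n,2}$, and the bulk $A_{n,3}$---and for each sampled point bounds the length of its geodesic inside $F_{k,n}\cap(\R\times[0,T])$. The dominant $Tn^{2/5}/\log n$ comes from the trivial bound $T$ per geodesic applied to $A_{n,1}$; for $A_{n,3}$ the paper crucially uses the three-arm estimate (Theorem~\ref{Theorem: 3-disjoint}) to control how long a bulk geodesic can remain ancestral to exactly $k$ points. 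Your $\mathsf{Top}$ argument is correct and in fact sharper than the paper's $A_{n,1}$ bound, and notably avoids three-arm entirely. (The separate claim $\E[N(n^{3/5})]\le Cn^{3/5}$ is not justified by your references---Proposition~\ref{Proposition: number of intersection points} concerns geodesics from a single time---but it is also unnecessary, since $(t-n^{3/5})^{-2/3}$ is already integrable down to $t=n^{3/5}$.)

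The $\mathsf{Side}$ argument, however, has a real gap. The inequality $a\le Cv\,e^{-cd^3/v^{6/5}}$ cannot hold pointwise: once you condition on $\L$, both $v=m(D_{(y,t)})$ and $a=m(D_{(y,t)}\cap A_n)$ are determined, and there is no remaining randomness for Theorem~\ref{Theorem: transversal fluctuation} to act on. A cone of volume $v$ need not have depth $\sim v^{3/5}$; it can be wide and shallow or narrow and deep, so the relationship between $v$, $d$, and $a$ is not what your heuristic suggests. You would need this as a statement about the joint law of $(v,a)$ given that the apex sits at horizontal distance $d$---precisely the correlation you flag as the main obstacle but do not resolve---and without it the step ``substituting into $(a^k/k!)\wedge e^{-a}$ and integrating against $v^{-7/5}\,dv$'' has no foundation. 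The paper's route through three-arm does handle this regime; your strategy can likely also be completed by a width argument (if $(y,t)$ lies at distance $d$ left of $A_n$ with a descendant in $A_n$, then $W(D_{(y,t)})\ge d$, and a dyadic sum in $d$ via the $\W$-analogue of Remark~\ref{Remark: scaling of H_a} gives $\E[\#\mathsf{Side}^t]=O(\log n)$), but that is a different argument requiring its own justification, not the one you sketched.
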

\begin{proof}
Any point in $F_{k,n}$ must lie on the infinite upward geodesics starting from $k$ of the points in $\Pi_{\lambda_n^*}\cap A_n$. Therefore, if we write $G_{(y,t)}=\{(g_{(y,t)}(u), t+u): u\ge0\}\subseteq\R^2$ for the graph of $g_{(y,t)}$, then we have
$$
F_{k,n}\cap (\R\times[0,T])=\bigcup_{(y,t)\in \Pi_{\lambda_n^*}\cap A_n}F_{k,n}\cap (\R\times[0,T])\cap G_{(y,t)}.
$$
Measuring the length and taking the conditional expectation given $\Pi=(\Pi_{\lambda})_{\lambda>0}$, we have
\begin{align*}
\E\Big[l\big(F_{k,n}\cap (\R\times[0,T])\big)\Big|\Pi\Big]\le \sum_{(y,t)\in \Pi_{\lambda_n^*}\cap A_n}\E\Big[l\big(F_{k,n}\cap (\R\times[0,T])\cap G_{(y,t)}\big)\Big|\Pi\Big].
\end{align*}
We split $A_n$ into three regions, as shown in Figure \ref{Figure: A_n split}. Sampled points in $A_{n,1}$ are likely to contribute to $F_{k,n}$ by having ancestral lines that go above the top of the box.  Similarly, ancestral lines of sampled points in $A_{n,2}$ are likely to contribute to $F_{k,n}$ by going beyond the side of the box.  Ancestral lines of sampled points in $A_{n,3}$ are unlikely to contribute to $F_{k,n}$.  As mentioned before the lemma, the dominant contribution comes from points sampled in $A_{n,1}$.
\begin{enumerate}
\item $A_{n,1}=\{(y,t)\in A_n: t>n^{3/5}-1/\log n\}$,
\item $A_{n,2}=\{(y,t)\in A_n\setminus A_{n,1}: y<3\log^3 n\text{ or } y> n^{2/5}-3\log^3 n\}$,
\item $A_{n,3} = A_n\setminus(A_{n,1}\cup A_{n,2})$.
\end{enumerate}
\begin{figure}[h]
\centering
\begin{tikzpicture}[scale=0.3]
\draw[draw=black] (0,0) rectangle ++(24,16);
\draw[draw=black] (0,12) rectangle ++(24,4);
\draw[draw=black] (0,0) rectangle ++(6,12);
\draw[draw=black] (18,0) rectangle ++(6,12);

\node at (12, 14) {$A_{n,1}$};
\node at (3, 6) {$A_{n,2}$};
\node at (21, 6) {$A_{n,2}$};
\node at (12, 6) {$A_{n,3}$};
\node at (-1, -1) {0};
\node at (6, -1) {$3\log^3 n$ };
\node at (17, -1) {$n^{2/5}-3\log^3 n$ };
\node at (24, -1) {$n^{2/5}$ };
\node at (-5, 12) {$n^{3/5}-1/\log n$ };
\node at (-2, 16) {$n^{3/5}$ };
\end{tikzpicture}
\caption{A partition of $A_n$.}
\label{Figure: A_n split}
\end{figure}
For $(y,t)\in A_{n,1}$, we use the trivial bound
$$
\E\Big[l\big(F_{k,n}\cap (\R\times[0,T])\cap G_{(y,t)}\big)\Big|\Pi\Big]\le T,
$$
and because $A_{n,1}$ has area $n^{2/5}/(\log n)$,
\begin{equation}\label{Sampling from the entire population: B1}
\E\left[\sum_{(y,t)\in \Pi_{\lambda_n^*}\cap A_{n,1}}l\big(F_{k,n}\cap (\R\times[0,T])\cap G_{(y,t)}\big)\right]\le \frac{Tn^{2/5}}{\log n}.
\end{equation}
For $(y,t)\in A_{n,2}\cup A_{n,3}$, because $F_{k,n}=Anc_k(\Pi_{\lambda_n^*})\setminus A_n$, we have the bound
\begin{align}\label{An2 An3}
&\E\Big[l\big(F_{k,n}\cap (\R\times[0,T])\cap G_{(y,t)}\big)\Big|\Pi\Big]\nonumber\\
&\qquad=\int_0^{T-t} \P(g_{(y,t)}(u)\in F_{k,n}|\Pi )\ du\nonumber\\
&\qquad\le\int_0^{T} \min\Big\{\P\big(g_{(y,t)}(u)\in Anc_k(\Pi_{\lambda_n^*}\cap A_n)\big|\Pi\big),\ \P(g_{(y,t)}(u)\in A_n^c|\Pi )\Big\}\ du.
\end{align}
We now need to bound this integral, which we will do separately for $(y,t)\in A_{n,2}$ and $(y,t)\in A_{n,3}$.

For $(y,t)\in A_{n,2}$, suppose without loss of generality that $(y,t)$ is the left part of $A_{n,2}$, i.e., $y\le 3\log^3n$. Let $E$ be the intersection of the following events. The event $E_1$ is the event that the geodesic $g_{(y,t)}$ coalesces within $u$ time units with another geodesic to its right. The events $E_2$ and $E_3$ impose conditions on transversal fluctuations of these two geodesics. The event $E_4$ says that there are at least $k+1$ sampled points in a box between the geodesics.
\begin{enumerate}
\item $E_1$: The geodesics $g_{(y,t)}(\cdot)$ and $g_{(y+3\log^3 n, t)}(\cdot)$ coalesce by time $t+u$.
\item $E_2$: $\sup_{0\le v\le 1/\log n} g_{(y,t)}(v)< y+\log^3 n$.
\item $E_3$: $\inf_{0\le v\le 1/\log n} g_{(y+3\log^3 n,t)}(v)> y+2\log^3 n$.
\item $E_4$: $\#\big(\Pi_{\lambda_n^*}\cap([y+\log^3 n, y+2\log^3 n]\times[t,t+1/\log n])\big)\ge k+1$.
\end{enumerate}
If the events $E_1$, $E_2$, and $E_3$ occur and $u\ge 1/(\log n)$, then $g_{(y,t)}(u)$ is ancestral to all points in the box $[y+\log^3 n, y+2\log ^3n]\times[t,t+1/\log n]$. If the event $E_4$ also occurs, then $g_{(y,t)}(u)$ is ancestral to at least $k+1$ sampled points in $A_n$. See Figure \ref{Figure: An2}. Note that the events $E_1$, $E_2$, and $E_3$ are independent of $\Pi$ and the event $E_4$ is determined by $\Pi$. Therefore, if $u\ge 1/(\log n)$, then
$$
\P\Big(g_{(y,t)}(u)\in Anc_k(\Pi_{\lambda_n^*}\cap A_n)\Big|\Pi\Big)\le \P(E_1^c)+\P(E_2^c)+\P(E_3^c)+\mathbbm{1}_{E_4^c}.
$$
\begin{figure}[h]
\centering
\begin{tikzpicture}[scale=0.25]
\draw [dotted] plot coordinates {(0, 0) (30,0)};
\draw [dotted] plot coordinates {(0, 11) (30,11)};
\draw[draw=black,fill=gray!30] (12,0) rectangle ++(6,2);
\draw plot [smooth, tension=1] coordinates {(10, 0)(8,2) (11,4) (14.5, 6) (15, 8)};
\draw plot [smooth, tension=1] coordinates {(20, 0)(22,2) (18,5) (15, 8)};
\draw plot [smooth, tension=1] coordinates {(15,8) (14.5, 9) (14,11)};

\fill (10,0) circle(8pt);
\fill (20,0) circle(8pt);
\fill (14,11) circle(8pt);

\node at (10,-1) {$(y,t)$};
\node at (20,-1) {$(y+3\log ^3 n, t)$};
\node at (14,12.5) {$g_{(y,t)}(u)$};
\node at (-1,0) {$t$};
\node at (-1,11) {$t+u$};
\end{tikzpicture}
\caption{When $E_1\cap E_2\cap E_3\cap E_4$ occurs, the point $g_{(y,t)}(u)$ is the ancestor of at least $k+1$ points in the shaded box.}
\label{Figure: An2}
\end{figure}\\
By Proposition \ref{Proposition: 2-disjoint}, we have
$$
\P(E_1^c)\le C u^{-2/3}\log^3 n.
$$ 
By Theorem \ref{Theorem: transversal fluctuation}, we have
$$
\P(E_2^c)=\P(E_3^c)\le C_3  e^{-C_4\log^{11} n}.
$$
Using \eqref{An2 An3}, it follows that for $(y,t)\in A_{n,2}$,
\begin{align*}
&\E\Big[l\big(F_{k,n}\cap (\R\times[0,T])\cap G_{(y,t)}\big)\Big|\Pi\Big]\\
&\qquad\le\int_0^{T} \P\Big(g_{(y,t)}(u)\in Anc_k(\Pi_{\lambda_n^*}\cap A_n)\Big|\Pi\Big)\ du\\
&\qquad\le\frac{1}{\log n}+\int_{1/\log n}^T C u^{-2/3}\log^3 n+2C_3  e^{-C_4\log^{11} n}+\mathbbm{1}_{E_4^c}\ du\\
&\qquad\le CT^{1/3}\log^3 n+2C_3  Te^{-C_4\log ^{11}n}+T\mathbbm{1}_{E_4^c}.
\end{align*}
Let $p= (\log^2 n)/n$. Since
\begin{equation}\label{bin}
\P(E_4^c) = \sum_{i=0}^{k} {n\choose i} p^i (1-p)^{n-i}\le \sum_{i=0}^{k}  (np)^i (1-p)^{n-i}\le C e^{-\log^2 n} \log^{2k} n,
\end{equation}
summing over $(y,t)\in \Pi_{\lambda_n^*}\cap A_{n,2}$ and taking the unconditioned expectation gives
\begin{align}\label{Sampling from the entire population: B2}
&\E\left[\sum_{(y,t)\in \Pi_{\lambda_n^*}\cap A_{n,2}}l\big(F_{k,n}\cap (\R\times[0,T])\cap G_{(y,t)}\big)\right]\nonumber\\
&\qquad\qquad\le 6n^{3/5}\log^3 n\left(CT^{1/3}\log^3 n+CTe^{-\log^2 n} \log^{2k} n\right).
\end{align}

For $(y,t)\in A_{n,3}$, we write $r_1 = n^{3/5}-t$ for the distance from $(y,t)$ to the top boundary and $r_2=\min\{y, n^{2/5}-y\}$ for the distance to the left or right boundary. To bound $
\P(g_{(y,t)}(u)\in A_n^c|\Pi )$ for $t+u<n^{3/5}$, or, equivalently, for $u<r_1$, note that for such choice of $u$, the geodesic $g_{(y,t)}(\cdot)$ could only exit the box $A_n$ by hitting the left or right boundary of $A_n$. By Theorem \ref{Theorem: transversal fluctuation}, we have
\begin{equation}\label{An3 1}
\P(g_{(y,t)}(u)\in A_n^c|\Pi )\le C_3e^{-C_4 r_2^3/u^2},\qquad\text{ if } u<r_1.    
\end{equation}
To bound $\P(g_{(y,t)}(u)\in Anc_k(\Pi_{\lambda_n^*}\cap A_n)|\Pi)$, we use an argument similar to that for $(y,t)\in A_{n,2}$, except that this time, we consider three geodesics, including one on each side of $g_{(y,t)}$.  We will argue that as long as at least two of these three geodesics intersect by time $t + u$, the point $g_{(y,t)}(u)$ will be the ancestor of more than $k$ sampled points in $A_n$, and we will use Theorem~\ref{Theorem: 3-disjoint} to bound this probability.  To make this argument precise, let $E$ be the intersection of the following events and see Figure \ref{Figure: A_{n,3}} for an illustration.
\begin{enumerate}
\item $E_1$: The geodesic $g_{(y,t)}(\cdot)$ coalesces with either $g_{(y-3\log^3 n,t)}(\cdot)$ or $g_{(y+3\log^3 n, t)}(\cdot)$ by time $t+u$.
\item $E_2$: $\sup_{0\le v\le 1/\log n} |g_{(y,t)}(v)-y|< \log^3 n$.
\item $E_3$: $\sup_{0\le v\le 1/\log n} |g_{(y+3\log^3 n,t)}(v)-(y+3\log^3 n)|< \log^3 n$.
\item $E_4$: $\sup_{0\le v\le 1/\log n} |g_{(y-3\log^3 n,t)}(v)-(y-3\log^3 n)|< \log^3 n$.
\item $E_5$: $\#(\Pi_{\lambda_n^*}\cap([y+\log^3 n, y+2\log^3 n]\times[t,t+1/\log n]))\ge k+1$.
\item $E_6$: $\#(\Pi_{\lambda_n^*}\cap([y-2\log^3 n, y-\log^3 n]\times[t,t+1/\log n]))\ge  k+1$.
\end{enumerate}
\begin{figure}[h]
\centering
\begin{tikzpicture}[scale=0.3]
\draw[draw=black] (8,0) rectangle ++(24,12);
\draw[draw=black,fill=gray!30] (15.33,4) rectangle ++(2.33,2);
\draw[draw=black,fill=gray!30] (22.33,4) rectangle ++(2.33,2);
\draw plot coordinates {(8, 0) (8, 16)};
\draw plot [smooth, tension=1] coordinates {(20, 4)(22,6) (25,9) (25, 13)};
\draw plot [smooth, tension=1] coordinates {(27, 4)(27,6) (28,9) (25, 13)};
\draw plot [smooth, tension=1] coordinates {(25,13)(24,15)};
\draw plot [smooth, tension=1] coordinates {(13, 4)(12,6) (15,9) (15, 13)(16, 15)};
\draw [dotted] plot coordinates {(13, 0) (13, 4)};
\draw [dotted] plot coordinates {(20, 0) (20, 4)};
\draw [dotted] plot coordinates {(27, 0) (27, 4)};
\draw [dotted] plot coordinates {(8, 4) (32, 4)};
\draw [dotted] plot coordinates {(8, 15) (32, 15)};
\draw [dotted] plot coordinates {(8, 6) (32, 6)};

\fill (13,4) circle(6.67pt);
\fill (20,4) circle(6.67pt);
\fill (27,4) circle(6.67pt);
\fill (24,15) circle(6.67pt);

\node at (33,-1) {$n^{2/5}$};
\node at (6.5,12) {$n^{3/5}$};
\node at (13,-1) {$y-3\log^3 n$};
\node at (20,-1) {$y$};
\node at (27,-1) {$y+3\log^3 n$};
\node at (7,4) {$t$};
\node at (6.5,15) {$t+u$};
\node at (4.5,6) {$t+1/\log n$};
\node at (25, 16.5) {$g_{(y,t)}(u)$};
\end{tikzpicture}
\caption{On the event $E$, there are at least $k+1$ points sampled from each of the two shaded regions. The points in one of these two regions will be descendants of $g_{(y,t)}(u)$.}
\label{Figure: A_{n,3}}
\end{figure}
If the event $E$ occurs, then $g_{(y,t)}(u)$ is an ancestor of at least $k+1$ points in $\Pi_{\lambda_n^*}\cap A_n$, as illustrated in Figure \ref{Figure: A_{n,3}}. Therefore,
$$
\P\Big(g_{(y,t)}(u)\in Anc_k(\Pi_{\lambda_n^*}\cap A_n)\Big|\Pi\Big)\le \P(E^c|\Pi)
$$
Also, the events $E_1$, $E_2$, $E_3$ and $E_4$ are independent of $\Pi$ and the events $E_5$ and $E_6$ are determined by $\Pi$, and we have
\begin{equation}\label{An3 E}
\P\Big(g_{(y,t)}(u)\in Anc_k(\Pi_{\lambda_n^*}\cap A_n)\Big|\Pi\Big)\le \P(E_1^c)+\P(E_2^c)+\P(E_3^c)+\P(E_4^c)+\mathbbm{1}_{E_5^c}+\mathbbm{1}_{E_6^c}.  
\end{equation}
By a scaling argument using part 3 of Proposition \ref{Proposition: properties of the landscape}, we can apply Theorem \ref{Theorem: 3-disjoint} with $u/(\log ^{9/2} n)$ in place of $t$ to get
\begin{equation}\label{An3 E1}
\P(E_1^c)\le Cu^{-5/3}(\log^{15/2} n )(\log^{52} (u\vee 2)). 
\end{equation}
By Theorem \ref{Theorem: transversal fluctuation}, we have
\begin{equation}\label{An3 E234}
\P(E_2^c)=\P(E_3^c)=\P(E_4^c)\le C_3  e^{-C_4\log^{11} n}.
\end{equation}
The argument in \eqref{bin} gives 
\begin{equation}\label{An3 E56}
\P(E_5^c)=\P(E_6^c)\le C e^{-\log^2n}\log ^{2k}n.
\end{equation}
Combining \eqref{An3 E}, \eqref{An3 E1}, \eqref{An3 E234}, and \eqref{An3 E56}, we get
\begin{equation}\label{An3 2}
\P\Big(g_{(y,t)}(u)\in Anc_k(\Pi_{\lambda_n^*}\cap A_n)\Big)\le Cu^{-5/3}(\log^{15/2} n )(\log^{52} (u\vee 2))+C e^{-\log^2n}\log ^{2k}n.
\end{equation}
Using \eqref{An2 An3}, we have for $(y,t)\in A_{n,3}$,
\begin{align*}
&\E\Big[l\big(F_{k,n}\cap (\R\times[0,T])\cap G_{(y,t)}\big)\Big|\Pi\Big]\\
&\qquad\le \int_0^T\min\Big\{\P\big(g_{(y,t)}(u)\in A_n^c\big|\Pi\big),\ \P\big(g_{(y,t)}(u)\in Anc_k(\Pi_{\lambda_n^*}\cap A_n)\big|\Pi\big)\Big\}\ du\\
&\qquad\le\int_0^T \P\big(g_{(y,t)}(u)\in A_n^c\big|\Pi\big)\mathbbm{1}_{\{u\le r_1\wedge r_2^{3/2}/(\log n)\}}\ du\\
&\qquad\qquad+\int_0^T\P\big(g_{(y,t)}(u)\in Anc_k(\Pi_{\lambda_n^*}\cap A_n)\big|\Pi\big)\mathbbm{1}_{\{u> r_1\wedge r_2^{3/2}/(\log n)\}}\ du.
\end{align*}
Recall that $\Pi_{\lambda_n^*}\cap A_n$ has the same distribution as $U_n$. Therefore, if $(y,t)$ is picked uniformly at random from $\Pi_{\lambda_n^*}\cap A_n$ and conditioned to be in $A_{n,3}$, then the random variable $r_1$ is uniformly distributed over $[1/\log n, n^{3/5}]$ and $r_2$ is uniformly distributed over $[3\log^3 n,n^{2/5}/2]$. Taking the unconditioned expectation, we have
\begin{align}\label{An3 integral}
&\E\left[\sum_{(y,t)\in \Pi_{\lambda_n}^*\cap A_{n,3}}l\big(F_{k,n}\cap (\R\times[0,T])\cap  G_{(y,t)}\big)\right]\nonumber\\
&\qquad\le\int_{1/\log n}^{n^{3/5}} \int_{3\log ^3 n}^{n^{2/5}/2} \int_0^{r_1\wedge r_2^{3/2}/(\log n)} \P\big(g_{(y,t)}(u)\in A_n^c\big)\ du\ dr_2\ dr_1\nonumber\\
&\qquad\qquad+\int_{1/\log n}^{n^{3/5}} \int_{3\log ^3 n}^{n^{2/5}/2} \int_{r_1\wedge r_2^{3/2}/(\log n)}^T \P\big(g_{(y,t)}(u)\in Anc_k(\Pi_{\lambda_n^*}\cap A_n)\big)\ du\ dr_2\ dr_1.
\end{align}
We bound the first integral in \eqref{An3 integral} using \eqref{An3 1} and the second integral using \eqref{An3 2}. We have
\begin{align}\label{An3 integral1}
&\int_{1/\log n}^{n^{3/5}} \int_{3\log ^3 n}^{n^{2/5}/2} \int_0^{r_1\wedge r_2^{3/2}/(\log n)} \P\big(g_{(y,t)}(u)\in A_n^c\big)\ du\ dr_2\ dr_1\nonumber\\
&\qquad\le\int_{1/\log n}^{n^{3/5}} \int_{3\log ^3 n}^{n^{2/5}/2} \int_0^{r_1\wedge r_2^{3/2}/(\log n)}C_3e^{-C_4 r_2^3/u^2}\ du\ dr_2\ dr_1\nonumber\\
&\qquad\le \int_{1/\log n}^{n^{3/5}} \int_{3\log ^3 n}^{n^{2/5}/2}r_1 C_3e^{-C_4 \log ^2 n}\ dr_2\ dr_1\nonumber\\
&\qquad\le Cn^{8/5} e^{-C_4\log^2 n}.
\end{align}
Also,
\begin{align}\label{An3 integral2}
&\int_{1/\log n}^{n^{3/5}} \int_{3\log ^3 n}^{n^{2/5}/2} \int_{r_1\wedge r_2^{3/2}/(\log n)}^T \P\big(g_{(y,t)}(u)\in Anc_k(\Pi_{\lambda_n^*}\cap A_n)\big)\ du\ dr_2\ dr_1\nonumber\\
&\qquad\le \int_{1/\log n}^{n^{3/5}} \int_{3\log ^3 n}^{n^{2/5}/2} \int_{r_1\wedge r_2^{3/2}/(\log n)}^T \Big(Cu^{-5/3}(\log ^{15/2} n)(\log^{52}(u\vee 2))\nonumber\\
&\qquad\qquad +Ce^{-\log^2 n}\log^{2k} n\Big)\ du\ dr_2\ dr_1\nonumber\\
&\qquad\le C(\log ^{15/2} n)(\log^{52}T)\int_{1/\log n}^{n^{3/5}} \int_{3\log ^3 n}^{n^{2/5}/2}\Big( r_1^{-2/3}+r_2^{-1}(\log^{2/3}n)\Big)\ dr_2 \ dr_1\nonumber\\
&\qquad\qquad+CnTe^{-\log^2 n} \log^{2k}n\nonumber\\
&\qquad\le Cn^{3/5}(\log^{55/6}n)(\log^{52}T)+CnT e^{-\log^2 n}\log^{2k}n.
\end{align}
Combining \eqref{An3 integral}, \eqref{An3 integral1}, and \eqref{An3 integral2}, we get
\begin{align}\label{Sampling from the entire population: B3}
&\E\left[\sum_{(y,t)\in \Pi_{\lambda_n^*}\cap A_{n,3}}l\big(F_{k,n}\cap (\R\times[0,T])\cap G_{(y,t)}\big)\right]\nonumber\\
&\qquad\le C n^{3/5}(\log^{55/6} n)(\log ^{52}T)+CnT e^{-\log^2 n}\log^{2k} n.   
\end{align}
Note that if $n$ is sufficiently large and $T\ge n^{3/5}$, then the bound in \eqref{Sampling from the entire population: B1} dominates those in \eqref{Sampling from the entire population: B2}, and \eqref{Sampling from the entire population: B3}. Therefore, combining \eqref{Sampling from the entire population: B1}, \eqref{Sampling from the entire population: B2}, and \eqref{Sampling from the entire population: B3} gives the result.
\end{proof}

\subsubsection{Proof of (\ref{Sampling from the entire population: comparison2})}\label{Subsubsection: comparison2}

In this section, we prove \eqref{Sampling from the entire population: comparison2}. The idea is that if $(y,t)\in A_n$ is ancestral to points both in and outside of $A_n$, then $(y,t)$ should be close to the boundary of $A_n$. Therefore, the total length of the branches indicated on the left-hand side of \eqref{Sampling from the entire population: comparison2} will be small.

\begin{proof}[Proof of \eqref{Sampling from the entire population: comparison2}]
Let $\eps_n = (\log n)/\sqrt{n}$. Since $\P(\lambda_n^*\in[1-\eps_n,1+\eps_n])\rightarrow1$ by \eqref{Sampling from the population: Poisson deviation} as $n\rightarrow\infty$ and $Anc_{\ge k}(\Pi_{\lambda})\subseteq Anc_{\ge k}(\Pi_{\lambda'})$ whenever $\lambda<\lambda'$, it follows that as $n\rightarrow\infty$, 
$$
\P(Anc_{\ge k}(\Pi_{\lambda_n^*})\subseteq Anc_{\ge k}(\Pi_{1+\eps_n}))\rightarrow1.
$$
Therefore, we will consider the case where the sampling rate is deterministic. If $(y,t)\in A_n$  is ancestral to at least $k$ points in $\Pi_{1+\eps_n}$ and at least one point in $A_n^c$, then there are three possibilities: either $(y,t)$ is close to the boundary of $A_n$, $H(D_{(y,t)})$ is large, or $W(D_{(y,t)})$ is large. See Figure \ref{Figure An0} for an illustration. More precisely, recall the definitions of $\H_a$ and $\W_a$ from \eqref{Definition: height and width of descendants}. Let $A_{n,0}=[3n^{1/5},n^{2/5}-3n^{1/5}]\times[n^{1/5},n^{3/5}]$ be a subset of $A_n$. We have the following decomposition:
\begin{align}\label{Sampling from the entire population: decompose bubbles outside of the box}
&G_{k,n}\subseteq\Big(Anc_{\ge k}(\Pi_{\lambda})\cap (A_n\setminus A_{n,0})\Big)\cup \Big(Anc_{\ge k}(\Pi_{\lambda})\cap A_{n,0}\cap\H_{n^{1/5}}\Big)\nonumber\\
&\qquad\qquad\cup\Big(Anc_{\ge k}(\Pi_{\lambda})\cap A_{n,0}\cap\H_{n^{1/5}}^c\cap\W_{3n^{1/5}}\Big). 
\end{align}

\begin{figure}[h]
\centering
\begin{tikzpicture}[scale=0.25]
\draw[draw=black] (-10,0) rectangle ++(20,12);
\draw[draw=black] (-8,2) rectangle ++(16,10);
\draw [dashed] plot [smooth cycle,tension=1] coordinates {(-6,6) (-7,0) (-8, -2) (-5, -3) (-6, -1) (-5, 2)};
\draw [dashed] plot [smooth cycle,tension=1] coordinates {(6,6) (4,4) (3, 3) (5, 2) (9, 1) (13, 4)(9,5)};

\fill (-6,6) circle(8pt);
\fill (6,6) circle(8pt);

\node at (-11.5,12){$A_n$};
\node at (-6,10.5){$A_{n,0}$};
\node at (-6, 7.5){$(y,t)$};
\node at (6,7.5){$(y',t')$};

\end{tikzpicture}
\caption{If a point $(y,t)\in A_{n,0}$ has a descendant in $A_n^c$, then $D_{(y,t)}$ needs to be ``tall" enough (left) or ``wide" enough (right).}\label{Figure An0}
\end{figure}

For the first term on the right-hand side of \eqref{Sampling from the entire population: decompose bubbles outside of the box}, by \eqref{sfs1}, we have
\begin{equation}\label{Sampling from the entire population: bubbles near the boundary}
\E\Big[l\big(Anc_{\ge k}(\Pi_\lambda)\cap (A_n\setminus A_{n,0})\big)\Big]=\sum_{i=k}^{\infty} \left(\frac{2C_2'\lambda^{2/5}}{5}\cdot m(A_n\setminus A_{n,0}) \cdot\frac{\Gamma(i-2/5)}{i!}\right)\le C\lambda^{2/5}n^{4/5}.
\end{equation}
For the the second term on the right-hand side of \eqref{Sampling from the entire population: decompose bubbles outside of the box}, we will show in Lemma \ref{Lemma: tall bubbles}
that
\begin{equation}\label{Sampling from the entire population, tall bubbles}
\E\Big[l\big(Anc_{\ge k}(\Pi_\lambda)\cap A_{n,0}\cap\H_{n^{1/5}}\big)\Big]\le Cn^{13/15}.    
\end{equation}
For the the third term on the right-hand side of \eqref{Sampling from the entire population: decompose bubbles outside of the box}, we will show in Lemma \ref{Lemma: wide bubbles}
that if $E$ denotes the event that $Anc_{\ge k}(\Pi_\lambda)\cap A_{n,0}\cap\H_{n^{1/5}}^c\cap\W_{3n^{1/5}}$ is not empty, then
\begin{equation}\label{Sampling from the entire population, wide bubbles}
\P(E)\le C e^{-C' n^{1/5}}.
\end{equation}
Then \eqref{Sampling from the entire population: comparison2} follows from \eqref{Sampling from the entire population: decompose bubbles outside of the box}, \eqref{Sampling from the entire population: bubbles near the boundary}, \eqref{Sampling from the entire population, tall bubbles}, and \eqref{Sampling from the entire population, wide bubbles}.
\end{proof}

We now bound the second term on the right-hand side of \eqref{Sampling from the entire population: decompose bubbles outside of the box}.

\begin{lemma}\label{Lemma: tall bubbles}
Let $C_2''\in(0,\infty)$ be the constant in Remark \ref{Remark: scaling of H_a}. Then
$$
\E\Big[l\big(Anc_{\ge k}(\Pi_\lambda)\cap A_{n,0}\cap\H_{n^{1/5}}\big)\Big]\le C_2'' n^{13/15}.
$$
\end{lemma}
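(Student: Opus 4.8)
The plan is to discard the information carried by the Poisson process entirely and reduce the estimate to a first-moment bound on $l(A_{n,0}\cap\H_{n^{1/5}})$, which is then handled by the scaling identity recorded in Remark~\ref{Remark: scaling of H_a}. Since trivially
$$
Anc_{\ge k}(\Pi_\lambda)\cap A_{n,0}\cap\H_{n^{1/5}}\subseteq A_{n,0}\cap\H_{n^{1/5}},
$$
and $l(\cdot)$ is monotone under set inclusion, it suffices to show $\E\big[l\big(A_{n,0}\cap\H_{n^{1/5}}\big)\big]\le C_2''\,n^{13/15}$, an inequality that no longer mentions $\Pi_\lambda$ or the number $k$ of descendants.

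Next I would unfold the definition \eqref{def length} of branch length. Writing $A_{n,0}=[3n^{1/5},n^{2/5}-3n^{1/5}]\times[n^{1/5},n^{3/5}]$, its time-$t$ slice is the interval $\widetilde A:=[3n^{1/5},n^{2/5}-3n^{1/5}]$ when $t\in[n^{1/5},n^{3/5}]$ and is empty otherwise, so $(A_{n,0}\cap\H_{n^{1/5}})^t=\widetilde A\cap\H_{n^{1/5}}^t$ for $t\in[n^{1/5},n^{3/5}]$ and is empty for other $t$. Hence
$$
l\big(A_{n,0}\cap\H_{n^{1/5}}\big)=\int_{n^{1/5}}^{n^{3/5}}\#\big(\H_{n^{1/5}}^t\cap\widetilde A\big)\ dt .
$$
Taking expectations, interchanging expectation and integral by Tonelli (the integrand is nonnegative and jointly measurable, exactly as in the remarks following \eqref{def length}, after checking the analogous measurability for $\H_a$ via approximation by rational geodesics), and then applying Remark~\ref{Remark: scaling of H_a} with $a=n^{1/5}$ — which gives $\E[\#(\H_{n^{1/5}}^t\cap\widetilde A)]=C_2''\,\widetilde m(\widetilde A)\,(n^{1/5})^{-2/3}=C_2''\,\widetilde m(\widetilde A)\,n^{-2/15}$ for every $t$ — I obtain
$$
\E\big[l\big(A_{n,0}\cap\H_{n^{1/5}}\big)\big]=\int_{n^{1/5}}^{n^{3/5}}C_2''\,\widetilde m(\widetilde A)\,n^{-2/15}\ dt\le C_2''\cdot n^{2/5}\cdot n^{-2/15}\cdot n^{3/5}=C_2''\,n^{13/15},
$$
using $\widetilde m(\widetilde A)=n^{2/5}-6n^{1/5}\le n^{2/5}$ and that the $t$-interval has length $n^{3/5}-n^{1/5}\le n^{3/5}$. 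Combining with the monotonicity reduction of the first paragraph finishes the proof.

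There is no real obstacle internal to this lemma; all the substance has been pushed into Remark~\ref{Remark: scaling of H_a}. That remark is the height-analogue of \eqref{cdf1} in Proposition~\ref{Proposition: expectation}: one runs the same scaling argument, using $S_\rho(\H_a)\stackrel{d}{=}\H_{\rho^{-3}a}$ from Proposition~\ref{Proposition: properties of T_a} in place of the volume scaling to get $\E[\#(\H_a^t\cap\widetilde A)]=C_2''\,\widetilde m(\widetilde A)\,a^{-2/3}$ with $C_2''\in[0,\infty]$, then rules out $C_2''=0$ via part 2 of Proposition~\ref{Proposition: properties of T_a}, and rules out $C_2''=\infty$ by noting that any $(y,t)$ with $H(D_{(y,t)})>a$ lies on an infinite upward geodesic starting at a time $\ge a$ earlier, so that $\#(\H_a^t\cap[-1,1])$ is dominated (after rescaling to $a=1$) by the quantity $N_1$ of Proposition~\ref{Proposition: number of intersection points}, which has finite mean. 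The one arithmetic point worth flagging is the balance of exponents $\tfrac35+\tfrac25-\tfrac{2}{15}=\tfrac{13}{15}$: this is exactly why the threshold in $\H_{n^{1/5}}$ is taken to be $n^{1/5}$ (so that $n^{13/15}=o(n)$), a choice forced by the partition of $A_n$ and the value $T_n=n^{3/5}\log\log n$ used in the surrounding proof of \eqref{Sampling from the entire population: comparison2}.
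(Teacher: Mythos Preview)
Your proof is correct and follows essentially the same route as the paper: drop the $Anc_{\ge k}(\Pi_\lambda)$ constraint by monotonicity, apply Remark~\ref{Remark: scaling of H_a} to obtain $\E[\#(\H_{n^{1/5}}^t\cap\widetilde A)]=C_2''\,\widetilde m(\widetilde A)\,n^{-2/15}$, and integrate over $t$ using $m(A_{n,0})\le n$ to arrive at $C_2''\,n^{13/15}$. The additional commentary you include on how Remark~\ref{Remark: scaling of H_a} is established and why the threshold $n^{1/5}$ is chosen is accurate but goes beyond what the paper records here.
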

\begin{proof}
By Remark \ref{Remark: scaling of H_a}, for all $t\in\R$ and all measurable $\widetilde{A}\subseteq\R$, we have
$$
\E[\#(\H_{a}^t\cap \widetilde{A})] = C_2''\widetilde{m}(\widetilde{A})a^{-2/3}.
$$
Then, for any measurable $A\subseteq\R^2$, we have
$$
\E[l(\H_a \cap A)]=\int_{-\infty}^\infty \E[\#(\H_a \cap A)^t]\ dt =\int_{-\infty}^\infty C_2''\widetilde{m}(A^t)a^{-2/3}\ dt= C_2''m(A)a^{-2/3},
$$
and hence
$$
\E\Big[l\big(Anc_{\ge k}(\Pi_\lambda)\cap A_{n,0}\cap\H_{n^{1/5}}\big)\Big]\le \E\Big[l(\mathcal{H}_{n^{1/5}}\cap A_{n,0})\Big]\le n\cdot C_2''n^{-2/15}=C_2''n^{13/15},
$$
which completes the proof.
\end{proof}

We now bound the third term on the right-hand side of \eqref{Sampling from the entire population: decompose bubbles outside of the box}.

\begin{lemma}\label{Lemma: wide bubbles}
Let $E$ be the event that $Anc_{\ge k}(\Pi_\lambda)\cap A_{n,0}\cap\H_{n^{1/5}}^c\cap\W_{3n^{1/5}}$ is not empty. Then
$$
\P(E)\le C e^{-C' n^{1/5}}.
$$
\end{lemma}
\begin{proof}
Here we need to show that it is unlikely that there are any points with $k$ or more descendants whose set of descendants is both short and wide. We will argue that the existence of such a point would require geodesics to have unusually large transversal fluctuations. Consider the grid point $(i,j)$ for $i\in \mathcal{I}=\{0,n^{1/5},2n^{1/5},\dots, \lceil n^{1/5}\rceil n^{1/5}\}$ and $j\in\mathcal{J}=\{0,n^{1/5},2n^{1/5},\dots,\lceil n^{2/5}\rceil n^{1/5}\}$. For each $(i,j)$, let $E_{i,j}$ be the event that 
$$
\sup_{0\le u\le 2n^{1/5}} |g_{(i,j)}(u)-i|\ge \frac{1}{3}n^{1/5}.
$$
We claim that 
\begin{equation}\label{Sampling from the entire population, grid}
E\subseteq\bigcup_{i\in\mathcal{I},j\in\mathcal{J}} E_{i,j}.   
\end{equation}
If \eqref{Sampling from the entire population, grid} holds, then the result follows from Theorem \ref{Theorem: transversal fluctuation}.

We now prove \eqref{Sampling from the entire population, grid}. Suppose the event $E$ occurs. Let $(y_0,t_0)\in A_{n,0}$ be a point such that $H(D_{(y_0,t_0)})\le  n^{1/5}$ and $W(D_{(y_0,t_0)})> 3n^{1/5}$. Let $i_0:=\sup\{i\in\mathcal{I}: i+n^{1/5}/3\le y_0\}$, $i_1:=\inf\{i\in\mathcal{I}: i-n^{1/5}/3\ge y_0\}$
and $j_0:=\sup\{j\in\mathcal{J}: j+n^{1/5}\le t_0\}$. Note that since $(y_0,t_0)\in A_{n,0}$ is away from the boundary, the quantities $i_0$, $i_1$, and $j_0$ are well-defined and we have $i_1-i_0\le 2n^{1/5}$. Suppose that neither $E_{i_0,j_0}$ nor $E_{i_1,j_0}$ holds, that is
\begin{equation}\label{Sampling from the entire population, transversal fluctuation on the grid}
\sup_{0\le u\le 2n^{1/5}} |g_{(i_0,j_0)}(u)-i_0|< \frac{1}{3}n^{1/5},\qquad\sup_{0\le u\le 2n^{1/5}} |g_{(i_1,j_0)}(u)-i_1|<\frac{1}{3}n^{1/5}.
\end{equation}
The assumption $H(D_{(y_0,t_0)})\le n^{1/5}$ implies that $D_{(y_0,t_0)}\subseteq\R\times[j_0,t_0]$. Then, \eqref{Sampling from the entire population, transversal fluctuation on the grid} implies that
$$
W(D_{(y,t)})\le i_1-i_0+\frac{2}{3}n^{1/5}\le 3n^{1/5},
$$
which contradicts $W(D_{(y,t)})>3n^{1/5}$.
\end{proof}

\section{The three-arm event}\label{Section: 3-arm}

We finally arrive at the proof of the three-arm probability bound. 
For convenience let us restate an equivalent version of the statement of the theorem as also recorded in \eqref{scaled123}.
\newcommand{\e}{\varepsilon}

\begin{theorem}\label{threearm}
Consider the infinite upward geodesics $g_{(-\e,0)}$, $g_{(0,0)}$, and $g_{(\e,0)}$. Then there exists a constant $C\in(0,\infty)$ such that, for all $\e \le 1/2,$
\begin{equation}\label{threearm1}
\P(g_{(-\e,0)}, g_{(0,0)}, \text{ and } g_{(\e,0)} \text{ are disjoint for all } s\in[0,1])\le C\e^{5/2}\log^{52} (1/\eps).
\end{equation}
\end{theorem}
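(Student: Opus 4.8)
The plan is to execute the strategy sketched in the introduction: transform the three-disjoint-geodesics event into an event about the parabolic Airy line ensemble together with an independent Brownian motion, extract five ``elementary'' events of probabilities $O(\sqrt\eps)$, $O(\sqrt\eps)$, $O(\sqrt\eps)$ and $O(\eps)$, and then show these events are approximately independent. By the rescaling property (part~3 of Proposition~\ref{Proposition: properties of the landscape}) it suffices to prove \eqref{threearm1}. The first step is to replace the three semi-infinite geodesics by finite ones up to height~$1$. By Theorem~\ref{Theorem: Brownian motion}, the restriction of $g_{(x,0)}$ to $[0,1]$ is the geodesic from $(x,0)$ to $(Y_x,1)$ with $Y_x=\argmax_y\{\L(x,0;y,1)+\mathcal B(y,1)\}$, where $\mathcal B(\cdot,1)$ is a two-sided Brownian motion measurable with respect to $\L|_{\R\times(1,\infty)}$, hence, by the independent-increments property (Proposition~\ref{Proposition: independent increment}), independent of the profiles $\L(x,0;\cdot,1)$, $x\in\{-\eps,0,\eps\}$. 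The event in \eqref{threearm1} then factors into a \emph{strip part} --- the finite geodesics $g_{(x_i,0;Y_{x_i},1)}$, $i=1,2,3$, are pairwise disjoint on $(0,1]$, depending only on $\L$ inside $\R\times[0,1]$ --- and an \emph{endpoint part} --- the triple $(Y_{-\eps},Y_0,Y_\eps)$ must be strictly increasing, governed by $\mathcal B(\cdot,1)$ together with the top of the Airy sheet $\L(0,0;\cdot,1)$. Since each $\L(x_i,0;\cdot,1)$ is a shear of the Airy sheet, which is generated by the parabolic Airy line ensemble $\{\mathfrak A_j\}_{j\ge1}$, the whole event lives on $\{\mathfrak A_j\}$ together with the independent $\mathcal B(\cdot,1)$.

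Next, via surgery and the geodesic-watermelon / extended-Airy-sheet correspondence, I would show that the strip part forces the top three lines $\mathfrak A_1,\mathfrak A_2,\mathfrak A_3$ to come within $O(\sqrt\eps)$ of one another near the (random, transversally localized) location of the three geodesics. After revealing these lines outside an order-one window and invoking the Brownian Gibbs property of the parabolic Airy line ensemble, the differences $\mathfrak A_1-\mathfrak A_2$ and $\mathfrak A_2-\mathfrak A_3$ become Brownian bridges conditioned to stay positive --- Brownian excursions on an order-one interval --- and the strip part reduces to (essentially) three events, one associated with each of the three geodesics, each demanding that such an excursion dips within $O(\sqrt\eps)$ of $0$ in the bulk; each of these has probability $O(\sqrt\eps)$ by a standard small-ball estimate for the Brownian excursion. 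The endpoint part translates into the requirement that the maxima over $[0,1]$ of three (after conditioning, essentially independent) Brownian motions, built from $\mathcal B(\cdot,1)$ and from the pieces of $\mathfrak A_1,\mathfrak A_2,\mathfrak A_3$ beyond the window, all fall in a common window of width $O(\sqrt\eps)$; integrating the cube of the bounded continuous density of the maximum of Brownian motion over the window location gives probability $O(\eps)$. Were all these events independent, the joint probability would be $O(\eps^{3/2})\cdot O(\eps)=O(\eps^{5/2})$.

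The main obstacle --- and the bulk of the proof --- is that these events live on overlapping portions of a single Airy line ensemble and a single Busemann function, so one must establish \emph{approximate} independence with only a poly-logarithmic loss. I would do this with a multi-scale conditioning scheme: reveal $\{\mathfrak A_j\}$ and $\mathcal B(\cdot,1)$ on a sequence of $O(\log(1/\eps))$ dyadically shrinking windows around the location of closeness, at each scale using Brownian Gibbs resampling to replace the conditioned curves by bridges with controlled endpoints and absorbing the error into a bounded power of a logarithm. Two delicate points: the location of closeness is random, so it must first be confined --- via transversal-fluctuation bounds in the spirit of Theorem~\ref{Theorem: transversal fluctuation} --- to only poly-logarithmically many effective positions, over which a union bound is applied; and after conditioning one must verify that the three excursion events and the endpoint event depend on genuinely disjoint stretches of randomness, so that the independent-increments property (Proposition~\ref{Proposition: independent increment}) and the independence of Brownian increments can be invoked. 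The modulus-of-continuity and upper-tail estimates for the Airy line ensemble --- line-ensemble analogues of Theorems~\ref{hold432} and~\ref{modcont} --- are precisely what is needed to control the per-scale errors, and they are the source of the (expectedly superfluous) $\log^{52}(1/\eps)$ factor. Combining the multi-scale decoupling with the elementary $O(\sqrt\eps)$ and $O(\eps)$ bounds then yields the claimed bound $C\eps^{5/2}\log^{52}(1/\eps)$.

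A sanity check supports the decomposition: applying the same surgery to two geodesics started $\eps$ apart produces one excursion event ($O(\sqrt\eps)$) and an endpoint event for two Brownian maxima ($O(\sqrt\eps)$), giving $O(\eps)$, consistent with Proposition~\ref{Proposition: 2-disjoint} at $M=\eps$, $t=1$; the three-geodesic case adds exactly the extra pair of excursion constraints and one more Brownian maximum, accounting for the jump from exponent $1$ to exponent $5/2$.
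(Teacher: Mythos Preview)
Your proposal correctly follows the paper's own high-level strategy (indeed, it is essentially a paraphrase of the heuristic outlined in Section~1.3.3), and the factorization into a ``strip part'' (near-touches in the Airy line ensemble) and an ``endpoint part'' (near-equality of maxima of $\mathcal B+\cA_1$) is exactly right. However, the proposal is too schematic at precisely the points where the real work lies, and it misidentifies or omits the tools that make the argument go through.

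First, the structure of the near-touch event is more specific than you indicate. Via Proposition~\ref{GTprop} (the Gelfand--Tsetlin formula for watermelon weights), the strip event forces $\cA_1-\cA_2$ to be small at \emph{two} interlaced locations $w_{11}<w_{12}$ and $\cA_2-\cA_3$ to be small at one location $w_{21}\in(w_{11},w_{12})$; it is not three identical ``excursion dips'' attached to the three geodesics. This interlacing is what drives the paper's three-case analysis (Cases~1--3 according to the mutual separation of the touch locations), and in particular Case~3 --- all three locations within $O(1)$ of each other --- requires a delicate two-scale zoom-in (the times $t_1,\dots,t_6$ and events $A_1,\dots,A_4$) using the bound \eqref{threeclose} on \emph{simultaneous} near-touches of three nonintersecting bridges. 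Your ``multi-scale conditioning scheme'' gestures at this but does not confront the specific difficulty, namely that when the three touch points collide the naive $O(\sqrt\eps)^3$ factorization breaks down and must be recovered by balancing the gain from \eqref{threeclose} at the coarse scale against the loss from rescaling at the fine scale.

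Second, the decoupling tool is not plain Brownian Gibbs. Under Brownian Gibbs the resampled top three curves are bridges conditioned to be ordered \emph{and} to stay above the fourth curve $\cA_4$, so the gaps are not Brownian excursions as you claim. The paper instead uses Theorem~\ref{duncan12} (the ``Wiener density'' construction of \cite{d24w}), which produces an auxiliary ensemble $\mathfrak L^{\mathbf a}$ that is literally independent Brownian bridges on the chosen windows, with a bounded Radon--Nikodym cost to pass back to $\cA$; this is what permits the clean application of Lemma~\ref{onebridge} and the nonintersecting-bridge estimates \eqref{twoclose}--\eqref{threeclose} without any $\cA_4$-floor. Finally, the paper does not work with the ``dip somewhere in the bulk'' event of probability $O(\sqrt\eps)$; instead it discretizes to a mesh of spacing $\eps^2\log^{-3}(1/\eps)$, uses the pointwise $O(\eps^3)$ bound from Lemma~\ref{onebridge} at each mesh triple, and sums. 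The endpoint part is handled separately by a Brownian-bridge maximum computation (Lemma~\ref{maxbridge}) after conditioning on the strip, giving the factor $\eps^2\log^2(1/\eps)$ in \eqref{Buvw}. Your proposal would need all of these ingredients to become a proof.
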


Continuing the discussion following \eqref{scaled123}, one can indeed predict that the exponent $5/2$ is the right one by reducing to events about the parabolic Airy line ensemble which is indeed how our proof will proceed. We now include a short review of the basics of the parabolic Airy LE that will be needed for our purpose and which will allow us to explain the source of the $5/2$ exponent. 

\newcommand{\cA}{\mathcal{P}}
\newcommand{\ocA}{\overline{\mathcal P}}

\subsection{The parabolic Airy line ensemble and its properties}  \label{ss:ale}

The parabolic Airy line ensemble (LE) $\cA=\{\cA_n\}_{n=1}^\infty$ is a family of random processes on $\R$ which are ordered, and was constructed in \cite{ch14}.
Some crucial properties are presented next. Define $\ocA_n(x):=\cA_n(x)+x^2$ for any $x\in \R$, $n\in \N$.
\begin{enumerate}
    \item Stationarity: $\ocA=\{\ocA_n\}_{n=1}^\infty$ is stationary with respect to horizontal shift.
   \item Ergodicity: $\ocA=\{\ocA_n\}_{n=1}^\infty$ is ergodic with respect to horizontal shift.
   \item Brownian Gibbs property: for any $m\le n\in \N$ and $x\le y$, let $S = \llbracket m,n\rrbracket \times [x, y]$.
 $\cA$, as a (random) function on $\N\times \R$, admits the following local Gibbs specification: the conditional distribution of the process $\cA|_S$ given $\cA|_{S^c}$ is just $n-m + 1$ Brownian bridges with variance parameter 2 connecting up the points $\cA_i(x)$ and $\cA_i(y)$, for $m\le i \le n$, conditioned on a global non-intersection, i.e., $\cA_i(z)>\cA_{i+1}(z)$ for any $1\vee (m-1) \le i \le n$ and $z\in [x,y]$.
\end{enumerate}

\newcommand{\boo}{\mathbf 0}
\newcommand{\cL}{\mathcal{L}}
\newcommand{\cB}{\mathcal{B}}

The first property is from its construction, and the second property is proved in \cite{cs14}. The third property is proved in \cite{ch14}.

The first line $\cA_1$ (of $\cA$) is the parabolic Airy$_2$ process on $\R$, and equals in law to the function $x\mapsto \cL(\boo;x,1)$, where we write $\boo = (0,0)$.
A particularly important property of the parabolic Airy LE is a strong form of local Brownianity that it admits which helps reduce estimating probabilities of several events to tractable Brownian computations. This has formed the basis of several recent advances. While a result of this form first appeared in \cite{chh19}, several strengthenings have followed. For our application we will use the following version recently obtained in \cite[Theorem 3.8]{d24w}. 

\begin{theorem}\label{duncan12} Fix $t\ge 1$ and $k, \ell \in \N$.  Fix $\mathbf{a} \in \R^{\ell}$ such that if $\ell \ge 2$, then $a_j+t < a_{j+1}$ for all $j \in \llbracket 1, \ell-1 \rrbracket.$ Define $U(\mathbf{a})=\bigcup_{j=1}^\ell(\llbracket 1,k \rrbracket \times (a_{j}, a_{j}+t))$. Then there exists a random sequence of continuous functions $\mathfrak{L}^{\mathbf{a}}=\mathfrak{L}^{t,k,\mathbf{a}}=\{\mathfrak{L}_{i}^{\mathbf{a}}: \mathbb{R} \to \mathbb{R}, i \in \N\}$ such that the following hold:
\begin{enumerate}
\item Almost surely, $\mathfrak{L}^{\mathbf{a}}_i(r)> \mathfrak{L}^{\mathbf{a}}_{i+1}(r)$ for all pairs $(i,r) \notin U(\mathbf{a}).$

\item The ensemble has the following Gibbs property: For any $m\in \N$ and $a<b$, consider the set $S=\llbracket 1,m \rrbracket \times [a,b].$ Then, conditional on the values of $\mathfrak{L}^{\mathbf{a}}_i(r)$ for $(i,r) \notin S$, the distribution of $\mathfrak{L}^{\mathbf{a}}_i(r)$ for $(i,r) \in S$ is given by $m$ independent Brownian bridges $B_1, B_2, \ldots, B_m$ with variance parameter 2 from $(a, \mathfrak{L}_i^{\mathbf{a}}(a))$ to $(b, \mathfrak{L}_i^{\mathbf{a}}(b))$ for $i \in \llbracket 1,m \rrbracket,$ conditioned on the event $B_i(r)> B_{i+1}(r)$ whenever $(i,r) \notin U(\mathbf{a}).$

\item We have $$\P(\mathfrak{L}_1^{\mathbf{a}}> \mathfrak{L}_2^{\mathbf{a}}> \ldots) \ge e^{-c_{k,\ell}^* t^3}$$ where $c_{k,\ell} > 0$ depends only on $k$ and $\ell$.  Also, conditional on the event $\{\mathfrak{L}_1^{\mathbf{a}}> \mathfrak{L}_2^{\mathbf{a}}> \ldots\},$ the ensemble $\mathfrak{L}^{\mathbf{a}}$ is equal in law to the parabolic Airy LE.
\end{enumerate}
\end{theorem}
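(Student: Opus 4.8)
The plan is to realize $\mathfrak{L}^{\mathbf{a}}$ as the parabolic Airy line ensemble with its non-intersection constraint relaxed \emph{exactly on the set} $U(\mathbf{a})$, and then to read off properties (1)--(3) from this description together with the Brownian Gibbs property of $\cA$ and a single quantitative ``favorable event'' estimate. Concretely, I would first build $\mathfrak{L}^{\mathbf{a}}$ by a finite truncation. Fix a large interval $[-L,L]$ containing all the windows $(a_j,a_j+t)$, $j\in\llbracket 1,\ell\rrbracket$, and an integer $N\ge k$. By the Brownian Gibbs property of $\cA$, the restriction of $(\cA_1,\dots,\cA_N)$ to $[-L,L]$, conditioned on its boundary values and on $\cA_{N+1}$, is a family of $N$ independent variance-$2$ Brownian bridges (with endpoints $\cA_i(\pm L)$) conditioned to be ordered and to lie above $\cA_{N+1}$. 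From the indicator imposing this conditioning I delete the ordering requirements at the pairs $(i,r)\in U(\mathbf{a})$; since $U(\mathbf{a})$ meets only finitely many lines on a bounded interval, the resulting weight is still a finite strictly positive density, and defines a modified ensemble $\mathfrak{L}^{(N,L)}$. Letting $L\to\infty$ and $N\to\infty$, the requisite tightness and consistency follow by the same arguments used to construct $\cA$ and its line-ensemble relatives, and give a limit $\mathfrak{L}^{\mathbf{a}}$ on $\N\times\R$. Property (1) is then immediate, since the ordering was never relaxed off $U(\mathbf{a})$, and property (2) is inherited in the limit from the finite-$(N,L)$ local specifications: on a box $S=\llbracket 1,m\rrbracket\times[a,b]$ the lines in $S$, conditioned on the outside, are independent variance-$2$ Brownian bridges with the prescribed endpoints, conditioned on ordering at every $(i,r)\notin U(\mathbf{a})$.

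For the equality in law in (3), I would argue that $\{\mathfrak{L}^{\mathbf{a}}_1>\mathfrak{L}^{\mathbf{a}}_2>\cdots\}$ is precisely the event that the deleted ordering constraints happen to hold, so that conditioning on it reimposes the full non-intersection constraint and returns $\cA$. To make this precise, let $\mathcal{E}$ denote the ``exterior data'': all lines of index $>k$, together with lines $1,\dots,k$ outside $\bigcup_j(a_j,a_j+t)$ and the endpoint values $\mathfrak{L}^{\mathbf{a}}_i(a_j),\mathfrak{L}^{\mathbf{a}}_i(a_j+t)$ for $i\le k$. By (2), conditionally on $\mathcal{E}$ the top $k$ lines inside the disjoint windows are independent \emph{free} Brownian bridges; restricting further to the event that these bridges are mutually ordered and stay above line $k+1$, their conditional law agrees with the corresponding conditional law under $\cA$ (the Brownian Gibbs property of $\cA$ read on the boxes $\llbracket 1,k\rrbracket\times(a_j,a_j+t)$). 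Since off the windows and on lines $>k$ the ensemble $\mathfrak{L}^{\mathbf{a}}$ already agrees structurally with $\cA$, integrating out $\mathcal{E}$ yields $\mathfrak{L}^{\mathbf{a}}\stackrel{d}{=}\cA$ conditional on global order.

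For the lower bound in (3), write $Z_j(\mathcal{E})$ for the conditional probability that $k$ independent variance-$2$ Brownian bridges over $(a_j,a_j+t)$ with the endpoints coming from $\mathcal{E}$ are mutually ordered and stay above $\cA_{k+1}|_{(a_j,a_j+t)}$. By (2) and window-disjointness, $\P(\mathfrak{L}^{\mathbf{a}}_1>\mathfrak{L}^{\mathbf{a}}_2>\cdots\mid\mathcal{E})=\prod_{j=1}^{\ell}Z_j(\mathcal{E})$, and the computation of the previous paragraph (which identifies the exterior law of $\mathfrak{L}^{\mathbf{a}}$ as that of $\cA$ reweighted by $\prod_j Z_j(\mathcal{E})^{-1}$) gives the clean identity $\P(\mathfrak{L}^{\mathbf{a}}_1>\mathfrak{L}^{\mathbf{a}}_2>\cdots)=\big(\E_{\cA}\big[\prod_{j=1}^{\ell}Z_j(\mathcal{E})^{-1}\big]\big)^{-1}$. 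It then suffices to show $\E_{\cA}\big[\prod_j Z_j^{-1}\big]\le e^{c^*_{k,\ell}t^3}$. On a high-probability regularity event for $\cA$ (endpoint gaps of order one; the Airy lines deviating from their chords over each window by $O(\sqrt{t}\,\text{polylog}\,t)$) one has $Z_j\ge e^{-c_k t^3}$: because $\cA_{k+1}(x)=\ocA_{k+1}(x)-x^2$ bulges above its own chord over a width-$t$ window by an amount of order $t^2$, the dominant requirement is that each of the $k$ free bridges bulge upward by order $t^2$, and a Cameron--Martin shift together with a Brownian small-ball estimate give probability $\ge e^{-ct^3}$ per bridge. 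On the complementary event, a cluster of $k+1$ nearly coincident Airy lines at a window endpoint is suppressed by the determinantal structure strongly enough to dominate the at-most-polynomial blow-up of $Z_j^{-1}$, so the tail contributes only an $O(e^{c_{k,\ell}t^3})$ term as well. Taking the product over the $\ell$ windows gives the bound with $c^*_{k,\ell}$ of order $k\ell$.

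I expect the genuine obstacles to be twofold. First, the construction: ``relaxing a constraint'' for the infinite ensemble is only meaningful via the finite-$(N,L)$ truncation, and the limit demands tightness of the top $k$ lines over the windows against the downward parabolic drift, plus verification that the limit still obeys the finite-box specification (2). Second, the quantitative estimate $\E_{\cA}[\prod_j Z_j^{-1}]\le e^{c^*_{k,\ell}t^3}$ must be uniform: the favorable-bridge computation has to be run against the true obstacle $\cA_{k+1}$ rather than a flat barrier, and simultaneously absorb both the $\Theta(t^2)$ parabolic bulge and atypically small endpoint gaps. Both ingredients are of a type already developed in the Brownian Gibbs literature, so the work lies in assembling them rather than in inventing new tools.
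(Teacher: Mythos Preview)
The paper does not prove this theorem: it is quoted verbatim as \cite[Theorem 3.8]{d24w} and used as a black box in Section \ref{Section: 3-arm}. So there is no ``paper's own proof'' to compare against, and your sketch should be read as an attempt at the result in \cite{d24w}.

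Your high-level picture is right --- relax the non-intersection constraint on $U(\mathbf a)$, recover $\cA$ upon conditioning on global order, and pay an $e^{ct^3}$ price coming from forcing $k$ free bridges to clear a parabolic bulge of height $\Theta(t^2)$ over width $t$. The quantitative part (the $Z_j\ge e^{-c_kt^3}$ estimate on a favorable event, plus repulsion of the Airy kernel controlling the small-gap tail of $Z_j^{-1}$) is exactly the heart of the matter and you have identified it correctly.

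There is, however, a genuine inconsistency between your construction and your argument for (3). In finite volume you keep the $\partial$-marginal equal to that of $\cA$ and only change the conditional law of the interior to ``bridges conditioned on order off $U$''. But then, conditioning your $\mathfrak L^{(N,L)}$ on global order reweights the $\partial$-marginal by $P_\partial(\text{full order})/P_\partial(\text{order off }U)$, which is not constant in $\partial$; so $\mathfrak L^{(N,L)}\mid\text{order}$ is \emph{not} the truncated $\cA$. Your later claim that ``the exterior law of $\mathfrak L^{\mathbf a}$ is $\cA$ reweighted by $\prod_j Z_j^{-1}$'' is the right object for (3), but it does not follow from the construction you gave --- you are tacitly assuming that the $\partial$-dependent reweighting washes out in the $N,L\to\infty$ limit, which is a nontrivial uniqueness-of-Gibbs-measure statement.

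The cleaner route, and the one that avoids any thermodynamic limit, reverses your order of logic: first prove $\E_{\cA}\big[\prod_j Z_j(\mathcal E)^{-1}\big]\le e^{c^*_{k,\ell}t^3}<\infty$, and then \emph{define} $\mathfrak L^{\mathbf a}$ directly by tilting the $\cA$-law of the exterior $\mathcal E$ by $\prod_j Z_j(\mathcal E)^{-1}$ (normalized) and resampling the top $k$ lines on each window as free bridges. Properties (1) and (3) are then immediate, and (2) follows by checking that the $Z_j^{-1}$ factor is measurable with respect to the exterior of any box $S$ (so it cancels from the conditional law inside $S$). This is presumably closer to how \cite{d24w} proceeds.
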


In words, the above says that given a domain $U$, there is a line ensemble $\mathfrak{L}^{\mathbf{a}}$ for which the Gibbs property on $U$ simply involves Brownian bridges without the non-intersection constraints, hence making computations about events on $U$ simpler.  On the other hand, conditional on non-intersection everywhere (which happens with reasonably large probability) the line ensemble has the distribution of the parabolic Airy LE thus allowing one to transfer estimates to the latter.

\subsection{Geodesic watermelons}
A key object in our analysis will be geodesic watermelons. 
Informally, geodesic watermelons are disjoint optimizers, i.e., a $k$-geodesic watermelon is a collection of $k$ distinct paths whose cumulative energy is the maximum. Note that the $1$-watermelon is simply a geodesic. See \cite{dz24d} for formal definitions, and arguments showing their existence and uniqueness in the directed landscape. 

We will use the following key fact from \cite{dz24d} which is a consequence of the RSK correspondence.  To state it we introduce the notion of a Gelfand-Tsetlin pattern. 
For $k\in \N$ let $T_k = \{(i, j) \in \N^2: i + j \le k + 1\}$.
We say that $w \in \R^{T_k}$ is a $k$-level Gelfand-Tsetlin pattern if it satisfies the interlacing
inequalities $w_{i,j} \le w_{i+1,j} \le w_{i,j+1}$
for all $i+j \le k$, and let $GT_k$ be the space of $k$-level Gelfand-Tsetlin patterns. For $x \in \R^{k+1}_{\le},$
we let $GT_k(x)$ be the set of all $w\in GT_k$ such that $(w,x) \in GT_{k+1}$ where $(w,x)_{i,j} = w_{i-1,j}$
for $i \ge 2$ and $(w,x)_{1,j} = x_j$.
 
While the top line in the parabolic Airy LE traces out $\cL({\bf 0};x,1)$ (recall that $\bf0$ is used to denote $(0,0)$), the lower lines allow access to disjointness of geodesics via the following relation. 

\newcommand{\cS}{\mathcal{S}}

\begin{proposition}[\cite{dz24d}, Proposition 5.9]\label{GTprop}
The directed landscape $\cL$ and the parabolic Airy line ensemble $\cA$ can be coupled so that $\cA_1(x) = \cL(\boo;x,1)$ for all $x \in \R$ and the following holds.
For a vector $\mathbf{x}= (x_1, x_2, x_3)$ with $x_1 < x_2 < x_3$, denote by $$\gamma(\mathbf{x})=(\gamma_1(x_1),\ldots,\gamma_3 (x_3))$$ the geodesic watermelon from $\boo$ to $(\mathbf{x},1)$, whose existence is the one of the main results in \cite{dz24d}.  For brevity, write $\cL(\gamma(\mathbf{x})) = \| \gamma_1 \|_{\mathcal{L}} + \| \gamma_2 \|_{\mathcal{L}} + \| \gamma_3 \|_{\mathcal{L}}.$  Then 
\begin{equation}\label{GTpattern}
\cL(\gamma(\mathbf{x})) = \sum_{i=1}^3 \cL(\boo; x_i,0)-\inf_{ w\in GT_{2}(x) }\sum_{i=1}^2 \sum_{j=1}^{3-i} \big( \cA_i(w_{i,j}) - \cA_{i+1}(w_{i,j}) \big).
\end{equation}
\end{proposition}

Note that indeed the LHS is bounded above by the first term of the RHS (since the geodesic watermelons have energies bounded by that of the corresponding geodesics) and the second term quantifies the deficit. Further, each of the summands in the second term is strictly positive on account of the non-intersection property of $\cA.$  Note also that the infimum on the RHS can be expressed as
$$\inf_{\substack{x_1 < w_{11} < x_2 < w_{12} < x_3 \\ w_{11} < w_{21} < w_{12}}} \Big[\big(\mathcal{P}_1(w_{11}) - \mathcal{P}_2(w_{11})\big) + \big(\mathcal{P}_2(w_{21}) - \mathcal{P}_3(w_{21})\big) + \big(\mathcal{P}_1(w_{12}) - \mathcal{P}_2(w_{12})\big)\Big].$$

Now on the event that that there are disjoint geodesics from $\boo$ to $(\mathbf{x},1),$ the error term must be zero. This corresponds to exact touches between $\cA_i$ and $\cA_{i+1}$ at the locations $w_{i,j}.$ This is a zero probability event since $\cA$ is non-intersecting, but we will work with an approximate version of this event where instead of exactly touching, the curves will come close to each other. Further, note that while the above only considers finite geodesics going from $\boo$ to $(\mathbf{x},1)$, we are interested in the further constraint that $\gamma_1,\gamma_2, \gamma_3$ are initial segments of semi-infinite geodesics. We next see how to incorporate this constraint.  Recall the Busemann function $\cB(x,s)$ from \eqref{busemann1}, which should be thought of as a properly centered version of the distance to infinity, in the vertical direction.  For convenience, we will use a modified definition of the Busemann function in this section, with $\mathcal{L}(0,0;0,n)$ on the right-hand side of \eqref{busemann1} replaced by $\mathcal{L}(0,1;0,n)$; note that this change only adds a constant shift to the Busemann function.  When $s=1$, we will denote $\cB(x,1)$ by $\cB(x).$  We also denote $\cL(0,0;x,1)$ by $\mathcal{L}(x)$.
By Theorem \ref{Theorem: Brownian motion}, the condition that the components of $\gamma$
are initial segments of semi-infinite geodesics is equivalent to the condition that for all $i=1,2,3,$
\begin{equation}\label{argmax}
\cB(x_i)+\cL(x_i)=\max_{x\in \R} \big(\cB(x)+\cL(x) \big)
\end{equation}
and hence in particular,
\begin{equation}\label{argmax1}
\cB(x_1)+\cL(x_1)=\cB(x_2)+\cL(x_2)=\cB(x_3)+\cL(x_3).
\end{equation}
Importantly, $\cB$ is independent of $\cL(\cdot,0;\cdot,1)$ which allows one to decouple the events in \eqref{GTpattern} and \eqref{argmax}.
We are now in a position to explain the source of the exponent $5/2$ appearing in the statement of Theorem \ref{threearm}.

Note that for the error term in \eqref{GTpattern} to be zero each of the three non-negative terms must be zero. Further \eqref{argmax1} stipulates that $\cB(x_1)+\cL(x_1)-\cB(x_2)-\cL(x_2)$ and $\cB(x_2)+\cL(x_2)-\cB(x_3)-\cL(x_3)$ are zero. 
In our argument all of the zeros will be replaced by $\sqrt{\e}$ (for reasons that will be apparent shortly). Thus, our event $\ncg$ consists of five events which, ignoring a certain degree of correlation between them, all require two independent Brownian motions to come within $\sqrt \e$ of each other on a unit order interval along with certain non-intersection constraints. It will turn out that the probability of each of these events will be of the order $\sqrt{\e}$.  The points will also typically be well separated, implying that the correlation between them will be weak enough to yield the $\e^{5/2}$ bound. 

However, a formal argument must address situations where the points are close to each other, increasing the correlation between the events. Nonetheless, the contributions of such situations will turn out to be negligible, albeit proving which turns out to be somewhat complicated.  We now proceed to making the above reasoning precise.  

The first point we need to address is that in Theorem \ref{threearm}, the geodesics do not emanate from the same point but rather are spaced $\eps$ apart.  For convenience of writing, instead of starting from height $0$, we will instead consider the three infinite upward geodesics emanating from the points $(-\eps, \eps^{3/2})$, $(0, \eps^{3/2})$, and $(\eps, \eps^{3/2})$, i.e., their starting points are shifted upward by $\eps^{3/2}$.  Let us call them $\gamma_1,\gamma_2,\gamma_3.$  A simple scaling argument shows that it suffices to prove the upper bound in \eqref{threearm1} for the geodesics $\gamma_1,\gamma_2,\gamma_3$. Towards this, we will perform a, by now, standard surgery argument to tie them all to the point $(0,0)$ at the cost of losing the geodesic status but being a near geodesic. Such an argument had first appeared in \cite{h20}.   We define three relevant events below, the first of which will be pertinent for this surgery argument.

\begin{itemize}
\item There is a constant $C_6 > 0$ such that with probability greater than $1/2$ one can find three disjoint curves $\eta_1,\eta_2,\eta_3$  joining $(0,0)$ and $(-\e,\eps^{-3/2}), (0,\eps^{-3/2})$, and $(\e,\eps^{-3/2})$ respectively, all of whose lengths are between $[-C_6 \sqrt{\e}, C_6 \sqrt{\e}]$. This can be seen, for instance, by scaling space by $\eps^{-1}$ and time by $\eps^{-3/2}$ and appealing to \eqref{GTpattern} (see also \cite{h20} where a different argument ensuring this was employed). Let us call this event $\mathcal{S}_1.$  On $\mathcal{S}_1$, let the paths obtained by concatenating $\gamma_i$ and $\eta_i$ be denoted by $\pi_i.$
Thus the lengths of $\pi_i$ and $\gamma_i$ differ by no more than $C_6 \sqrt \e.$

\item Let $\mathcal{S}_2$ be the event that the intersections of the semi-infinite geodesics emanating from $(0,0)$, $(-\e,\eps^{3/2}), (0,\eps^{3/2})$, and $(\e,\eps^{3/2})$ with the line $y=1$ all lie in the interval\\ $[-\log^{1/2}(1/\eps), \log^{1/2}(1/\eps)].$  By Theorem \ref{Theorem: transversal fluctuation}, for sufficiently small $\eps$, we have $\P(\mathcal{S}_2)\ge 1-\e^{10}.$

\item Let $\cS_3$ be the event that for each $x \in [-\log^{1/2}(1/\eps), \log^{1/2}(1/\eps)]$ and $i\in \{-1,0,1\},$  we have
$$|\cL(0,0; x,1)-\cL(\e i, \eps^{3/2}; x,1)| \leq \sqrt{\e}\log^{3/2}(1/\eps).$$
It is possible to reduce the power of the logarithm, but this definition will be sufficient for our purposes.
\end{itemize}

\begin{lemma}
For sufficiently small $\eps$, we have $\P(\cS_3)\ge 1-\e^{10}.$
\end{lemma}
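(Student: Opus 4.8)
\textbf{Proof strategy for the bound $\P(\cS_3) \ge 1 - \e^{10}$.}

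The plan is to control $|\cL(0,0;x,1) - \cL(\e i, \eps^{3/2};x,1)|$ uniformly over $x$ in the window $[-\log^{1/2}(1/\eps),\log^{1/2}(1/\eps)]$ and $i \in \{-1,0,1\}$ by decomposing the difference into two contributions: a spatial displacement of size $O(\e)$ in the starting point and a temporal displacement of size $O(\eps^{3/2})$. The basic tool is the modulus of continuity estimate for the parabolically adjusted landscape $\mathcal{K}$ recorded in Theorem~\ref{modcont}, together with the relation $\cL(u) = \mathcal{K}(u) - (x-y)^2/(t-s)$, so that differences of $\cL$ differ from differences of $\mathcal{K}$ only by an explicit deterministic parabolic correction. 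First I would fix a large box radius $b = b(\eps)$ — taking $b = \log(1/\eps)$ is more than enough to contain all the relevant space-time points — and a lower cutoff $\varepsilon_0$ on the time increment; here all time increments are at least $\eps^{3/2}$, and the relevant time displacement $\tau$ between the pair of points $(0,0;x,1)$ and $(\e i, \eps^{3/2};x,1)$ is exactly $\eps^{3/2}$, while the spatial displacement $\xi$ is exactly $\e|i| \le \e$. These satisfy the hypotheses of Theorem~\ref{modcont} (in particular $\tau \le \eps_0^3/b^3$ once $\eps$ is small, choosing $\eps_0$ appropriately), so on the event $\{R \le m\}$ from that theorem we get, for every such pair,
$$
|\mathcal{K}(0,0;x,1) - \mathcal{K}(\e i, \eps^{3/2};x,1)| \le m\big[(\eps^{3/2})^{1/3}\log^{2/3}(\eps^{-3/2}) + \e^{1/2}\log^{1/2}(4b\e^{-1})\big] \le C m \sqrt{\e}\,\log^{2/3}(1/\eps),
$$
absorbing constants and noting $(\eps^{3/2})^{1/3} = \sqrt{\e}$ and $\log^{1/2}(4b\e^{-1}) \le C\log^{1/2}(1/\eps)$. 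The parabolic correction term is $|x^2 - (x-\e i)^2|/1 = |2x\e i - \e^2 i^2| \le C\e\log^{1/2}(1/\eps)$ uniformly over $|x| \le \log^{1/2}(1/\eps)$, which is negligible compared to $\sqrt{\e}\log^{3/2}(1/\eps)$. Hence on $\{R \le m\}$ the event $\cS_3$ holds as soon as $Cm\log^{2/3}(1/\eps) \le \log^{3/2}(1/\eps)$, i.e. $m \le c\log^{5/6}(1/\eps)$ for a suitable small constant $c$.

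It then remains to choose $m$ so that $\P(R > m)$ is at most $\e^{10}$. The tail bound from Theorem~\ref{modcont} reads $\P(R \ge m) \le c\,b^{10}\varepsilon_0^{-6} e^{-d m^{3/2}}$. With $b = \log(1/\eps)$ and $\eps_0$ a fixed constant (or at worst a small negative power of $\log(1/\eps)$), the prefactor $b^{10}\eps_0^{-6}$ is polylogarithmic in $1/\eps$, so it suffices to take $m = m(\eps)$ growing just fast enough that $e^{-dm^{3/2}} \le \eps^{11}$, say, which forces $m \asymp (\log(1/\eps))^{2/3}$. The only thing to check is consistency: we need simultaneously $m \le c\log^{5/6}(1/\eps)$ (to conclude $\cS_3$ occurs on $\{R\le m\}$) and $m \asymp \log^{2/3}(1/\eps)$ (to kill the tail). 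Since $2/3 < 5/6$, for all sufficiently small $\eps$ we can indeed pick such an $m$, and then $\P(\cS_3^c) \le \P(R > m) \le c\,\mathrm{polylog}(1/\eps)\cdot \eps^{11} \le \eps^{10}$ for $\eps$ small. This completes the argument; the only mild subtlety — and the step I would be most careful about — is verifying that $(0,0;x,1)$ and $(\e i,\eps^{3/2};x,1)$ genuinely lie in $K_b^{\eps_0}$ for the chosen parameters (in particular that both time increments exceed $\eps_0^3$, which holds since $1 - \eps^{3/2} \ge 1/2 \ge \eps_0^3$ for small $\eps_0$) and that $\tau = \eps^{3/2} \le \eps_0^3/b^3$, which is where the freedom to take $\eps$ small is used. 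Everything else is a routine application of Theorem~\ref{modcont} and its Gaussian-type tail.
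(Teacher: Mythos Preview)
Your argument is correct and in fact more direct than the paper's. The paper proceeds by first using transversal fluctuation bounds (Theorem~\ref{Theorem: transversal fluctuation}) to locate the point $z$ where the geodesic $g_{(0,0;x,1)}$ crosses height $\eps^{3/2}$, then splits the difference via the triangle inequality as
\[
|\cL(0,0;x,1) - \cL(\eps i, \eps^{3/2};x,1)| \le |\cL(0,0;z,\eps^{3/2})| + |\cL(\eps i,\eps^{3/2};x,1) - \cL(z,\eps^{3/2};x,1)|,
\]
bounding the first term by scaling plus Theorem~\ref{hold432} and the second by Theorem~\ref{modcont} with $\tau = 0$. You instead apply Theorem~\ref{modcont} directly to the pair $u_1 = (0,0;x,1)$ and $u_2 = (\eps i, \eps^{3/2};x,1)$ with both $\tau = \eps^{3/2}$ and $\xi \le \eps$ nonzero, which avoids the geodesic localization step entirely. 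Your approach is cleaner here; the paper's decomposition is perhaps more in keeping with the surgery viewpoint of the surrounding argument but is not needed for this particular estimate.

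One minor imprecision: in computing the parabolic correction you divide both squared differences by $1$, but the second time increment is $1 - \eps^{3/2}$, not $1$. The discrepancy contributes an additional term of order $x^2 \eps^{3/2} = O(\eps^{3/2}\log(1/\eps))$, which is still negligible compared to $\sqrt{\eps}\log^{3/2}(1/\eps)$, so your conclusion stands.
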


\begin{proof} 
By an application of Theorem \ref{Theorem: transversal fluctuation} for finite geodesics, all the geodesics $g_{(0,0; x,1)}$ with $|x|\le {\log^{1/2}(1/\eps)}$ intersect the line $y = \eps^{3/2}$ at points $z$ with absolute value no more than $\eps \log^{1/2}(1/\eps)$, with probability at least $1 - \frac{1}{3} \eps^{10}$.  By the triangle inequality,
$$|\cL(0,0; x,1)-\cL(\eps i, \eps^{3/2}; x,1)|\le |\cL(0,0; z,\eps^{3/2})|+|\cL(\eps i, \eps^{3/2}; x,1)-\cL(z,\eps^{3/2}; x,1)|.$$
To bound the first term on the RHS, note that by scaling, $\cL(0,0; z,\eps^{3/2})$ has the same law as $\sqrt{\eps} \cL(0,0; \eps^{-1} z, 1)$.  Theorem \ref{hold432} then establishes that when $\eps$ is sufficiently small, we have $\mathcal{L}(0, 0; \eps^{-1} z, 1) \leq 2 \log(1/\eps)$ with probability at least $\frac{1}{3} \eps^{10}$; indeed the random variable $R$ in Theorem \ref{hold432} has the property that $\P(R > \log^a(1/\eps))$ decays faster than any power of $\eps$ as $\eps \rightarrow 0$ as long as $a > 2/3$. 
The second term can be bounded by applying Theorem \ref{modcont} with $\tau = 0$ and $\xi = |\eps i - z|$.
\end{proof}

Armed with the above, we state the following proposition. 

\begin{proposition}\label{Airyprop}
Let $\mathcal{P}_1$, $\mathcal{P}_2$, and $\mathcal{P}_3$ denote the top three lines of the parabolic Airy line ensemble.  Let $(W(t), t \in \R)$ be two-sided Brownian motion with variance parameter 2, independent of the parabolic Airy line ensemble.  Let $T = \lfloor \log^{1/2}(1/\eps) + 2 \rfloor$.  Let $A$ be the event that there exist times $a$, $b$, and $c$ with $-\log^{1/2}(1/\eps) \leq a < b < c \leq \log^{1/2}(1/\eps)$ such that the following hold:
\begin{itemize}
\item We have $\mathcal{P}_1(a) - \mathcal{P}_2(a) < \eps$, $\mathcal{P}_1(c) - \mathcal{P}_2(c) < \eps$, and $\mathcal{P}_2(b) - \mathcal{P}_3(b) < \eps$.

\item Let $$m_1 = \max_{-T \leq t \leq a} (W(t) + \mathcal{P}_1(t)), \quad m_2 = \max_{a \leq t \leq c} (W(t) + \mathcal{P}_1(t)), \quad m_3 = \max_{c \leq t \leq T} (W(t) + \mathcal{P}_1(t)).$$  We have $|m_1 - m_2| < \eps$, $|m_1 - m_3| < \eps$, and $|m_2 - m_3| < \eps$.
\end{itemize}
Then there exists a constant $C > 0$ such that $\P(A) \leq C \eps^{5} \log^{44}(1/\eps)$ for sufficiently small $\eps$.
\end{proposition}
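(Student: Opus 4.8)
The plan is to exploit the independence of $W$ from the parabolic Airy line ensemble $\cP$, together with the Brownian Gibbs property, to reduce the event $A$ to the product of five essentially-one-dimensional events each costing a factor $O(\sqrt{\eps})$ up to logarithmic corrections. First I would discretize: cover $[-\log^{1/2}(1/\eps),\log^{1/2}(1/\eps)]$ by a grid of mesh $\delta = \eps^{10}$ (say), so that $a,b,c$ may be taken to be grid points at the cost of enlarging the thresholds $\eps$ to $2\eps$ and adjusting the $m_i$ comparisons by the modulus of continuity of $\cP_1$ and $W$ on a $\delta$-scale — a routine estimate using Theorem~\ref{duncan12} (local Brownianity of $\cP$) and the Brownian modulus of continuity for $W$, which contribute only polylogarithmic factors. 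This turns $\P(A)$ into a sum over $O(\log^{3/2}(1/\eps)\cdot\delta^{-3})$ triples of grid points; since $\delta^{-3}$ is polynomial in $1/\eps$, to get the $\eps^5$ bound I must show each summand is $O(\eps^5 \cdot \mathrm{poly}(\eps))$ with enough room to spare, or — better — I should keep the sum over $(a,b,c)$ and control it more carefully rather than crudely union-bounding. The cleaner route is: fix the unordered positions but sum over the \emph{scale} of the gaps $b-a$ and $c-b$, treating separately the regime where all three of $a,b,c$ are pairwise separated by at least some small $\rho$ (the ``well-separated'' regime) and the complementary regime.

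In the well-separated regime I would condition on $W$ and on $\cP_1$ restricted to the complement of small neighborhoods of $a,b,c$. The three ``touching'' events $\{\cP_1(a)-\cP_2(a)<\eps\}$, $\{\cP_1(c)-\cP_2(c)<\eps\}$, $\{\cP_2(b)-\cP_3(b)<\eps\}$ are then handled via the Brownian Gibbs property: resampling $\cP_1$ (and $\cP_2$) on a unit interval around each of $a$ and $c$ as Brownian bridges conditioned on nonintersection, the probability that a Brownian bridge comes within $\eps$ of a barrier at a fixed interior point is $O(\sqrt{\eps})$ (by the standard small-gap estimate for Brownian motion near a barrier, using the acceptance probability lower bound in Theorem~\ref{duncan12}(3) to pass between the constrained and unconstrained ensembles). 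Since the relevant Gibbs resampling windows around $a$, $b$, $c$ are disjoint in the well-separated regime, these three events contribute a product $O(\eps^{3/2})$ conditionally. For the two ``max-matching'' events, note that on each of the three subintervals $[-T,a]$, $[a,c]$, $[c,T]$ the quantity $W(t)+\cP_1(t)$ is, conditionally on $\cP_1$, a Brownian motion plus a deterministic function; its running maxima $m_1,m_2,m_3$ over disjoint intervals are conditionally independent given $\cP_1$, and — using that the density of the maximum of Brownian motion on an interval of order-one length is bounded on compact sets (a consequence of the reflection principle; one also needs a lower bound on the interval lengths, which holds once $a,b,c$ avoid the endpoints, a further polylog-probability event) — the probability that $m_2$ and $m_3$ each fall within $\eps$ of $m_1$ is $O(\eps^2)$. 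Multiplying, the conditional probability for a fixed well-separated triple is $O(\eps^{7/2})$; integrating over the $O(\mathrm{poly}\log)$ choices of positions (the gap-scale sum is geometric and converges) yields $O(\eps^{7/2}\log^{O(1)}(1/\eps))$, comfortably better than needed.

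The genuinely delicate part — and the main obstacle — is the \emph{clustered} regime, where two (or three) of $a,b,c$ are within distance $\rho$ of each other, so the Gibbs resampling windows overlap and the ``touching'' events and the max-matching events cease to be conditionally independent. Here I would argue as follows. If $b$ is within $\rho$ of $a$ (or $c$), then on a window of length $\sim 1$ around that cluster we are asking the Airy ensemble to have \emph{two} separate $\eps$-close encounters (between lines $1,2$ and between lines $2,3$), which by a Gibbs/Brownian-bridge computation costs $O(\eps)$ rather than $O(\sqrt\eps)\cdot O(\sqrt\eps)=O(\eps)$ — no gain — but the payoff is that the clustered configuration also forces the \emph{spatial} location of the cluster to be essentially pinned, removing one of the position integrations, and more importantly the max-matching event over the tiny middle interval $[a,c]$ of length $\lesssim \rho$ becomes automatically satisfied up to an error controlled by the fluctuation of $W+\cP_1$ over a $\rho$-interval, which is $O(\sqrt{\rho})$; choosing $\rho$ a small power of $\eps$ makes this a net win. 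In other words, one trades the lost independence against the smallness of the middle interval. Carrying this out requires a somewhat involved case analysis (which pairs are clustered, whether the ``max'' $m_1$ is attained inside or outside a cluster window), and uniform control of Brownian-bridge small-gap probabilities under conditioning, via Theorem~\ref{duncan12} and the Brownian Gibbs property. Summing the well-separated and all clustered contributions, each bounded by $C\eps^5\log^{44}(1/\eps)$ (the exponent $44$ absorbing the modulus-of-continuity polylogs from discretization, the $\log^{3/2}(1/\eps)$ window lengths, and the barrier-crossing estimates), completes the proof.
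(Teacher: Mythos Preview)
Your overall architecture---discretize, decompose into well-separated versus clustered configurations, use the Brownian Gibbs property for the near-touch events, and use a max-of-Brownian-bridge computation for the max-matching constraints---matches the paper's. However, the quantitative estimates are wrong in a way that makes the argument fail.

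The central error is the near-touch estimate. At a \emph{fixed} interior point, the probability that the gap between two nonintersecting Brownian bridges is below $\eps$ is $O(\eps^3)$, not $O(\sqrt{\eps})$: the gap process behaves like a Bessel(3) bridge, whose density vanishes quadratically at zero (entropic repulsion). This is the paper's Lemma~\ref{onebridge} and the bounds \eqref{twoclose}--\eqref{threeclose}. Your $O(\sqrt{\eps})$ appears to come from the heuristic in the paper's outline for Theorem~\ref{Theorem: 3-disjoint}, but that $\eps$ is a \emph{different} parameter: Lemma~\ref{reduction} replaces the original $\eps$ by $4\sqrt{\eps}\log^{3/2}(1/\eps)$ when passing to Proposition~\ref{Airyprop}, so the $O(\sqrt{\eps})$ in the outline becomes $O(\eps^3)$ here. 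More concretely, even accepting your numbers, the well-separated bound $O(\eps^{3/2})\cdot O(\eps^2)=O(\eps^{7/2})$ is \emph{weaker} than the target $O(\eps^5)$ for small $\eps$, not ``comfortably better''---the argument does not close.

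The discretization is also mishandled. A mesh of $\delta=\eps^{10}$ yields $\sim\eps^{-30}$ triples; you cannot then claim only ``$O(\mathrm{poly}\log)$ choices of positions''. The paper instead takes mesh $\sim\eps^2\log^{-3}(1/\eps)$, giving $O(\eps^{-6}\log^{21/2}(1/\eps))$ triples. With the correct $O(\eps^3)$ per near-touch, each well-separated triple contributes $O(\eps^9)$ to $\P(A_{u,v,w})$, so the sum is $O(\eps^3\log^{42}(1/\eps))$; multiplying by the uniform bound $\P(B_{u,v,w}\cap G\mid A_{u,v,w})=O(\eps^2\log^2(1/\eps))$ (where your estimate is essentially right) gives the $\eps^5\log^{44}(1/\eps)$.

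Finally, your clustered-regime sketch is too vague and partly backwards. Saying the max-matching over a tiny middle interval ``becomes automatically satisfied'' means you \emph{lose} an $\eps$-factor, and this loss must be recovered elsewhere. The paper does this via a two-scale argument: when $w-u\sim2^{-k}$ and $v-u\sim2^{-\ell}$, one first uses the $\eps^8$ bound \eqref{threeclose} to pay for both gaps being $O(2^{-k/2}\log^2)$ at a reference point, then zooms to the $2^{-k}$ and $2^{-\ell}$ scales to extract the remaining factors, producing per-triple bounds like $O(2^{3k/2-\ell}\eps^9\log^{32}(1/\eps))$ that decay in $k,\ell$ and make the dyadic sums converge. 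This multiscale bookkeeping is the real content of the proof and is not captured by your heuristic.
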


Theorem \ref{threearm} is therefore an immediate consequence of the following lemma and an elementary scaling argument.  Proposition \ref{Airyprop} and Lemma \ref{reduction} give a power of the logarithm of $44 + 5(3/2) = 51.5$, which we round up to 52 in Theorem \ref{threearm}, as we do not aim to optimize the exponent.

\begin{lemma}\label{reduction}
Let $A$ be the event defined in Proposition \ref{Airyprop}, when we put $4 \sqrt{\eps} \log^{3/2} (1/\eps)$ in place of $\eps$.
Let $\ncg$ be the event defined following the statement of Theorem \ref{threearm}, but with the geodesics starting from height $\eps^{3/2}$ instead of $0$.
Then for sufficiently small $\eps$, we have
$\P(\ncg) \leq 2 \P(A)+ 4\e^{10}$.
\end{lemma}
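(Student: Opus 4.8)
The plan is to show that the event $\ncg$ (three disjoint semi-infinite geodesics from $(-\eps,\eps^{3/2})$, $(0,\eps^{3/2})$, $(\eps,\eps^{3/2})$ up to height $1$) is, up to the small-probability bad events $\cS_1^c,\cS_2^c,\cS_3^c$, contained in the event $A$ of Proposition \ref{Airyprop} with threshold $4\sqrt\eps\log^{3/2}(1/\eps)$. Since $\P(\cS_1)>1/2$, $\P(\cS_2),\P(\cS_3)\ge 1-\eps^{10}$, and $\cS_1$ is independent of the data determining whether $\ncg$ occurs (it concerns the region between heights $0$ and $\eps^{3/2}$, or can be arranged to be), I would write $\P(\ncg)=\P(\ncg\cap\cS_1)/\P(\cS_1)\cdot$ (roughly) and bound $\P(\ncg)\le 2\P(\ncg\cap\cS_1\cap\cS_2\cap\cS_3)+2\P(\cS_2^c)+2\P(\cS_3^c)\le 2\P(\ncg\cap\cS_1\cap\cS_2\cap\cS_3)+4\eps^{10}$. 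So the heart of the matter is the deterministic (pathwise) implication: on $\cS_1\cap\cS_2\cap\cS_3$, if $\ncg$ holds then $A$ holds.

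To establish that implication I would argue as follows. First, on $\cS_2$ the three semi-infinite geodesics hit the line $y=1$ at points $x_1<x_2<x_3$ in $[-\log^{1/2}(1/\eps),\log^{1/2}(1/\eps)]$; these are the candidate times $a,b,c$ — more precisely, via Proposition \ref{GTprop} and the Gelfand-Tsetlin formula \eqref{GTpattern}, the requirement that there be three \emph{disjoint} finite geodesics from $\boo$ to $(\mathbf{x},1)$ forces the deficit term $\inf_{w\in GT_2(x)}\sum(\cA_i(w_{ij})-\cA_{i+1}(w_{ij}))$ to be small. Here I need to pass from geodesics starting at $(\eps i,\eps^{3/2})$ to geodesics starting at $\boo=(0,0)$: this is exactly what $\cS_1$ (concatenation with the curves $\eta_i$, changing lengths by at most $C_6\sqrt\eps$) and $\cS_3$ (comparing $\cL(0,0;x,1)$ with $\cL(\eps i,\eps^{3/2};x,1)$, error $\le\sqrt\eps\log^{3/2}(1/\eps)$) are for. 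Tallying the errors from the surgery, the disjointness of the $\gamma_i$ up to height $1$ implies that the three near-geodesics $\pi_i$ from $\boo$ to $(\mathbf{x},1)$ have cumulative length within $O(\sqrt\eps\log^{3/2}(1/\eps))$ of $\sum_i\cL(\boo;x_i,1)$, hence the deficit is at most $4\sqrt\eps\log^{3/2}(1/\eps)$ and, each summand being nonnegative, each of $\cA_1-\cA_2$ at $w_{11}$, $\cA_2-\cA_3$ at $w_{21}$, $\cA_1-\cA_2$ at $w_{12}$ is at most $4\sqrt\eps\log^{3/2}(1/\eps)$; taking $a=w_{11}$, $b=w_{21}$, $c=w_{12}$ gives the first bullet of $A$. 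Second, the condition that $\gamma_1,\gamma_2,\gamma_3$ are initial segments of semi-infinite geodesics through $x_1,x_2,x_3$ gives, via Theorem \ref{Theorem: Brownian motion} / \eqref{argmax}, that $\cB(x_i)+\cL(x_i)$ is the common maximum over $\R$; identifying $W$ with $\cB$ (Brownian motion, variance $2$, independent of the Airy ensemble) and $\mathcal{P}_1(t)=\cL(\boo;t,1)$, the requirement that the argmax restricted to $[-T,a]$, to $[a,c]$, and to $[c,T]$ all agree within the same $O(\sqrt\eps\log^{3/2}(1/\eps))$ error (from the surgery and from restricting the max from $\R$ to $[-T,T]$, which costs nothing on $\cS_2$) yields the second bullet of $A$ with threshold $4\sqrt\eps\log^{3/2}(1/\eps)$. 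Combining, $\ncg\cap\cS_1\cap\cS_2\cap\cS_3\subseteq A$, so $\P(\ncg\cap\cS_1\cap\cS_2\cap\cS_3)\le\P(A)$, and the lemma follows (with the factor $2$ absorbing the $1/\P(\cS_1)$).

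The main obstacle, I expect, is the bookkeeping in the surgery step: carefully tracking that all the additive errors incurred in replacing $\gamma_i$ (starting at $(\eps i,\eps^{3/2})$) by paths through $\boo$ — the length of $\eta_i$ bounded by $C_6\sqrt\eps$ on $\cS_1$, the discrepancy $|\cL(0,0;x,1)-\cL(\eps i,\eps^{3/2};x,1)|\le\sqrt\eps\log^{3/2}(1/\eps)$ on $\cS_3$, and the error from using $\mathcal P_1(t)=\cL(\boo;t,1)$ rather than the true passage times from the shifted starting points — are each $O(\sqrt\eps\log^{3/2}(1/\eps))$ and that the constants assemble so that $4\sqrt\eps\log^{3/2}(1/\eps)$ is a valid uniform threshold for all five of the ``$<\eps$''-type conditions in the definition of $A$. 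One must also be a little careful that the points $a,b,c$ produced from the Gelfand-Tsetlin optimizer genuinely satisfy $-\log^{1/2}(1/\eps)\le a<b<c\le\log^{1/2}(1/\eps)$, which follows from $x_1<x_2<x_3$ lying in that interval on $\cS_2$ together with the interlacing $x_1<w_{11}<x_2<w_{12}<x_3$ and $w_{11}<w_{21}<w_{12}$; one may need to slightly enlarge the interval and absorb this into constants, but since we are not optimizing the logarithmic exponent this is harmless. Everything else — the probability bounds on $\cS_1^c,\cS_2^c,\cS_3^c$ and the independence of $\cB$ from $\cL(\cdot,0;\cdot,1)$ — is already supplied by the preceding results.
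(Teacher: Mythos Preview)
Your proposal is correct and matches the paper's proof essentially line for line: the paper uses the independence of $\cS_1$ from $\ncg$ to get $\P(\ncg\cap\cS_1\cap\cS_2\cap\cS_3)\ge\tfrac12\P(\ncg)-2\eps^{10}$, then establishes the deterministic inclusion $\ncg\cap\cS_1\cap\cS_2\cap\cS_3\subseteq A$ by exactly your two-step argument (surgery plus Proposition~\ref{GTprop} for the near-touches, and \eqref{argmax} together with $\cS_3$ for the near-equality of the three maxima). Your identification of the bookkeeping as the only real work, and your observation that $a=w_{11},b=w_{21},c=w_{12}$ inherit the interval bound from the $x_i$ via interlacing on $\cS_2$, are both exactly how the paper proceeds.
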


\begin{proof}
Recall the events $\cS_1$, $\cS_2$, and $\cS_3$, and note that $\ncg$ is independent of $\cS_1$ and that $x \mapsto \cL({\bf 0};x,1)$ has the same law as $\cA_1$.  Also, by Theorem \ref{Theorem: Brownian motion}, the process $x \mapsto \cB(x)$ is a two-sided Brownian motion with variance parameter 2, which is independent of the process $x \mapsto \cL({\bf 0};x)$.  Therefore, we can set $\cA_1(x) = \cL(\boo;x,1)$ and $W(x) = \cB(x)$.
We now claim that $\ncg\cap \cS_1\cap \cS_2\cap \cS_3 \subseteq A.$ That this immediately implies the lemma follows by observing that $\P(\ncg\cap \cS_1\cap \cS_2\cap \cS_3)\ge \frac{1}{2}\P(\ncg)-2\e^{10}$.

It remains to prove the above claim. 
Note that on $\cS_1\cap \cS_2\cap \cS_3\cap \ncg$, the portions of the paths $\pi_1,\pi_2,\pi_3$ up to height $1$ are disjoint paths originating from $(0,0)$ and ending at $(x_1, 1)$, $(x_2, 1)$, and $(x_3, 1)$.  The lengths of these disjoint paths differ from $\cL(0,0; x_i, 1)$ by at most $\sqrt{\e}\log^{3/2}(1/\eps)  + C_6 \sqrt{\eps}$.
By Proposition \ref{GTprop}, it follows that for sufficiently small $\eps$, there exist $x_1< w_{11}< x_2 < w_{12} < x_3$ and $w_{11}<w_{21}< w_{12}$ such that 
\begin{align}\label{neartouch1}
|\cA_1(w_{11})-\cA_2(w_{11})| \leq 4 \sqrt{\e}\log^{3/2}(1/\eps), \quad & |\cA_1(w_{12})-\cA_2(w_{12})|\leq 4 \sqrt{\e}\log^{3/2}(1/\eps), \\
\nonumber
|\cA_2(w_{21})-\cA_3(w_{21})| \leq &\,\,\, 4\sqrt{\e}\log^{3/2}(1/\eps).
\end{align}
Because on $\cS_{2}$, all of the $x_{i}$ have absolute value at most $\log^{1/2}(1/\e)$, we can therefore take $w_{11}=a, w_{12}=c$, and $w_{21}=b$ to see that the first condition in the definition of $A$ is satisfied.

It remains to check the second condition in the definition of $A$ by showing that approximate versions of \eqref{argmax} and \eqref{argmax1} hold.  For $x \in \R$, we write $\cL({\bf 0};x)$ in place of $\cL({\bf 0}; x,1)$.  Let $s=\argmax_{x} (\cB(x)+\cL({\bf 0}; x))$.  On $\cS_{2}$, $s$ has absolute value at most $\log^{1/2}(1/\e).$ Then for $i=1,2,3,$ we observe, using twice the definition of $\cS_3$, that
\begin{align*}
\cB(x_i) + \cL({\bf 0}; x_i) + \sqrt{\eps} \log^{3/2}(1/\eps) &\geq \cB(x_i) + \cL(\eps i, \eps^{3/2}; x_i, 1) \\
&\geq \cB(s) + \cL(\eps i, \eps^{3/2}; s, 1) \\
&\geq \cB(s) + \cL({\bf 0}; s) - \sqrt{\eps} \log^{3/2}(1/\eps).
\end{align*}
Therefore,
\begin{equation}\label{argmax2}
|\cB(x_i)+\cL({\bf 0}; x_i)-\cB(s)-\cL({\bf 0}; s)| \leq 2 \sqrt{\e}\log^{3/2}(1/\eps).
\end{equation}
That is, the maximum value of $\cB(x)+\cL({\bf 0}; x)$ is nearly attained at the points $x_1$, $x_2$, and $x_3$.  Recalling that $\mathcal{P}_1(x) = \cL({\bf 0}; x)$ and $W(x) = \cB(x)$,
it follows that the second condition in the definition of $A$ is satisfied, as the values of $m_1$, $m_2$, and $m_3$ in Proposition \ref{Airyprop} are all within $2 \sqrt{\e}\log^{3/2}(1/\eps)$ of the global maximum.
\end{proof}

\newcommand{\PP}{\mathbb{P}}
Thus it remains to prove Proposition \ref{Airyprop}.The argument has many pieces and the entirety of the remainder of the paper is devoted to this. 

\subsection{Proof of Proposition \ref{Airyprop}}

We begin by collecting several lemmas which will be useful in the proof.  The first lemma is a standard formula for the maximum of a Brownian bridge, which can be found, for example, on page 63 of \cite{borsal}.

\begin{lemma}\label{maxbridge}
Let $(B_s)_{0 \leq s \leq t}$ be a standard Brownian bridge going from $0$ to $z$ in time $t$.  For $y\geq \max\{0,z\}$, we have
$$\P \bigg( \sup_{0 \leq s \leq t} B_s \leq y \bigg) = 1 - \exp \bigg(- \frac{2y(y-z)}{t} \bigg) \leq \frac{2y(y-z)}{t}.$$
\end{lemma}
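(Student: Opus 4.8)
The plan is to derive the formula from the reflection principle for Brownian motion combined with the description of the Brownian bridge as Brownian motion conditioned on its terminal value. Fix $t>0$, let $(W_s)_{0\le s\le t}$ be a standard Brownian motion with $W_0=0$, write $M_t=\sup_{0\le s\le t}W_s$, and let $\varphi_t(x)=(2\pi t)^{-1/2}e^{-x^2/(2t)}$ denote the density of $W_t$. The bridge $(B_s)_{0\le s\le t}$ from $0$ to $z$ is the regular conditional law of $(W_s)_{0\le s\le t}$ given $W_t=z$, so the key point is to compute the conditional density of the event $\{M_t\ge y\}$ given $W_t=z$, for $y\ge\max\{0,z\}$.

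First I would record the reflection principle in the asymmetric form adapted to a possibly negative endpoint: on $\{M_t\ge y\}$ the first hitting time $\tau_y$ of level $y$ satisfies $\tau_y\le t$, and reflecting the path after $\tau_y$ across the level $y$ sends a path ending at $z$ to a path ending at $2y-z$; since $y\ge\max\{0,z\}$ this reflection is a measure-preserving bijection on the relevant paths, which in density form reads
\[
\frac{\partial}{\partial z}\,\P\big(M_t\ge y,\ W_t\le z\big)=\varphi_t(2y-z).
\]
Dividing by the density $\varphi_t(z)$ of $W_t$ at the conditioning value then gives
\[
\P\Big(\sup_{0\le s\le t}B_s\ge y\Big)=\frac{\varphi_t(2y-z)}{\varphi_t(z)}=\exp\!\Big(-\frac{(2y-z)^2-z^2}{2t}\Big)=\exp\!\Big(-\frac{2y(y-z)}{t}\Big),
\]
using $(2y-z)^2-z^2=4y(y-z)$. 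Taking complements yields the asserted identity $\P(\sup_{0\le s\le t}B_s\le y)=1-\exp(-2y(y-z)/t)$. For the stated inequality, observe that $y\ge\max\{0,z\}$ forces both $y\ge0$ and $y-z\ge0$, so $u:=2y(y-z)/t\ge0$, and the elementary bound $1-e^{-u}\le u$ for $u\ge0$ completes the argument.

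The only step requiring genuine care is the passage from the joint law of $(M_t,W_t)$ to the conditional law of the bridge, i.e.\ justifying that the displayed ratio of Gaussian densities is the conditional probability of $\{M_t\ge y\}$ given $W_t=z$; this is the standard disintegration of Wiener measure over the value of $W_t$ together with measurability of the supremum functional. An alternative that sidesteps conditioning is a Girsanov change of measure expressing the bridge law against the driftless path and evaluating $\P(M_t\ge y)$ as the corresponding expectation, but the reflection-principle ratio is by far the shortest route; in either case this is entirely classical and the result is, as noted, available in the literature (e.g.\ \cite{borsal}).
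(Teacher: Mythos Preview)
Your argument is correct: the reflection-principle computation of the joint density of $(M_t,W_t)$ and the disintegration over $W_t=z$ give exactly the stated exponential, and the inequality $1-e^{-u}\le u$ finishes it. The paper does not actually prove this lemma; it simply cites page~63 of Borodin--Salminen \cite{borsal} for the identity, so your write-up is more self-contained than what appears there.
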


The second is an entropic repulsion estimate of a Brownian excursion. 
\begin{lemma}\label{onebridge}
Let $(X_t)_{0 \leq t \leq  1}$ be a standard Brownian bridge from $x_1$ to $x_2$, where $x_1 > 0$ and $x_2 > 0$.  Let $\delta \in (0, 1/2)$.  Then there exists a positive constant $C_7$, depending on $\delta$ but not on $x_1$ or $x_2$, such that for all $\eps > 0$ and all $t \in [\delta, 1 - \delta]$, we have $$\P(X_t < \eps, X_s > 0 \textup{ for all } s \in [0,1]) \leq C_7 \eps^3.$$
\end{lemma}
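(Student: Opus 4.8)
First note that when $\eps \geq 1$ the claimed bound is trivial with $C_7 = 1$, since $\eps^3 \geq 1$ bounds any probability, so we may assume $\eps \leq 1$ throughout. The plan is to condition on the value $y = X_t$. Since the event $\{X_s > 0 \text{ for all } s \in [0,1]\}$ forces $X_t = y > 0$, and since, given $X_t = y$, the restrictions of $X$ to $[0,t]$ and to $[t,1]$ are conditionally independent Brownian bridges (from $x_1$ to $y$ over time $t$, and from $y$ to $x_2$ over time $1-t$), we get
\begin{equation*}
\P(X_t < \eps,\, X_s > 0 \ \forall s) = \int_0^{\eps} p_t(y)\, q_t(y)\, r_t(y)\, dy,
\end{equation*}
where $p_t(y) = (2\pi t(1-t))^{-1/2}\exp(-(y-\mu_t)^2/(2t(1-t)))$ is the density of $X_t$, with mean $\mu_t = (1-t)x_1 + t x_2$ and variance $t(1-t)$, and $q_t(y)$ (resp.\ $r_t(y)$) is the probability that the left (resp.\ right) bridge stays positive. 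By applying Lemma~\ref{maxbridge} to the negative of a shifted bridge — a Brownian bridge from $x_1 > 0$ to $y > 0$ over time $t$ stays positive with probability $1 - e^{-2x_1 y/t} \leq 2x_1 y/t$, and similarly on $[t,1]$ — we obtain $q_t(y) \leq 2x_1 y/t$ and $r_t(y) \leq 2x_2 y/(1-t)$. Hence
\begin{equation*}
\P(X_t < \eps,\, X_s > 0 \ \forall s) \leq \frac{4 x_1 x_2}{t(1-t)} \int_0^{\eps} y^2\, p_t(y)\, dy.
\end{equation*}

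The remaining task is to bound the right-hand side by $C_7 \eps^3$ uniformly in $x_1, x_2 > 0$; this uniformity — controlling the potentially large prefactor $x_1 x_2$ — is the only real point of the argument. The key input is the elementary inequality $\mu_t^2 \geq 4 t(1-t) x_1 x_2$, which is AM--GM applied to the two terms defining $\mu_t$. Also, throughout we use $t(1-t) \geq \delta(1-\delta)$ for $t \in [\delta, 1-\delta]$, which is where all $\delta$-dependence enters. I would split into two cases according to the size of $\mu_t$.

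\emph{Case 1: $\mu_t \geq 2\eps$.} Then for $y \in [0,\eps]$ we have $(y - \mu_t)^2 \geq (\mu_t/2)^2$, so $p_t(y) \leq (2\pi t(1-t))^{-1/2} \exp(-\mu_t^2/(8 t(1-t))) \leq (2\pi \delta(1-\delta))^{-1/2} e^{-x_1 x_2/2}$ using the AM--GM bound. Bounding $y^2 \leq \eps^2$ and $\int_0^\eps dy = \eps$ gives a bound of the form $C(\delta)\, x_1 x_2\, e^{-x_1 x_2/2}\, \eps^3$, and since $\sup_{u \geq 0} u e^{-u/2} < \infty$ this is $\leq C_7 \eps^3$. \emph{Case 2: $\mu_t < 2\eps$.} Then $(1-t) x_1 \leq \mu_t < 2\eps$ and $t x_2 \leq \mu_t < 2\eps$, so $x_1, x_2 < 2\eps/\delta$ and hence $x_1 x_2 < 4\eps^2/\delta^2$. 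Using the crude bounds $p_t(y) \leq (2\pi\delta(1-\delta))^{-1/2}$, $y^2 \leq \eps^2$, and $\int_0^\eps dy = \eps$, the right-hand side is at most $C(\delta)\, \eps^5 \leq C(\delta)\, \eps^3$ since $\eps \leq 1$. Taking $C_7$ to be the maximum of the two constants (and $1$) completes the proof. I expect the only care needed is in verifying the reflection-principle bound via Lemma~\ref{maxbridge} with the correct shift, and in the bookkeeping of the case split; there is no serious obstacle.
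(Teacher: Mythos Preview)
Your proof is correct, but it takes a genuinely different route from the paper's. The paper bounds the joint probability by the conditional probability $\P(X_t < \eps \mid X_s > 0 \ \forall s)$, identifies the conditioned bridge as a three-dimensional Bessel bridge from $x_1$ to $x_2$, invokes a monotonicity result (from Pitman's lecture notes) to reduce to the Bessel bridge from $0$ to $0$, i.e.\ the Brownian excursion, and then integrates the known density of $B^{ex}_t$ to get the $O(\eps^3)$ bound. Your argument instead works directly with the joint probability: you condition on $X_t = y$, use the Markov property and the reflection-principle formula (Lemma~\ref{maxbridge}) to bound each half-bridge's positivity probability by $O(y)$, and then handle the $x_1 x_2$ prefactor via AM--GM and a case split on the size of $\mu_t$.

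Your approach is more elementary and fully self-contained within the paper (it uses only Lemma~\ref{maxbridge}, which is already stated), avoiding the external inputs of the Bessel-bridge identification and the monotonicity of Bessel bridges in their endpoints. The paper's approach is shorter to write once those inputs are granted, and the monotonicity reduction to $x_1 = x_2 = 0$ makes the uniformity in $x_1, x_2$ immediate, whereas you have to work a little for it (the AM--GM trick). Both are clean; yours is arguably the more direct computation.
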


\begin{proof}
We have
\begin{equation}\label{condbridge}
\P(X_t < \eps, X_s > 0 \textup{ for all } s \in [0,1]) \leq \P(X_t < \eps \,|\, X_s > 0 \textup{ for all } s \in [0,1]).
\end{equation}
The Brownian bridge from $x_1$ to $x_2$ conditioned to stay positive is a three-dimensional Bessel bridge from $x_1$ to $x_2$.  The three-dimensional Bessel bridge from $0$ to $0$ is the same as a standard Brownian excursion $(B^{ex}_t)_{0 \leq t \leq 1}$.  Therefore using the monotonicity coming, for example, from the construction of the Bessel bridge in (0.22) of \cite{pitcsp}, we get that the probability on the right-hand side of \eqref{condbridge} is bounded above by $\P(B^{ex}_t < \eps)$.  It is well-known (see, for example, page 76 of \cite{itomckean}) that $B^{ex}_t$ has probability density function $$f_t(x) = \frac{2x^2}{\sqrt{2 \pi t^3(1-t^3)}} e^{-x^2/(2t(1-t))}, \qquad x > 0.$$
Integrating this formula over $x$ from $0$ to $\eps$, we get that $\P(B^{ex}_t > \eps) = O(\eps^3)$, which completes the proof.
\end{proof}

We now consider independent Brownian bridges conditioned not to intersect, which we call nonintersecting Brownian bridges.
The next lemma records various near touch events for nonintersecting Brownian bridge ensembles.

\begin{lemma}
Let $(X_t)_{0 \leq t \leq 1}$, $(Y_t)_{0 \leq t \leq 1}$, and $(Z_t)_{0 \leq t \leq 1}$ be nonintersecting standard Brownian bridges going from $x_1$ to $x_2$, from $y_1$ to $y_2$, and from $z_1$ to $z_2$ respectively. Suppose $x_1 > y_1 > z_1$ and $x_2 > y_2 > z_2$.  Let $\delta \in (0, 1/2)$.  Then there exist positive constants $C_8$ and $C_9$, depending only on $\delta$, such that for all $\eps > 0$ and all $t \in [\delta, 1 - \delta]$, we have
\begin{equation}\label{twoclose}
\P(X_t - Y_t < \eps) \leq C_8 \eps^{3}
\end{equation}
and
\begin{equation}\label{threeclose}
\P(X_t - Y_t < \eps, Y_t - Z_t < \eps) \leq C_9 \eps^8.
\end{equation}
\end{lemma}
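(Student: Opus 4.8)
The plan is to prove the two bounds \eqref{twoclose} and \eqref{threeclose} by combining the Brownian Gibbs / resampling structure of nonintersecting bridges with the single-bridge entropic repulsion estimate in Lemma \ref{onebridge}. For \eqref{twoclose}, I would first use monotonicity to reduce to the case of just two nonintersecting bridges $X$ and $Y$ (dropping $Z$ only decreases the gap $X_t-Y_t$ stochastically, so keeping $Z$ cannot increase the probability — more precisely, conditioning on the third bridge pushes $X$ and $Y$ apart). Then, conditionally on the bridge $Y$, the law of $X$ is that of a Brownian bridge from $x_1$ to $x_2$ conditioned to stay above the continuous path $Y$; writing $U_s = X_s - Y_s$, this is a ``bridge above a moving barrier'', which by a Girsanov/absolute-continuity comparison dominates (in the sense needed) a Brownian bridge conditioned to stay positive. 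Applying Lemma \ref{onebridge} to $U$ at time $t\in[\delta,1-\delta]$ gives $\P(U_t < \eps \mid Y) \leq C_7\eps^3$ uniformly, and taking expectations over $Y$ yields \eqref{twoclose}. The one technical point to handle carefully is that Lemma \ref{onebridge} is stated for a genuine Brownian bridge with fixed positive endpoints, whereas $U$ has endpoints $x_1-y_1>0$ and $x_2-y_2>0$ but a random, non-straight barrier; this is resolved by first conditioning on $Y$ and then noting that $X$ given $\{X_s > Y_s \ \forall s\}$ is absolutely continuous with respect to the three-dimensional Bessel-type bridge dominating picture used in the proof of Lemma \ref{onebridge}, exactly as in the monotonicity argument invoked there via \cite{pitcsp}.

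For \eqref{threeclose} the strategy is to iterate this idea, exploiting the conditional independence structure of the middle curve. I would condition on $X$ and $Z$; then, by the Brownian Gibbs property, $Y$ is a Brownian bridge from $y_1$ to $y_2$ conditioned to lie in the random corridor between $Z$ (below) and $X$ (above). On the event in \eqref{threeclose} we need $Y_t$ to be within $\eps$ of both its ceiling $X_t$ and its floor $Z_t$, which in particular forces $X_t - Z_t < 2\eps$. So I would split:
\begin{align*}
\P(X_t - Y_t < \eps,\ Y_t - Z_t < \eps) &\le \P\big(X_t - Z_t < 2\eps,\ X_t - Y_t < \eps \mid \text{corridor}\big)\\
&\le \E\Big[\mathbbm{1}_{\{X_t - Z_t < 2\eps\}}\, \P\big(X_t - Y_t < \eps \mid X, Z\big)\Big].
\end{align*}
The inner conditional probability is $\le C_7\eps^3$ by the same single-bridge estimate applied to $X-Y$ in the corridor (again using that conditioning $Y$ to also stay above $Z$ only makes $X_t-Y_t$ larger, so the one-sided estimate still applies). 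Then $\E[\mathbbm{1}_{\{X_t-Z_t<2\eps\}}]$ is the probability that two nonintersecting bridges (namely $X$ and $Z$, which remain nonintersecting after integrating out $Y$) come within $2\eps$ of each other at time $t$, which is $\le C_8 (2\eps)^3$ by \eqref{twoclose}. Multiplying gives $O(\eps^6)$.

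This last calculation only yields $\eps^6$, not the claimed $\eps^8$, so the main obstacle is extracting the extra two powers of $\eps$. The point is that near-touches of all three curves at a common location are far more costly than the product bound suggests, because forcing $Y_t\approx X_t\approx Z_t$ squeezes $Y$ into an $O(\eps)$-wide corridor over a macroscopic time window, not just at the single time $t$. To capture this I would, instead of conditioning only at time $t$, use the Gibbs property on a fixed interval $[t-\delta', t+\delta']\subseteq[\delta,1-\delta]$ and argue that on the target event the three curves must all stay within $O(\eps)$ of each other throughout a sub-interval of positive length with probability gaining a further $\eps$-power per curve: a Brownian bridge conditioned to stay positive and be $\le\eps$ at an interior time is $O(\eps^3)$-unlikely \emph{and} is then $O(\eps^2)$-localized (it must dip from $O(1)$ down to $O(\eps)$), so the combined cost of the middle curve being sandwiched, together with $X-Z$ being small, compounds to $\eps^8$. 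Concretely: apply \eqref{twoclose}-type reasoning to get $X_t-Z_t<2\eps$ costing $\eps^3$; then conditionally the middle curve $Y$ must be within $\eps$ of $X$ at time $t$, which — given that $Y$ has endpoints $y_1-x_1, y_2-x_2$ of order $1$ below $X$ — is a three-dimensional-Bessel-bridge-type event of probability $O(\eps^3)$; and finally there is an additional $O(\eps^2)$ factor from the requirement that $Y$ simultaneously stay above $Z$, i.e. that $Z$ also rises to within $\eps$ of $X$ on a neighborhood of $t$, which is another entropic-repulsion cost. Bookkeeping these three independent-ish costs via nested conditioning (condition on $X$; resample $Z$ above nothing relevant, getting the first $\eps^3$-ish factor; resample $Y$ in the thin corridor, getting $\eps^3$; and absorb the remaining $\eps^2$ from the overlap of the two near-touch windows) is the delicate part, and is where I would spend the bulk of the effort; the rest is routine Gaussian estimation with constants depending only on $\delta$.
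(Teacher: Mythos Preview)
Your approach is genuinely different from the paper's, and it runs into a real obstacle at \eqref{threeclose}.

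The paper's proof is short and direct. It first invokes a monotonicity result of Aggarwal--Huang (\cite[Proposition 5.1]{ah23}) stating that the gap processes of nonintersecting Brownian bridges are monotone in the \emph{endpoint gaps}. This reduces both estimates to the extremal case $x_1=y_1=z_1=x_2=y_2=z_2=0$, i.e.\ the Brownian $3$-watermelon, where the Karlin--McGregor formula gives the joint density of $(X_t,Y_t,Z_t)$ explicitly as a Gaussian times the squared Vandermonde $(x-y)^2(x-z)^2(y-z)^2$. Both bounds then follow by counting powers of $\eps$: on $\{x-y<\eps,\ y-z<\eps\}$ the Vandermonde contributes $\eps^2\cdot\eps^2\cdot(2\eps)^2$ and the domain of integration another $\eps^2$, giving $\eps^8$; on $\{x-y<\eps\}$ one gets $\eps^2$ from $(x-y)^2$ and $\eps$ from the strip, giving $\eps^3$.

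Your iterative conditioning strategy has two problems. First, the monotonicity you invoke for \eqref{twoclose} --- that removing the bottom curve $Z$ stochastically \emph{decreases} the gap $X_t-Y_t$ --- is not the monotonicity in \cite{ah23} and is not obviously true: conditioning on $Y>Z$ tends to push $Y$ \emph{up} toward $X$, which could just as well shrink the gap. Second, and more seriously, your argument for \eqref{threeclose} genuinely stalls at $\eps^6$. The missing $\eps^2$ is precisely the factor $(x-z)^2$ in the Vandermonde: when all three curves are $\eps$-close at time $t$, there is an additional repulsion cost between the \emph{outer} pair $X$ and $Z$, on top of the two adjacent-pair costs. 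Your nested-conditioning scheme (condition on $X,Z$, bound the $Y$-corridor event, then bound $\{X_t-Z_t<2\eps\}$) treats $\{X_t-Z_t<2\eps\}$ as a two-curve event and extracts only $\eps^3$ from it, whereas the correct accounting is $\eps^3$ from $(x-y)^2\,dy$, $\eps^3$ from $(y-z)^2\,dz$, \emph{and} $\eps^2$ from $(x-z)^2$. The entropic-repulsion heuristic you sketch at the end (``the corridor is thin on a macroscopic window'') is aiming at the right phenomenon, but making it rigorous essentially forces you to rediscover the Vandermonde structure; the explicit-density route is both shorter and what the paper does.
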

While we will not require and hence not pursue lower bounds, the above bounds are in fact sharp.  

\begin{proof}
It was proved in Proposition 5.1 of \cite{ah23} that the gap processes in an ensemble of nonintersecting Brownian bridges are monotone in the endpoint gaps.  As a consequence, the probabilities in \eqref{twoclose} and \eqref{threeclose} are largest when $x_1 = y_1 = z_1 = x_2 = y_2 = z_2 = 0$, so it suffices to prove the bound in this case.  Then the process $(X_t, Y_t, Z_t)_{0 \leq t \leq 1}$ is known as the Brownian 3-watermelon, and the joint density of $(X_t, Y_t, Z_t)$ can be found using the formula of Karlin and McGregor \cite{km59}.  As noted, for example, in Remark 1.3 of \cite{LDA24}, the joint density is given by
$$f_t(x,y,z) = c_t (x-y)^2 (x-z)^2 (y-z)^2 \exp \bigg(- \frac{x^2 + y^2 + z^2}{2t(1-t)} \bigg) {\bf 1}_{\{x > y > z\}},$$
where $c_t$ is a constant depending on $t$.  Using that $x^2 + y^2 + z^2 \geq x^2$ and that, when $0 < x-y < \eps$ and $0 < y-z < \eps$, we have $(x-y)^2 (x-z)^2 (y-z)^2 \leq 4 \eps^6$, we get
\begin{align*}
\P(Y_t - X_t < \eps, Z_t - Y_t < \eps)
&= \int_{y - \eps}^y \int_{x - \eps}^x \int_{-\infty}^{\infty} f_t(x,y,z) \: dx \: dy \: dz \\
&\leq 4c_t \eps^6 \int_{y - \eps}^y \int_{x - \eps}^x \int_{-\infty}^{\infty} \exp \bigg(- \frac{x^2}{2t(1-t)} \bigg) \: dx \: dy \: dz \\
&= 4 c_t \eps^8 \int_{-\infty}^{\infty} \exp \bigg(- \frac{x^2}{2t(1-t)} \bigg) \: dx.
\end{align*}
Because the integral is finite, this expression is bounded above by $C_9 \eps^8$, proving \eqref{threeclose}.
Likewise,
\begin{align*}
\P(Y_t - X_t < \eps) &= \int_{-\infty}^y \int_{x - \eps}^x \int_{-\infty}^{\infty} c_t (x-y)^2 (x-z)^2 (y-z)^2 \exp \bigg(- \frac{x^2 + y^2 + z^2}{2t(1-t)} \bigg) \: dx \: dy \: dz.
\end{align*}
This is easily seen to be $O(\eps^3)$ because $(x - y)^2 \leq \eps^2$ and the middle integral is over an interval whose length is $\eps$.  The result \eqref{twoclose} follows.
\end{proof}

The next lemma is a H\"older regularity estimate for the top three lines of the parabolic Airy line ensemble, which follows from Theorem 1.5 of \cite{dv21a}. Note that the regularity of the top line was already quoted in Theorem \ref{modcont}.

\begin{lemma}\label{AiryHolder}
Let $(W(t), t \in \R)$ be a two-sided Brownian motion with variance parameter 2, and let $\mathcal{P}_1$, $\mathcal{P}_2$, and $\mathcal{P}_3$ denote the top three lines of the parabolic Airy line ensemble.  Let $T = \lfloor \log^{1/2}(1/\eps) + 2 \rfloor$.  There exists a constant $C_{10} > 0$ such that with probability at least $1 - \eps^{100}$, for all $s, t \in [-T, T]$ with $\eps^{100} \leq |t - s| \leq 1$, we have
$$\max\{|W(t) - W(s)|, |\mathcal{P}_1(t) - \mathcal{P}_1(s)|, |\mathcal{P}_2(t) - \mathcal{P}_2(s)|, |\mathcal{P}_3(t) - \mathcal{P}_3(s)|\} \leq C_{10} \log(1/\eps) \sqrt{|t - s|}.$$
\end{lemma}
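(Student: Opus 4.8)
The statement is essentially a union bound over dyadic scales combined with a H\"older continuity estimate with Gaussian-type tails for each of the four processes involved. First, for the Brownian motion $W$, the classical L\'evy modulus of continuity (or a simple chaining argument using Gaussian tail bounds for increments over intervals of length $2^{-j}$) gives that, with probability at least $1 - \tfrac14 \eps^{100}$, one has $|W(t) - W(s)| \le C \sqrt{|t-s| \log(1/|t-s|)}$ uniformly over $s,t \in [-T,T]$ with $|t-s| \le 1$; since $|t-s| \ge \eps^{100}$ forces $\log(1/|t-s|) \le 100 \log(1/\eps)$, and since $T = O(\log^{1/2}(1/\eps))$ only contributes polylogarithmically to the number of dyadic blocks, we can absorb everything into the bound $C_{10} \log(1/\eps)\sqrt{|t-s|}$. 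The factor $\eps^{100}$ is easily achieved because the Gaussian tail at level $C\log(1/\eps)\sqrt{2^{-j}}$ over an increment of standard deviation $\sqrt{2^{-j}}$ decays like $\exp(-c \log^2(1/\eps))$, which beats any fixed power of $\eps$, and there are only $O(\log^{O(1)}(1/\eps))$ pairs of dyadic blocks to union over.

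For the three Airy lines $\mathcal{P}_1, \mathcal{P}_2, \mathcal{P}_3$, the plan is to invoke Theorem 1.5 of \cite{dv21a}, which provides precisely such a modulus-of-continuity statement with Gaussian-type (or better) upper tails for the parabolic Airy line ensemble on compact intervals; the parabolic correction $x \mapsto x^2$ is Lipschitz on $[-T,T]$ with Lipschitz constant $2T = O(\log^{1/2}(1/\eps))$, so it only changes the constant $C_{10}$. Concretely, one applies that theorem on the interval $[-T,T]$ for each of the first three lines, chooses the tail level so that the failure probability is at most $\tfrac14 \eps^{100}$ for each line (again possible because the tails decay faster than any power of $\eps$), and then takes a union bound over the four events (one for $W$, three for the $\mathcal{P}_i$).

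The only mild subtlety — and the step I would be most careful about — is the interplay between the lower cutoff $\eps^{100} \le |t-s|$ and the upper cutoff $|t-s| \le 1$, together with the growing domain $[-T,T]$. The domain growth is harmless: covering $[-T,T]$ by $O(T) = O(\log^{1/2}(1/\eps))$ unit intervals and applying the compact-interval estimate on each, then patching, costs only a polylogarithmic factor in the number of events, which is swamped by the $\eps^{100}$ safety margin. The lower cutoff $|t-s| \ge \eps^{100}$ is exactly what converts the naturally appearing $\sqrt{\log(1/|t-s|)}$ (or $\log\log$-type) factor in the modulus of continuity into the clean bound $\log(1/\eps)$: since $\log(1/|t-s|) \le 100\log(1/\eps)$, the true modulus $C\sqrt{|t-s|\log(1/|t-s|)} \le C\sqrt{100}\,\sqrt{|t-s|}\,\sqrt{\log(1/\eps)} \le C_{10}\log(1/\eps)\sqrt{|t-s|}$. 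Thus no new ideas are needed beyond \cite{dv21a} and a routine dyadic union bound; I would organize the write-up as: (i) quote \cite{dv21a} for $\mathcal{P}_1,\mathcal{P}_2,\mathcal{P}_3$ on $[-T,T]$ after handling the parabolic shift and domain growth, (ii) a one-line chaining argument for $W$, (iii) union bound and cutoff bookkeeping.
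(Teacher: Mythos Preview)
Your proposal is correct and follows essentially the same approach as the paper: invoke Theorem~1.5 of \cite{dv21a} for the $\mathcal{P}_k$ on unit intervals, use stationarity of $\mathcal{P}_k(t)+t^2$ to cover $[-T,T]$ by $O(T)$ overlapping unit intervals with a union bound, absorb the parabolic shift (which is Lipschitz on $[-T,T]$), and use the lower cutoff $|t-s|\ge\eps^{100}$ to convert the $\log^{1/2}(2/|t-s|)$ factor into $\log^{1/2}(1/\eps)$; the Brownian motion case is handled identically or by classical chaining. The paper's write-up differs only cosmetically, choosing the tail parameter $m=C'\log^{1/2}(1/\eps)$ for the random constant $R_k$ in \cite{dv21a} and noting explicitly that the resulting bound on $|(\mathcal{P}_k(s)+s^2)-(\mathcal{P}_k(t)+t^2)|$ translates back to one on $|\mathcal{P}_k(s)-\mathcal{P}_k(t)|$ after adding $|s^2-t^2|\le 2T|t-s|$.
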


\begin{proof}
Theorem 1.5 of \cite{dv21a} states that there are positive constants $c$ and $d$ such that for all positive integers $k$ and all $s, t \in [0,1]$, we have
$$|\mathcal{P}_k(s) - \mathcal{P}_k(t)| \leq R_k |t - s|^{1/2} \log^{1/2}\bigg( \frac{2}{|t-s|} \bigg),$$
where $R_k$ is a random constant satisfying $\P(R_k > m) \leq e^{ck - dm^2}$ for all $m > 0$.  
Therefore, by choosing $m = C' \log^{1/2}(1/\eps)$ for a sufficiently large constant $C'$, we see that there is a positive constant $C$ such that, with probability at least $1 - \eps^{101}$, for $k \in \{1, 2, 3\}$ and all $s,t \in [0,1]$ with $\eps^{100} \leq |t - s| \leq 1$,
we have
\begin{equation}\label{Pkdiff}
|\mathcal{P}_k(s) - \mathcal{P}_k(t)| \leq C |t - s|^{1/2} \log(1/\eps).
\end{equation}
Standard modulus of continuity estimates imply that this result also holds with Brownian motion $W$ in place of $\mathcal{P}_k$ (this is already noted in \cite{dv21a}).

As observed in \cite{dv21a}, we can extend this result beyond the interval $[0,1]$ by using the stationarity of $(\mathcal{P}_k(t) + t^2, t \in \R)$.  For $s, t \in [0,1]$, we have $|s^2 - t^2| \leq 2|t - s| \leq 2|t - s|^{1/2}$.  Therefore, the bound \eqref{Pkdiff} can be rewritten as
\begin{equation}\label{diff2}
|(\mathcal{P}_k(s) - s^2) - (\mathcal{P}_k(t) - t^2)| \leq C |t - s|^{1/2} \log(1/\eps).
\end{equation}
By considering overlapping time intervals of length 1 started at the half integers, we get that \eqref{diff2} holds with probability at least $1 - \eps^{100}$ for $k \in \{1, 2, 3\}$ and all $s, t \in [-T, T]$ with $\eps^{100} \leq |t - s| \leq 1$.  Now for $s, t \in [-T, T]$ with $\eps^{100} \leq |t - s| \leq 1$,
\begin{align*}
|\mathcal{P}_k(s) - \mathcal{P}_k(t)| &= |(\mathcal{P}_k(s) + s^2) - (\mathcal{P}_k(t) + t^2) + (t^2 - s^2)| \\
&\leq |(\mathcal{P}_k(s) + s^2) - (\mathcal{P}_k(t) + t^2)| + 2T |t - s| \\
&\leq C \log(1/\eps) \sqrt{|t - s|}.
\end{align*}
This inequality implies the result.
\end{proof}

Given the above inputs, we are now in a position to dive into the proof of Proposition \ref{Airyprop}.

\begin{proof}[Proof of Proposition \ref{Airyprop}]
We first observe that Lemma \ref{AiryHolder} allows us to restrict our attention to a finite mesh for the points $a$, $b$, and $c$.
Let $m = 2^h$, where $$h = \min\{k \in \Z: 2^{-k} \leq \eps^2 \log^{-3}(1/\eps)\}.$$  Note that $\frac{1}{2} \eps^2 \log^{-3}(1/\eps) < 1/m \leq \eps^2 \log^{-3}(1/\eps)$.
Let $L = \{i/m: -m(T-1) \leq i \leq m(T - 1)\}$ be the set consisting of the integer multiples of $1/m$ that are in the interval $[-(T-1), T-1]$.

Given $u, v, w \in L$ with $u < v < w$, we define the event
$$A_{u,v,w} = \{\mathcal{P}_1(u) - \mathcal{P}_2(u) < 2 \eps, \: \mathcal{P}_1(w) - \mathcal{P}_2(w) < 2 \eps, \: \mathcal{P}_2(v) - \mathcal{P}_3(v) < 2 \eps\}.$$
Also, let $$m_1^* = \max_{-T \leq t \leq u} (W(t) + \mathcal{P}_1(t)), \quad m_2^* = \max_{u \leq t \leq w} (W(t) + \mathcal{P}_1(t)), \quad m_3^* = \max_{w \leq t \leq T} (W(t) + \mathcal{P}_1(t)).$$  Then define the event $$B_{u,v,w} = \{|m^*_1 - m^*_2| < 2 \eps, |m^*_2 - m^*_3| < 2 \eps\}.$$
Recall the event $A$ defined in the statement of the proposition, and let $G$ be the event in Lemma~\ref{AiryHolder}.  It follows from Lemma \ref{AiryHolder} that if $A \cap G$ occurs, then we can find points $u$, $v$, and $w$ in $L$ that are sufficiently close to $a$, $b$, and $c$ respectively that as long as $\eps$ is small enough, the event $A_{u,v,w} \cap B_{u,v,w}$ must occur.  Therefore,
\begin{equation}\label{mainPA}
\P(A) \leq \P(G^c) + \sum_{\substack{u, v, w \in L \\ u < v < w}} \P(A_{u,v,w} \cap B_{u,v,w} \cap G).
\end{equation}
We will therefore bound $\P(A_{u,v,w} \cap B_{u,v,w} \cap G)$.  We will first bound $\P(A_{u,v,w})$ and then essentially bound  $\P(B_{u,v,w}\cap G\mid A_{u,v,w})$. The former task involves considering three cases involving the separation of $u,v,$ and $w$.

\medskip
\noindent {\bf Case 1}:  Suppose $v - u > 2$ and $w - v > 2$.  Let $a_1 = u - 1/2$, $a_2 = v - 1/2$, and $a_3 = w - 1/2$.  We can then apply Theorem \ref{duncan12} with $t = 1$, $k = \ell = 3$, and ${\bf a} = (a_1, a_2, a_3)$.  
Letting $\mathfrak{L}^{\bf a}$ be the ensemble defined in Theorem \ref{duncan12}, we get
\begin{align*}
&\P(A_{u,v,w}) \\
&\quad = \P(\mathfrak{L}^{\bf a}_1(u) - \mathfrak{L}^{\bf a}_2(u) < 2 \eps, \mathfrak{L}^{\bf a}_1(w) - \mathfrak{L}^{\bf a}_2(w) < 2 \eps, \mathfrak{L}^{\bf a}_2(v) - \mathfrak{L}^{\bf a}_3(v) < 2 \eps \,|\, \mathfrak{L}^{\bf a}_1 > \mathfrak{L}^{\bf a}_2 > \dots) \\
&\quad \leq e^{c_{3,3}^*}  \P(\mathfrak{L}^{\bf a}_1(u) - \mathfrak{L}^{\bf a}_2(u) < 2 \eps, \mathfrak{L}^{\bf a}_1(w) - \mathfrak{L}^{\bf a}_2(w) < 2 \eps, \mathfrak{L}^{\bf a}_2(v) - \mathfrak{L}^{\bf a}_3(v) < 2 \eps , \mathfrak{L}^{\bf a}_1 > \mathfrak{L}^{\bf a}_2 > \dots).
\end{align*}
Let $\mathcal{G}$ be the $\sigma$-field generated by the values of $\mathfrak{L}^{\bf a}$ outside $U({\bf a})$.
It follows from the Gibbs property stated in Theorem \ref{duncan12} with $m = 3$ that conditional on $\mathcal{G}$, the processes $\mathfrak{L}^{\bf a}_1$, $\mathfrak{L}^{\bf a}_2$, $\mathfrak{L}^{\bf a}_3$ in the intervals $(a_j, a_j + 1)$ for $j = 1, 2, 3$ are independent Brownian bridges.
Let $D_1$ be the event that $\mathfrak{L}^{\bf a}_1(s) > \mathfrak{L}^{\bf a}_2(s)$ for all $s \in (a_1, a_1 + 1)$.  Let $D_2$ be the event that $\mathfrak{L}^{\bf a}_2(s) > \mathfrak{L}^{\bf a}_3(s)$ for all $s \in (a_2, a_2 + 1)$, and let $D_3$ be the event that $\mathfrak{L}^{\bf a}_1(s) > \mathfrak{L}^{\bf a}_2(s)$ for all $s \in (a_3, a_3 + 1)$.  Then
{\begin{align*}
&\P(\mathfrak{L}^{\bf a}_1(u) - \mathfrak{L}^{\bf a}_2(u) < 2 \eps, \mathfrak{L}^{\bf a}_1(w) - \mathfrak{L}^{\bf a}_2(w) < 2 \eps, \mathfrak{L}^{\bf a}_2(v) - \mathfrak{L}^{\bf a}_3(v) < 2 \eps , \mathfrak{L}^{\bf a}_1 > \mathfrak{L}^{\bf a}_2 > \dots | \mathcal{G}) \\
&\quad \leq \P(\mathfrak{L}^{\bf a}_1(u) - \mathfrak{L}^{\bf a}_2(u) < 2 \eps, D_1 | \mathcal{G}) \P(\mathfrak{L}^{\bf a}_2(v) - \mathfrak{L}^{\bf a}_3(v) < 2 \eps, D_2 | \mathcal{G}) \P( \mathfrak{L}^{\bf a}_1(w) - \mathfrak{L}^{\bf a}_2(w) < 2 \eps, D_3 | \mathcal{G}).
\end{align*}}
Because the difference between two Brownian bridges is a Brownian bridge with twice the variance parameter, we can apply Lemma \ref{onebridge} with $\mathfrak{L}^{\bf a}_1 - \mathfrak{L}^{\bf a}_2$ or $\mathfrak{L}^{\bf a}_2 - \mathfrak{L}^{\bf a}_3$ in place of $X$ to get that each of the three probabilities on the right-hand side is $O(\eps^3)$.  
It follows that
\begin{equation}\label{case1}
\P(A_{u,v,w}) = O(\eps^9).
\end{equation}

\medskip
\noindent {\bf Case 2}:  Suppose either $v - u > 4$ and $w - v \leq 2$, or $v - u \leq 2$ and $w - v > 4$.  Because these two possibilities can be handled in the same way, it is enough to consider $v - u > 4$ and $w - v \leq 2$.  This time, instead of considering three intervals, we will consider the two intervals $(u - 5/2, u + 1/2)$ and $(v - 1/2, v + 5/2)$.  Note that $v + 5/2 \geq w + 1/2$ and $(v - 1/2) - (u + 1/2) > 3$.  Therefore, we can apply Theorem \ref{duncan12} with $\ell = 2$, $a_1 = u - 5/2$, $a_2 = v - 1/2$, and $t = 3$.  As in the previous case, let $\mathfrak{L}^{\bf a}$ be the ensemble defined in Theorem \ref{duncan12}, and let $\mathcal{G}$ be the $\sigma$-field generated by the values of $\mathfrak{L}^{\bf a}$ outside $U({\bf a})$. Similarly, let $D_1$ be the event that $\mathfrak{L}_1^{\bf a}(r) > \mathfrak{L}_2^{\bf a}(r)$ for all $r \in (a_1, a_1 + 3)$, and let $D_2$ be the event that $\mathfrak{L}_1^{\bf a}(r) > \mathfrak{L}_2^{\bf a}(r) > \mathfrak{L}^{\bf a}_3(r)$ for all $r \in (a_2, a_2 + 3)$.  Also, let $G_2$ be the event that $|\mathfrak{L}^{\bf a}_i(t) - \mathfrak{L}_i^{\bf a}(s)| \leq C_{10} \log(1/\eps) \sqrt{|t - s|}$ whenever $i \in \{1, 2, 3\}$ and $s,t \in [a_2, a_2 + 3]$ with $\eps^{100} \leq |t - s| \leq 1$.  Reasoning as in Case 1, we can apply Theorem \ref{duncan12} to get
\begin{align} \label{AuvwG}
&\P(A_{u,v,w} \cap G) \nonumber \\
&\: \leq C \P(\mathfrak{L}^{\bf a}_1(u) - \mathfrak{L}^{\bf a}_2(u) < 2 \eps, \mathfrak{L}^{\bf a}_1(w) - \mathfrak{L}^{\bf a}_2(w) < 2 \eps, \mathfrak{L}^{\bf a}_2(v) - \mathfrak{L}^{\bf a}_3(v) < 2 \eps, \mathfrak{L}^{\bf a}_1 > \mathfrak{L}^{\bf a}_2 > \dots, G_2).
\end{align}
Also,
\begin{align} \label{case2cond}
&\P(\mathfrak{L}^{\bf a}_1(u) - \mathfrak{L}^{\bf a}_2(u) < 2 \eps, \mathfrak{L}^{\bf a}_1(w) - \mathfrak{L}^{\bf a}_2(w) < 2 \eps, \mathfrak{L}^{\bf a}_2(v) - \mathfrak{L}^{\bf a}_3(v) < 2 \eps, \mathfrak{L}^{\bf a}_1 > \mathfrak{L}^{\bf a}_2 > \dots, G_2|\mathcal{G}) \nonumber \\
&\: \: \leq \P(\mathfrak{L}^{\bf a}_1(u) - \mathfrak{L}^{\bf a}_2(u) < 2 \eps, D_1|\mathcal{G}) \P(\mathfrak{L}^{\bf a}_1(w) - \mathfrak{L}^{\bf a}_2(w) < 2 \eps, \mathfrak{L}^{\bf a}_2(v) - \mathfrak{L}^{\bf a}_3(v) < 2 \eps, D_2 \cap G_2|\mathcal{G}).
\end{align}
Applying Lemma \ref{onebridge} and following the argument used for Case 1, we get
\begin{equation}\label{firstprob}
\P(\mathfrak{L}^{\bf a}_1(u) - \mathfrak{L}^{\bf a}_2(u) < 2 \eps, D_1|\mathcal{G}) = O(\eps^3).
\end{equation}

It remains to bound $\P(\mathfrak{L}^{\bf a}_1(w) - \mathfrak{L}^{\bf a}_2(w) < 2 \eps, \mathfrak{L}^{\bf a}_2(v) - \mathfrak{L}^{\bf a}_3(v) < 2 \eps, D_2 \cap G_2|\mathcal{G})$.  It follows from Theorem \ref{duncan12} that conditional on $\mathcal{G}$, the processes $\mathfrak{L}^{\bf a}_1$, $\mathfrak{L}^{\bf a}_2$, and $\mathfrak{L}^{\bf a}_3$ over the interval $[a_2, a_2 + 3]$ are independent Brownian bridges.  Note that $D_2$ is the event that these bridges do not intersect.  Therefore, if we let $X$, $Y$, and $Z$ be nonintersecting independent Brownian bridges on the time interval $[a_2, a_2 + 3]$ with the same endpoints as $\mathfrak{L}^{\bf a}_1$, $\mathfrak{L}^{\bf a}_2$, and $\mathfrak{L}^{\bf a}_3$, and if $G_2^*$ is defined in the same way as $G_2$ but with $X$, $Y$, and $Z$ in place of $\mathfrak{L}^{\bf a}_1$, $\mathfrak{L}^{\bf a}_2$, and $\mathfrak{L}^{\bf a}_3$, then
\begin{align}\label{LXYZbound}
&\P(\mathfrak{L}^{\bf a}_1(w) - \mathfrak{L}^{\bf a}_2(w) < 2 \eps, \mathfrak{L}^{\bf a}_2(v) - \mathfrak{L}^{\bf a}_3(v) < 2 \eps, D_2 \cap G_2|\mathcal{G}) \nonumber \\
&\qquad \qquad \qquad \qquad \qquad \leq \P(X_w - Y_w < 2 \eps, Y_v - Z_v < 2 \eps, G_2^*).
\end{align}
We will therefore work with these nonintersecting Brownian bridges. The analysis proceeds by zooming in on an appropriate scale to probe the behaviors near $v$ and $w.$
Let $\ell$ be the nonnegative integer such that $2^{-\ell} < w - v \leq 2^{-(\ell - 1)}$.  Define the times $$s_1 = v - \frac{2^{-\ell}}{4}, \quad s_2 = \frac{v+w}{2}, \quad s_3 = w + \frac{2^{-\ell}}{4}.$$
Note that $a_2 < s_1 < v < s_2 < w < s_3 < a_2 + 3$.  Also, $s_1 - a_2 \geq 1/4$ and $(a_2 + 3) - s_3 \geq 1/4$.  Let $E_1$ be the event that $X_{s_1} - Y_{s_1} \leq 2^{-\ell/2} \log^2(1/\eps)$ and $Y_{s_1} - Z_{s_1} \leq 2^{-\ell/2} \log^2(1/\eps)$.  If the events $X_w - Y_w < 2 \eps$, $Y_v - Z_v < 2 \eps$, and $G_2^*$ all occur, then by the regularity imposed by the latter, $X_{s_1} - Y_{s_1}$ and $Y_{s_1} - Z_{s_1}$ are both bounded above by $2 \eps + 3 C_{10} 2^{-\ell/2} \log(1/\eps)$.  Because $2^{-(\ell - 1)} \geq \frac{1}{2} \eps^2 \log^{-3}(1/\eps)$, it follows that for sufficiently small $\eps$, the event $E_1$ must occur.
By \eqref{threeclose},
\begin{equation}\label{E1G}
\P(E_1|\mathcal{G}) = O(2^{-4 \ell} \log^{16}(1/\eps)).
\end{equation} 
Let $\mathcal{J}$ be the $\sigma$-field generated by $\mathcal{G}$ and by the values $X_{s_i}$, $Y_{s_i}$, and $Z_{s_i}$ for $i \in \{1, 2, 3\}$.  Note that $E_1 \in \mathcal{J}$.  
Let $E_2$ be the event that $X_w - Y_w < 2 \eps$ and $Y_v - Z_v < 2 \eps$.
We have $s_2 - s_1 = s_3 - s_2 = (w - v) + \frac{1}{2} 2^{-\ell} \in (\frac{3}{2} 2^{-\ell}, \frac{5}{2} 2^{-\ell}]$.  Therefore, we can use Brownian scaling to scale the time intervals $[s_1, s_2]$ and $[s_2, s_3]$ to $[0,1]$ and then apply \eqref{twoclose} twice, one for each interval, with $2 \eps (s_2 - s_1)^{-1/2}$ in place of $\eps$, to get
\begin{equation}\label{E2H}
\P(E_2|\mathcal{J}) = O(\eps^6 2^{3 \ell}).
\end{equation}
It follows from \eqref{E1G} and \eqref{E2H} that $\P(X_w - Y_w < 2 \eps, Y_v - Z_v < 2 \eps, G_2^*) = O(\eps^6 2^{-\ell} \log^{16}(1/\eps))$.
Because this bound does not depend on the endpoints of the Brownian bridges, combining this result with \eqref{LXYZbound} yields
\begin{equation}\label{secondprob}
\P(\mathfrak{L}^{\bf a}_1(w) - \mathfrak{L}^{\bf a}_2(w) < 2 \eps, \mathfrak{L}^{\bf a}_2(v) - \mathfrak{L}^{\bf a}_3(v) < 2 \eps, D_2 \cap G_2|\mathcal{G}) = O(\eps^6 2^{-\ell} \log^{16}(1/\eps)).
\end{equation}
Combining \eqref{AuvwG}, \eqref{case2cond}, \eqref{firstprob}, and \eqref{secondprob}, we get
\begin{equation}\label{case2}
\P(A_{u,v,w} \cap G) = O(\eps^9 2^{-\ell} \log^{16}(1/\eps)).
\end{equation}

\medskip
\noindent {\bf Case 3}:  Suppose $v - u \leq 4$ and $w - v \leq 4$.  We apply Theorem \ref{duncan12} with $\ell = 1$, $a_1 = u - 2$, and $t = 12$.  Let $\mathfrak{L}^{\bf a}$ be the ensemble defined in Theorem \ref{duncan12}.  Let $D$ be the event that $\mathfrak{L}_1^{\bf a}(r) > \mathfrak{L}_2^{\bf a}(r) > \mathfrak{L}^{\bf a}_3(r)$ for all $r \in [a_1, a_1 + 12]$.  Also, let $G_3$ be the event that $|\mathfrak{L}^{\bf a}_i(t) - \mathfrak{L}_i^{\bf a}(s)| \leq C_{10} \log(1/\eps) \sqrt{|t - s|}$ whenever $i \in \{1, 2, 3\}$ and $s,t \in (a_1, a_1 + 12)$ with $\eps^{100} \leq |t - s| \leq 1$.  Theorem \ref{duncan12} implies that
\begin{align*}
\P(A_{u,v,w} \cap G) \leq C \P(\mathfrak{L}^{\bf a}_1(u) - \mathfrak{L}^{\bf a}_2(u) < 2 \eps, \mathfrak{L}^{\bf a}_1(w) - \mathfrak{L}^{\bf a}_2(w) < 2 \eps, \mathfrak{L}^{\bf a}_2(v) - \mathfrak{L}^{\bf a}_3(v) < 2 \eps, D \cap G_3).
\end{align*}
It follows from Theorem \ref{duncan12} that conditional on $\mathcal{G}$, the processes $\mathfrak{L}^{\bf a}_1$, $\mathfrak{L}^{\bf a}_2$, and $\mathfrak{L}^{\bf a}_3$ over the interval $[a_1, a_1 + 12]$ are independent Brownian bridges. 
As in Case 2, we will consider nonintersecting Brownian bridges $X$, $Y$, and $Z$ on the interval $[a_1, a_1 + 12]$ with arbitrary endpoints.  Define $G_3^*$ the same way as $G_3$ but with $X$, $Y$, and $Z$ in place of $\mathfrak{L}^{\bf a}_1$, $\mathfrak{L}^{\bf a}_2$, and $\mathfrak{L}^{\bf a}_3$.  Then
\begin{equation}\label{main3bound}
\P(A_{u,v,w} \cap G) \leq C \P(X_u - Y_u < 2 \eps, X_w - Y_w < 2 \eps, Y_v - Z_v < 2 \eps, G_3^*).
\end{equation}
We again do a scale decomposition.
Let $k$ be the integer such that $2^{-k} < w - u \leq 2^{-(k-1)}$, and let $\ell$ be the integer such that $2^{-\ell} < v - u \leq 2^{-(\ell - 1)}$.  Note that $k \geq -1$ and $\ell \geq -1$.  We may assume without loss of generality that $v - u \leq \frac{1}{2}(w - u)$, which implies that $\ell > k$.  We now define the following times:
$$t_1=u-\frac{2^{-k}}{2}, \quad t_2=w- \frac{2^{-k}}{4}, \quad t_3=w+\frac{2^{-k}}{2}, \quad t_4=u- \frac{2^{-\ell}}{2}, \quad t_5 = \frac{u+v}{2}, \quad t_6=v+\frac{2^{-\ell}}{2}.$$
Note that $t_1 < t_4 < u < t_5 < v < t_6 < t_2 < w < t_3$.  Also, the points $t_1 < u < t_2 < w < t_3$ all fit within an interval of length at most $3 \cdot 2^{-k}$.  Likewise, the points $t_4 < u < t_5 < v < t_6$ all fit within an interval of length at most $3 \cdot 2^{-\ell}$.  We will use these times to study the process on two different time scales.  See the figure below.

\begin{figure}[h]
\centering
\begin{tikzpicture}[scale=0.8]
\draw[thick] (1,2)--(20,2);
\draw[fill=black](6,2) circle (3pt);
\draw[fill=black](8,2) circle (3pt);
\draw[fill=black](15,2) circle (3pt);
\draw (2,1.8)--(2,2.2);
\draw (5,1.8)--(5,2.2);
\draw (7,1.8)--(7,2.2);
\draw (9,1.8)--(9,2.2);
\draw (12,1.8)--(12,2.2);
\draw (19,1.8)--(19,2.2);
\node at (2, 1.4){$t_1$};
\node at (5, 1.4){$t_4$};
\node at (6, 1.4){$u$};
\node at (7, 1.4){$t_5$};
\node at (8, 1.4){$v$};
\node at (9, 1.4){$t_6$};
\node at (12, 1.4){$t_2$};
\node at (15, 1.4){$w$};
\node at (19, 1.4){$t_3$};
\end{tikzpicture}
\label{points}
\end{figure}

We now define four new events.
\begin{itemize}
\item Let $A_1$ be the event that $X_{t_2} - Y_{t_2} \leq 2^{-k/2} \log^2(1/\eps)$ and $Y_{t_2} - Z_{t_2} \leq 2^{-k/2} \log^2(1/\eps)$.
For sufficiently small $\eps$, if $G_3^*$ occurs and if both $X_u - Y_u$ and $Y_v - Z_v$ are less than $2 \eps$, then the regularity imposed by $G_3^*$ ensures that the closeness of the curves must be felt at $t_2$ as well and therefore the event $A_1$ must occur. 

\item Let $A_2$ be the event that $X_w - Y_w < 2 \eps$.

\item Let $A_3$ be the event that $X_{t_4} - Y_{t_4} \leq 2^{-\ell/2} \log^2(1/\eps)$ and $Y_{t_4} - Z_{t_4} \leq 2^{-\ell/2} \log^2(1/\eps)$.  Note that for sufficiently small $\eps$, if $G$ occurs and if both $X_u - Y_u$ and $Y_v - Z_v$ are less than $2 \eps$, then $A_3$ must occur. 

\item Let $A_4$ be the event that $X_u - Y_u < 2 \eps$ and $Y_v - Z_v < 2 \eps$.
\end{itemize}
This discussion implies that the event on the right-hand side of \eqref{main3bound} whose probability we need to bound is contained in
$A_1 \cap A_2 \cap A_3 \cap A_4$.

Let $\mathcal{G}$ be the $\sigma$-field generated by the values of $\mathfrak{L}^{\bf a}$ outside $U({\bf a})$.  We first bound $P(A_1|\mathcal{G})$.  We can scale the time interval $[a_1, a_1 + 12]$ to $[0,1]$ using Brownian scaling and apply \eqref{threeclose} with $\frac{1}{\sqrt{12}} 2^{-k/2} \log^2(1/\eps)$ in place of $\eps$ to get
\begin{equation}\label{PA1}
\P(A_1) = O(2^{-4k} \log^{16}(1/\eps)).
\end{equation}

Let $\mathcal{F}_1$ be the $\sigma$-field generated by $\mathcal{G}$ and by $X_{t_2}$, $Y_{t_2}$, $Z_{t_2}$, $X_{t_3}$, $Y_{t_3}$, and $Z_{t_3}$.  Conditional on $\mathcal{F}_1$, the processes $X$, $Y$, and $Z$ between times $t_2$ and $t_3$ are nonintersecting Brownian bridges.  
We have $t_3 - t_2 = \frac{3}{4} 2^{-k}$.  Also, $w - t_2 = \frac{1}{4} 2^{-k}$ and $t_3 - w = \frac{1}{2}2^{-k}$.  Therefore, we can use Brownian scaling to scale the time interval $[t_2, t_3]$ to $[0,1]$, and then apply \eqref{twoclose} with $2 \eps (t_3 - t_2)^{-1/2}$ in place of $\eps$ to get
\begin{equation}\label{PA2}
\P(A_2|\mathcal{F}_1) = O(2^{3k/2} \eps^3).
\end{equation}

Analogous to above, let $\mathcal{F}_2$ be the $\sigma$-field generated by $\mathcal{F}_1$, $X_{t_1}$, $Y_{t_1}$, $Z_{t_1}$, $X_w$, $Y_w$, and $Z_w$.
Conditional on $\mathcal{F}_2$, the processes $X$, $Y$, and $Z$ between times $t_1$ and $t_2$ are nonintersecting Brownian bridges. 
We have $\frac{5}{4} 2^{-k} \leq t_2 - t_1 \leq \frac{9}{4} 2^{-k}$.  Also, $t_4 - t_1 \geq \frac{1}{4} 2^{-k}$ because $\ell > k$, and $t_2 - t_4 \geq \frac{3}{4}2^{-k}$.  Therefore, we can use Brownian scaling to scale the time interval $[t_1, t_2]$ to $[0,1]$, and then we can apply \eqref{threeclose} with $2^{-\ell/2} \log^2(1/\eps) (t_2 - t_1)^{-1/2}$ in place of $\eps$ to get
\begin{equation}\label{PA3}
\P(A_3|\mathcal{F}_2) = O(2^{4k-4\ell} \log^{16}(1/\eps)).
\end{equation}

Let $\mathcal{F}_3$ be the $\sigma$-field generated by $\mathcal{F}_2$ and by $X_{t_i}$, $Y_{t_i}$, and $Z_{t_i}$ for $i \in \{4, 5, 6\}$.
Conditional on $\mathcal{F}_3$, the processes $X$, $Y$, and $Z$ between times $t_4$ and $t_5$, and between times $t_5$ and $t_6$, are independent Brownian bridges.  
We have $2^{-\ell} \leq t_5 - t_4 = t_6 - t_5 \leq \frac{3}{2} 2^{-\ell}$.  Also, $u$ and $v$ are at least a distance $\frac{1}{2} 2^{-\ell}$ from the points $t_4$, $t_5$, and $t_6$.  Therefore, we can use Brownian scaling to scale the time intervals $[t_4, t_5]$ and $[t_5, t_6]$ to $[0,1]$, and then we can apply \eqref{twoclose} twice, once for each interval, with $2 \eps (t_5 - t_4)^{-1/2}$ in place of $\eps$ to get
\begin{equation}\label{PA4}
\P(A_4|\mathcal{F}_3) = O(2^{3 \ell} \eps^6).
\end{equation}

Because $A_1 \in \mathcal{F}_1$, $A_1 \cap A_2 \in \mathcal{F}_2$, and $A_1 \cap A_2 \cap A_3 \in \mathcal{F}_3$, we can combine the estimates in \eqref{PA1}, \eqref{PA2}, \eqref{PA3}, and \eqref{PA4} to get
\begin{equation}\label{case3}
\P(A_{u,v,w} \cap G) \leq C \P(A_1 \cap A_2 \cap A_3 \cap A_4) = O(2^{3k/2} 2^{-\ell} \eps^9 \log^{32}(1/\eps)).
\end{equation}

The number of points in $L$ is $O(\eps^{-2} \log^{7/2}(1/\eps))$.  Therefore, the number of possible choices for $(u,v,w)$ in Case 1 is $O(\eps^{-6} \log^{21/2}(1/\eps))$.  In Case 2, given the values of $u$, $v$, and $\ell$, the number of possible choices for $w$ is $O(2^{-\ell} \eps^{-2} \log^3(1/\eps))$.  Therefore, for a given value of $\ell$, the number of possible choices for $(u,v,w)$ in Case 2 is $O(\eps^{-6} \log^{10}(1/\eps) 2^{-\ell})$.  Likewise, in Case 3, given the values of $u$, $k$, and $\ell$, the number of possible choices for $v$ is $O(2^{-\ell} \eps^{-2} \log^3(1/\eps))$, and the number of possible choices for $w$ is $O(2^{-k} \eps^{-2} \log^3(1/\eps))$.  Therefore, for given values of $k$ and $\ell$, the number of possible choices for $(u,v,w)$ in Case 3 is $O(2^{-k} 2^{-\ell} \eps^{-6} \log^{19/2}(1/\eps))$.
Now we can combine the results \eqref{case1}, \eqref{case2}, and \eqref{case3}, along with Lemma \ref{AiryHolder}, to get
\begin{align}\label{Auvw}
\sum_{\substack{u, v, w \in L \\ u < v < w}} \P(A_{u,v,w}) &\leq \sum_{\substack{u, v, w \in L \\ u < v < w}} \P(G^c) + \sum_{\substack{u, v, w \in L \\ u < v < w}} \P(A_{u,v,w} \cap G) \nonumber \\
&\leq C \eps^{100} \cdot \eps^{-6} \log^{21/2}(1/\eps) + C \eps^9 \cdot \eps^{-6} \log^{21/2}(1/\eps) \nonumber \\
&\qquad+ C \sum_{\ell = 0}^{\infty} \eps^9 2^{-\ell} \log^{16}(1/\eps) \cdot \eps^{-6} \log^{10}(1/\eps) 2^{-\ell} \nonumber \\
&\qquad \qquad + C \sum_{k=-1}^{\infty} \sum_{\ell = k+1}^{\infty} 2^{3k/2} 2^{-\ell} \eps^9 \log^{32}(1/\eps) \cdot 2^{-k} 2^{-\ell} \eps^{-6} \log^{19/2}(1/\eps) \nonumber \\
&\leq C \eps^{94} \log^{11}(1/\eps) + C \eps^3 \log^{11}(1/\eps) + C \eps^3 \log^{26}(1/\eps) \sum_{\ell = 0}^{\infty} 2^{-2 \ell}  \nonumber \\
&\qquad + C \eps^3 \log^{42}(1/\eps) \sum_{k=-1}^{\infty} 2^{k/2} \sum_{\ell=k+1}^{\infty} 2^{-2 \ell} \nonumber \\
&\leq C \eps^3 \log^{42}(1/\eps).
\end{align}

Let $\mathcal{H}$ be the $\sigma$-field generated by $(W(t), u \leq t \leq w)$ and by $\mathcal{P}_i(t)$ for $t \in [u, w]$ and $i \in \{1, 2, 3\}$.  Note that $A_{u,v,w} \in \mathcal{H}$, and $m_2^*$ is $\mathcal{H}$-measurable. 

Having bounded $\P(A_{u,v,w})$ we now aim to bound $\P(B_{u,v,w} \cap G|\mathcal{H})$.  
Let $G_u \subseteq G$ be the event that the bound in Lemma \ref{AiryHolder} holds when $s,t \in [-T, u]$, and let $G_w \subseteq G$ be the event that the bound in Lemma \ref{AiryHolder} holds when $s,t \in [w, T]$.  If $|m_1^* - m_2^*|  < 2 \eps$ and $G_u$ occurs, we claim that one of the following must hold:
\begin{itemize}
\item There exists $r \in L \cap [-T, (u - T)/2]$ such that $$r = \min\{t \in L \cap [-T, (u - T)/2]: W(t) + \mathcal{P}_1(t) \geq m_2^* - 3 \eps\}$$ and 
$$\max_{t \in [r, r + 1/2]} (W(t) + \mathcal{P}_1(t) - W(r) - \mathcal{P}_1(r)) < 5 \eps.$$  Also, we must have $|W(r) + \mathcal{P}_1(r) - (W(r+1/2) + \mathcal{P}_1(r +1/2))| < \sqrt{2} C_{10} \log(1/\eps)$.

\item There exists $r \in L \cap ((u - T)/2, u]$ such that $$r = \max\{t \in L \cap ((u - T)/2, u]: W(t) + \mathcal{P}_1(t) \geq m_2^* - 3 \eps\}$$ and $$\max_{t \in [r, r - 1/2]} (W(t) + \mathcal{P}_1(t) - W(r) - \mathcal{P}_1(r)) < 5 \eps.$$  Also, we must have $|W(r) + \mathcal{P}_1(r) - (W(r-1/2) + \mathcal{P}_1(r -1/2))| < \sqrt{2} C_{10} \log(1/\eps)$.
\end{itemize}
To see this, note that Lemma \ref{AiryHolder} implies that for $\eps$ sufficiently small, when $|m_1^* - m_2^*| < 2 \eps$ and $G_u$ occurs, the process $(W(t) + \mathcal{P}_1(t), -T \leq t \leq u)$ must get to within $\eps$ of its maximum, and therefore within $3 \eps$ of $m_2^*$, at one of the points in $L$.  This point in $L$ must either be in $[-T, (u - T)/2]$ or in $((u - T)/2, u]$.  In the first case, we consider the points in $L$ in increasing order, and let $r$ be the first point at which the process gets within $3 \eps$ of $m_2^*$.  If $|m_1^* - m_2^*| < 2 \eps$, then the process $(W(t) + \mathcal{P}_1(t), -T \leq t \leq u)$ can not increase by more than $5 \eps$ during the time interval $[r, r + 1/2]$.  Note, in particular, that $r + 1/2 < u$ because $u \geq -(T-1)$.  The second case is handled the same way, except that we work backwards from time $u$.

Now suppose $r \in L \cap [-T, (u - T)/2]$, and let $\mathcal{H}_r$ be the $\sigma$-field generated by $\mathcal{P}_i$ for $i \geq 2$ and by $W(t)$ and $\mathcal{P}_1(t)$ for all $t \notin (r, r+1/2)$.  A consequence of the Brownian Gibbs Property of the parabolic Airy line ensemble is that conditional on $\mathcal{H}_r$, the process $(W(t), t \in [r, r + 1/2])$ is a Brownian bridge, and the process $(\mathcal{P}_1(t), t \in [r, r + 1/2])$ is a Brownian bridge conditioned to stay above $\mathcal{P}_2$.  A standard monotonicity argument given, for example, in Lemma 2.6 of \cite{ch14} implies that conditional on $\mathcal{H}_r$, the process $(\mathcal{P}_1(t), t \in [r, r + 1/2])$ can be coupled with a Brownian bridge $(B(t), t \in [r, r+1/2])$ having the same endpoints such that $B(t) \leq \mathcal{P}_1(t)$ for all $t \in [r, r + 1/2]$.  Therefore,
\begin{align}\label{probincrease}
&\P \Big( \max_{t \in [r, r + 1/2]} (W(t) + \mathcal{P}_1(t) - W(r) - \mathcal{P}_1(r)) < 5 \eps \,\Big|\, \mathcal{H}_r \Big) \nonumber \\
&\qquad \leq \P \Big( \max_{t \in [r, r + 1/2]} (W(t) + B(t) - W(r) - B(r)) < 5 \eps \,\Big|\, \mathcal{H}_r \Big).
\end{align}
Because the sum of two Brownian bridges is a Brownian bridge with twice the variance parameter, it follows from Lemma \ref{maxbridge} that on the event that $|W(r) + \mathcal{P}_1(r) - (W(r+1/2) + \mathcal{P}_1(r +1/2))| < \sqrt{2} C_{10} \log(1/\eps)$, the conditional probability on the right-hand side of \eqref{probincrease} is $O(\eps \log(1/\eps))$.
Because the Brownian bridge is symmetric under time reversal, the same bound can be obtained when $r \in L \cap ((u - T)/2, u]$.  It follows that $$\P(G_u \cap \{|m_1^* - m_2^*| < 2 \eps\}|\mathcal{H}) = O(\eps \log(1/\eps)).$$
The same argument, applied to the interval $[w, T]$ instead of $[-T, u]$ gives $$\P(G_w \cap \{|m_2^* - m_3^*| > 2 \eps\}|\mathcal{H}) = O(\eps \log(1/\eps)),$$
and also establishes the conditional independence of these two events given $\mathcal{H}$.
Combining these results, we get $\P(B_{u,v,w} \cap G|\mathcal{H}) = O(\eps^2 \log^2(1/\eps))$, and therefore because $A_{u,v,w} \in \mathcal{H}$, there is a positive constant $C$ such that
\begin{equation}\label{Buvw}
\P(B_{u,v,w} \cap G|A_{u,v,w}) \leq C \eps^2 \log^2(1/\eps).
\end{equation}
Combining \eqref{mainPA}, \eqref{Auvw}, and \eqref{Buvw}, we get
$$\P(A) \leq \P(G^c) + \sum_{\substack{u, v, w \in L \\ u < v < w}} \P(A_{u,v,w}) \P(B_{u,v,w} \cap G|A_{u,v,w}) \leq C \eps^5 \log^{44}(1/\eps),$$
which completes the proof.
\end{proof}

\section{Simulation results}\label{simsec}

\begin{figure}[h!]
\centering
\includegraphics[scale=0.35, trim={1cm 5.5cm 2cm 6cm}, clip]{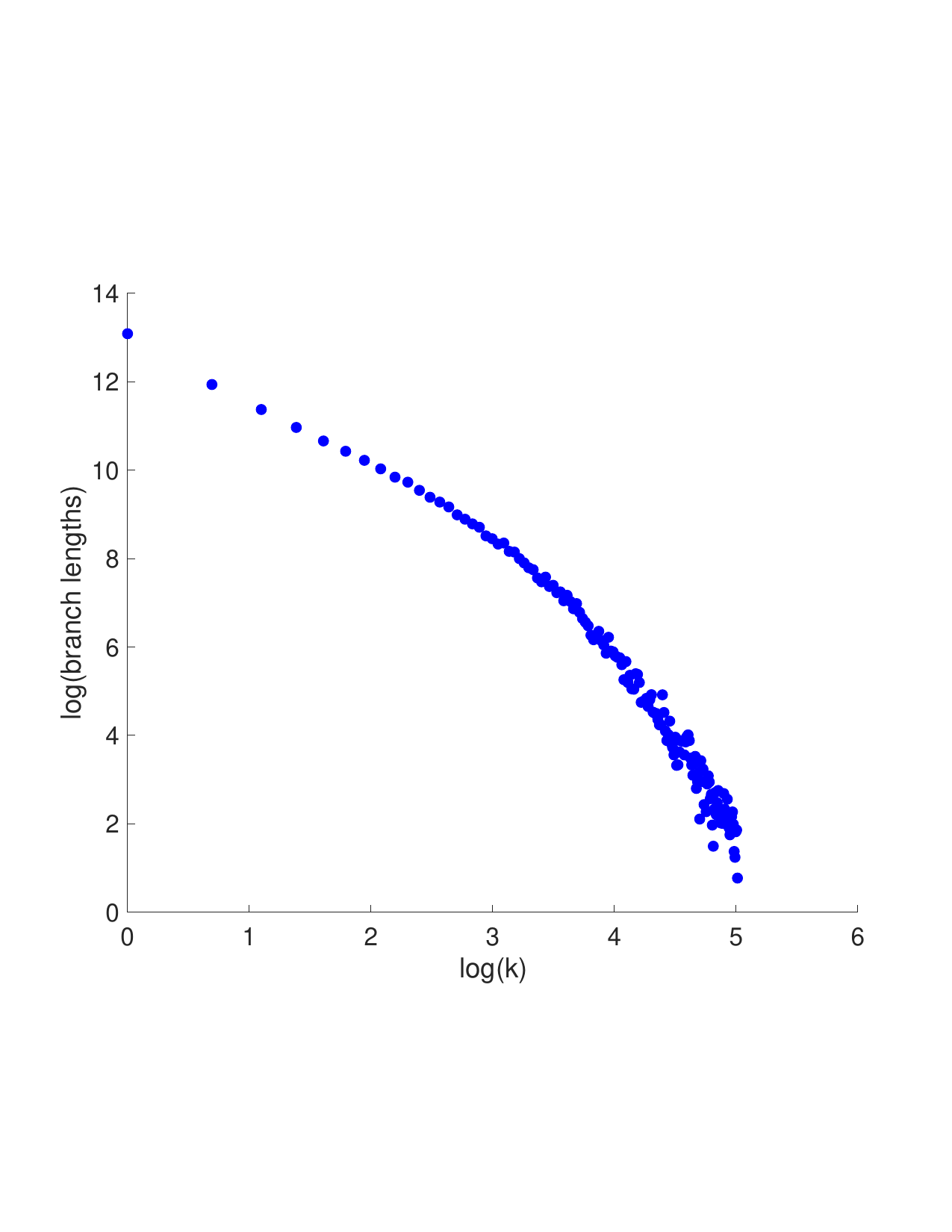}
\caption{The logarithm of the sum of the lengths of the branches on the ancestral lines of $k$ individuals, plotted against the value of $k$ on a log-log scale, for $1 \leq k \leq 150$.  Here $n = 1000$ points were sampled at random from the entire population of size $N = 100,000,000$, and the results were averaged over 100 simulation runs.}
\label{bendfig}
\end{figure}

\begin{figure}[h!]
\centering
\includegraphics[scale=0.38, trim={0cm 7.5cm 1cm 7.8cm}, clip]{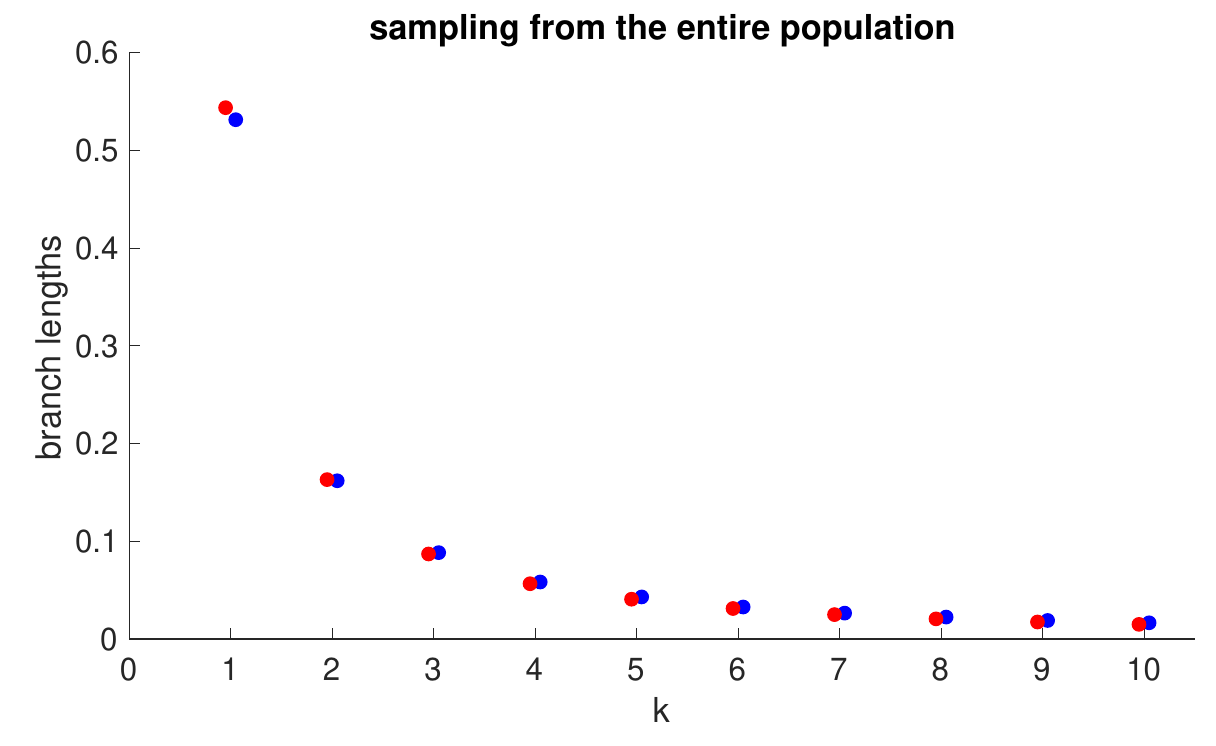}
\includegraphics[scale=0.38, trim={0cm 7.5cm 1cm 7.8cm}, clip]{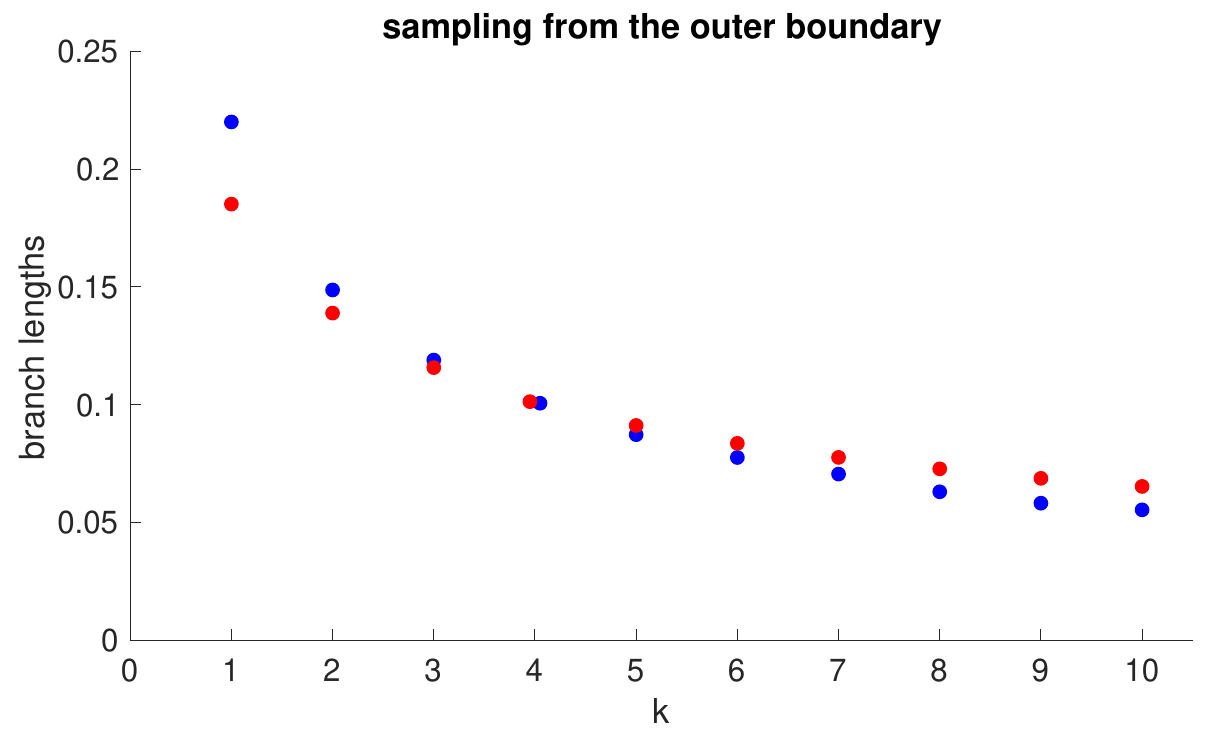}
\caption{For $1 \leq k \leq 10$, the lengths of the branches on the ancestral lines of $k$ individuals, normalized to sum to $1$, are plotted in blue.  Plotted in red are the predictions from Theorems~\ref{Theorem: main2} and \ref{Theorem: main1}, in which the numbers are proportional to $\Gamma(k - 2/5)/k!$ when sampling from the entire population (left panel) and proportional to $\Gamma(k+1/2)/k!$ when sampling from the outer boundary (right panel).  Here $n = 5000$ points were sampled at random from the population of size $N = 100,000,000$, and the results were averaged over 100 simulation runs.}
\label{compare}
\end{figure}

\begin{figure}[h!]
\centering
\includegraphics[scale=0.38, trim={0.5cm 5.5cm 0.5cm 6.5cm}, clip]{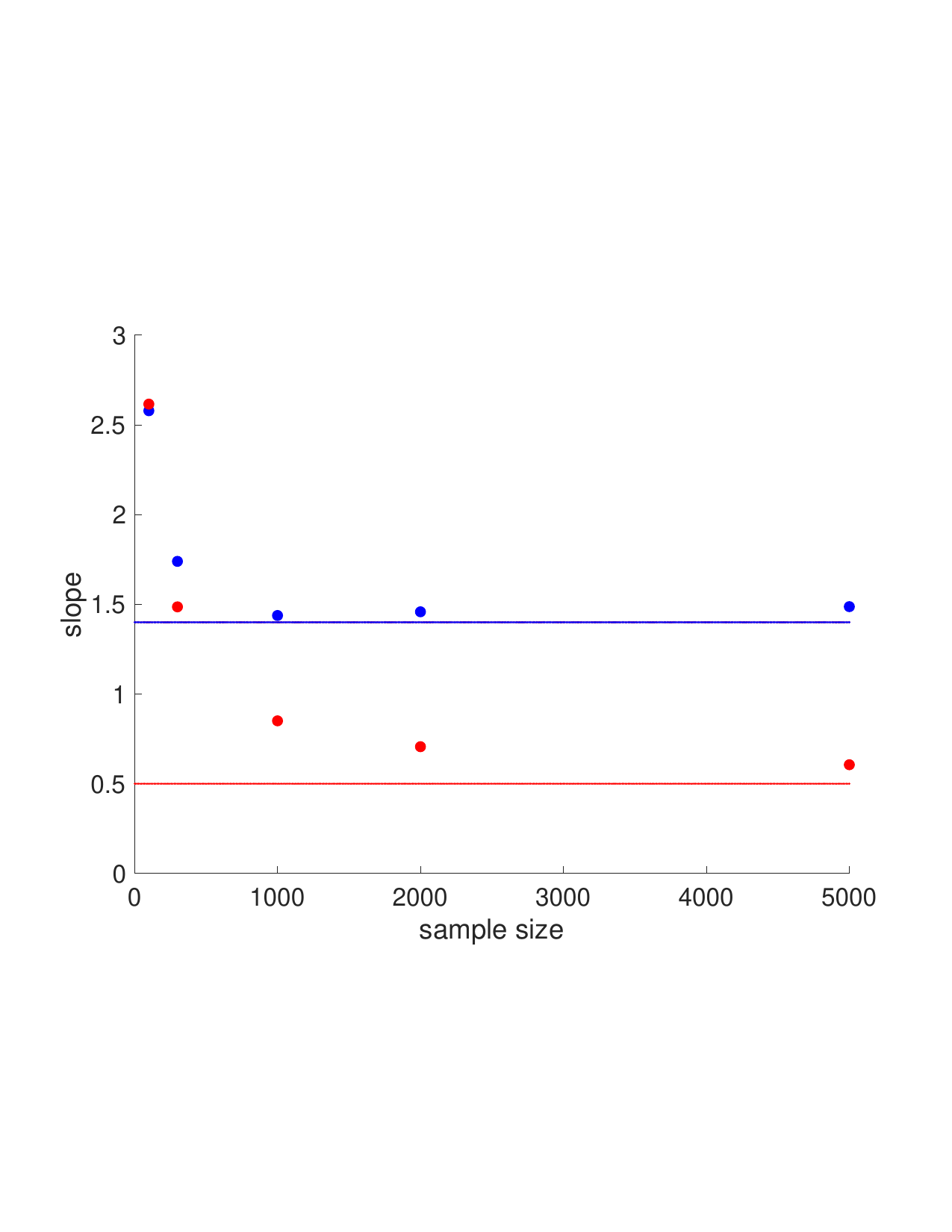}
\caption{This figure shows the absolute value of the slope of the least-squares line when the lengths of the branches on the ancestral lines of $k$ individuals are plotted against $k$, on a log-log scale for $1 \leq k \leq 10$.  Results are shown for sample sizes $n = 100, 300, 1000, 2000, 5000$.  Results when the sample comes from the entire population are in blue, and results for sampling from the outer boundary are in red.  The horizontal red and blue lines show the predictions from Theorems \ref{Theorem: main2} and \ref{Theorem: main1} respectively.}
\label{slopefig}
\end{figure}

We carried out simulations in MATLAB to compare to the conclusions of Theorems \ref{Theorem: main2} and \ref{Theorem: main1}. 
We simulated the growth of a first passage percolation cluster until the number of occupied sites reached $N = 100,000,000$.  We then sampled $n$ occupied sites at random, either from the entire cluster or from the boundary of the cluster, for $n = 100, 300, 1000, 2000$, and $5000$.  We found the geodesics connecting these points to the origin, which correspond to the ancestral lines of $n$ individuals sampled from a spatially growing population.  For all five values of $n$ and all $k = 1, 2, \dots, n-1$, we counted the number of sites that were ancestors of exactly $k$ of the $n$ sampled individuals, corresponding to $Anc_k(U_n)$ in Theorem~\ref{Theorem: main1} when we sample from the entire population and $Anc_k(\widetilde{U}_n)$ in Theorem \ref{Theorem: main2} when we sample from the boundary.  The entire simulation was repeated 100 times, and the results were averaged.  Figures \ref{bendfig}, \ref{compare}, and \ref{slopefig} present results from these simulations.

Figure \ref{bendfig} shows the lengths of the branches belonging to the ancestral lines of $k$ sampled lineages for $k = 1, 2, \dots, 150$, when $n = 1000$ individuals were sampled from the entire population.  Theorem~\ref{Theorem: main1} suggests that, when plotted on a log-log scale as in the figure, the points should lie on a straight line of slope $-7/5$.  This is approximately what is observed in Figure \ref{bendfig} for small values of $k$.  For larger values of $k$, the slope in Figure~\ref{bendfig} becomes more steep, consistent with what was observed in \cite{fgkah16}.

Figure \ref{compare} compares the simulation results for $k = 1, 2, \dots, 10$ to the theoretical results when the sample size is $n = 5000$.  The left panel shows that when we sample from the entire population, the simulations are in excellent agreement with the predictions of Theorem~\ref{Theorem: main2}. The right panel compares the simulation results to the predictions from Theorem~\ref{Theorem: main1} when we sample from the outer boundary of the population.  In this case, there are noticeable discrepancies between the simulation results and the theoretical predictions.  Nevertheless, as predicted by our theoretical results, in simulations when we sample from the outer boundary, we observe longer branch lengths associated with higher values of $k$, and shorter branch lengths associated with $k = 1$, relative to the results when sampling from the entire population.

Figure \ref{slopefig} shows how the results depend on the sample size.
For different sample sizes, we determined the slope of the least-squares line when the lengths of the branches on the ancestral line of $k$ sampled individuals for $k = 1, 2, \dots 10$ are plotted against the value of $k$ on a log-log scale. Theorems \ref{Theorem: main2} and \ref{Theorem: main1} predict that the absolute value of the slope of the line should be 1.4 when we sample from the entire population and 0.5 when we sample from the outer boundary.  Figure~\ref{slopefig} shows that the slope is close to 1.4 when sampling from the entire population as long as the sample size is at least $n = 1000$.  When we sample only from the outer boundary of the population, the convergence is slower and the prediction of Theorem \ref{Theorem: main1} does not become accurate until the sample size is $n = 5000$.  This may explain why in simulation results in \cite{epf22}, when sampling from the outer boundary, the slope was found to be closer to $2/3$ than $1/2$.

\end{document}